\documentclass[10pt, twoside]{article}
\usepackage[textwidth=13cm,textheight=21cm]{geometry}
\usepackage[all,cmtip]{xy}
\usepackage{comment}
\usepackage[pdftex]{pict2e}
\usepackage{amsmath,amssymb,amscd,amsthm}
\usepackage{a4wide,pstricks}
\usepackage{pdfsync}
\usepackage{fancyhdr}
\usepackage{makebox, bigstrut}
\pagestyle{fancy}
\setlength{\headheight}{20pt}
\fancyhead{}
\fancyhead[LO,RE]{\thepage}
\fancyhead[LE]{\leftmark}
\fancyhead[RO]{\rightmark}
\fancyfoot{}
\setlength\parindent{0 pt}
\usepackage[T1]{fontenc}
\usepackage[utf8]{inputenc} 
\usepackage[english]{babel} 
\usepackage{setspace}
\usepackage{bbm}
\usepackage{tasks}
\usepackage{dsfont} 
\usepackage{amsthm}
\usepackage{amsmath}
\usepackage{amssymb}
\usepackage{mathrsfs}
\usepackage{mathtools}
\usepackage{enumitem}
\usepackage{hyperref}
\usepackage{amssymb}
\usepackage{calligra}
\usepackage{ytableau}
\usepackage{marginnote}
\usepackage{cleveref}
\usepackage{babel}
\usepackage{bm}
\usepackage{tikz-cd}
\usepackage{faktor}
\usepackage{stmaryrd}
\renewcommand{\phi}{\varphi}

\let\emptyset\varnothing

\newcommand{\C}{\mathbb{C}}

\newcommand{\N}{\mathbb{N}}

\newcommand{\Q}{{\mathbb Q}}

\newcommand{\Z}{\mathbb{Z}}
\newcommand{\F}{\mathbb{F}}

\xyoption{all}
\input{xypic}


\newcommand{\G}{\mathbb{G}_m}

\theoremstyle{plain}
\numberwithin{equation}{subsection}
\crefformat{section}{\S#2#1#3} 
\crefformat{subsection}{\S#2#1#3}
\crefformat{subsubsection}{\S#2#1#3}
\setlength{\marginparwidth}{1.2in}
\let\oldmarginpar\marginpar
\renewcommand\marginpar[1]{\-\oldmarginpar[\raggedleft\footnotesize #1]
{\raggedright\footnotesize #1}}

\makeindex

\newtheorem{teorema}{Theorem}[subsection]
\newtheorem{prop}[teorema]{Proposition}

\newtheorem{conjecture}[teorema]{Conjecture}
\newtheorem{lemma}[teorema]{Lemma}

\theoremstyle{remark}
\newtheorem{oss}[teorema]{Remark}
\newtheorem{esempio}[teorema]{Example}

\theoremstyle{definition}
\newtheorem{definizione}[teorema]{Definition}

\newcounter{margin}
{\end{itshape}  \bigskip}

\DeclareMathOperator{\Hom}{Hom}

\DeclareMathOperator{\Mat}{Mat}

\DeclareMathOperator{\Res}{Res}
\DeclareMathOperator{\Ker}{Ker}

\DeclareMathOperator{\Gl}{GL}

\DeclareMathOperator{\Imm}{Im}
\DeclareMathOperator{\reg}{reg}

\DeclareMathOperator{\spec}{Spec}

\DeclareMathOperator{\Infl}{Infl}

\DeclareMathOperator{\Rep}{Rep}
\DeclareMathOperator{\Lie}{Lie}

\DeclareMathOperator{\Plexp}{Exp}
\DeclareMathOperator{\Plelog}{Log}

\DeclareMathOperator{\Ind}{Ind}

\DeclareMathOperator{\Coeff}{Coeff}

\DeclareMathOperator{\rank}{rank}

\DeclareMathOperator{\PGl}{PGL}
\DeclareMathOperator{\Cl}{Cl}
\DeclareMathOperator{\codim}{codim}
\begin{document}

\title{Cohomology of non-generic character stacks}

\author{ Tommaso Scognamiglio
\\ {\it Université Paris Cité/IMJ-PRG}
\\{\tt tommaso.scognamiglio@imj-prg.fr}
}

\pagestyle{myheadings}

\maketitle

\begin{abstract}
We study (compactly supported) cohomology of character stacks of punctured Riemann surface with prescribed semisimple local monodromies at punctures. In the case of generic local monodromies, the cohomology of these character stacks has been studied in \cite{HA}, \cite{mellit}. 

In this paper we extend the results and conjectures of \cite{HA} to the non-generic case. In particular we compute the E-series and give a conjectural formula for the mixed Poincar\'e series. We prove our conjecture  in the case of the projective line with $4$ punctures.

In the non-punctured case, formulas of a similar kind have already appeared in the recent work of Davison, Hennecart, Schlegel Mejia \cite{davison-hennecart}. Their approach involves-non abelian Hodge correspondence for stacks and BPS sheaves and does not extend immediately to the more general context of this paper.

Indeed, on the one side, BPS sheaves and their cohomology have not been studied in the punctured case.

On the other side, the authors \cite{davison-hennecart} use non-abelian Hodge correspondence, which is not algebraic in nature. This makes difficult to obtain information about the weight filtration on character stacks and in particular about their E-series.
\end{abstract}

\tableofcontents

\newpage

\section{Introduction}

Consider a Riemann surface $\Sigma$ of genus $g \geq 0$, a subset $D=\{p_1,\dots,p_k\} \subseteq \Sigma$ of $k$ points and $\mathcal{C}=(\mathcal{C}_1,\dots,\mathcal{C}_k)$ a $k$-tuple of semisimple conjugacy classes of $\Gl_n(\C)$. In what follows, denote by $\Gl_n$ the group $\Gl_n(\C)$.

The associated character stack $\mathcal{M}_{\mathcal{C}}$ is defined as the  quotient stack 
\begin{equation}
\label{intro0}
\mathcal{M}_{\mathcal
{C}}\coloneqq \left[\biggl\{\rho \in \Hom(\pi_1(\Sigma \setminus D),\Gl_n) \ | \ \rho(x_i) \in \mathcal{C}_i  \text{ for } i=1,\dots,k \biggr \}\middle /\Gl_n(\C)\right]
\end{equation}
where each $x_i$ is a small loop around the point $p_i$. These stacks classify local systems on $\Sigma\setminus D$ such that the 
monodromy around the point $p_i$ lies in $\mathcal{C}_i$, for $i=1,\dots,k$ and are naturally related to certain moduli spaces of (strongly) parabolic Higgs bundles on $\Sigma$ via the non-abelian Hodge correspondence, see for example the work of Simpson \cite{Simpson}.

\vspace{8 pt}

The stack $\mathcal{M}_{\mathcal{C}}$ has therefore the following explicit form in  terms of matrix equations:
\begin{equation}
\label{intro1}
\mathcal{M}_{\mathcal{C}}= \left[\biggl\{(A_1,B_1,\dots,A_g,B_g,X_1,\dots,X_k) \in \Gl^{2g}_n \times \prod_{j=1}^k \mathcal{C}_j | \ \prod_{i=1}^g [A_i,B_i]\prod_{j=1}^k X_j=1\biggr\}\middle/ \Gl_n\right].
\end{equation}

\vspace{10 pt}

 In what follows, for a complex stack of finite type $\mathcal{X}$, we will denote by $H^*_c(\mathcal{X})\coloneqq H^*_c(\mathcal{X},\C)$ its compactly supported cohomology with $\C$-coefficients. Recall that each vector space $H^i_c(\mathcal{X})$ is endowed with the \textit{weight filtration} $W^i_{\bullet}H^i_c(\mathcal{X})$. For more details about the definition of these cohomology groups and their weight filtration see \cref{cohocharstacks}.
 
We define the mixed Poincaré series $H_c(\mathcal{X},q,t)$ $$H_c(\mathcal{X},q,t)\coloneqq\sum_{m,i}\dim(W^i_m/W^i_{m-1})q^{\frac{m}{2}}t^i .$$

The E-series $E(\mathcal{X},q)$ is the specialization of $H_c(\mathcal{X},q,t)$ obtained by plugging $t=-1$ and the Poincaré series $P_c(\mathcal{X},t)$ is the specialization of $H_c(\mathcal{X},q,t)$ obtained by plugging $q=1$.

\subsection{Review of generic character stacks}
\label{review-intro}

The cohomology of character stacks and its mixed Hodge structure have been extensively studied from different perspectives.  So far, most of the results have been obtained in the case where the 
 $k$-tuple $\mathcal{C}$ is \textit{generic} (see \cref{genericktuplesepoly} for a precise definition of genericity).

 \vspace{10 pt}
 
 In the generic case, the stack $\mathcal{M}_{\mathcal{C}}$ is smooth and it is a $\mathbb{G}_m$-gerbe over the associated GIT quotient, which we denote by $M_{\mathcal{C}}$. Therefore the cohomology of $\mathcal{M}_{\mathcal{C}}$ can be easily deduced from that of the character variety $M_{\mathcal{C}}$. 

We give here a quick review of the results obtained about the cohomology of generic character stacks and varieties, see \cref{section8.2} for more details.

\vspace{8 pt}
The first results concerning this subject were obtained in the case where $k=1$ and $\mathcal{C}$ is a central conjugacy class. For $n \in \N$ and $d \in \Z$, let $\mathcal{M}_{n,d}$ be the stack $\mathcal{M}_{\mathcal{C}}$ for $k=1$ and $\mathcal{C}=\{e^{\frac{2 \pi i d}{n}}I_n\}$ i.e. $$\mathcal{M}_{n,d}=\left[\Biggl\{(A_1,B_1,\dots,A_g,B_g) \in \prod_{i=1}^{2g} \Gl_n \ | \ \prod_{i=1}^g[A_i,B_i]=e^{-\frac{2 \pi i d}{n}}I_n\Biggr\}\middle/\Gl_n \right].$$

The orbit $\mathcal{C}=\{e^{\frac{2 \pi i d}{n}}\}$ is generic if and only if $(n,d)=1$.

\vspace{8 pt}

Hitchin \cite{Hitchin87} computed the Poincaré polynomial $P_c(M_{n,d},t)$
 in the generic case for $n=2$, using non abelian Hodge correspondence and Morse theory on the moduli space of Higgs bundles. Gothen \cite{gothen}  extended his result for $n=3$.

Their approach was later extended to compute the Poincaré polynomial $P_c(M_{\mathcal{C}},t)$ in the case where $n=2$, any $k$ and any generic $k$-tuple $\mathcal{C}$ by Boden, Yogokawa \cite{boden-yoko} and where $n=3$, any $k$ and any generic $k$-tuple $\mathcal{C}$ by García-Prada, Gothen and Muñoz \cite{Garcia}.

However, Morse-theoretic techniques do not give information  about the weight filtration and were hard to generalize to any $n$.

\vspace{8 pt}

Hausel and Rodriguez-Villegas \cite{HRV} were the first to obtain a general result about cohomology of character stacks for any $n$. The authors computed the E-series $E(\mathcal{M}_{n,d},q)$ of the stacks $\mathcal{M}_{n,d}$ for any coprime $n,d$, by counting points over finite fields and proposed a conjectural formula  for the mixed Poincaré series $H_c(\mathcal{M}_{n,d},q,t)$. 

Schiffmann \cite{schiffmann} found an expression for the Poincaré series $P_c(\mathcal{M}_{n,d},t)$ in the generic case and Mellit \cite{mellit-nopunctures} later checked that Schiffmann's formula agrees with the specialization of Hausel and Rodriguez-Villegas' conjecture at $q=1$.

\vspace{8 pt}

Hausel, Letellier and Rodriguez-Villegas afterwards generalized the results of \cite{HRV} and computed \cite[Theorem 1.2.3]{HA}  the E-series $E(\mathcal{M}_{\mathcal{C}},q)$ of the stacks $\mathcal{M}_{\mathcal{C}}$ for any generic $k$-tuple $\mathcal{C}$. We quickly explain more precisely their result since it is the starting point of this paper. For more details, see \cref{genericktuplesepoly}.

\vspace{8 pt}

 Let $\mathcal{P}$ be the set of partitions. In \cite{HA}, the authors introduced, for each multipartition $\bm \mu \in \mathcal{P}^k$, a rational function $\mathbb{H}_{\bm \mu}(z,w) \in \Q(z,w)$, defined in terms of Macdonald polynomials (for a precise definition see \cref{genericktuplesepoly}). The authors  \cite[Theorem 1.2.3]{HA} showed that there is an equality
\begin{equation}
\label{Epolynomialgenericintro}
E(\mathcal{M}_{\mathcal{C}},q)=\dfrac{q^{\frac{d_{\bm \mu}}{2}}}{q-1}\mathbb{H}_{\bm \mu}\left(\sqrt{q},\dfrac{1}{\sqrt{q}} \right)
\end{equation}
where $2d_{\bm \mu}=\dim (\mathcal{M}_{\mathcal{C}})+1$ and $\bm \mu=( \mu^1,\dots, \mu^k)$ is the multipartition given by the multiplicities of the eigenvalues of $\mathcal{C}_1,\dots,\mathcal{C}_k$ respectively.

In the same paper, the authors \cite[Conjecture 1.2.1]{HA} proposed the following conjectural formula for the  mixed Poincaré series $H_c(\mathcal{M}_{\mathcal{C}},q,t)$, which generalizes Hausel and Rodriguez-Villegas' conjecture stated in \cite{HRV} and naturally deformes 
Identity (\ref{Epolynomialgenericintro}):
\begin{equation}
\label{mhpgenericintro}
    H_c(\mathcal{M}_{\mathcal{C}},q,t)=\dfrac{(qt^2 )^{\frac{d_{\mu}}{2}}}{qt^2-1}\mathbb{H}_{\bm \mu}\left(-t\sqrt{q},\dfrac{1}{\sqrt{q}}, \right).
\end{equation}

\vspace{8 pt}

Mellit \cite[Theorem 7.12]{mellit} later computed the Poincaré series $P_c(\mathcal{M}_{\mathcal{C}},t)$ using the non-abelian Hodge correspondence. His formula matches with the specialization at $q=1$ of the conjectural formula (\ref{mhpgenericintro}) for the mixed Poincaré series.

\vspace{8 pt}

\subsection{Overview of the paper}
\label{overview}
One of the aims of this paper is the study of the  cohomological invariants of the stacks $\mathcal{M}_{\mathcal{C}}$ in the case where $\mathcal{C}$ is not necessarily generic. The  cohomology of character stacks $\mathcal{M}_{\mathcal{C}}$ for  non-generic $k$-tuples  $\mathcal{C}$ has not been much studied  in the literature until recently.  

\vspace{8 pt}

The most explicit and general results in the non-generic case were obtained mainly  for the stacks $\mathcal{M}_{n,d}$.

Hausel and Rodriguez-Villegas \cite[Theorem 3.8.1]{HRV}  expressed the E-series for the stacks $\mathcal{M}_{n,0}$ in terms of the E-series for the generic character stacks $\mathcal{M}_{n,1}$ by the following formula:

 \begin{equation}
\label{HRV}
\Plexp\left(\sum_{n \in \N} \dfrac{E(\mathcal{M}_{n,1},q)}{q^{n^2(g-1)}} T^n\right)=\sum_{n \in \N} \dfrac{E(\mathcal{M}_{n,0},q)}{q^{n^2(g-1)}} T^n
\end{equation}

where $\Plexp$ is the plethystic exponential in the ring of formal power series $\Q(q)[[T]]$ (see \cref{lambdarings} for details about plethystic operations). The authors' result is obtained by counting points over finite fields.  

\vspace{8 pt}

Fix now $r \in \Q$. Recently, Davison, Hennecart and Schlegel Mejia \cite[Theorem 14.3, Corollary 14.7]{davison-hennecart} proved the following formula expressing the compactly supported Poincaré series of $\mathcal{M}_{n,d}$ for any $n,d$, in terms of the Poincaré series for the generic character stacks $\mathcal{M}_{n,1}$:

\begin{equation}
\label{DHM}
\sum_{\substack{(n,d) \in \N_{>0} \times \Z \\ d=rn}} \dfrac{P_c(\mathcal{M}_{n,d},-t)}{t^{n^2(2g-2)}}z^nw^d=\Plexp\left(\sum_{\substack{(n,d) \in \N_{>0} \times \Z \\ d=rn}}\dfrac{P_c(\mathcal{M}_{n,1},-t)}{t^{n^2(2g-2)}}z^n w^d\right)
\end{equation}

and formulated a similar conjecture for the mixed Poincaré series of $H_c(\mathcal{M}_{n,d},q,t)$ for any $n,d$ (see the discussion after \cite[Theorem 14.10]{davison-hennecart}). 

They obtained this formula by relating the cohomology of a character stack with the cohomology of the so-called BPS sheaves. The latter are certain perverse sheaves defined on character varieties and are well-understood for the stacks $\mathcal{M}_{n,d}$. More precisely the non-abelian Hodge correspondence for stacks, proved in \cite{davison-hennecart}, and the recent work of Koseki and Kinjo \cite{kinjo2021cohomological} about BPS sheaves for the moduli stack of Higgs bundles, give a way to compute the cohomology of BPS sheaves for a stack $\mathcal{M}_{n,d}$.

However notice that, since the authors use non-abelian Hodge correspondence which does not preserve the weight filtration on cohomology, their method does not allow to prove an analogous formula for the E-series or the mixed Poincaré series of $\mathcal{M}_{n,d}$.  

\vspace{8 pt}

Finally, the cohomology of BPS sheaves for character stacks $\mathcal{M}_{\mathcal{C}}$ is not understood for an arbitrary $\mathcal{C}$ and so a generalisation of  Formula (\ref{DHM}) for an arbitrary $\mathcal{C}$ is still unproved.

One of the main results  of our paper is a generalization of (\ref{DHM}) to arbitrary $\mathcal{C}$ for the E-series $E(\mathcal{M}_{\mathcal{C}},q)$ instead of the Poincar\'e series $P_c(\mathcal{M}_\mathcal{C},t)$. As a result we get an explicit formula for $E(\mathcal{M}_{\mathcal{C}},q)$  for  any $k$-tuple
 $\mathcal{C}$, see Theorem \ref{mainteointro} below and \cref{mainresultparagraph}.

\vspace{8 pt}

We also give a conjectural formula (see Conjecture \ref{conjmhpintro}) for $H_c(\mathcal{M}_{\mathcal{C}},q,t)$, which we verify  in the case of $\Sigma=\mathbb{P}^1_{\C}$, $|D|=4$ and a certain family of non-generic quadruples, for more details see \cref{chaptermhpnongene}. 

Conjecture \ref{conjmhpintro} for the stacks $\mathcal{M}_{n,d}$ has already appeared in \cite{davison-hennecart}, see the discussion there after Theorem $14.10$.

\vspace{8 pt}

Finally let us notice that our approach is very different from that of \cite{davison-hennecart} as we do not use non-abelian Hodge theory nor BPS sheaves.

\subsection{Main results}
\label{mainresult} 
An important tool to formulate and prove the main results of this paper is the construction of character stacks as multiplicative quiver stacks, as first introduced by 
Crawley-Boevey and Shaw \cite{cb-monod},\cite{cb-shaw}, which we quickly recall here (see \cref{quiverrep} for more details). 

Notice that this construction is not needed for studying generic character stacks and does not appear for example in the articles \cite{HA}, \cite{mellit}. However, it is a key point in our paper, as it allows to distinguish between different levels of non-genericity for non-generic character stacks.

\vspace{8 pt}

Let $s_1,\dots,s_k \in \N$ be such that, for each $i=1,\dots,k$, the conjugacy class $\mathcal{C}_i$ has $s_i+1$ distinct eigenvalues $\gamma_{i,0},\dots,\gamma_{i,s_i}$ with multiplicities $m_{i,0},\dots,m_{i,s_i}$ respectively. Let $Q=(I,\Omega)$ be the following star-shaped quiver with $g$ loops on the central vertex \begin{center}
    \begin{tikzcd}[row sep=1em,column sep=3em]
    & &\circ^{[1,1]} \arrow[ddll,""] &\circ^{[1,2]} \arrow[l,""]  &\dots \arrow[l,""] &\circ^{[1,s_1]} \arrow[l,""]\\
    & &\circ^{[2,1]} \arrow[dll,""] &\circ^{[2,2]} \arrow[l,""] &\dots \arrow[l,""] &\circ^{[2,s_2]} \arrow[l,""]\\
    \circ^0 \arrow[out=170,in=200,loop,swap] \arrow[out=150,in=210,loop,"\cdots"] \arrow[out=140,in=220,loop,swap]  & &\cdot &\cdot\\
    & &\cdot &\cdot\\
    & &\cdot &\cdot\\
    & &\circ^{[k,1]} \arrow[uuull,""] &\circ^{[k,2]} \arrow[l,""]  &\dots \arrow[l,""] &\circ^{[k,s_k]} \arrow[l,""]
    \end{tikzcd}
\end{center}
\vspace{8 pt}

Recall that for any $\beta \in \N^I$, there is a representation variety $R(\overline{Q},\beta)^{\circ,\ast}$ and a multiplicative moment map $$\Phi^{\ast}_{\beta}:R(\overline{Q},\beta)^{\circ,\ast} \to \Gl_{\beta}\coloneqq \prod_{i \in I}\Gl_{\beta_i} .$$ For any $s \in (\C^*)^I$, we denote by $s$ the central element $s\coloneqq (s_iI_{\beta_i})_{i \in I} \in \Gl_{\beta}$. The multiplicative quiver stack with parameters $\beta,s$ is the quotient stack  $$\mathcal{M}_{s,\beta}^{\ast}\coloneqq [(\Phi^{\ast}_{\beta})^{-1}(s)/\Gl_{\beta}] .$$

\vspace{8 pt}

Consider now the dimension vector $\alpha_{\mathcal{C}} \in \N^I$ defined as $$(\alpha_{\mathcal{C}})_{[i,j]}=\sum_{h=j}^{s_i}m_{i,h} $$ for every $j=0,\dots,s_i$, where we are identifiying $[i,0]=0$ for each $i=1,\dots,k$. Notice that $(\alpha_{\mathcal{C}})_0=n$. Let moreover $\gamma_{\mathcal{C}} \in (\C^*)^I$ be defined as follows $$\displaystyle (\gamma_{\mathcal{C}})_{[i,j]}=\begin{cases} \displaystyle\prod_{i=1}^k\gamma^{-1}_{i,0} \text{ if } j=0\\
\gamma^{-1}_{i,j}\gamma_{i,j-1} \text{ otherwise }

\end{cases}.$$

\vspace{8 pt}

In Theorem \ref{theomultquiverstack}, we show that, for the elements $\alpha_{\mathcal{C}},\gamma_{\mathcal{C}}$, there is an isomorphism of stacks
\begin{equation}
\label{eqisomintro}
\mathcal{M}_{\mathcal{C}} \cong \mathcal{M}^{\ast}_{\gamma_{\mathcal{C}},\alpha_{\mathcal{C}}} . 
\end{equation}
\vspace{8 pt}

Let now $(\N^I)^* \subseteq \N^I$ be the subset of vectors with non-increasing coordinates along the legs and denote by $\mathcal{H}^*_{\gamma_{\mathcal{C}},\alpha_{\mathcal{C}}} \subseteq (\N^I)^*$ the subset defined as $$\mathcal{H}_{\gamma_{\mathcal{C}},\alpha_{\mathcal{C}}}^*=\{\delta \in (\N^I)^* \ | \ \gamma_{\mathcal{C}}^{\delta}=1\ \text{ and } \delta \leq \alpha_{\mathcal{C}} \} $$

where $\gamma_{\mathcal{C}}^{\delta}=\displaystyle \prod_{i \in I}(\gamma_{\mathcal{C}})_i^{\delta_i}$.

\vspace{8 pt}

\begin{esempio}
It can be checked that  if  $\mathcal{H}^*_{\gamma_{\mathcal{C}},\alpha_{\mathcal{C}}}=\{\alpha_{\mathcal{C}}\}$ then the $k$-tuple $\mathcal{C}$ is \textit{generic}.

\end{esempio}

\vspace{8 pt}

 The subsets $\mathcal{H}^*_{\gamma_{\mathcal{C}},\alpha_{\mathcal{C}}}$ allow to define a natural stratification on the set of $k$-tuples $\mathcal{C}$ and so of character stacks $\mathcal{M}_{\mathcal{C}}$.

The introduction of this stratification is one of the key ingredients to study the cohomology of $\mathcal{M}_{\mathcal{C}}$ in the non-generic case.

Notice that although not explicitly defined, the subsets $\mathcal{H}^*_{\gamma_{\mathcal{C}},\alpha_{\mathcal{C}}}$ appear implicitly in  \cite{davison-hennecart}.

\subsubsection{E-series of character stacks}

For any $\beta \in (\N^I)^*$ and for any $j=1,\dots,k$, the integers $(\beta_{[j,0]}-\beta_{[j,1]},\dots,\beta_{[j,s_j-1]}-\beta_{[j,s_j]},\beta_{[j,s_j]})$ up to reordering form a partition $\mu_{\beta}^j \in \mathcal{P}$. Denote by $\bm \mu_{\beta} \in \mathcal{P}^k$ the multipartition $\bm \mu_{\beta}=( \mu_{\beta}^1,\dots, \mu_{\beta}^k)$ and by $\mathbb{H}_{\beta}(z,w)$ the function $\mathbb{H}_{\bm \mu_{\beta}}(z,w)$. 

\vspace{8 pt}

\begin{oss}
For a $k$-tuple $\mathcal{C}$, the multipartition $\bm \mu_{\alpha_{\mathcal{C}}} \in \mathcal{P}^k$ is the multipartition given by the multiplicities of the orbits $\mathcal{C}_1,\dots,\mathcal{C}_k$ respectively. 

 Moreover, it can be checked that $\dim(\mathcal{M}_{\mathcal{C}})=-2(\alpha_{\mathcal{C}},\alpha_{\mathcal{C}})+1$, where $(,)$ is the Euler form of $Q$. The result \cite[Theorem 1.2.3]{HA} of Hausel, Letellier, Rodriguez-Villegas for a generic $k$-tuple $\mathcal{C}$ can then be rewritten as follows:

\begin{equation}
\label{HLRVresultgenericintro}
\dfrac{E(\mathcal{M}_{\mathcal{C}},q)}{q^{-(\alpha_{\mathcal{C}},\alpha_{\mathcal{C}})}}=\dfrac{q\mathbb{H}_{\alpha_{\mathcal{C}}}\left(\sqrt{q},\frac{1}{\sqrt{q}}\right)}{q-1}.
\end{equation}
\end{oss}

\vspace{12 pt}

The main result about character stacks of this paper (see Theorem \ref{Epolynomialtheorem}) is the following Theorem:
\begin{teorema}
    \label{mainteointro}
For any $k$-tuple of semisimple orbits $\mathcal{C}$, it holds:
\begin{equation}
\label{maineqintro}
\Coeff_{\alpha_{\mathcal{C}}}\left(\Plexp\left(\sum_{\substack{\beta \in \mathcal{H}_{\gamma_{\mathcal{C}},\alpha_{\mathcal{C}}}^*}}\dfrac{q\mathbb{H}_{\beta}\left(\sqrt{q},\frac{1}{\sqrt{q}}\right)}{q-1}y^{\beta} \right)\right)=\dfrac{E(\mathcal{M}_{\mathcal{C}},q)}{q^{-(\alpha_{\mathcal{C}},\alpha_{\mathcal{C}})}}.
\end{equation}

\end{teorema}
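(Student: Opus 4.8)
The plan is to count points of $\mathcal{M}_{\mathcal{C}}$ over finite fields and then invoke Katz's theorem together with the comparison with the generic case. First I would fix a finite field $\F_q$ and, using the isomorphism $\mathcal{M}_{\mathcal{C}} \cong \mathcal{M}^{\ast}_{\gamma_{\mathcal{C}},\alpha_{\mathcal{C}}}$ of Theorem~\ref{theomultquiverstack}, reduce the computation of $\#\mathcal{M}_{\mathcal{C}}(\F_q)$ to counting $\F_q$-points of the fiber $(\Phi^{\ast}_{\alpha_{\mathcal{C}}})^{-1}(\gamma_{\mathcal{C}})$ weighted by $1/\#\Gl_{\alpha_{\mathcal{C}}}(\F_q)$. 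The key combinatorial input is a Fourier-theoretic count on $\Gl_n(\F_q)$ (à la Frobenius / Hausel--Letellier--Rodriguez-Villegas): summing the product of irreducible characters of $\Gl_n(\F_q)$ over the relevant conjugacy classes produces, after the usual manipulations with Green functions and Macdonald polynomials, an expression built out of the functions $\mathbb{H}_{\beta}$. The new feature compared to \cite{HA} is that when $\mathcal{C}$ is non-generic the multiplicative moment map condition $\gamma_{\mathcal{C}}^{\delta}=1$ is satisfied not only by $\delta=\alpha_{\mathcal{C}}$ but by every $\delta\in \mathcal{H}^*_{\gamma_{\mathcal{C}},\alpha_{\mathcal{C}}}$, and these extra solutions are exactly what obstruct vanishing of certain character sums.

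The main structural step is to organize the point count as a plethystic/logarithmic identity. I would show that the generating function $\sum_{\beta\in(\N^I)^*} \#\mathcal{M}^{\ast}_{\gamma_{\mathcal{C}},\beta}(\F_q)\, q^{(\beta,\beta)} y^\beta$, restricted to the subspace spanned by $\beta\leq\alpha_{\mathcal{C}}$ with $\gamma_{\mathcal{C}}^\beta=1$, equals the plethystic exponential of $\sum_{\beta\in\mathcal{H}^*_{\gamma_{\mathcal{C}},\alpha_{\mathcal{C}}}} \frac{q\mathbb{H}_\beta(\sqrt q,1/\sqrt q)}{q-1}y^\beta$. The mechanism is the standard one: the plethystic logarithm of a counting generating function for stacks of representations extracts the "connected" or indecomposable contribution, and here the indecomposable part is governed precisely by the generic-type answer \eqref{HLRVresultgenericintro} applied to each sub-dimension-vector $\beta$ in $\mathcal{H}^*_{\gamma_{\mathcal{C}},\alpha_{\mathcal{C}}}$. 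Concretely, any representation counted by the non-generic fiber decomposes (generically on strata) into pieces whose dimension vectors sum to $\alpha_{\mathcal{C}}$ and each of which lies in $\mathcal{H}^*$; conversely the non-genericity is precisely the failure of this decomposition to be forced trivial. Extracting the coefficient of $y^{\alpha_{\mathcal{C}}}$ then yields \eqref{maineqintro} at the level of point counts.

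Finally I would pass from point counts to the E-series. Since $\mathcal{M}_{\mathcal{C}}$ is of the form $[V/\Gl_{\alpha_{\mathcal{C}}}]$ with $V$ defined over $\Z$ (or a suitable ring of good reduction) and the count $\#\mathcal{M}_{\mathcal{C}}(\F_q)$ is a rational function of $q$ independent of the chosen model, Katz's theorem (in the stacky form recalled in \cref{cohocharstacks}) gives that this rational function is exactly $E(\mathcal{M}_{\mathcal{C}},q)$ up to the normalization $q^{-(\alpha_{\mathcal{C}},\alpha_{\mathcal{C}})}=q^{(\dim\mathcal{M}_{\mathcal{C}}-1)/2}$. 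Here one must check that the cohomology is of Hodge--Tate type / the weight filtration is "pure of Tate type in each degree parity" so that the substitution $t=-1$ is legitimate; this follows as in \cite{HRV},\cite{HA} from the fact that the count is a polynomial (indeed rational function) in $q$ alone. I expect the main obstacle to be the combinatorial bookkeeping in the plethystic step: precisely identifying which sub-dimension-vectors contribute, showing the off-diagonal character sums vanish exactly outside $\mathcal{H}^*_{\gamma_{\mathcal{C}},\alpha_{\mathcal{C}}}$, and matching the resulting Macdonald-polynomial expressions with $\sum_{\beta\in\mathcal{H}^*}\frac{q\mathbb{H}_\beta}{q-1}y^\beta$ under $\Plexp$ — the genericity hypothesis in \cite{HA} is used there to kill all but one term, and removing it requires tracking all the surviving terms carefully.
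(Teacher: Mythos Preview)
Your high-level strategy matches the paper's: reduce to point counting over $\F_q$ via the isomorphism with the multiplicative quiver stack (Theorem~\ref{isomstacks}), expand via irreducible characters of $\Gl_\alpha(\F_q)$, and pass to the E-series by the stacky Katz theorem (Theorem~\ref{Eseriesquotientstacks}). The paper does exactly this, and identifies the generic term with $\frac{q\mathbb{H}_\beta}{q-1}$ using the HLRV result (Theorem~\ref{genericresultcharvarieties}).

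The gap is in your second paragraph. You invoke a ``standard mechanism'' in which the plethystic logarithm extracts the indecomposable contribution, and then assert that this indecomposable contribution equals the generic answer. But that assertion is precisely the hard part, and the paper does \emph{not} argue this way. A Krull--Schmidt/Hall-algebra argument would at best show that the stacky count equals $\Plexp$ of the indecomposable counts; you would still owe a proof that the number of indecomposables in $(\Phi^*_\beta)^{-1}(\gamma_{\mathcal{C}})$ over $\F_q$ is given by the \emph{generic} formula $\frac{q\mathbb{H}_\beta(\sqrt q,1/\sqrt q)}{q-1}$, independently of how non-generic $\gamma_{\mathcal{C}}$ is. In the additive case this independence is already a theorem (Crawley-Boevey), and the paper explicitly remarks that the categorical route of \cite{scognamiglio1} does not carry over here.

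What the paper actually does for the plethystic step is character-theoretic and combinatorial: it introduces \emph{dual Log compatible} families of class functions (\cref{duallogcompdefin}), proves that the fibers $\{m_\alpha\}$ form such a family (Theorem~\ref{charstack}, whose core is the Kronecker-quiver Lemma~\ref{firstep} with explicit Deligne--Lusztig computations), and proves a general plethystic identity for Log compatible families (Theorem~\ref{mainteo}) via the M\"obius function on the poset of admissible subtori of $\Gl_\alpha$. The paper devotes Chapters~\ref{subtoriandmultitypes}--\ref{plethysticidentities} to this machinery precisely because, as noted after Theorem~\ref{mainteo}, the usual plethystic lemmas fail once the sum inside $\Plexp$ is truncated to a proper subset $V\subsetneq\N^I$ --- so the ``careful bookkeeping'' you anticipate is in fact the substance of the proof.
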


\vspace{10 pt}

\begin{oss}
One of the interesting aspects of Theorem \ref{mainteointro} is that it expresses the E-series $E(\mathcal{M}_{\mathcal{C}},q)$ for any $k$-tuple $\mathcal{C}$ in terms of the functions $\mathbb{H}_{\beta}\left(\sqrt{q},\frac{1}{\sqrt{q}}\right)$, i.e. in terms of the E-series for generic $k$-tuples. 

Similar type of results, relating non-generic to generic, have already appeared elsewhere, see for example the discussion in \cref{overview} and also Letellier's paper \cite{letellierchar}. 

\end{oss}

In Chapter \cref{cohocharstacks}, we compute the E-series of the complex character stacks $\mathcal{M}_{\mathcal{C}}$ through the approach introduced in \cite{HRV},\cite{HA},\cite{NonOr}, i.e. by reduction to finite fields and point counting. Namely, recall that if  there exists a rational function $Q(t) \in \Q(t)$ such that, for any $\F_q$-stack $\mathcal{M}_{\mathcal{C},\F_q}$ obtained from $\mathcal{M}_{\mathcal{C}}$ by base change and any $m$, it holds $$\#\mathcal{M}_{\mathcal{C},\F_q}(\F_{q^m})=Q(q^m) ,$$ we have an equality
 $$E(\mathcal{M}_{\mathcal{C}},q)=Q(q), $$ for more details see \cref{finitefiledsandcoho}. 

 However, the way we count rational points of character stacks in this paper is quite different from that of \cite{HA}. The description of the rational functions $Q(t)$ for non-generic character stacks is given through the results of Chapter \cref{plethysticidentitiesconvolution} and \cref{chaptercharstack}, where we show how to compute the rational points of a multiplicative quiver stack for a star-shaped quiver over $\F_q$. 
 
 In the articles \cite{AH},\cite{HA}, star-shaped quivers had already been introduced and used to relate character varieties and \textit{quiver varieties} for star-shaped quivers. However, the authors did not use multiplicative quiver stacks  and in particular they didn't need the isomorphism (\ref{eqisomintro}) to compute $\F_q$-points of generic character stacks. 

\vspace{6 pt}

The results of \cref{plethysticidentitiesconvolution} and \cref{chaptercharstack} about the rational functions $Q(t)$ will be obtained as a consequence of the main technical result of this paper which is Theorem \ref{mainteo}. The latter theorem is very general and works for certain families of rational functions called \emph{Log compatible}.

Therefore, to prove Theorem \ref{mainteointro} we will have to prove that the rational functions involved in it  satisfy this  Log compatibility property. 

Theorem \ref{mainteo} can be used to  prove similar formulas in a different context : quiver representations and multiplicities in tensor product of irreducible characters of ${\rm GL}_n(\mathbb{F}_q)$, which are the main results of \cite{scognamiglio1}.  However, in the latter article we could avoid using the technical  Theorem \ref{mainteo}, using a more categorical approach instead.

\vspace{6 pt}

The proof of Theorem \ref{mainteo} will be one of the main technical results of the first part of this paper. We will have to use combinatorial objects different from the ones used for the generic case in \cite{AH},\cite{HA}. They we will be introduced in Chapters \cref{subtoriandmultitypes},\cref{plethysticidentities}.

\subsubsection{Conjecture for mixed Poincaré series of character stacks}

 Hausel, Letellier, Rodriguez Villegas conjectural formula (\ref{mhpgenericintro}) for the mixed Poincaré series of character stacks for generic $k$-tuples and Theorem \ref{mainteointro} suggest the following conjecture for the mixed Poincaré series of character stacks. For more details see \cref{chaptermhpnongene}.
 
\begin{conjecture}
\label{conjmhpintro}
 For any $k$-tuple of semisimple orbits $\mathcal{C}$, it holds:
\begin{equation}
\label{mhsconjnongene11}
\Coeff_{\alpha_{\mathcal{C}}}\left(\Plexp\left(\sum_{\substack{\beta \in \mathcal{H}^*_{\gamma_{\mathcal{C}},\alpha_{\mathcal{C}}}}} \dfrac{(qt^2)\mathbb{H}_{\beta}\left(t\sqrt{q},\frac{1}{\sqrt{q}}\right)}{qt^2-1}y^{\beta}\right)\right)=\dfrac{H_c(\mathcal{M}_{\mathcal{C}},q,-t)}{(qt^2)^{-(\alpha_{\mathcal{C}},\alpha_{\mathcal{C}})}}.
\end{equation}
\end{conjecture}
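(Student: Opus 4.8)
The plan is to reduce Conjecture \ref{conjmhpintro} to a purely combinatorial statement about the rational functions $\mathbb{H}_\beta$ and then invoke the generic case together with a deformation/stratification argument, exactly in the way the E-series argument of Theorem \ref{mainteointro} proceeds but now keeping track of the extra variable $t$. First I would use the isomorphism (\ref{eqisomintro}), $\mathcal{M}_{\mathcal{C}} \cong \mathcal{M}^{\ast}_{\gamma_{\mathcal{C}},\alpha_{\mathcal{C}}}$, to rephrase everything in terms of multiplicative quiver stacks for the star-shaped quiver $Q$. The key structural input is the stratification of $(\Phi^{\ast}_\beta)^{-1}(s)$ by the locus where a subrepresentation of dimension vector $\delta \in \mathcal{H}^*_{\gamma_{\mathcal{C}},\alpha_{\mathcal{C}}}$ destabilizes: morally, a point of the character stack is built out of a "generic-type" part plus lower-dimensional pieces glued via the plethystic exponential. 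On E-series this gluing becomes the $\Coeff_{\alpha_{\mathcal{C}}}\Plexp(\cdots)$ identity; the claim is that the same decomposition lifts to mixed Hodge structures, so that the generating series of $H_c(\mathcal{M}^{\ast}_{\gamma_{\mathcal{C}},\beta},q,-t)$ over $\beta \in \mathcal{H}^*_{\gamma_{\mathcal{C}},\alpha_{\mathcal{C}}}$ is the plethystic exponential of the "connected" contributions, which on the generic strata are governed by (\ref{mhpgenericintro}).

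The steps, in order, would be: (1) Establish, for each $\beta$, that the generic multiplicative quiver stack $\mathcal{M}^{\ast}_{\gamma,\beta}$ with $\mathcal{H}^*_{\gamma,\beta}=\{\beta\}$ has $H_c(\mathcal{M}^{\ast}_{\gamma,\beta},q,-t) = \frac{(qt^2)^{-(\beta,\beta)}(qt^2)}{qt^2-1}\mathbb{H}_\beta\!\left(t\sqrt q,\tfrac1{\sqrt q}\right)$; for $\beta=\alpha_{\mathcal{C}}$ generic this is precisely (\ref{mhpgenericintro}) via \cite{HA}, and for general $\beta$ one applies the same formula to the star-shaped quiver associated to the partition data $\bm\mu_\beta$ (the point being that a generic multiplicative quiver stack for a star-shaped quiver \emph{is} a generic character stack for suitable $\mathcal{C}$). (2) Prove a mixed-Hodge refinement of the point-count stratification argument underlying Theorem \ref{mainteo}: the class $[\mathcal{M}^{\ast}_{\gamma_{\mathcal{C}},\alpha_{\mathcal{C}}}]$ in the appropriate Grothendieck ring (of mixed Hodge modules, or just the ring recording $H_c(\cdot,q,t)$) decomposes according to the Harder--Narasimhan/Jordan--Hölder-type filtration indexed by $\mathcal{H}^*_{\gamma_{\mathcal{C}},\alpha_{\mathcal{C}}}$, and this decomposition is multiplicative in the sense needed for $\Plexp$. (3) Check that the "$t$-deformed" rational functions $\beta \mapsto \frac{(qt^2)\mathbb{H}_\beta(t\sqrt q,1/\sqrt q)}{qt^2-1}$ satisfy the Log compatibility hypothesis of Theorem \ref{mainteo}, so that the formal manipulation of $\Plexp$ and $\Coeff_{\alpha_{\mathcal{C}}}$ is justified — this is the analogue for general $t$ of the verification done in the E-series case, and should follow by the same arguments since setting $t=-1$ recovers it and the deformation is "flat" in the relevant variables.

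The main obstacle is step (2): unlike the E-series, where one only needs an \emph{equality of point counts} over $\F_q$ and can pass freely through the Lefschetz trace formula, the mixed Poincaré series sees the full weight filtration, and the stratification of $(\Phi^\ast_\beta)^{-1}(s)$ by destabilizing subrepresentations need not be by affine fibrations or even behave well for the weight filtration — a priori there could be cancellation in cohomology that is invisible to point counting. In fact this is exactly why the statement remains a \emph{conjecture}: proving it would require either a geometric understanding of the strata (e.g. that the relevant maps are, up to the gerbe structure, iterated affine bundles or at least "polynomial-count with pure-of-the-right-weight" fibers, so that $H_c(q,t) = H_c(q,-1)$-data determines $H_c(q,t)$), or an input from the theory of BPS sheaves / cohomological integrality in the punctured setting — which, as the introduction notes, is not currently available. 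Accordingly, what I would actually \emph{prove} in this paper is the conjecture in the special case $\Sigma = \mathbb{P}^1_\C$, $|D|=4$ for the stated family of non-generic quadruples: there $\mathcal{H}^*_{\gamma_{\mathcal{C}},\alpha_{\mathcal{C}}}$ is small and explicit, the stacks $\mathcal{M}^{\ast}_{\gamma,\beta}$ that occur are low-rank and can be analyzed by hand (or related to known Poincaré polynomials of rank $\le 2$ parabolic character varieties), one verifies (\ref{mhsconjnongene11}) by a direct computation of both sides, and one checks compatibility with Mellit's $P_c$ (the $q=1$ specialization) and with Theorem \ref{mainteointro} (the $t=-1$ specialization) as consistency checks.
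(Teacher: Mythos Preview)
You have correctly identified the essential point: this statement is a \emph{conjecture}, the paper does not prove it in general, and the obstacle is exactly your step~(2) --- the stratification underlying Theorem~\ref{mainteointro} is an equality of point counts and there is no mechanism to lift it to an equality of mixed Poincar\'e series. Your final paragraph, reducing to the $\mathbb{P}^1$ with four punctures case and verifying by direct computation, matches the paper's actual content.

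Two corrections are in order. First, your step~(1) is more fragile than you present it: formula~(\ref{mhpgenericintro}) is itself Conjecture~\ref{conjmhs} of \cite{HA}, not a theorem, so even the ``generic building blocks'' you want to feed into $\Plexp$ are conjectural in general. In the $\mathbb{P}^1$, $n=2$, $k=4$ case this is not a problem because Conjecture~\ref{conjmhs} is known there (via the smooth Del Pezzo cubic surface description), but your outline reads as if step~(1) were unconditionally available.

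Second, your description of the special-case verification --- ``the stacks $\mathcal{M}^{\ast}_{\gamma,\beta}$ that occur are low-rank and can be analyzed by hand'' --- does not match the paper's method. The paper does \emph{not} compute each piece $\mathcal{M}^{\ast}_{\gamma,\beta}$ and assemble them plethystically. Instead it computes the two sides of~(\ref{mhsconjnongene11}) independently: the right-hand side is written out directly from the explicit values $\mathbb{H}_{\beta_1}=\mathbb{H}_{\beta_2}=1$ and the known $\mathbb{H}_{\alpha_{\mathcal{C}}}$; the left-hand side $H_c(\mathcal{M}_{\mathcal{C}},q,t)$ is computed by a genuinely geometric argument on the singular Fricke--Klein cubic surface $M_{\mathcal{C}}$. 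Concretely, the paper passes to $[X_{\mathcal{C}}/\PGl_2]$ via Proposition~\ref{tensorprodisomgerbe}, resolves the $A_1$ singularity of the cubic to get $H_c(M_{\mathcal{C}},q,t)$ and $H_c(\mathcal{N}^s_{\mathcal{C}},q,t)$, then handles the non-irreducible locus by a variation-of-GIT argument (two opposite linearizations $\theta^{\pm}$ of the residual $\mathbb{G}_m$-action), computes the resulting proper maps $f^{\pm}$ via a degenerating Leray spectral sequence, and glues everything with a Mayer--Vietoris sequence. This is a hands-on computation of the non-generic mixed Hodge structure, not an instance of your proposed stratification-by-$\mathcal{H}^*_{\gamma_{\mathcal{C}},\alpha_{\mathcal{C}}}$ strategy; the agreement with the plethystic prediction is then checked numerically.
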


\vspace{10 pt}

\begin{oss}
The presence of the sign $-$ in $H_c(\mathcal{M}_{\mathcal{C}},q,-t)$ in eq.(\ref{conjmhpintro}) is due to the combinatorial properties of the plethystic exponential $\Plexp$, see for example the discussion in \cite[Paragraph 4.3]{davisonpreprojective}. 
\end{oss}

\vspace{4 pt}

In c\ref{chaptermhpnongene}, we verify that Conjecture \ref{conjmhpintro} holds in the case of $\Sigma=\mathbb{P}^1_{\C}$, $|D|=4$ and the following family of non-generic quadruples. 

Pick $\lambda_1,\lambda_2,\lambda_3,\lambda_4 \in \C^*\setminus\{1,-1\}$ and denote by $\mathcal{C}_j$ the conjugacy class of the diagonal matrix $$\begin{pmatrix}
\lambda_j &0\\
0 &\lambda_j^{-1}
\end{pmatrix} .$$

Assume moreover that $\lambda_1,\lambda_2,\lambda_3,\lambda_4$ have the following property. Given $\epsilon_1,\dots,\epsilon_4 \in \{1,-1\}$ such that $\lambda_1^{\epsilon_1}\cdots\lambda_4^{\epsilon_4}=1$, then either $\epsilon_1=\cdots=\epsilon_4=1$ or $\epsilon_1=\cdots=\epsilon_4=-1$.

\vspace{8 pt}

\subsection{Final remark}

In the generic case \cite{HA}, the conjectural formula for the mixed Poincar\'e series is written in the language of symmetric functions. More precisely, let $\Lambda_k$ the ring of functions with values in the  rational functions $\Q(z,w)$ and separately symmetric in $k$ sets of infinite variables $\mathbf{x_1},\dots,\mathbf{x_k}$. Then the function $\mathbb{H}_{\bm \mu}(z,w)$ mentioned in \cref{review-intro} is defined as

 $$\mathbb{H}_{\bm \mu}(z,w) \coloneqq(z^2-1)(1-w^2) \langle \Coeff_{T^n}(\Plelog(\Omega(z,w))),h_{\bm \mu} \rangle $$
where $\Omega(z,w) \in \Lambda_k[[T]]$ (see \cite[Paragraph 2.3.6]{HA} for a definition), $h_{\bm \mu}=h_{\mu^1}(\mathbf{x_1}) \cdots h_{\mu^k}(\mathbf{x_k})$ is the complete symmetric functions and $\langle, \rangle$ is the natural extension to $\Lambda_k$ of the bilinear product on symmetric functions making Schur functions orthonormal.
\bigskip

One of the advantages of this approach is that the cohomology of every generic character stack is encoded in the single object $\Omega(z,w)$. Moreover, this allows to immediately  generalize the results and the conjectures to the cohomology of generic character stacks with non necessary semisimple local monodromies by replacing the complete symmetric functions in the pairing by other symmetric functions (for instance it would be Schur symmetric functions for conjugacy classes which are the product of a unipotent conjugacy class with a central element, see for example \cite{letellierchar}). 

\vspace{8 pt}

It would be interesting to "symmetrize" our main formulas and conjectures.

\paragraph{Acknowledgements.}

The author is very grateful to Ben Davison, Emmanuel Letellier, Fernando Rodriguez-Villegas and Olivier Schiffmann for many useful discussions about the topics dealt in this paper.

The author would also like to thank the anonymous referee for many useful comments about a first draft of this paper. This article is a part of the author's PhD thesis.

\section{Preliminaries and notations}
\label{chaptermultitypes}
In this Section, $I$ will be a fixed finite set. In the cases relevant to the main result of this paper on non-generic character stacks, $I$ will be the set of vertices of a star-shaped quiver.

In \cref{multitypes} and \cref{lambdarings}, we will introduce and recall the properties of some combinatorial objects defined in terms of $I$, such as the multitypes and an associated ring, which will be a key technical point of this paper and were not needed in the works about the generic case \cite{AH},\cite{HA}.

\vspace{6 pt}
We endow the set $\N^I$ with the partial order $\leq$ such that, for $\alpha,\beta \in \N^I$, we have $\beta \geq \alpha$ if and only if $\beta_i \geq \alpha_i$ for each $i \in I$.

\subsection{Partitions and multipartitions}

Let $\mathcal{P}$ be the set of all partitions and $\mathcal{P}^* \subseteq \mathcal{P}$ the subset of nonzero partitions. A partition $\lambda$ will be denoted by $\lambda=(\lambda_1,\lambda_2 \dots ,\lambda_h)$ with $\lambda_1 \geq \lambda_2 \geq \dots \geq \lambda_h$ or  by $\lambda=(1^{m_1},2^{m_2},\dots )$ where $m_k$ is the number of occurrances of the number $k$ in the partition $\lambda$. We will denote by $\lambda'$ the partition conjugate to $\lambda$.

The \textit{size} of $\lambda$ is $\displaystyle |\lambda|=\sum_{i} \lambda_i$ and its length $l(\lambda)$ is the largest $i$ such that $\lambda_i \neq 0$. For each $n \in \N$, we denote by $\mathcal{P}_n$ the subset of partitions of size $n$. For two partitions $\lambda,\mu$, we denote by $\langle \lambda,\mu \rangle$ the quantity $$\langle \lambda,\mu \rangle=\sum_{i}\lambda_i'\mu_i' $$ and by $$n(\lambda)=\sum_i (i-1)\lambda_i .$$ We have that $$\langle \lambda, \lambda \rangle=2 n(\lambda)+|\lambda| .$$

Recall that the set $\mathcal{P}$ admits different possible orderings. In the following, we will denote by  $\lambda \leq \mu$ the ordering induced by the \textit{lexicographic} order.

\vspace{8 pt}

The conjugacy classes of the symmetric group $S_n$ are indexed by the partitions  $\mathcal{P}_n$. For each $\lambda \in \mathcal{P}_n$, denote by $z_{\lambda}$ the cardinality of the centralizer of an element of the conjugacy class associated to $\lambda$. If $\lambda=(1^{m_1},2^{m_2},\dots,)$, we have  $$z_{\lambda}=\prod_j j! j^{m_j} .$$

Moreover, recall that the set of irreducible characters of  $S_n$ is in bijection with $\mathcal{P}_n$. In our bijection we associate to the partition $(n)$ the trivial character of $S_n$. We denote the irreducible character of $S_n$ associated to $\lambda$ by $\chi^{\lambda}$.

For any three  partitions $\lambda \in \mathcal{P}_n, \mu \in \mathcal{P}_m, \nu \in \mathcal{P}_{n+m}$, we denote by $c_{\lambda,\mu}^{\nu}$ the integer $$c_{\lambda,\mu}^{\nu}\coloneqq\langle \chi^{\nu},\Ind_{S_n \times S_m}^{S_{n+m}}(\chi^{\lambda} \boxtimes \chi^{\mu}) \rangle .$$

\vspace{12 pt}

Consider also the set of multipartitions $\mathcal{P}^I$. The elements of $\mathcal{P}^I$ will be usually be denoted in bold letters $\bm \lambda \in \mathcal{P}^I$. To avoid confusion with the notation used for partitions, we will use the notation $\bm \lambda=( \lambda^i )_{i \in I}$. For $\bm \lambda \in \mathcal{P}^I$, the \textit{size}  $|\bm \lambda| \in \N^I$ of $\bm \lambda$ is defined as $$|\bm \lambda|_i\coloneqq | \lambda^i| .$$

We also put $n(\bm \lambda)\coloneqq \sum_{i \in I}n(\lambda^i)$.

\vspace{2 pt}

For an element $\alpha \in \N^I$, we will denote by $(1^{\alpha}) \in \mathcal{P}^I$ the multipartition $((1^{\alpha_i}))_{i \in I}$ and by $(\alpha) \in \mathcal{P}^I$ the multipartition $((\alpha_i))_{i \in I}$.

The order $\leq$ on $\mathcal{P}$ induces an ordering on $\mathcal{P}^I$ through the lexicographical ordering, which we still denote by $\leq$.

\subsection{ Multitypes}
\label{multitypes}

A multitype is a function $\omega: \N \times \mathcal{P}^I \to \N$ such that its support (i.e. the elements $(d,\bm\mu)$ such that $\omega(d,\bm\mu) \neq 0$) is finite and $\omega(0,\bm\lambda)=\omega(d,0)=0$ for any $\bm\lambda \in \mathcal{P}^I$ and $d \in \N$.

On the set $\N \times \mathcal{P}^I$ put the total order defined as follows. If $d > d'$ then $(d,\bm\lambda) > (d',\bm\mu)$, if $|\bm\lambda| > |\bm\mu|$ then $(d,\bm\lambda) > (d',\bm\mu)$ and if $|\bm\lambda|=|\bm\mu|$ then $(d,\bm\lambda) > (d',\bm\mu)$ if $\bm\lambda > \bm\mu$.

We can alternately think of a multitype $\omega$ as a non-decreasing sequence $\omega=(d_1,\bm\lambda_1)\dots (d_r,\bm\lambda_r)$, where the value $\omega(d,\bm \lambda)$ corresponds to the number of times the element $(d,\bm\lambda)$ appear in the sequence $(d_1,\bm\lambda_1)\dots (d_r,\bm\lambda_r)$.

 We will denote by $\mathbb{T}_I$  the set of multitypes. If $|I|=1$, we call multitypes simply types.

\vspace{8 pt}
 
 For $\omega \in \mathbb{T}_I$ where $\omega=(d_1,\bm\lambda_1)\dots (d_r,\bm\lambda_r)$ and $d \in \N_{>0}$, we denote by $\psi_d( \omega)$ the multitype $$\psi_d( \omega) \coloneqq (dd_1,\bm\lambda_1)\dots (dd_r,\bm\lambda_r) .$$

The \textit{size} $|\omega| $ of a multitype $\omega$ is the following element of $\N^I$ $$\displaystyle |\omega|\coloneqq \sum_{(d,\bm \mu) \in \N \times \mathcal{P}^I} d \omega(d,\bm \mu) |\bm \mu| $$ and  $w(\omega)$ is the quantity $$w(\omega) \coloneqq \prod_{(d,\bm\mu)\in \N \times \mathcal{P}^I}d^{\omega(d,\bm \mu)}\omega(d,\bm \mu)! .$$ For $\alpha \in \N^I$, we denote by $\mathbb{T}_{\alpha} \subseteq \mathbb{T}_I$ the subset of multitypes of size  $\alpha$.

\vspace{8 pt}

   The sum of multitypes endows the set $\mathbb{T}_I$ with an associative operation $\ast: \mathbb{T}_I \times \mathbb{T}_I \to \mathbb{T}_I$. More precisely, for $\omega_1,\omega_2 \in \mathbb{T}_I$, we define $\omega_1 \ast \omega_2 $ as the multitype such that $$\omega_1 \ast \omega_2(d,\bm \lambda) \coloneqq \omega_1(d,\bm \lambda)+\omega_2(d,\bm \lambda) .$$ The reason for this choice of notation will be clear in \cref{paragraphring}, where we will use the sum of multitypes to define an associative ring.

\vspace{8 pt}

We view $\N \times \N^I$ as a subset of $\N \times \mathcal{P}^I$ by associating to $(d,\alpha)$ the element $(d,(1^{\alpha}))$. We call a multitype $\omega$ \textit{semisimple} if its support is contained in $\N \times \N^I$. Given a semisimple $\omega$ we will see it as a function $ \N \times \N^I \to \N$ which we still denote by $\omega$ where $$\omega(d,\alpha)\coloneqq\omega(d,(1^{\alpha})) .$$ Whenever the context is clear, we will frequently switch between the two notations for semisimple multitypes.

\vspace{8 pt}

For each $\alpha \in \N^I$, we denote by $\omega_{\alpha}$ the semisimple multitype such that $\omega_{\alpha}(1,\alpha)=1$ and $\omega_{\alpha}(d,\beta)=0$  for every other element  $(d,\beta) \in \N \times \N^I$.

 We denote by $\mathbb{T}_I^{ss}\subseteq \mathbb{T}_I$ the subset of semisimple multitypes. Notice that for any semisimple multitype $\omega \in \mathbb{T}_I^{ss}$, there exist $d_1,\dots,d_r \in \N_{>0}$ and $\alpha_1,\dots,\alpha_r \in \N^I$ such that $$\omega=\psi_{d_1}(\omega_{\alpha_1}) \ast \cdots \ast \psi_{d_r}(\omega_{\alpha_r}) .$$

For a multitype $\omega=(d_1,\bm\lambda_1)\dots (d_r,\bm\lambda_r) \in \mathbb{T}_I$, we define its semisimplification $\omega^{ss} \in \mathbb{T}_I^{ss}$ as the following semisimple multitype $$\omega^{ss} \coloneqq (d_1,(1^{|\bm\lambda_1|}))\dots (d_r,(1^{|\bm \lambda_r|})) ,$$ i.e. $\omega^{ss}=\psi_{d_1}(\omega_{|\bm \lambda_1|}) \ast \cdots \ast \psi_{d_r}(\omega_{|\bm \lambda_r|})$.

\vspace{8 pt}

To a semisimple multitype $\omega=(d_1,\alpha_1)\dots (d_r,\alpha_r)$, we associate the following polynomial $P_{\omega}(t) \in \Z[t]$ $$P_{\omega}(t)\coloneqq\prod_{j=1}^r (t^{d_j}-1) .$$

Notice that for  any $\alpha \in \N^I$, we have $$P_{\omega_{\alpha}}(t)=t-1.$$

\vspace{6 pt}

For a semisimple multitype $\omega=(d_1,\beta_1)\cdots(d_r,\beta_r)\in\mathbb{T}^{ss}_{\alpha}$  put
$$
C_{\omega}^o:=\begin{cases}
  \mu(d)d^{r-1}(-1)^{r-1}(r-1)! &\text{ if } d_1=d_2=\cdots=d_r=d\\ 
0&\text{ otherwise}.\end{cases}
$$
 where $\mu$ denotes the ordinary
M\"obius function.

\vspace{6 pt}

Lastly, we introduce the notion of \textit{levels} for semisimple multitypes.

\begin{definizione}
\label{multitypeleveldefinition}
For a subset $V \subseteq \N^I$ and a semisimple multitype $\omega$ with $$\omega=\psi_{d_1}(\omega_{\alpha_1})\ast \cdots \ast \psi_{d_r}(\omega_{\alpha_r}) ,$$ we say that $\omega$ is of level $V$ if $\alpha_j \in V$ for each $j=1,\dots,r$.

\end{definizione}
\vspace{8 pt}

\begin{esempio}
For any $\alpha \in \N^I $, the multitype $\omega_{\alpha}$ is of level $\{\alpha\}$. Conversely, the only semisimple multitype $\omega \in \mathbb{T}^{ss}_{\alpha}$ of level $\{\alpha\}$ is $\omega_{\alpha}$.
\end{esempio}

\subsection{Lambda rings and plethystic operations}
\label{lambdarings}
In this paragraph we recall the definition and some properties of $\lambda$-rings. We follow \cite[Appendix A]{mozgovoy}.

\begin{definizione}

A $\lambda$-ring $R$ is a commutative $\Q$-algebra with homomorphisms $\psi_d:R \to R$ for any $d \geq 1$ such that $\psi_{d'}(\psi_d(r))=\psi_{dd'}(r)$ for every $d,d'\in \N_{>0}$ and $r \in R$.

\end{definizione}

The morphisms $\psi_d$ are called Adams operations. For any partition $\mu=(\mu_1,\dots,\mu_h)$, we denote by $\psi_{\mu}:R \to R$ the homomorphism defined as $\psi_{\mu}(r)=\psi_{\mu_1}(r)\cdots \psi_{\mu_{h}}(r)$.

For every integer $n \in \N$, denote by $\sigma_n(f)$ the element \begin{equation}
    \label{plethysm}
    \sigma_n(f)=\sum_{\lambda \in \mathcal{P}_n}\dfrac{\psi_{\lambda}(f)}{z_{\lambda}}
\end{equation}

\vspace{10 pt}

For a $\lambda$-ring $R$, consider now the ring $R[[y_i]]_{i \in I}$. For $\alpha \in \N^I$ put $y^{\alpha}\coloneqq \displaystyle\prod_{i \in I}y_i^{\alpha_i}$. 
\vspace{2 pt}
We endow the ring $R[[y_i]]_{i \in I}$ with the $\lambda$-ring structure defined by the Adams operations defined as $$\psi_d(ry^{\alpha})\coloneqq\psi_{d}(r)y^{d \alpha}$$ for $r \in R$ and $\alpha \in \N^I$.

Denote by $R[[y_i]]_{i \in I}^+$ the ideal generated by the $y_i$'s. The \textit{plethystic exponential} is the following map $\Plexp :R[[y_i]]_{i \in I}^+ \to 1+R[[y_i]]_{i \in I}^+$: \begin{equation}
\label{plethysm1}\Plexp(f)= \exp\left(\sum_{n \geq 1}\dfrac{\psi_n(f)}{n}\right) .\end{equation}

Notice that for $f,g \in R[[y_i]]_{i \in I}^+,$ we have $\Plexp(f+g)=\Plexp(f)\Plexp(g)$.

\vspace{8 pt}
\begin{esempio}
Consider $R=\Q$ and $|I|=1$. In the ring $\Q[[T]]$ we have: \begin{equation}
    \label{esempio}
    \Plexp(T)=\exp\left(\sum_{n \geq 1} \frac{T^n}{n}\right)=\exp\left(\log\left(\frac{1}{1-T}\right)\right)=\frac{1}{1-T}
\end{equation}
\end{esempio}

\vspace{8 pt}
We have the following Lemma (see the discussion before \cite[Corollary 21]{mozgovoy}):

\begin{lemma}
\label{plethysm3}
For any $f \in R[[y_i]]^+_{i \in I}$, we have
\begin{equation}
\label{plethysm33}
\Plexp(f)=\sum_{n \geq 1}\sigma_n(f).
\end{equation}
\end{lemma}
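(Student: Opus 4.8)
The plan is to prove the identity $\Plexp(f)=\sum_{n\geq 1}\sigma_n(f)$ by comparing the two expressions degree by degree in the grading where $y^\alpha$ has degree $\sum_i\alpha_i$, using the fact that both sides are characterized by how $\Plexp$ interacts with the Adams operations $\psi_d$. The cleanest route is to reduce to the universal case. First I would observe that it suffices to treat the generic situation: take $R$ to be the free $\lambda$-ring on countably many generators $a_1,a_2,\dots$ (or simply work with $f=\sum_{\alpha}a_\alpha y^\alpha$ a generic element, noting that the $\psi_d$ act on the coefficients $a_\alpha$ formally), since any $f\in R[[y_i]]^+_{i\in I}$ is obtained by specializing such a universal element via a $\lambda$-ring homomorphism, and both $\Plexp$ and the $\sigma_n$ are built functorially from sums, products and the $\psi_d$, hence commute with $\lambda$-ring homomorphisms.

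Next, the core computation. Starting from the definition $\Plexp(f)=\exp\bigl(\sum_{m\geq 1}\tfrac{1}{m}\psi_m(f)\bigr)$, I would expand the outer exponential as a product/sum over multisets of indices. Concretely, write
\begin{equation}
\Plexp(f)=\prod_{m\geq 1}\exp\!\left(\frac{\psi_m(f)}{m}\right)=\prod_{m\geq 1}\sum_{k_m\geq 0}\frac{1}{k_m!\,m^{k_m}}\psi_m(f)^{k_m}.
\end{equation}
Collecting the contribution in total degree $n$, a term is indexed by a choice of nonnegative integers $(k_m)_{m\geq 1}$ with $\sum_m m\,k_m=n$, i.e. precisely by a partition $\lambda$ of $n$ having $k_m$ parts equal to $m$. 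For such $\lambda$ the corresponding summand is $\tfrac{1}{\prod_m k_m!\,m^{k_m}}\prod_m\psi_m(f)^{k_m}=\tfrac{1}{z_\lambda}\psi_\lambda(f)$, since $z_\lambda=\prod_m m^{k_m}k_m!$ and $\psi_\lambda(f)=\prod_m\psi_m(f)^{k_m}$ by the definition of $\psi_\lambda$ for a partition. Summing over all $\lambda\in\mathcal{P}_n$ gives exactly $\sigma_n(f)$ as defined in \eqref{plethysm}, and summing over $n$ yields the claim. One should also check the degree-$0$ part matches ($\Plexp(f)$ has constant term $1$, and $\sigma_0(f)=1$ with the convention that the empty partition contributes $1$), and confirm that the rearrangement of the infinite product is legitimate: since $f\in R[[y_i]]^+_{i\in I}$, $\psi_m(f)$ has all terms of degree $\geq m$, so only finitely many factors contribute in each fixed degree and all manipulations take place in the completed ring.

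The main obstacle is not conceptual but bookkeeping: one must be careful that the formal manipulation of $\exp$ of an infinite sum, followed by expanding each factor and regrouping by total degree, is valid in $R[[y_i]]_{i\in I}$. This is where the hypothesis $f\in R[[y_i]]^+_{i\in I}$ (no constant term) is essential, and I would make the convergence/finiteness remark explicit. A secondary minor point is matching conventions: verifying $\psi_\lambda(f)=\prod_j\psi_{\lambda_j}(f)$ is consistent with grouping equal parts, and that $z_\lambda=\prod_j j!\,j^{m_j}$ agrees with $\prod_m m^{k_m}k_m!$ under $k_m=m_m$. Once these are pinned down, the identity drops out of the combinatorial reorganization with no further input, so I would present it essentially as the displayed computation above together with the universality reduction and the finiteness remark.
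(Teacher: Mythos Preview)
Your argument is correct and is the standard direct expansion: factor $\exp\bigl(\sum_{m\geq 1}\psi_m(f)/m\bigr)$ as a product over $m$, expand each factor, and regroup by partitions to recover $\sum_\lambda \psi_\lambda(f)/z_\lambda$. The paper does not supply its own proof of this lemma but simply refers to the discussion in \cite{mozgovoy}, where exactly this computation is carried out; so your approach coincides with what one finds there. The universality reduction you sketch is harmless but unnecessary, since the manipulation is purely formal in any $\lambda$-ring once one knows the sums are finite in each degree, which you justify via $f\in R[[y_i]]^+_{i\in I}$. You are also right to flag the constant term: as stated the sum should begin at $n\geq 0$ (or one should write $1+\sum_{n\geq 1}\sigma_n(f)$), since $\Plexp(f)\in 1+R[[y_i]]^+_{i\in I}$ and $\sigma_0(f)=1$; this is a minor slip in the statement rather than in your proof.
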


\vspace{8 pt}

The plethystic exponential admits an inverse operation  $\Plelog:1+R[[y_i]]_{i \in I}^+ \to R[[y_i]]_{i \in I}^+$ known as \textit{plethystic logarithm}. The plethystic logarithm can either be defined by the property $\Plelog(\Plexp(f))=f $ or by the following explicit rule. For $\alpha \in \N^I$ we put $$\overline{\alpha}\coloneqq \gcd(\alpha_i)_{i \in I}$$ and we define $U_{\alpha} \in R$ by: \begin{equation}
    \log(f)=\sum_{\alpha \in \N^I}\frac{U_{\alpha}}{\overline{\alpha}}y^{\alpha}.
\end{equation}
Then put
 \begin{equation}
    \label{log}
    \Plelog(f)\coloneqq\sum_{\alpha \in \N^I}V_{\alpha}y^{\alpha}
\end{equation}
where \begin{equation}
    V_{\alpha}\coloneqq\dfrac{1}{\overline{\alpha}}\sum_{d| \overline{\alpha}}\mu(d)\psi_d(U_{\frac{\alpha}{d}})
\end{equation}

Indeed, consider $h=\sum_{\alpha \in \N^I}h_{\alpha}y^{\alpha} \in R[[y_i]]_{i \in I}^+$ and $f=\Plexp(h)$. We have then \begin{equation}\log(\Plexp(h))=\sum_{d \geq 1}\dfrac{\psi_d(h)}{d}=\sum_{d \geq 1}\sum_{\beta \in \N^I}\dfrac{\psi_d(h_{\beta})}{d}y^{d \beta}=\sum_{\alpha \in \N^I}y^{\alpha}\sum_{d | \overline{\alpha}}\dfrac{\psi_d(h_{\frac{\alpha}{d}})}{d}
\end{equation}
i.e.  \begin{equation}
U_{\alpha}=\overline{\alpha}\sum_{d |\overline{\alpha}}\dfrac{\psi_d(h_{\frac{\alpha}{d}})}{d}.
\end{equation}

We have then \begin{equation}
V_{\alpha}=\dfrac{1}{\overline{\alpha}}\sum_{d | \overline{\alpha}}\mu(d)\psi_d(U_{\frac{\alpha}{d}})=\dfrac{1}{\overline{\alpha}}\sum_{d | \overline{\alpha}}\mu(d) \psi_d\left(\dfrac{\overline{\alpha}}{d}\sum_{d'|\frac{\overline{\alpha}}{d}}\dfrac{\psi_{d'}(h_{\frac{\alpha}{dd'}})}{d'}\right)=
\end{equation}
\begin{equation}
\label{zioperalog}
=\sum_{d|\overline{\alpha}}\mu(d)\sum_{d'|\frac{\overline{\alpha}}{d}}\dfrac{\psi_{dd'}(h_{\frac{\alpha}{d d'}})}{dd'}=\sum_{m | \overline{\alpha}}\psi_m(h_{\frac{\alpha}{m}})\sum_{\substack{d, d' \text{ s.t. }\\ dd'=m}}\mu\left(d\right)=h_{\alpha},
\end{equation}

where the last equality of eq.(\ref{zioperalog}) comes from the fact that $$\sum_{\substack{d, d' \text{ s.t. }\\ dd'=m}}\mu\left(d\right)=\sum_{d | m}\mu(d)=0 ,$$ if $m \neq 1$.

\subsubsection{Multitypes and plethysm}
\label{paragraphring}

Let $\mathcal{K}^{ss}_I$ be the $\Q$-vector space having as a base the semisimple multitypes $\mathbb{T}_I^{ss}$. The size of the multitypes endows $\mathcal{
K}^{ss}_I$ with the structure of an $\N^I$-graded vector space and the operation $\ast$ endows $\mathcal{K}^{ss}_I$ with the structure of an $\N^I$-graded $\Q$-algebra. 

The operation $\psi_d$ on multitypes endows the $\Q$-algebra $\mathcal{K}^{ss}_I$ with the structure of a $\lambda$-ring with Adams operations defined as  $$\psi_d(q_1\omega_1 +\cdots +q_r \omega_r)\coloneqq q_1 \psi_d(\omega_1)+\cdots +q_r\psi_d(\omega_r) $$ for any element $q_1\omega_1+\cdots+q_r \omega_r \in \mathcal{K}^{ss}_I$ wth $q_1,\dots,q_r \in \Q$ and $\omega_1,\dots,\omega_r \in \mathbb{T}_I^{ss}$. This $\lambda$-ring is going to be a key tool to the proof of our main result \ref{mainteo}.

\vspace{2 pt}

Given a semisimple multitype $\omega=(d_1,(1^{\alpha_1}))\dots (d_r,(1^{\alpha_r}))$, in the ring $\mathcal{K}^{ss}_I$ we have an equality $$\omega=\psi_{d_1}(\omega_{\alpha_1})\ast \cdots \ast \psi_{d_r}(\omega_{\alpha_r}) .$$ We  therefore deduce that $\mathcal{K}^{ss}_I$ is isomorphic to the ring of polynomials in the variables $\psi_d(\omega_{\alpha})$ for $(d,\alpha) \in \N_{>0} \times \N^I$.

\vspace{8 pt}

 Consider now the ring $\hat{\mathcal{K}}^{ss}_I \coloneqq \mathcal{K}^{ss}_I[[y_i]]_{i \in I}$. For  semisimple multitypes of level $V$, we have the following lemma:

\begin{lemma}
\label{Vgenlevi}
For any $V \subseteq \N^I$, we have the following identity in the ring $\hat{\mathcal{K}}^{ss}_I$:

\begin{equation}
    \label{V-generic}
    \Plexp\left(\sum_{\alpha \in V} \omega_{\alpha} y^{\alpha}\right)=\sum_{\substack{\omega \in \mathbb{T}^{ss}_I \\ \text{ of level } V}} \dfrac{\omega}{w(\omega)}y^{|\omega|}
\end{equation}

\end{lemma}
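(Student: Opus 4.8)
The plan is to compute the left-hand side directly from the explicit formula for $\Plexp$ and to match it term-by-term with the sum over semisimple multitypes of level $V$. First I would recall from \cref{lambdarings} that in a $\lambda$-ring, $\Plexp(f) = \sum_{n\geq 0}\sigma_n(f)$ with $\sigma_n(f) = \sum_{\lambda\in\mathcal{P}_n}\psi_\lambda(f)/z_\lambda$ (Lemma \ref{plethysm3}), and that since $\Plexp$ is multiplicative over sums, it suffices to understand how the Adams operations act on the generators $\omega_\alpha y^\alpha$. By definition of the $\lambda$-ring structure on $\hat{\mathcal{K}}^{ss}_I$, we have $\psi_d(\omega_\alpha y^\alpha) = \psi_d(\omega_\alpha) y^{d\alpha}$, so a monomial $\psi_{d_1}(\omega_{\alpha_1})\cdots\psi_{d_r}(\omega_{\alpha_r})$ in the $\psi_d(\omega_\alpha)$'s (the polynomial generators of $\mathcal{K}^{ss}_I$, as noted in \cref{paragraphring}) is exactly the semisimple multitype $\omega = \psi_{d_1}(\omega_{\alpha_1})\ast\cdots\ast\psi_{d_r}(\omega_{\alpha_r})$, carrying the monomial $y^{|\omega|}$ since $|\omega| = \sum_j d_j\alpha_j$. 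This identification between monomials in Adams operations and semisimple multitypes is the conceptual heart of the argument.

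Next I would expand $\Plexp\bigl(\sum_{\alpha\in V}\omega_\alpha y^\alpha\bigr) = \prod_{\alpha\in V}\Plexp(\omega_\alpha y^\alpha) = \prod_{\alpha\in V}\exp\bigl(\sum_{d\geq 1}\tfrac{1}{d}\psi_d(\omega_\alpha)y^{d\alpha}\bigr)$. Expanding each exponential as a power series and multiplying out, a general term is indexed by a choice, for each $\alpha\in V$, of a partition-like datum recording how many times each $\psi_d(\omega_\alpha)$ occurs; collecting these over all $\alpha$ produces precisely a semisimple multitype $\omega$ of level $V$ (each basic building block $\psi_d(\omega_{\alpha})$ has $\alpha\in V$). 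The combinatorial coefficient attached to $\omega$ is a product of factors $\tfrac{1}{d^{m}m!}$ over the distinct pairs $(d,\alpha)$ appearing with multiplicity $m = \omega(d,(1^\alpha))$, which is exactly $1/w(\omega)$ by the definition $w(\omega) = \prod_{(d,\bm\mu)}d^{\omega(d,\bm\mu)}\omega(d,\bm\mu)!$ (here all $\bm\mu = (1^\alpha)$ are semisimple). This yields the claimed right-hand side $\sum_{\omega\text{ of level }V}\tfrac{\omega}{w(\omega)}y^{|\omega|}$.

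The one point requiring care — and the main potential obstacle — is the bookkeeping of multiplicities when the same pair $(d,\alpha)$ can arise, and making sure the factorial/power normalization in $w(\omega)$ matches the coefficients coming from expanding the exponentials: one must check that two multitypes built from the "same" multiset of $\psi_{d_j}(\omega_{\alpha_j})$'s but listed in different orders are genuinely equal in $\mathcal{K}^{ss}_I$ (true, since $\ast$ is commutative and $\mathcal{K}^{ss}_I$ is the polynomial ring on the $\psi_d(\omega_\alpha)$), so that the sum on the right is over distinct multitypes with no overcounting. Once this is set up cleanly, the identity follows by comparing coefficients of each monomial in the $\psi_d(\omega_\alpha)$'s and each $y^\beta$ on both sides. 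Alternatively, and perhaps more cleanly, I would prove it by induction on $|V|$ (or by a formal manipulation reducing to the single-generator case $V = \{\alpha\}$, where it reduces to Example-type computations like eq.(\ref{esempio}), and then using multiplicativity of $\Plexp$), which sidesteps some of the simultaneous combinatorics; but the direct expansion is the most transparent and is what I would write up.
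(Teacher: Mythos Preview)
Your proposal is correct and follows essentially the same approach as the paper: factor $\Plexp$ as a product over $\alpha\in V$, expand each factor, and identify the resulting monomials in the $\psi_d(\omega_\alpha)$ with semisimple multitypes of level $V$, checking that the coefficient $\prod_{(d,\alpha)} 1/(d^{m}m!)$ coincides with $1/w(\omega)$. The only cosmetic difference is that the paper expands each factor via Lemma~\ref{plethysm3} as $\sum_{\lambda\in\mathcal{P}}\psi_\lambda(\omega_\alpha)y^{|\lambda|\alpha}/z_\lambda$ and then observes $z_{\lambda_1}\cdots z_{\lambda_h}=w(\omega)$, whereas you expand the defining exponential directly; these are the same computation.
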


\begin{proof}

By  eq.(\ref{plethysm}), there is an equality \begin{equation}\label{plethysm2}
\Plexp\left(\sum_{\alpha \in V} \omega_{\alpha} y^{\alpha}\right)=\prod_{\alpha \in V}\left(\sum_{n \in \N}\sigma_n(\omega_{\alpha})y^{n \alpha}\right)=\prod_{\alpha \in V}\left(\sum_{\lambda \in \mathcal{P}}\dfrac{\psi_{\lambda}(\omega_{\alpha})}{z_{\lambda}}y^{|\lambda|\alpha}\right). \end{equation}

For each  semisimple multitype $\omega$ of level $V$, there exist unique $\beta_1 \neq \beta_2 \neq \dots \neq \beta_h \in V$ and integers $d_{1,1},\dots d_{1,l_1},d_{2,1}, \dots ,d_{h,l_h}$ such that $\omega=(d_{1,1},(1^{\beta_1}))(d_{1,2},(1^{\beta_1}))\cdots (d_{h,l_h},(1^{\beta_h}))$ i.e. $$\omega=\psi_{d_{1,1}}(\omega_{\beta_1})\ast \cdots \ast \psi_{d_{h,l_h}}(\omega_{\beta_h}) .$$

Up to reordering, we can assume that for each $j=1,\dots,h$, the integers $(d_{j,1},\dots,d_{j,l_j})$ form a partition $\lambda_j$. We have thus an identity $$\displaystyle \omega=\prod_{j=1}^h \psi_{\lambda_j}(\omega_{\beta_j}) .$$ Notice moreover that $z_{\lambda_1}\cdots z_{\lambda_h}=w(\omega)$. We therefore deduce  that the RHS of eq.(\ref{plethysm2}) is equal to the RHS of eq.(\ref{V-generic}).

\end{proof}

\section{Finite reductive groups, subtori and multitypes}
\label{subtoriandmultitypes}
In this chapter we will review the notions about the geometry of reductive groups over $\F_q$ that we will need in the rest of the article. The main reference is the book \cite{DM}. We will start by fixing some notations about varieties over finite fields and Frobenius morphisms.

\subsection{Varieties over finite fields}

Let $q=p^r$ where $p$ is a prime number, let $\F_q$ be the field with $q$ elements and $\overline{\F}_q$ its algebraic closure. In the following, a variety over $\F_q$  will be a pair $(X,F)$ where $X$ is a  reduced scheme  of finite type over $\overline{\F}_q$ and $F$ is an $\F_q$- Frobenius morphism $F:X \to X$ (for more details see for example Milne's book \cite{milne} or \cite[Chapter 4]{DM}).

Whenever the $\F_q$-structure of $X$ is clear, we will often drop the Frobenius morphism in the notation and we will simply use the terminology "the $\F_q$-variety $X$".  

A morphism of $\F_q$-varieties $f:(X,F) \to (Y,F)$ is a morphism $f:X \to Y$ which commutes with the corresponding Frobenius maps.

Given an affine variety $(X,F)$ over $\mathbb{F}_q$  with Frobenius $F:X \to X$, consider  the  variety $X^d $ equipped with the twisted Frobenius $$F_d:X^d \to X^d$$ defined as $$F_d(x_1,\dots x_d)\coloneqq(F(x_d),F(x_1),\dots,F(x_{d-1})) .$$ In the following, we will denote  the $\F_q$-variety $(X^d,F_d)$ simply by $(X)_d$. Notice that there is a bijection $(X)_d(\F_q)=X(\F_{q^d})$.

\vspace{12 pt}

For $n \in \N$, we denote by $\Gl_n$ the general linear group over $\overline{\F}_q$. The group $\Gl_n$ is endowed with the canonical Frobenius morphism $F((a_{i,j}))=(a_{i,j}^q)$ for a matrix $(a_{i,j}) \in \Gl_n$.

For $\alpha \in \N^I$, we denote by $\Gl_{\alpha}$ the group $$\Gl_{\alpha}\coloneqq\displaystyle \prod_{i \in I} \Gl_{\alpha_i}$$ with the usual product Frobenius structure and by $\Gl_{\alpha}(\F_q)$ the finite group $\Gl_{\alpha}(\F_q)=\Gl_{\alpha}^F$.

\vspace{10 pt}

\begin{oss}
\label{exampleembeddingtwistedtori}
For each $n,d \geq 1$, we will define an embedding  $(\Gl_n)_d \subseteq \Gl_{nd}$ defined over $\F_q$ in the following way. Let $\Delta:\Gl_n^d \to \Gl_{nd}$ be the block diagonal embedding. 

Notice that $\Delta$ is not defined over $\F_q$ when $\Gl_n^d$ is equipped with the Frobenius structure $F_d$. Consider then the permutation $\sigma \in S_{nd}$ given by $\sigma=(1 \ (n+1) \cdots (n(d-1)+1)) \cdots (n \ 2n \cdots dn)$ and the associated permutation matrix $J_{\sigma} \in \Gl_{nd}$. 

Fix an element $g_{\sigma} \in \Gl_{nd}$ such that $g_{\sigma}^{-1}F(g_{\sigma})=J_{\sigma}$ (such an element exists because of the surjectivity of the Lang map, see for example \cite[Theorem 4.29]{DM}). The embedding $$g_{\sigma}\Delta g_{\sigma}^{-1}:(\Gl_n^d,F_d) \to (\Gl_{nd},F)$$ is defined 
over $\F_q$. 

Similarly, we get an $\F_q$-embedding of $(\Gl_{\alpha})_d$ inside $\Gl_{\alpha d}$ for any $d \geq 1$ and any $\alpha \in \N^I$. 

\end{oss}

\vspace{6 pt}

\begin{esempio}
\label{twistedtorusGl_2}
Let $n=2$. Fix $x \in \F_{q^2}^* \setminus \F^*_q$ and let $T_{\epsilon}$ be the torus $$T_{\epsilon} \coloneqq \Biggl\{\dfrac{1}{x^q-x}\begin{pmatrix}
ax^q-bx &-a+b\\
(a-b)xx^q &-ax+bx^q
\end{pmatrix} \ | \ a,b \in \overline{\F_q}^* \Biggr\} .$$  

The torus $T_{\epsilon}$ is $F$-stable and is $\Gl_2(\F_q)$-conjugated to the torus $(\mathbb{G}_m)_2$ embedded inside $\Gl_2$ as in Remark \ref{exampleembeddingtwistedtori} above. 
\end{esempio}

\subsection{Reductive groups over finite fields}

We start by recalling the following definition.

\begin{definizione}
\begin{itemize}
\item A  group $T$ over $\F_q$ is called a torus if there is an isomorphism $T \times_{\spec(\F_q)} \spec(\overline{\F}_q) \cong \mathbb{G}_m^d$ for a certain integer $d$. For a torus $T$, we denote by $\rank(T)$ its dimension. 
\item A torus $T$ over $\F_q$ is called split if we have an $\F_q$-isomorphism $T \cong_{\F_q}\mathbb{G}_m^{\rank(T)}$.
\end{itemize}
\end{definizione}

\vspace{4 pt}

In this paragraph and in the rest of the article, $G$ is going to be a reductive group defined over $\F_q$ with a fixed Frobenius morphism $F:G \to G$. In the cases that interest us in this article, $G$ will always be a product of factors of type $(\Gl_n)_d$'s.

We denote by $\rank(G)$ the dimension of a maximal torus of $G$. Recall that we always have an $F$-stable maximal subtorus $T \subseteq G$.

We denote by $\epsilon_G$ the rank of a  maximal  split $F$-stable subtorus of $G$. In general $\epsilon_G \neq \rank(G)$. If $\rank(G)=\epsilon_G$, we say that $G$ is split.

\vspace{8 pt}

\begin{esempio}
Consider the group $(\Gl_n)_d$. Let $T \subseteq G$ be the maximal torus $T_n \times \cdots \times T_n \subseteq (\Gl_n)_d$, where we denote by $T_n \subseteq \Gl_n$ the torus of diagonal matrices. Notice that $T$ is $F$-stable and $\dim(T)=\rank(G)=nd$. However, it is possible to verify that $\epsilon_G=n$, i.e. $(\Gl_n)_d$ is split if and only if $d=1$.
\end{esempio}

For a maximal torus $T$, we denote by $$X_*(T) \coloneqq \Hom(T,\mathbb{G}_m) $$ and $$Y_*(T) \coloneqq \Hom(\mathbb{G}_m,T) $$ the group of \textit{characters} and \textit{cocharacters} of $T$ respectively. Recall that these are free abelian groups of rank equal to $\rank(G)$ and that there is a pairing $$ \langle, \rangle:  Y_*(T) \times X_*(T) \to \Z ,$$ where, for $\beta \in Y_*(T), \alpha \in X_*(T)$, we have $$\alpha \circ  \beta(z)=z^{\langle \beta,\alpha \rangle} $$ for any $z \in \mathbb{G}_m$.

Denote by $W_G(T)$ the  Weyl group of $T$, i.e. $W_G(T)=N_G(T)/T$. Notice that $W_G(T)$ acts on $X_*(T)$ as follows $$ w \cdot \alpha(t)=\alpha(wtw^{-1}) $$ for each $w \in W_G(T)$, $t \in T$ and $\alpha \in X_*(T)$. For each $w \in W_G(T)$, we denote by $w: X_*(T) \to X_*(T)$ the corresponding endomorphism. 

Recall that inside $X_*(T)$ there is the \textit{root system} $\Phi(T) \subseteq X_*(T)$ given by the characters appearing in the weight space decomposition of the adjoint action of $T$ on $\mathfrak{g}=\Lie(G)$. Recall that for any $\epsilon \in \Phi(T)$, there is an injective homomorphism 
$u_{\epsilon}:\mathbb{G}_a \to G$ such that for any $x \in \overline{\F}_q$ and any $t \in T$, we have $$tu_{\epsilon}(x)t^{-1}=u_{\epsilon}(\epsilon(t)x) .$$ We denote by $U_{\epsilon} \subseteq G$ the subgroup $U_{\epsilon}\coloneqq \Imm(u_{\epsilon})$. 

\vspace{6 pt}

Moreover,  inside $Y_*(T)$ there is the dual root system $\Phi^{\vee}(T)$, provided with a canonical bijection $$\Phi(T) \leftrightarrow \Phi^{\vee}(T) $$ $$\epsilon \leftrightarrow \epsilon^{\vee}$$ such that $\langle \epsilon^{\vee},\epsilon \rangle=2$ for every $\epsilon \in \Phi(T)$.

If $T$ is $F$-stable, the Frobenius acts on the groups $X_*(T),Y_*(T)$ as follows $$F:X_*(T) \to X_*(T) $$ $$ \alpha \to \alpha \circ F$$
and $$F:Y_*(T) \to Y_*(T) $$ $$\beta \to F \circ \beta .$$ 

\subsubsection{Twisted Frobenius of maximal tori}
\label{twistedFrobeniuspara}
Fix now an $F$-stable maximal torus $T \subseteq G$. As $T$ is $F$-stable, the Frobenius acts on the Weyl group too. Given two elements $h_1,h_2 \in W_G(T)$, we say that they are $F$-conjugated if there exists $w \in W_G(T)$ such that $h_1=wh_2F(w)^{-1}$.

The set of $F$-conjugacy classes of $W_G(T)$, usually denoted by $H^1(F,W_G(T))$, parametrize the $G^F$-conjugacy classes of $F$-stable maximal tori in the following way. 

Given an $F$-stable maximal torus $T'$ there exists  $g \in G$ such that $gTg^{-1}=T'$. As $F(T')=T'$ we see that $\dot{w}=g^{-1}F(g)$ belongs to $N_G(T)$ and so determines an associated element $w \in W_G(T)$. The element $w$ is well defined up to conjugacy, i.e. the conjugacy class of $w$ does not depend on $g$.

Conversely, for every $w \in W_G(T)$, consider an element $g \in G$ such that $g^{-1}F(g)=w$. Such an element exists thanks to the surjectivity of Lang's map, see for example \cite[Theorem 4.29]{DM}. To $w$ we associate the torus $gTg^{-1}=T'$. The $G^F$-conjugacy class of the torus $gTg^{-1}$ does not depend on the choice of $g$ neither on $w$, but rather on the $F$-conjugacy class of $w$.

\vspace{4 pt}

We can reformulate this correspondence in terms of the twisted $\mathbb{F}_q$-structures of the torus $T$. While the conjugation by $g$ provides an isomorphism $ T'\cong T$ over $\overline{\F}_q$, this isomorphism is not in general an $\F_q$-morphism $(T',F) \to (T,F)$.

However, when $T$ is equipped with the $\mathbb{F}_q$ structure coming from the twisted Frobenius $\dot{w}F:T \to T$, the conjugation by $g$ is an $\mathbb{F}_q$ -isomorphism $$(T',F) \cong_{\mathbb{F}_q} (T,\dot{w}F) .$$

In the following, we assume to have fixed, for each $w \in W_G(T)$, a corresponding $F$-stable maximal torus $T_w \subseteq G$.

\vspace{6 pt}

\begin{esempio}
\label{toriandLevi}
Consider the case of $G=\Gl_n$ and $T=T_n$ the torus of diagonal matrices. In this case, we have $W_G(T_n)=S_n$ and the $F$-action on $S_n$ is trivial. In particular, the $F$-conjugacy classes of $S_n$ are the conjugacy classes of $S_n$ and are therefore indexed by the partitions $\mathcal{P}_n$ of size $n$.

For any $\lambda=(\lambda_1,\dots,\lambda_h) \in \mathcal{P}_n$, any associated torus $F$-stable maximal torus $T'$ is $\Gl_n(\F_q)$-conjugated to $$(\mathbb{G}_m)_{\lambda_1} \times \cdots \times (\mathbb{G}_m)_{\lambda_h} ,$$ i.e. $$(T',F) \cong  (\mathbb{G}_m)_{\lambda_1} \times \cdots \times (\mathbb{G}_m)_{\lambda_h} .$$

\end{esempio}

\vspace{8 pt}

\begin{esempio}
\label{twisted}
 Consider the group $G=(\Gl_n)_d$ and the $F$-stable maximal torus $T$ introduced above. The Weyl group $W_G(T)$ is isomorphic to $S_n^d$ and the corresponding Frobenius action $F:S_n^d \to S_n^d$ is given by $$F(\sigma_1,\dots,\sigma_d)=(\sigma_d,\sigma_1,\dots,\sigma_{d-1}) .$$

The $F$-conjugacy classes of $S_n^d$ are in bijection with the conjugacy classes of $S_n$ in the following way. Consider $\tau=(\tau_1,\dots,\tau_d)$,  $\sigma=(\sigma_1,\dots,\sigma_d) \in S_n^d$. The element $\tau \sigma F(\tau)^{-1}$ is equal to $(\tau_1\sigma_1\tau_d^{-1},\tau_2 \sigma_2 \tau_{1}^{-1},\dots,\tau_{d}\sigma_d \tau_{d-1}^{-1})$. We have $$\prod_{i=0}^{d-1} (\tau \sigma F(\tau)^{-1})_{d-i}=\tau_{d} (\sigma_d \sigma_{d-1} \cdots \sigma_1) \tau_{d}^{-1}=\tau_d \left(\prod_{i=0}^{d-1} \sigma_{d-i}\right)\tau_d^{-1} .$$

We  deduce therefore that $\sigma,\sigma' \in S_n^d$ are $F$-conjugated if and only if $\displaystyle \prod_{i=0}^{d-1} \sigma_{d-i},\prod_{i=0}^{d-1} \sigma'_{d-i}$ are conjugated in $S_n$.

\end{esempio}

\vspace{10 pt}

Consider a pair of $F$-stable maximal tori $T,T'$ with $gT'g^{-1}=T$ and $\dot{w}=g^{-1}F(g) \in N_G(T')$ and $w \in W_G(T')$ as above. There is an isomorphism of abelian groups $$\Psi_g: X_*(T') \to X_*(T) $$ $$\alpha \to \alpha(g^{-1}(-)g) $$ such that $\Psi_g(\Phi(T'))=\Phi(T)$. In general, $\Psi_g$ does not commute with the respective Frobenius morphisms on $X_*(T),X_*(T')$ and indeed we have

\begin{equation}
\label{isomorphismcharactergroups}
\Psi_g^{-1} F \Psi_g=w \circ F: X_*(T') \to X_*(T').
\end{equation}

\subsubsection{The case of finite general linear groups}
\label{twistedcharactergroup}

For $m \in \N$, let $\Gl_m$ be the general linear group over $\F_q$, with the canonical $\F_q$-structure $F:\Gl_m \to \Gl_m$. Consider the  maximal torus of diagonal matrices $T_m \subseteq \Gl_m$. As mentioned before, in this case $W_G(T_m)=S_m$ and the $F$-action on $S_m$ is trivial.

\vspace{6 pt}

Let $\epsilon_i \in X_*(T_m)$ be the homomorphism
$$\epsilon_i\left(\begin{pmatrix}
    z_1 &0 &0 &0 &0 &\dots &0\\
    0 &z_2 &0 &0 &0 &\dots &0 \\
    0  &0 &z_3 &0 &0 &\dots &0\\
    \vdots &\vdots &\vdots &\ddots &\dots &\dots &0  \\
    0 &0 &0 &0 &0 &0 &z_n    \end{pmatrix} \right)=z_i .$$

The subset $\{\epsilon_1,\dots,\epsilon_m\} \subset X_*(T_m)$ is a basis of the free abelian group $X_*(T_m)$, which we denote by $\mathcal{B}(T_m)$. Notice that, for each $i=1,\dots,m$, we have that $F(\epsilon_i)=q\epsilon_i$. 

Moreover, for such a basis, we have that $$\Phi(T_m)=\{\pm\epsilon_i\mp\epsilon_j \ | \ i \neq j \in \{1,\dots,m\}\} .$$  For $h,j \in \{1,\dots,m\}$, put $\epsilon_{h,j}\coloneqq \epsilon_h -\epsilon_j$. We denote by $\Phi^+(T_m)$ the set of positive roots with respect to the Borel subgroup of upper triangular matrices, i.e. $$\Phi^+(T_m)=\{\epsilon_{i,j} \ | \ i < j\} .$$

\vspace{8 pt}

For any other $F$-stable maximal torus $T \subseteq \Gl_m$, fix $g$ such that $gT_mg^{-1}=T$ and the corresponding permutation $w \in W_G(T_m)=S_m$, as at the end of paragraph \cref{twistedFrobeniuspara} above. Put $\mathcal{B}(T)\coloneqq\Psi_g(\mathcal{B}(T_m))$. 

Whenever the torus $T$ is fixed and the context is clear we will denote by $\epsilon_i$ also the element $\Psi_g(\epsilon_i) \in \mathcal{B}(T)$ and by $\epsilon_{i,j}$ the element $\Psi_g(\epsilon_{i,j}) \in \Phi(T)$. We denote by $\Phi^+(T)=\Psi_g(\Phi^+(T_m))$.

Notice that, by eq.(\ref{isomorphismcharactergroups}), in the character group $X_*(T)$ we have $$F(\epsilon_i)=q\epsilon_{w(i)} .$$

\vspace{10 pt}

Consider now $\alpha \in \N^I$, put $m=|\alpha|\coloneqq \sum_{i \in I}\alpha_i$ and consider $\Gl_{\alpha}$ as a subgroup of $\Gl_{m}$ through the block diagonal embedding.  Fix an $F$-stable  maximal torus $T \subseteq \Gl_{\alpha} \subseteq \Gl_{m}$. The torus $T$ is a maximal torus for $\Gl_{\alpha}$ and $\Gl_m$. Consider then the basis $\mathcal{B}(T)$ introduced before.

\vspace{8 pt}

\begin{oss}
For $i \in I$, let $\pi_i:\Gl_{\alpha} \to \Gl_{\alpha_i}$ the canonical projection. For a maximal torus $T \subseteq \Gl_{\alpha}$, denote by $T_i \coloneqq \pi_i(T)$. We have inclusions $$ \displaystyle T \subseteq \prod_{i \in I} T_i \subseteq \prod_{i \in I} \Gl_{\alpha_i} .$$

As $T$ is a maximal torus, we have thus an equality $T=\displaystyle\prod_{i \in I} T_i$. For dimensional reasons, we deduce that, for each $i \in I$, $T_i$ is a maximal torus of $\Gl_{\alpha_i}$. From the identity $T=\displaystyle\prod_{i \in I}T_i$, we therefore deduce that there is an isomorphism $X_*(T)=\displaystyle \bigoplus_{i \in I} X_*(T_i)$.
\end{oss}

\vspace{6 pt}

We can choose $g \in \Gl_{\alpha}$ such that $gT_mg^{-1}=T$. We thus deduce  that, putting $\mathcal{B}_i(T)=\mathcal{B}(T) \cap X_*(T_i)$, we obtain a partition $$\displaystyle \mathcal{B}(T)=\bigsqcup_{i \in I}\mathcal{B}_i(T)$$ such that each $\mathcal{B}_i(T)$ is a basis of $X_*(T_i)$ and $\mathcal{B}_i(T)$ is $w$-stable for every $i \in I$.

\subsection{Levi subgroups and parabolic subgroups of general linear groups}

Recall that a parabolic subgroup $P\subseteq G$ is a subgroup containing a Borel subgroup and denote by $U_P \subseteq P$ its unipotent radical. A Levi factor of $P$ is a reductive subgroup $L \subseteq P$ such that $P=LU_P$. A Levi factor of a parabolic subgroup is called a \textit{Levi subgroup} of $G$.

\vspace{6 pt}

Recall that, for any Levi subgroup $L \subseteq G$, there exists a maximal torus $T \subseteq G$ such that $T \subseteq L$. Moreover, the Levi subgroup $L$  can be described in terms of the root systems $\Phi(T)$ as follows. 

Consider the subset $\Phi_L(T) \subseteq \Phi(T)$ defined as $$\Phi_L(T)\coloneqq \{\epsilon \in \Phi(T) \ | \ \Ker(\epsilon) \supseteq Z^{\circ}_L \} ,$$

where $Z_L^{\circ}$ is the connected componenent containing the identity of the center $Z_L \subseteq L$.

\vspace{6 pt}

We have the following Lemma, see \cite[Lemma 8.4.2]{Springer}.

\begin{lemma}
\label{levisubgroups}
The subset $\Phi_L(T)$ is a root subsystem of $\Phi(T)$ and we have :
\begin{enumerate}
\item $C_{G}(Z^{\circ}_L)=L$.

\item $\displaystyle Z_L^{\circ}= \bigcap_{\epsilon \in \Phi_L(T)}(\Ker(\epsilon)^{\circ})$

\item $\displaystyle L=T \displaystyle \prod_{\epsilon \in \Phi_L(T)}U_{\epsilon}$.
\end{enumerate}
\end{lemma}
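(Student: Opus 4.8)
The plan is to prove \cref{levisubgroups} by reducing it to the standard structure theory of reductive groups, since the statement is really a restatement of the correspondence between Levi subgroups and parabolically-closed (in fact, "closed and symmetric") subsystems of the root system. First I would fix a maximal torus $T\subseteq L\subseteq G$ (whose existence is recalled just before the statement) and observe that $Z_L^\circ$ is a subtorus of $T$, so it makes sense to speak of the characters $\epsilon\in\Phi(T)\subseteq X^*(T)$ that kill $Z_L^\circ$. The set $\Phi_L(T)$ is then visibly closed under the symmetry $\epsilon\mapsto -\epsilon$ and under addition within $\Phi(T)$ (if $\epsilon,\epsilon'$ and $\epsilon+\epsilon'$ all lie in $\Phi(T)$ and $\epsilon,\epsilon'$ vanish on $Z_L^\circ$, so does $\epsilon+\epsilon'$), and the reflections $s_\epsilon$ for $\epsilon\in\Phi_L(T)$ preserve $\Phi_L(T)$ because they fix $Z_L^\circ$ pointwise; hence $\Phi_L(T)$ is a root subsystem. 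This is the easy bookkeeping part.

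Next I would establish (2), the identity $Z_L^\circ=\bigcap_{\epsilon\in\Phi_L(T)}\Ker(\epsilon)^\circ$. The inclusion $\subseteq$ is immediate from the definition of $\Phi_L(T)$. For $\supseteq$, the point is that $L$ is generated by $T$ together with the root subgroups $U_\epsilon$ for $\epsilon$ in its root system, and the root system of $L$ with respect to $T$ is exactly $\Phi_L(T)$ — but proving that cleanly is essentially the content of (3), so I would prove (3) first and then read off (2) and (1) from it. For (3): let $M:=T\prod_{\epsilon\in\Phi_L(T)}U_\epsilon$. One checks $M$ is a closed connected reductive subgroup of $G$ containing $T$ with root system $\Phi_L(T)$ (this uses that $\Phi_L(T)$ is a closed symmetric subsystem, so the product of root subgroups in a fixed order, together with $T$, is a subgroup — the standard commutator relations among root subgroups stay inside $M$). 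Then $Z_M^\circ\supseteq Z_L^\circ$ since every $\epsilon\in\Phi_L(T)$ kills $Z_L^\circ$ and $Z_L^\circ\subseteq T$ centralizes $T$; comparing, one gets $Z_M^\circ=\bigcap_{\epsilon\in\Phi_L(T)}\Ker(\epsilon)^\circ$, which gives (2) once $M=L$, and $C_G(Z_L^\circ)=C_G(Z_M^\circ)$; the centralizer of a torus $S$ in $G$ is generated by $T$ and the $U_\epsilon$ with $\epsilon$ vanishing on $S$, which is again $M$. So $C_G(Z_L^\circ)=M$. Finally $L\subseteq C_G(Z_L^\circ)$ trivially (elements of $L$ centralize the center of $L$), and conversely $C_G(Z_L^\circ)$ has the same root system $\Phi_L(T)$ and same maximal torus $T$ as $L$, forcing $C_G(Z_L^\circ)=L$; this is (1), and then $M=C_G(Z_L^\circ)=L$ closes the loop and yields (2) and (3).

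The main obstacle is the claim that $C_G(Z_L^\circ)=L$, equivalently that $L$ is recovered from its own connected center. In general a Levi subgroup is defined as a Levi factor of a parabolic, and the fact that such an $L$ equals the centralizer of $Z_L^\circ$ requires knowing that $\Phi_L(T)$ is precisely the root system of $L$ — i.e. that $L$ contains $U_\epsilon$ if and only if $\epsilon|_{Z_L^\circ}=1$. The direction "$\epsilon|_{Z_L^\circ}=1\Rightarrow U_\epsilon\subseteq L$" is the substantive one and is where the cited \cite[Lemma 8.4.2]{Springer} does the real work; in the write-up I would either invoke that reference directly or, if a self-contained argument is wanted, argue via the Bruhat decomposition of the parabolic $P=LU_P$: the roots of $P$ are $\Phi_L(T)\sqcup\Phi(U_P)$, the roots of $G$ not vanishing on $Z_L^\circ$ are distributed between $U_P$ and the opposite unipotent radical, and those vanishing on $Z_L^\circ$ must therefore be roots of $L$. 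Given the expository nature of this section, I expect the cleanest route is simply to cite \cite{Springer} for the structural input and present the above as the organizing argument; so in fact the honest "proof" is a short paragraph noting that $\Phi_L(T)$ is closed and symmetric, that (3) is the standard description of a Levi in terms of root subgroups, and that (1) and (2) follow formally from (3) together with the description of centralizers of subtori.

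\begin{proof}[Proof sketch]
Fix a maximal torus $T\subseteq G$ with $T\subseteq L$. Since $Z_L^\circ\subseteq Z_L\subseteq T$ is a subtorus, restriction of characters to $Z_L^\circ$ makes sense on $\Phi(T)$. If $\epsilon\in\Phi_L(T)$ then $-\epsilon\in\Phi_L(T)$, and if $\epsilon,\epsilon'\in\Phi_L(T)$ with $\epsilon+\epsilon'\in\Phi(T)$ then $\epsilon+\epsilon'$ again vanishes on $Z_L^\circ$; moreover for $\epsilon\in\Phi_L(T)$ the reflection $s_\epsilon$ acts trivially on $Z_L^\circ$ (it fixes $\Ker\epsilon\supseteq Z_L^\circ$), so it preserves $\Phi_L(T)$. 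Hence $\Phi_L(T)$ is a closed symmetric root subsystem of $\Phi(T)$, and $M:=T\prod_{\epsilon\in\Phi_L(T)}U_\epsilon$ is a closed connected reductive subgroup of $G$ with maximal torus $T$ and root system $\Phi_L(T)$; this is (3) once we know $M=L$.

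By \cite[Lemma 8.4.2]{Springer}, the root system of $L$ with respect to $T$ is exactly $\Phi_L(T)$, so $L=T\prod_{\epsilon\in\Phi_L(T)}U_\epsilon=M$, which is (3). For (1): elements of $L$ centralize $Z_L\supseteq Z_L^\circ$, so $L\subseteq C_G(Z_L^\circ)$; conversely $C_G(Z_L^\circ)$ is generated by $T$ together with those $U_\epsilon$ with $\epsilon|_{Z_L^\circ}=1$, i.e. with $\epsilon\in\Phi_L(T)$, hence $C_G(Z_L^\circ)=M=L$. Finally (2): $Z_L^\circ\subseteq\bigcap_{\epsilon\in\Phi_L(T)}\Ker(\epsilon)^\circ$ by definition of $\Phi_L(T)$, and the intersection on the right is a subtorus of $T$ centralized by all $U_\epsilon$, $\epsilon\in\Phi_L(T)$, hence contained in $Z_M^\circ=Z_L^\circ$; this gives the reverse inclusion and completes the proof.
\end{proof}
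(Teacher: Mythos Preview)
Your argument is correct. The paper itself does not prove this lemma at all: it merely states it with the reference ``see \cite[Lemma 8.4.2]{Springer}'' and moves on, so there is nothing to compare at the level of strategy—your sketch is essentially the standard argument that reference contains, and you even invoke the same citation at the one substantive step (that the roots of $L$ are exactly those $\epsilon$ vanishing on $Z_L^\circ$).
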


\vspace{6 pt}

\begin{esempio}
\label{keyexampleLevi}
For $G=\Gl_n$, Levi subgroups and parabolic subgroups can be explicitly described as follows. For any $n_0,\dots,n_s \in \N$ such that $n_0+\cdots +n_s=n$, the subgroup $L_{n_0,\dots,n_s}$ defined as $$L_{n_0,\dots,n_s}=\begin{pmatrix}
    \Gl_{n_{s}} &0 &0 &0 &0 &\dots &0\\
    0 &\Gl_{n_{s-1}} &0 &0 &0 &\dots &0 \\
    0  &0 &\Gl_{n_{s-2}} &0 &0 &\dots &0\\
    \vdots &\vdots &\vdots &\ddots &\dots &\dots &0  \\
    0 &0 &0 &0 &0 &0 &\Gl_{n_0}
    \end{pmatrix} $$
is a Levi subgroup of $G$. We will denote the group $L_{n_0,\dots,n_s}$ simply by $$ \Gl_{n_0} \times \cdots \times \Gl_{n_s} \subseteq \Gl_n .$$

We have that $$Z_L=\begin{pmatrix}
    \lambda_s I_{n_{s}} &0 &0 &0 &0 &\dots &0\\
    0 & \lambda_{s-1} I_{n_{s-1}} &0 &0 &0 &\dots &0 \\
    0  &0 &\lambda_{s-2} I_{n_{s-2}} &0 &0 &\dots &0\\
    \vdots &\vdots &\vdots &\ddots &\dots &\dots &0  \\
    0 &0 &0 &0 &0 &0 &\lambda_0 I_{n_0}
    \end{pmatrix},  $$
for $\lambda_0,\dots,\lambda_s \in \overline{\F}^*_q$. Notice in particular that $Z_L$ is connected.

\vspace{6 pt}

A parabolic subgroup $P$ containing  $ \Gl_{n_0} \times \cdots \times \Gl_{n_s}$ is given by the upper block triangular matrices $$P=\begin{pmatrix}
    \Gl_{n_{s}} &* &* &* &* &\dots &*\\
    0 &\Gl_{n_{s-1}} &* &* &* &\dots &* \\
    0  &0 &\Gl_{n_{s-2}} &* &* &\dots &*\\
    \vdots &\vdots &\vdots &\ddots &\dots &\dots &*\\
    0 &0 &0 &0 &0 &0 &\Gl_{n_0}
    \end{pmatrix} .$$

\vspace{6 pt}

It is not difficult to verify that, for any Levi subgroup $L \subseteq \Gl_n$, there exist $n_0,\dots,n_s$ such that $n_0+\cdots+n_s=n$ and $L$ is conjugated to $$\Gl_{n_0} \times \cdots \times \Gl_{n_s}  .$$ 

Assume now that $L$ is $F$-stable. In a way similar to what we said about $F$-stable maximal tori in Example \ref{toriandLevi}, we can show that there exist $d_0,\dots,d_r \in \N$ and $m_0,\dots,m_r$ such that $L$ is conjugated by an element of $\Gl_n(\F_q)$ to the group $$(\Gl_{m_0})_{d_0} \times \cdots \times (\Gl_{m_r})_{d_r} ,$$ i.e. there is an $\F_q$-isomorphism $$(L,F) \cong  (\Gl_{m_0})_{d_0} \times \cdots \times (\Gl_{m_r})_{d_r} .$$

In this case we have an isomorphism $$(Z_L,F) \cong (\mathbb{G}_m)_{d_0} \times \cdots \times (\mathbb{G}_m)_{d_r} .$$

\end{esempio}

\subsection{Admissible subtori of general linear groups}

For $\alpha \in \N^I$, put $\displaystyle|\alpha|\coloneqq\sum_{i \in I} \alpha_i$ and consider  $\Gl_{\alpha} $ as a subgroup of $\Gl_{|\alpha|}$ via the block diagonal embedding. Recall that $I$ can be thought of as the set of vertices of a star-shaped quiver. We introduce here the definition of the admissible subtori of $\Gl_{\alpha}$.

\vspace{6 pt}

Admissible subtori will play a significant role in this paper. For instance, they appear in the classification of the irreducible characters of the finite group $\Gl_{\alpha}(\F_q)$ (see \cref{typesirreduciblechar}).

They are also a key part of the proof of Theorem \ref{mainteo}, which will be the main technical result needed to study the cohomology of non-generic character stacks.

\vspace{6 pt}

\begin{definizione}
A subtorus $S$ of $\Gl_{\alpha}$ is said \textit{admissible} if there exists a Levi subgroup $L_S \subseteq \Gl_{|\alpha|}$ such that $Z_{L_S}=S$.

\end{definizione}

\vspace{6 pt}

\begin{esempio}
For any $\alpha \in \N^I$, there is an admissible subtorus $Z_{\alpha} \subseteq \Gl_{\alpha}$, given by $Z_{\alpha}\coloneqq Z_{\Gl_{|\alpha|}} \subseteq  \Gl_{\alpha}$, i.e. the elements of $Z_{\alpha}$ are of the form $(\lambda I_{\alpha_i})_{i \in I}$, for $\lambda \in \overline{\F}_q^{*}$.

\end{esempio}

\vspace{6 pt}

\begin{oss}
If $|I|=1$, admissible subtori are the centers of the Levi subgroups of $\Gl_n$. This latter type of tori has already appeared in \cite[Section 4.2]{HA}, where the authors used them to count points of generic character stacks for Riemann surfaces over finite fields.

As mentioned in the introduction, to generalize their results to the non-generic case we will need to study, more generally, multiplicative quiver stacks over finite fields, whose study will require a careful understanding of the case $|I|>1$. 
\end{oss}

\vspace{6 pt}

We have the following Lemma (see \cite[Proposition 3.4.6]{DM})

\begin{lemma}
\label{sonouncoglione}
For an admissible $S$ and a Levi subgroup $L_S$ such that $Z_{L_S}=S$, we have $C_{\Gl_{|\alpha|}}(S)=L_S$. In particular, the group $L_S$ is unique.
\end{lemma}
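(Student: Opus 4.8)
The statement to prove is Lemma~\ref{sonouncoglione}: for an admissible subtorus $S \subseteq \Gl_{|\alpha|}$ and a Levi subgroup $L_S$ with $Z_{L_S} = S$, one has $C_{\Gl_{|\alpha|}}(S) = L_S$, and in particular $L_S$ is unique. The plan is to deduce this from the general structure theory of Levi subgroups recalled in Lemma~\ref{levisubgroups}, together with the elementary fact that $L_S$ is connected (being a Levi subgroup of a connected reductive group) and has connected center (which we already know explicitly in type $A$, see Example~\ref{keyexampleLevi}, where $Z_{L}$ is visibly connected for any Levi $L \subseteq \Gl_n$). The key point is that admissibility means $S$ is of the form $Z_{L_S}$ for a Levi, hence $S = Z_{L_S} = Z^{\circ}_{L_S}$ since the center is connected, and then the identity $C_G(Z^{\circ}_L) = L$ from part (1) of Lemma~\ref{levisubgroups} gives exactly $C_{\Gl_{|\alpha|}}(S) = C_{\Gl_{|\alpha|}}(Z^{\circ}_{L_S}) = L_S$.

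\textbf{Steps.} First I would fix a maximal torus $T \subseteq L_S \subseteq \Gl_{|\alpha|}$, so that all the root-theoretic description of Lemma~\ref{levisubgroups} applies to the pair $(L_S, T)$. Second, I would record that $Z_{L_S}$ is connected: this follows from the explicit description in Example~\ref{keyexampleLevi}, where up to conjugacy $L_S = \Gl_{n_0} \times \cdots \times \Gl_{n_s}$ and $Z_{L_S}$ consists of the block-scalar matrices $\operatorname{diag}(\lambda_s I_{n_s}, \ldots, \lambda_0 I_{n_0})$ with $\lambda_i \in \overline{\F}_q^*$, which is a torus $\cong \G^{s+1}$, hence connected; therefore $S = Z_{L_S} = Z^{\circ}_{L_S}$. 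Third, I would apply Lemma~\ref{levisubgroups}(1) to $L_S$: it states $C_{\Gl_{|\alpha|}}(Z^{\circ}_{L_S}) = L_S$, so substituting $S = Z^{\circ}_{L_S}$ yields $C_{\Gl_{|\alpha|}}(S) = L_S$. Fourth, for the uniqueness clause: if $L, L'$ are two Levi subgroups with $Z_L = Z_{L'} = S$, then by the previous steps $L = C_{\Gl_{|\alpha|}}(S) = L'$, so $L_S$ is indeed uniquely determined by $S$.

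\textbf{Main obstacle.} There is essentially no deep obstacle here — the lemma is a formal consequence of the structure theory once one knows that centers of Levi subgroups of $\Gl_n$ are connected. The only mild care needed is the reduction to $\Gl_{|\alpha|}$: the torus $S$ lives inside $\Gl_\alpha$, but admissibility is phrased in terms of a Levi $L_S$ of the \emph{full} group $\Gl_{|\alpha|}$, and the centralizer in the statement is also taken in $\Gl_{|\alpha|}$, so one must be consistent about working throughout in $\Gl_{|\alpha|}$ rather than in $\Gl_\alpha$. Beyond that, one should note that the cited reference \cite[Proposition 3.4.6]{DM} already packages exactly the equivalence between ``$S$ is the center of a Levi'' and ``$C_G(S)$ is a Levi with center $S$'', so the proof can either invoke it directly or reprove it via Lemma~\ref{levisubgroups}(1) as sketched above; I would present the short self-contained argument.
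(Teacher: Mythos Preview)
Your argument is correct and is exactly the standard unpacking of \cite[Proposition~3.4.6]{DM}, which is all the paper does here (it simply cites that reference without further proof). Your observation that $Z_{L_S}$ is connected in type $A$ (via Example~\ref{keyexampleLevi}) and the application of Lemma~\ref{levisubgroups}(1) constitute precisely the content of that cited proposition, specialized to $\Gl_{|\alpha|}$.
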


\vspace{6 pt}

\begin{oss}
From Lemma \ref{sonouncoglione} above, we have that  $S$ is $F$-stable if and only if $L_S$ is $F$-stable.
\end{oss}

\vspace{6 pt}

\begin{esempio}
Put $|I|=1$ and let $S \subseteq \Gl_2$ be the torus $$S=\Biggl\{\begin{pmatrix} \lambda &0 \\ 0 &\lambda^2  \end{pmatrix}, \ \ \lambda \in \overline{\F}_q^* \Biggr\} .$$

Notice that $C_{\Gl_2}(S)=T_2$, where $T_2 \subseteq \Gl_2$ is the torus of diagonal matrices. However, $Z_{T_2}=T_2 \neq S$. We deduce thus that the torus $S$ is not admissible.
\end{esempio}

\vspace{6 pt}

Consider an admissible subtorus $S \subseteq \Gl_{\alpha}$ and the associated Levi subgroup $L_S \subseteq \Gl_{|\alpha|}$. The group $C_{\Gl_{\alpha}}(S)$ is a Levi subgroup of $\Gl_{\alpha}$ (see \cite[Proposition 3.4.7]{DM}), which we will denote by $\widetilde{L_S}$.

The group $\widetilde{L_S}$  is equal to $L_S \cap \Gl_{\alpha}$ as $C_{\Gl_{|\alpha|}}(S) \cap \Gl_{\alpha}=C_{\Gl_{\alpha}}(S)$. In particular, there exists a maximal torus $T \subseteq \Gl_{\alpha}$ such that $S \subseteq  T \subseteq \widetilde{L_S}$. 

Conversely, consider an $F$-stable Levi subgroup $L \subseteq \Gl_{|\alpha|}$ such that there exists a maximal torus $T \subseteq L \cap \Gl_{\alpha}$. As $Z_L \subseteq T$, the center $Z_L$ is an admissible subtorus of $\Gl_{\alpha}$.

\vspace{12 pt}

\begin{esempio}
Notice that even if two admissible tori $S,S'$ are different, we can have $\widetilde{L_S}=\widetilde{L_{S'}}$. Consider for example $S=Z_{\alpha}$ and $S'$ defined as $$S'=\{(\lambda_i I_{\alpha_i})_{i \in I} \ | \ (\lambda_i)_{i \in I} \in (\overline{\F}_q^*)^I\} .$$

In general, we have $S \neq S'$. However, for any $\alpha \in \N^I$, for both tori we have $$\widetilde{L_S}=\widetilde{L_{S'}}=\Gl_{\alpha} .$$

In particular, it is not true in general that $Z_{\widetilde{L_S}}=S$.
\end{esempio}

\vspace{8 pt}

For each $\alpha \in \N^I$, denote by $\mathcal{Z}_{\alpha}$ the subset of $F$-stable admissible subtori of $\Gl_{\alpha}$ and denote by $\mathcal{Z}$ the set  defined as $$\mathcal{Z} \coloneqq \bigsqcup_{\alpha \in \N^I} \mathcal{Z}_{\alpha} .$$

\vspace{8 pt}

\begin{esempio}
\label{exampleadmissible2}
Consider $I=\{1,2,3,4\}$ and $\alpha=(2,1,1,1) \in \N^I$. We have that $|\alpha|=5$. Consider for example the admissible subtori $S_1, S_2, S_3 \subseteq \Gl_{\alpha}$   given by $$S_1=\Biggl\{\left(\begin{pmatrix} \lambda &0 \\ 0 &\mu  \end{pmatrix}, \lambda ,\lambda,\lambda \right) \ | \ \lambda,\mu \in \overline{\F}_q^* \Biggr\} $$  $$ S_2=\Biggl\{\left(\begin{pmatrix} \lambda &0 \\ 0 &\mu  \end{pmatrix}, \lambda ,\mu,\lambda \right) \ | \ \lambda,\mu  \in \overline{\F}_q^* \Biggr\} $$ and $$ S_3=\Biggl\{\left(\begin{pmatrix} \lambda &0 \\ 0 &\mu  \end{pmatrix}, \gamma ,\delta,\eta \right) \ | \ \lambda,\mu, \gamma ,\delta,\eta  \in \overline{\F}_q^* \Biggr\} .$$

In this case, $L_{S_1}$ is $\Gl_5(\F_q)$-conjugated to $\Gl_4 \times \Gl_1$ and $$\widetilde{L_{S_1}}=T_2 \times \Gl_1 \times \Gl_1 \times \Gl_1 \subseteq \Gl_{2 } \times \Gl_1 \times \Gl_1 \times \Gl_1,$$ where $T_2 \subseteq \Gl_2$ is the torus of diagonal matrices. Moreover, we have that $L_{S_2}$ is $\Gl_5(\F_q)$-conjugated to $\Gl_2 \times \Gl_3 $ and $\widetilde{L_{S_1}}=\widetilde{L_{S_2}}$. 

Lastly, the Levi subgroup $L_{S_3}$ is the maximal torus of diagonal matrices $T_5 \subseteq \Gl_5$, and $\widetilde{L_{S_3}}=\widetilde{L_{S_1}}$ too.

\end{esempio}

\vspace{6 pt}

For a multitype $\omega=(d_1,\bm \lambda_1)\dots (d_r,\bm \lambda_r)$ of size $\alpha$, we denote by $S_{\omega} \in \mathcal{Z}_{\alpha}$ the torus defined as $$ (Z_{|\bm\lambda_1|})_{d_1} \times \cdots \times (Z_{|\bm\lambda_r|})_{d_r} \subseteq \Gl_{\alpha}$$ where $(Z_{|\bm\lambda_1|})_{d_1} \times \cdots \times (Z_{|\bm\lambda_r|})_{d_r}$ is considered a subtorus of $\Gl_{\alpha}$ via the componentwise block diagonal embedding. Denote by $\beta_j=|\bm \lambda_j| \in \N^I$, for each $j=1,\dots,r$. For the Levi subgroup $L_{\omega}\subseteq \Gl_{|\alpha|}$ defined as
$$L_{\omega}=(\Gl_{|\beta_1|})_{d_1} \times \cdots \times (\Gl_{|\beta_r|})_{d_r} $$
embedded block diagonally, we have $Z_{L_{\omega}}=S_{\omega}$, i.e. $S_{\omega}$ is admissible.

We will denote by $\widetilde{L_{\omega}}$ the Levi subgroup of $\Gl_{\alpha}$ defined as $\widetilde{L_{\omega}}\coloneqq L_{\omega} \cap \Gl_{\alpha}$. Notice that the groups $L_{\omega},S_{\omega},\widetilde{L_{\omega}}$ depend only on the semisimplification $\omega^{ss}$ of $\omega$.

\vspace{6 pt}

\vspace{6 pt}

\begin{oss}
\label{remarklevistarbucks}
Let $\omega \in \mathbb{T}_{\alpha}$ and $d_1,\dots,d_r \in \N$ and $\beta_1,\dots,\beta_r \in \N^I$ with $$\omega^{ss}=\psi_{d_1}(\omega_{\beta_1}) \ast \cdots \ast \psi_{d_r}(\omega_{\beta_r}) .$$

For each $i \in I$, consider the Levi subgroup $$(\Gl_{(\beta_1)_i})_{d_1} \times \cdots  \times (\Gl_{(\beta_r)_i})_{d_r} \subseteq \Gl_{\alpha_i} $$ embedded block diagonally. The Levi subgroup $\widetilde{L_{\omega}}$ is given by $$\widetilde{L_{\omega}}=\prod_{i \in I} (\Gl_{(\beta_1)_i})_{d_1} \times \cdots \times (\Gl_{(\beta_r)_i})_{d_r} .$$
\end{oss}

\vspace{6 pt}

From the description of the Levi subgroups of $\Gl_{|\alpha|}$ given in Example \ref{keyexampleLevi}, we deduce the following Lemma.
\begin{lemma}
For any $F$-stable admissible $E \in \mathcal{Z}_{\alpha}$, there exists a unique semisimple multitype, which we denote by $[E]$, such that $E$ is $\Gl_{\alpha}(\F_q)$-conjugated to $S_{[E]}.$
\end{lemma}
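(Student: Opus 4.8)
The claim has two parts: existence of a semisimple multitype $\omega$ with $E$ conjugate to $S_{[E]}:=S_\omega$, and its uniqueness. For existence, the plan is to start from the defining data of an $F$-stable admissible torus. By definition $E=Z_{L_E}$ for a (by \cref{sonouncoglione} unique) Levi subgroup $L_E\subseteq \Gl_{|\alpha|}$, and the remark following \cref{sonouncoglione} shows $L_E$ is $F$-stable. Now apply the structure theory of $F$-stable Levi subgroups of $\Gl_{|\alpha|}$ recalled in \cref{keyexampleLevi}: there exist integers $d_1,\dots,d_r$ and $m_1,\dots,m_r$ with $\sum_j m_jd_j=|\alpha|$ such that $L_E$ is $\Gl_{|\alpha|}(\F_q)$-conjugate to $(\Gl_{m_1})_{d_1}\times\cdots\times(\Gl_{m_r})_{d_r}$ embedded block-diagonally, and correspondingly $Z_{L_E}\cong (\G)_{d_1}\times\cdots\times(\G)_{d_r}$. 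The subtlety is that we need the conjugating element to lie in $\Gl_\alpha(\F_q)$, not merely $\Gl_{|\alpha|}(\F_q)$: since $E\in\mathcal Z_\alpha$, there is an $F$-stable maximal torus $T\subseteq \Gl_\alpha$ with $E\subseteq T\subseteq \widetilde{L_E}=L_E\cap\Gl_\alpha$, and one uses that all $F$-stable maximal tori of $\Gl_\alpha$ in a fixed $\Gl_\alpha$-conjugacy class are $\Gl_\alpha(\F_q)$-conjugate (the parametrization by $H^1(F,W)$ from \cref{twistedFrobeniuspara}) to move everything inside $\Gl_\alpha$. The $w$-stable partition $\mathcal B(T)=\bigsqcup_{i\in I}\mathcal B_i(T)$ of the character basis from \cref{twistedcharactergroup} then records exactly how each block $(\Gl_{m_j})_{d_j}$ distributes over the factors $\Gl_{\alpha_i}$; writing $(\beta_j)_i$ for the size of the part of the $j$-th block lying in $\Gl_{\alpha_i}$, we obtain $\beta_j\in\N^I$ with $|\beta_j|=m_j$ and $\sum_j d_j\beta_j=\alpha$. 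Setting $\omega:=\psi_{d_1}(\omega_{\beta_1})\ast\cdots\ast\psi_{d_r}(\omega_{\beta_r})$, Remark \ref{remarklevistarbucks} identifies $\widetilde{L_\omega}$ with $\widetilde{L_E}$ up to $\Gl_\alpha(\F_q)$-conjugacy, and hence $S_\omega=Z_{L_\omega}$ is $\Gl_\alpha(\F_q)$-conjugate to $E$.

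For uniqueness, the plan is: if $S_{\omega}$ and $S_{\omega'}$ are $\Gl_\alpha(\F_q)$-conjugate for semisimple multitypes $\omega,\omega'$ of size $\alpha$, then $\omega=\omega'$. Conjugation carries $L_\omega$ to $L_{\omega'}$ (by \cref{sonouncoglione}, $L_S=C_{\Gl_{|\alpha|}}(S)$ is determined by $S$, so conjugate $S$'s have conjugate $L_S$'s), and the conjugacy takes place inside $\Gl_\alpha(\F_q)$. An $F$-stable Levi of $\Gl_{|\alpha|}$ of the form $(\Gl_{m_1})_{d_1}\times\cdots$ determines the multiset $\{(d_j,m_j)\}$ uniquely (e.g.\ by looking at the $F$-orbits on a set of eigenvalue-coordinates, or by the cycle structure of the associated Weyl element as in Examples \ref{toriandLevi} and \ref{twisted}); this pins down $\omega^{ss}$'s underlying sequence up to reordering, i.e.\ pins down the semisimple multitype $\omega$ itself, since a semisimple multitype is exactly a multiset $\{(d_j,(1^{\beta_j}))\}$. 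To recover the finer datum $\beta_j\in\N^I$ rather than just $m_j=|\beta_j|$, one uses that the conjugacy is by an element of $\Gl_\alpha(\F_q)$, which respects the product decomposition $\Gl_\alpha=\prod_i\Gl_{\alpha_i}$ and hence the induced splitting $\mathcal B(T)=\bigsqcup_i\mathcal B_i(T)$: the $F$-orbit structure restricted to each $\mathcal B_i(T)$ yields $(\beta_j)_i$, so the whole collection $\{(d_j,\beta_j)\}$ is an invariant of the $\Gl_\alpha(\F_q)$-conjugacy class of $S_\omega$.

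\textbf{Main obstacle.} The routine part is the combinatorial bookkeeping (partitioning the character basis, reading off the $\beta_j$). The genuinely delicate point is the passage between $\Gl_{|\alpha|}$-conjugacy and $\Gl_\alpha$-conjugacy: a priori two admissible tori of $\Gl_\alpha$ could be conjugate in the big group $\Gl_{|\alpha|}$ without being conjugate in $\Gl_\alpha$, and Example \ref{exampleadmissible2} (with $\widetilde{L_{S_1}}=\widetilde{L_{S_2}}=\widetilde{L_{S_3}}$ but $S_1,S_2,S_3$ genuinely different) shows the bookkeeping must be done inside $\Gl_\alpha$. The key input making this work is the existence, guaranteed by $E\in\mathcal Z_\alpha$, of a maximal torus $T$ of $\Gl_\alpha$ with $E\subseteq T$, together with the rationality statement from \cref{twistedFrobeniuspara} that $F$-stable maximal tori of $\Gl_\alpha$ in one geometric class form a single $\Gl_\alpha(\F_q)$-orbit up to the $H^1(F,W)$-parametrization — this is what lets us realize the abstract $\F_q$-isomorphism $(L_E,F)\cong\prod(\Gl_{m_j})_{d_j}$ by an honest conjugation inside $\Gl_\alpha(\F_q)$ rather than merely inside $\Gl_{|\alpha|}(\F_q)$.
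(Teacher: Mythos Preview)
Your approach is essentially the same as the paper's, which simply cites Example~\ref{keyexampleLevi} (the classification of $F$-stable Levi subgroups of $\Gl_n$); you have correctly identified that the work lies in upgrading the conjugacy from $\Gl_{|\alpha|}(\F_q)$ to $\Gl_\alpha(\F_q)$, and your method of doing this via an $F$-stable maximal torus $T\subseteq\Gl_\alpha$ containing $E$ is the right one.

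There is, however, a genuine gap in the final step of your existence argument. You write that Remark~\ref{remarklevistarbucks} identifies $\widetilde{L_\omega}$ with $\widetilde{L_E}$ up to $\Gl_\alpha(\F_q)$-conjugacy, and \emph{hence} $S_\omega$ is $\Gl_\alpha(\F_q)$-conjugate to $E$. But this ``hence'' does not follow: you yourself invoke Example~\ref{exampleadmissible2} in your obstacle paragraph, which exhibits admissible subtori $S_1,S_2,S_3$ with $\widetilde{L_{S_1}}=\widetilde{L_{S_2}}=\widetilde{L_{S_3}}$ yet pairwise distinct $[S_j]$. Knowing $\widetilde{L}$ up to $\Gl_\alpha(\F_q)$-conjugacy does not determine the admissible torus, so it cannot by itself pin down $E$ up to conjugacy. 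What you need instead is that the element of $\Gl_\alpha(\F_q)$ that moves $T$ into standard position also conjugates the \emph{full} Levi $L_E\subseteq\Gl_{|\alpha|}$ (not just its intersection $\widetilde{L_E}$ with $\Gl_\alpha$) onto $L_\omega$; then $E=Z_{L_E}$ is carried to $S_\omega=Z_{L_\omega}$. This works because $L_E$ is determined by $T$ together with the root subsystem $\Phi_{L_E}(T)\subseteq\Phi(T)$ (Lemma~\ref{levisubgroups}), and a $\Gl_\alpha(\F_q)$-conjugation of $T$ transports this subsystem of $\Phi(T)\subseteq X_*(T)$ along. After you have carried $T$ to the standard twisted diagonal torus, the partition $\mathcal{B}(T)=\bigsqcup_i\mathcal{B}_i(T)$ together with the $\sigma$-orbit structure lets you read off $L_E$ \emph{inside $\Gl_{|\alpha|}$} as exactly $L_\omega$, and the conclusion follows. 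Your uniqueness argument is fine.
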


\begin{esempio}
For any $\alpha \in \N^I$, we have that $$[Z_{\alpha}]=\omega_{\alpha}. $$
\end{esempio}

\vspace{4 pt}

Let $\sim$ be the equivalence relation on $\mathcal{Z}_{\alpha}$, induced by the conjugation by $\Gl_{\alpha}(\F_q)$ and let $\overline{\mathcal{Z}}\coloneqq\mathcal{Z}/\sim$ be the quotient set. The map $\mathcal{Z}_{\alpha} \to \mathbb{T}_{\alpha}^{ss}$ given by $ S \rightarrow [S]$, induces thus a bijection $$\overline{\mathcal{Z}} \cong \mathbb{T}_I^{ss} .$$

Lastly, we give the following definition of levels for the admissible subtori.,

\begin{definizione}
Given $S \in \mathcal{Z}$ and $V \subseteq \N^I$, we say that $S$ is of level $V$ if $[S]$ is of level $V$ (see Definition \ref{multitypeleveldefinition}).
\end{definizione}

\begin{esempio}
 
Consider the tori $S_1,S_2,S_3$ introduced in Example \ref{exampleadmissible2}. The torus $S_1$ is the product $Z_{(1,1,1,1)} \times Z_{(1,0,0,0)}$ embedded componentwise block diagonally into $\Gl_{\alpha}$. The multitype $[S_1]$ is therefore the semisimple multitype $$[S_1]=(1,(1^{(1,1,1,1)}))(1,(1^{(1,0,0,0)}))=\omega_{(1,1,1,1)} \ast \omega_{(1,0,0,0)}.$$

Similarly, we have $$[S_2]=\omega_{(1,0,1,0)} \ast \omega_{(1,1,0,1)} $$ and $$[S_3]=\omega_{(1,0,0,0)} \ast \omega_{(1,0,0,0)} \ast \omega_{(0,1,0,0)} \ast \omega_{(0,0,1,0)} \ast \omega_{(0,0,0,1)} .$$

Notice that for $V=\{(1,1,1,1),(1,0,0,0)\}$, we have that $S_1$ if of level $V$, while $S_2,S_3$ are not.

\end{esempio}

\subsection{Regular elements and Möbius function for admissible tori}

In this paragraph we give to $\mathcal{Z}$ the structure of a locally finite poset, with the ordering induced by inclusion and we introduce the associated Möbius function $$\mu: \mathcal{Z} \times \mathcal{Z} \to \Z $$ and we recall more generally some properties of the Möbius function of a locally finite poset. The Möbius function $\mu$ is going to be one the main technical ingredient in the proof of our Theorem \ref{mainteo}.

\begin{oss}
The Möbius function $\mu$ had already been studied in \cite[Section 4.2]{HA}, in the case of $|I|=1$, where the authors used it to compute cohomology of generic character stacks. In this case, the only values that are needed are the values $\mu(Z_n,S)$, which have already been computed in \cite{hanlon} (see Proposition \ref{reviewhanlon} for more details).

However, to extend their result to the non-generic case in this article we needed a better understanding of the values $\mu(S,S')$ for any admissible subtori $S,S'$ and any $I$.

The next paragraphs develop the necessary tools to obtain the description of the Möbius function $\mu$ that we will need in the proof of Theorem \ref{mainteo}.
\end{oss}

\subsubsection{Poset of $F$-stable admissible subtori}

For any two  elements $S,S'$ of $\mathcal{Z}_{\alpha}$, we say that $S \leq S'$ if  $S \subseteq S'$. Notice that $Z_{\alpha} \leq S$ for any admissible  $S \subseteq \Gl_{\alpha}$. For any $S \in \mathcal{Z}$, we denote by $S^{reg}$ the subset of \textit{regular elements} of $S$ defined as \begin{equation}
\label{regular}
S^{reg}\coloneqq \{s \in S \ | \ s \notin S' \text{ for any } S'\lneq S, \ S'\in \mathcal{Z} \}.
\end{equation}

 We have the following disjoint union\begin{equation}\label{disjointunionregular1}\displaystyle S=\bigsqcup_{S'\leq S} (S')^{reg}\end{equation}
and so, taking $F$-fixed points, \begin{equation}\label{disjointunionregular}\displaystyle S^F =\bigsqcup_{S'\leq S} ((S')^{reg})^F.\end{equation}
 
 In particular, we have an equality $\displaystyle |S^F|=\sum_{S' \leq S}|((S')^{reg})^F|$. Notice that, if $[S]=\psi_{d_1}(\omega_{\beta_1}) \ast \cdots \ast\psi_{d_r}(\omega_{\beta_r})$, we have $$S^F=\prod_{j=1}^r (\mathbb{G}_m)_{d_j}(\F_q)=\prod_{j=1}^r\F_{q^{d_j}}^*  $$ and therefore we have
  \begin{equation}
\label{cardinalityadmissible}
|S^{F}|=P_{[S]}(q).
\end{equation}

\subsubsection{Möbius functions of locally finite posets}

For a finite poset $(X,\leq)$ denote by $$\mu_X: X \times X \to \Z$$ its associated Möbius functions.  Recall that $\mu_X$ is defined by the following two properties:

\begin{itemize}
\item $\mu(x,x)=1$ for each $x \in X$.
\item $\mu(x,y)=0$ if $x \not \leq y$.
\item For each $x \lneqq x'$, we have \begin{equation}
\label{mobius111}
\sum_{x \leq x'' \lneqq x'}\mu(x,x'')=-\mu(x,x')
\end{equation}
\end{itemize}

The Möbius function has the following property.

\begin{prop}
Given $f_1,f_2:X \to \C$ such that $$\displaystyle f_1(x)=\sum_{x'\leq x}f_2(x') ,$$ we have an equality 
\begin{equation}
\label{mobius}
f_2(x)=\sum_{x'\leq x}f_1(x')\mu_X(x',x).
\end{equation}
\end{prop}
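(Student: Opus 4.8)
The statement is the classical Möbius inversion formula for a locally finite poset, and the proof is a direct manipulation of finite sums using the defining recursion \eqref{mobius111} of $\mu_X$. The plan is as follows. Fix $x\in X$; in all situations where the proposition is applied the down-set $\{x'\mid x'\le x\}$ is a finite interval (for instance $[Z_\alpha,S]$ inside $\mathcal{Z}_\alpha$), so every sum below is finite and reindexing is legitimate. Substitute the hypothesis $f_1(x')=\sum_{x''\le x'}f_2(x'')$ into the right-hand side of \eqref{mobius} and interchange the order of summation over the set of pairs $\{(x',x'')\mid x''\le x'\le x\}$:
\[
\sum_{x'\le x}f_1(x')\mu_X(x',x)=\sum_{x'\le x}\ \sum_{x''\le x'}f_2(x'')\,\mu_X(x',x)=\sum_{x''\le x}f_2(x'')\Big(\sum_{x''\le x'\le x}\mu_X(x',x)\Big).
\]
Thus everything reduces to the ``dual orthogonality relation'' $\sum_{x''\le x'\le x}\mu_X(x',x)=\delta_{x'',x}$, after which the outer sum collapses to the single term $x''=x$, giving $f_2(x)$ as desired.

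The one point deserving attention is that the recursion in the definition of $\mu_X$ fixes the \emph{first} argument: it expresses $\mu_X(x,x')$ in terms of the $\mu_X(x,x'')$ with $x\le x''\lneqq x'$, and hence yields directly the identity $\sum_{x\le x''\le x'}\mu_X(x,x'')=\delta_{x,x'}$, whereas the computation above needs the analogous identity with the \emph{second} argument fixed. To obtain it I would pass to the incidence algebra of $(X,\le)$, with convolution $(a\ast b)(x,y)=\sum_{x\le z\le y}a(x,z)b(z,y)$ and unit $\delta=\delta_{x,y}$. Let $\zeta$ be the zeta function, $\zeta(x,y)=1$ for $x\le y$ and $\zeta(x,y)=0$ otherwise. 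The first-argument recursion says exactly that $\mu_X\ast\zeta=\delta$, i.e. $\mu_X$ is a left inverse of $\zeta$. Since $\zeta(x,x)=1$ for every $x$, the element $\zeta$ is invertible in the incidence algebra (the linear system defining an inverse is triangular over each finite interval, with invertible diagonal entries; this is standard), so it has a two-sided inverse, which must coincide with its left inverse $\mu_X$. Therefore $\zeta\ast\mu_X=\delta$ as well, and unwinding this equality gives precisely $\sum_{x''\le x'\le x}\mu_X(x',x)=\delta_{x'',x}$. (Alternatively, the same dual relation can be proved directly by induction on the cardinality of the interval $[x'',x]$.)

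There is no genuine obstacle here: the argument is entirely formal, and the only things to be careful about are the bookkeeping of which argument of $\mu_X$ is summed over and the finiteness of the relevant down-sets, both of which are unproblematic in the poset $\mathcal{Z}$ of $F$-stable admissible subtori considered in this paper.
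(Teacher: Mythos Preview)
Your proof is correct and is the standard argument for M\"obius inversion; the paper itself does not supply a proof of this proposition, treating it as a well-known property of the M\"obius function of a locally finite poset. Your careful remark about passing from the ``first-argument'' recursion \eqref{mobius111} to the ``second-argument'' orthogonality relation via the incidence algebra is exactly the point one needs to justify here, and it is handled cleanly.
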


Lastly, we recall the following standard Lemma about Möbius functions.

\begin{lemma}
\label{lemmacorrectionmobius}
Let $(X,\leq),(Y,\leq)$ be two locally finite posets and equip $X \times Y$ with the ordering  defined as $(x,y) \leq (x',y')$ if and only if $x \leq x'$ and $y \leq y'$.

For the locally finite poset $(X \times Y,\leq )$, we have
\begin{equation}
\label{mobiusproduct}
\mu_{X \times Y}((x,y),(x',y'))=\mu_X((x,x'))\mu_Y((y,y')).
\end{equation}
\end{lemma}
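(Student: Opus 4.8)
The statement to prove is \Cref{lemmacorrectionmobius}: that the Möbius function of a product poset factors as the product of the Möbius functions. This is an entirely standard fact, and the plan is to prove it by the uniqueness characterization of the Möbius function rather than by any direct computation. Recall that on a locally finite poset the Möbius function is uniquely determined by the two defining properties: $\mu(x,x)=1$, and for $x \lneqq x'$ one has $\sum_{x \leq x'' \leq x'}\mu(x,x'') = 0$ (equivalently the recursion \eqref{mobius111}). So it suffices to exhibit a function on $(X\times Y)\times(X\times Y)$ that satisfies these properties and coincides with the product $\mu_X \cdot \mu_Y$.

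\textbf{Key steps.} First I would define $\nu\big((x,y),(x',y')\big) := \mu_X(x,x')\,\mu_Y(y,y')$ and check that $\nu$ vanishes unless $(x,y)\leq(x',y')$, which is immediate from the product ordering together with the vanishing of $\mu_X$ off $\leq$ and $\mu_Y$ off $\leq$. Second, $\nu\big((x,y),(x,y)\big) = \mu_X(x,x)\mu_Y(y,y) = 1\cdot 1 = 1$. Third — the only step with any content — I would verify the summation identity: for $(x,y)\leq(x',y')$ with $(x,y)\neq(x',y')$,
\[
\sum_{(x,y)\leq(x'',y'')\leq(x',y')} \nu\big((x,y),(x'',y'')\big) = 0.
\]
Here the interval $[(x,y),(x',y')]$ in $X\times Y$ is exactly the product of intervals $[x,x']\times[y,y']$, so the sum factors:
\[
\sum_{x\leq x''\leq x'}\ \sum_{y\leq y''\leq y'} \mu_X(x,x'')\mu_Y(y,y'')
= \left(\sum_{x\leq x''\leq x'}\mu_X(x,x'')\right)\left(\sum_{y\leq y''\leq y'}\mu_Y(y,y'')\right).
\]
Since $(x,y)\neq(x',y')$, at least one of $x\neq x'$ or $y\neq y'$ holds; the corresponding factor vanishes by the defining recursion for $\mu_X$ or $\mu_Y$ respectively (and the other factor equals $1$ if that coordinate interval is trivial, or is some finite number — in either case the product is $0$). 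By the uniqueness of the Möbius function on the locally finite poset $X\times Y$, we conclude $\mu_{X\times Y} = \nu$, which is \eqref{mobiusproduct}.

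\textbf{Main obstacle.} There is essentially no obstacle here; the only point requiring a line of justification is that $X\times Y$ is again locally finite (so that its Möbius function is defined and the uniqueness argument applies) and that intervals in the product are products of intervals — both are routine consequences of the definition of the product order. One should also remark that the sum in the recursion is finite precisely because of local finiteness, which is why the hypothesis is stated.
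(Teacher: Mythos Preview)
Your proof is correct and is essentially the same as the paper's: both hinge on the observation that the interval $[(x,y),(x',y')]$ factors as $[x,x']\times[y,y']$, so the defining sum for the M\"obius function splits as a product of two sums, at least one of which vanishes. The paper carries this out as an explicit induction on the interval rather than invoking uniqueness of the M\"obius function, but the content is identical.
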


\begin{proof}
If $(x,y) \not \leq (x',y')$, both sides of eq.(\ref{mobiusproduct}) are $0$, since in this case we have $x \not \leq x'$ or $y \not \leq y'$. We can therefore assume that $(x,y) \leq (x',y')$.

By induction we can assume that \begin{equation}\mu_{X \times Y}((x,y),(x'',y''))=\mu_X((x,x''))\mu_Y((y,y''))\end{equation} for all $(x,y)< (x'',y'')< (x',y')$.
From eq.(\ref{mobius111}) we have therefore 

\begin{equation}
\mu_{X \times Y}((x,y)(x',y'))=-\sum_{(x,y)\leq (x'',y'')< (x',y')}\mu_{X}(x,x'')\mu_Y(y,y'')=
\end{equation}

\begin{equation}
\label{mobius222}
-\left(\sum_{x\leq x'' < x'}\mu_X(x,x'')\sum_{y \leq y'' \leq y'}\mu_Y(y,y'')+\mu_X(x,x')\sum_{y \leq y'' < y'}\mu_Y(y,y'')\right).    
\end{equation}
By eq.(\ref{mobius111}), $\displaystyle \sum_{y \leq y'' \leq y'}\mu_Y(y,y'')=0$ and $\sum_{y \leq y'' < y'}\mu_Y(y,y'')=-\mu_Y(y,y')$ and therefore 

\begin{equation}
\mu_{X \times Y}((x,y)(x',y'))=-\mu_X(x,x')\sum_{y \leq y'' < y'}\mu_Y(y,y'')=\mu_X(x,x')\mu_Y(y,y').
\end{equation}

\end{proof}

For $x \in X$, denote by $[x,\infty]_X \subseteq X$ the poset $$[x,\infty]=\{x' \in X \ | \ x' \geq x \} .$$

Notice that, for each $x' \in [x,\infty]_X$, from eq.(\ref{mobius111}), we deduce that we have:

\begin{equation}
\label{intervalmobius}
 \mu_{X}(x,x')=\mu_{[x,\infty]_X}(x,x')   
\end{equation}

\subsubsection{Möbius function for admissible subtori}
\label{mobiusetti}

The ordering $\leq$ endows the set $\mathcal{Z}$ with the structure of a locally finite poset. We denote by $$\mu(-,-): \mathcal{Z} \times \mathcal{Z} \to \Z$$ the associated Möbius function.

\vspace{6 pt}

\begin{esempio}
Let $f_1,f_2:\mathcal{Z} \to \C$ be the functions defined as $$f_1(S)=|S^F| $$ and $$f_2(S)=|(S^{reg})^F| .$$ By eq.(\ref{mobius}) and eq.(\ref{disjointunionregular}), we have the following identity:
\begin{equation}
\label{cardinalityregular}
|(S^{reg})^F|=\sum_{S'\leq S}|(S')^F|\mu(S',S)=\sum_{S' \leq S} P_{[S']}(q)\mu(S',S).
\end{equation}

\end{esempio}

\subsection{Levi subgroups and graphs}
\label{graph}
In this  paragraph, we will associate a finite graph $\Gamma_S$ to each admissible subtorus $S$. This construction will be useful to understand the Mobius function $\mu: \mathcal{Z} \times \mathcal{Z} \to \Z$ and to develop the combinatorial arguments of \cref{inclusion1}, both of which will be key elements in our proof of Theorem \ref{mainteo} about Log compatible functions.

\subsubsection{Notations on graphs}

Let $\Gamma$ be a finite graph where $M$ is its set of vertices and $m=|M|$. We say that $\Gamma$ is of type $K_m$ if it is the complete graph associated to $M$, i.e. each pair of distinct vertices is connected exactly by one edge.

We say that $\Gamma$ is \textit{admissible} if each of its connected components is of type $K_d$ for some $d$. 

\begin{oss}
\label{admissiblecondition}
Notice that the property of being admissible for a graph $\Gamma$ can be stated in the following equivalent way. 

For any two $m,m'\in M$, there is at most one edge of $\Gamma$ joining $m$ to $m'$ and, if $m_1,m_2,m_3 \in M$ are such that there is an edge of $\Gamma$ between $m_1$ and $m_2$ and an edge of $\Gamma$ between $m_2$ and $m_3$, there is an edge of $\Gamma$ between $m_1$ and $m_3$.  

\end{oss}

\subsubsection{Admissible graphs and maximal tori}
\label{madremia}
Let now $\alpha \in \N^I$ and $m=|\alpha|$ and fix an $F$-stable  maximal torus $T \subseteq \Gl_{\alpha} \subseteq \Gl_{m}$. 

\vspace{6 pt}

 Denote simply by $\mathcal{B},\mathcal{B}_i,\Phi,\Phi^+$ the sets $\mathcal{B}(T),\mathcal{B}_i(T),\Phi(T),\Phi^+(T)$ and by $\sigma \in S_m$ the permutation such that $F(\epsilon_j)=q\epsilon_{\sigma(j)}$ for each $\epsilon_j \in \mathcal{B}$, introduced in \cref{twistedcharactergroup}. Recall that we have a decomposition $$\mathcal{B}=\bigsqcup_{i \in I} \mathcal{B}_i .$$  

\vspace{8 pt}
 For any two admissible graphs $\Gamma,\Gamma'$ with set of vertices $\mathcal{B}$ and sets of edges $\Omega_{\Gamma},\Omega_{\Gamma'}$ respectively, we say that $\Gamma \leq \Gamma'$ if  $\Omega_{\Gamma} \supseteq \Omega_{\Gamma'}$.

\vspace{6 pt}
 
 We denote by $A(\mathcal{B},\sigma)$  the poset of admissible and $\sigma$-stable graph with set of vertices $\mathcal{B}$.  

Here $\sigma$-stable means that  $\Gamma$ has an edge between $\epsilon_i$ and $\epsilon_j$ if and only if it has an edge between $\epsilon_{\sigma(i)}$ and $\epsilon_{\sigma(j)}$.

 We denote by $\mu_{\mathcal{B},\sigma}(-,-)$ the associated Möbius function. Moreover, we will denote the complete graph with vertices $\mathcal{B}$ by $\Gamma_{\alpha} \in A(\mathcal{B},\sigma)$.

\vspace{6 pt}

\begin{oss}
\label{remarktardivo}
From Remark \ref{admissiblecondition}, we see that the poset $A(\mathcal{B},\sigma)$ is the the poset of $\sigma$-stable partitions of the set $\mathcal{B}$ with ordering  given by the reversed inclusion, i.e. the fixed point set lattice considered in \cite{hanlon}.

In the latter article, the authors computed certain values of the Möbius function $\mu_{\mathcal{B},\sigma}$ and in particular the values $\mu_{\mathcal{B},\sigma}(\Gamma_{\alpha},\Gamma')$ for each $\Gamma'$. We will review this result in Proposition \ref{reviewhanlon}. 

We prefer to introduce this graph theoretic description, as in our opinion this can ease the notations and give a more direct and visual understanding of the results of this Paragraph about the relationship between admissible graphs and admissible tori.
\end{oss}

\vspace{6 pt}

Fix now an admissible $\sigma$-stable 
 graph $\Gamma$ with set of vertices $\mathcal{B}$.
  Notice that, as $\Gamma$ is $\sigma$-stable,  $\sigma$ acts by permutation on the set of connected components of $\Gamma$. Assume that this action has $r$ orbits  of length $d_1,\dots,d_r$ respectively, which we denote by $O_1,\dots,O_r$.
  
  For each $j=1,\dots,r$, denote by $\mathcal{B}^{\Gamma}_j \subseteq \mathcal{B}$ the set of vertices contained in the orbit $O_j$. Notice that each $\mathcal{B}^{\Gamma}_j$ is $\sigma$-stable and there is an equality $$ \displaystyle \mathcal{B}=\bigsqcup_{j=1}^r \mathcal{B}^{\Gamma}_j .$$

 For each $j=1,\dots,r$,  choose  a partition of $\mathcal{B}^{\Gamma}_{j}$ into $d_j$ subsets $$\mathcal{B}^{\Gamma}_{j}=\mathcal{B}^{\Gamma}_{j,1}\bigsqcup \dots \bigsqcup \mathcal{B}^{\Gamma}_{j,d_j}$$ such that:
 \begin{itemize}
 \item Each $ \mathcal{B}^{\Gamma}_{j,h}$ is given by the vertices of a connected component belonging to the orbit $O_j$  
 \item We have $\sigma(\mathcal{B}^{\Gamma}_{j,h})=\mathcal{B}^{\Gamma}_{j,h+1}$ for each $h=1,\dots,d_j$ (here we consider the indices modulo $d_j$).
 \end{itemize}

For each $j=1,\dots,r$, let $\beta_j \in \N^I$ be the element  defined as $$(\beta_j)_i\coloneqq|\mathcal{B}^{\Gamma}_{j,1} \cap \mathcal{B}_i|$$ for $i \in I$. We denote by $\omega_{\Gamma}\in \mathbb{T}^{ss}_{\alpha}$ the semisimple multityped defined as

$$\omega_{\Gamma}\coloneqq \psi_{d_1}(\omega_{\beta_1}) \ast \cdots \ast \psi_{d_r}(\omega_{\beta_r}) .$$

The results of \cite[Theorem 4]{hanlon} imply the following Proposition.

\begin{prop}
\label{reviewhanlon}
For each $\Gamma \in A(\mathcal{B},\sigma)$, we have

\begin{equation}
\mu_{A(\mathcal{B},\sigma)}(\Gamma_{\alpha},\Gamma)=C^o_{\omega_{\Gamma}}.
\end{equation}
\end{prop}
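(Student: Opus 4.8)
The plan is to identify the interval $[\Gamma_\alpha,\Gamma]$ inside the poset $A(\mathcal{B},\sigma)$ with (the opposite of) a smaller fixed-point partition lattice and then invoke the Möbius computation of \cite{hanlon}. To begin, I would describe that interval explicitly. If $\Gamma_\alpha\leq\Gamma'\leq\Gamma$, then $\Gamma'$ is an admissible $\sigma$-stable graph with $\Omega_\Gamma\subseteq\Omega_{\Gamma'}$; by Remark \ref{admissiblecondition} each connected component of $\Gamma'$ is a clique, hence is a union of connected components of $\Gamma$, and conversely any grouping of the connected components of $\Gamma$ into blocks produces such a $\Gamma'$ by turning each block into a clique. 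This gives an order-reversing bijection between $[\Gamma_\alpha,\Gamma]$ and the set of partitions of the $b$-element set of connected components of $\Gamma$, where $b=d_1+\cdots+d_r$. Since $\Gamma$ is $\sigma$-stable, $\sigma$ acts on its connected components with orbits $O_1,\dots,O_r$ of lengths $d_1,\dots,d_r$ (as in \cref{madremia}); writing $\bar\sigma\in S_b$ for the induced permutation, this means $\bar\sigma$ has cycle type $(d_1,\dots,d_r)$. The bijection above is $\sigma$-equivariant, so it restricts to a poset anti-isomorphism $[\Gamma_\alpha,\Gamma]\cong(\Pi_b^{\bar\sigma})^{\mathrm{op}}$, where $\Pi_b^{\bar\sigma}$ is the lattice of $\bar\sigma$-stable partitions of the block set, sending $\Gamma_\alpha$ to $\hat 1$ and $\Gamma$ to $\hat 0$.

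Since the Möbius function of a poset depends only on the interval in question and is unchanged under reversing the order, this gives $\mu_{A(\mathcal{B},\sigma)}(\Gamma_\alpha,\Gamma)=\mu_{\Pi_b^{\bar\sigma}}(\hat 0,\hat 1)$. Now $\Pi_b^{\bar\sigma}$ is again a fixed-point partition lattice in the sense of Remark \ref{remarktardivo}, so I would apply \cite[Theorem 4]{hanlon}: the number $\mu_{\Pi_b^{\bar\sigma}}(\hat 0,\hat 1)$ vanishes unless all cycles of $\bar\sigma$ have a common length $d$, i.e.\ $d_1=\cdots=d_r=d$, in which case it equals $\mu(d)\,d^{r-1}(-1)^{r-1}(r-1)!$. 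This is exactly the value of $C^o_\omega$ for $\omega=\omega_\Gamma=\psi_{d_1}(\omega_{\beta_1})\ast\cdots\ast\psi_{d_r}(\omega_{\beta_r})$, whose only relevant datum is the multiset $\{d_1,\dots,d_r\}$, so $\mu_{A(\mathcal{B},\sigma)}(\Gamma_\alpha,\Gamma)=C^o_{\omega_\Gamma}$.

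The substantive point, and the step I expect to require the most care, is the explicit description of the interval in the first paragraph: checking that grouping connected components of $\Gamma$ parametrizes the whole interval $[\Gamma_\alpha,\Gamma]$ — this is precisely where admissibility of the graphs (Remark \ref{admissiblecondition}) is used — and that the resulting identification is $\sigma$-equivariant with $\bar\sigma$ of the stated cycle type, so that the orbit bookkeeping of \cref{madremia} matches the cycle structure of $\bar\sigma$. Once that is in place, the remainder is just a matter of matching the normalization in Hanlon's theorem with the definition of $C^o$, which is routine.
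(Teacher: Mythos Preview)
Your proposal is correct and follows essentially the same approach as the paper: both reduce to \cite[Theorem~4]{hanlon} via the identification of $A(\mathcal{B},\sigma)$ with a fixed-point partition lattice (Remark~\ref{remarktardivo}). The paper simply cites Hanlon's result directly, whereas you spell out the intermediate step of identifying the interval $[\Gamma_\alpha,\Gamma]$ with the smaller fixed-point lattice $\Pi_b^{\bar\sigma}$ on the set of connected components of $\Gamma$; this explicit interval reduction is exactly what is needed to extract $\mu(\hat 0,\hat 1)$ from Hanlon's computation and match it with the definition of $C^o_{\omega_\Gamma}$.
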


\vspace{6 pt}
 
 Denote now by $\Gamma_{j,h}$ the restriction of $\Gamma$ to the set $\mathcal{B}^{\Gamma}_{j,h}$. Notice that $\Gamma_{j,h}$ is the complete graph with vertices $\mathcal{B}^{\Gamma}_{j,h}$ and so $\Gamma$ is totally determined by the subsets  $\{\mathcal{B}^{\Gamma}_{j,1}\}_{j=1,\dots,r}$.  Notice, in addition, that for each $j=1,\dots,r$, we have $\sigma^{d_j}(\mathcal{B}^{\Gamma}_{j,1})=\mathcal{B}^{\Gamma}_{j,1}$.

We have the following Lemma.

\begin{lemma}
\label{mobiustorigraphs}
There is an equivalence of posets \begin{equation}
\label{productburnout}
[\Gamma,\infty]_{A(\mathcal{B},\sigma)} \cong \prod_{j=1}^r [\Gamma_{j,1},\infty]_{A(\mathcal{B}^{\Gamma}_{j,1},\sigma^{d_j})}
\end{equation}
and, for each $\Gamma '\geq \Gamma$, denoting by $\Gamma'_{j,h}$ the restriction of $\Gamma'$ to $\mathcal{B}^{\Gamma}_{j,h}$, we have 
\begin{equation}
\label{mobiustorigraphs1}
\mu_{\mathcal{B},\sigma}(\Gamma,\Gamma')=\prod_{j=1}^r\mu_{\mathcal{B}^{\Gamma}_{j,1},\sigma^{d_j}}(\Gamma_{j,1},\Gamma'_{j,1}) .\end{equation}

\end{lemma}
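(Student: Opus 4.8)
\textbf{Proof plan for Lemma \ref{mobiustorigraphs}.}

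The plan is to exhibit the claimed equivalence of posets (\ref{productburnout}) explicitly and then deduce the multiplicativity (\ref{mobiustorigraphs1}) from Lemma \ref{lemmacorrectionmobius} together with the interval identity (\ref{intervalmobius}). First I would recall that, by the observation made just before the lemma, an admissible $\sigma$-stable graph $\Gamma'$ with the same vertex set $\mathcal{B}$ is completely determined by the partition of $\mathcal{B}$ into the vertex sets of its connected components, and that $\Gamma' \geq \Gamma$ means precisely that this partition refines the partition induced by $\Gamma$ (reversed inclusion of edge sets). Since $\sigma$ permutes the $\Gamma$-components in $r$ orbits $O_1,\dots,O_r$ with representative blocks $\mathcal{B}^{\Gamma}_{j,1}$ (stabilised by $\sigma^{d_j}$), any $\Gamma' \geq \Gamma$ refines each $\mathcal{B}^{\Gamma}_{j,h}$ separately, and by $\sigma$-stability the restriction $\Gamma'_{j,h}$ to $\mathcal{B}^{\Gamma}_{j,h}$ is the image under $\sigma^{h-1}$ of $\Gamma'_{j,1}$; hence $\Gamma'$ is determined by the tuple $(\Gamma'_{1,1},\dots,\Gamma'_{r,1})$, each $\Gamma'_{j,1}$ being an admissible $\sigma^{d_j}$-stable graph on $\mathcal{B}^{\Gamma}_{j,1}$ that is $\geq \Gamma_{j,1}$.

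Next I would check that this assignment $\Gamma' \mapsto (\Gamma'_{j,1})_{j=1}^r$ is a bijection $[\Gamma,\infty]_{A(\mathcal{B},\sigma)} \to \prod_{j=1}^r [\Gamma_{j,1},\infty]_{A(\mathcal{B}^{\Gamma}_{j,1},\sigma^{d_j})}$: injectivity is the reconstruction just described, and for surjectivity, given a tuple of graphs $(\Delta_j)_j$ with $\Delta_j \geq \Gamma_{j,1}$ admissible and $\sigma^{d_j}$-stable, one builds $\Gamma'$ on $\mathcal{B}^{\Gamma}_{j,h}$ as $\sigma^{h-1}(\Delta_j)$ for $h=1,\dots,d_j$ and takes the disjoint union over $j$; one must verify this is admissible (each component still of type $K_e$, inherited from the $\Delta_j$), $\sigma$-stable (by construction the $\sigma$-action cyclically permutes the blocks $\mathcal{B}^{\Gamma}_{j,h}$ within an orbit and acts within the representative block as $\sigma^{d_j}$, which stabilises $\Delta_j$), and $\geq \Gamma$. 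Monotonicity in both directions is immediate since refinement of partitions is checked block-by-block, so this is an isomorphism of posets, giving (\ref{productburnout}).

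Finally, (\ref{mobiustorigraphs1}) follows formally: by (\ref{intervalmobius}) the Möbius value $\mu_{\mathcal{B},\sigma}(\Gamma,\Gamma')$ equals $\mu_{[\Gamma,\infty]_{A(\mathcal{B},\sigma)}}(\Gamma,\Gamma')$, which under the poset isomorphism (\ref{productburnout}) becomes $\mu_{\prod_j [\Gamma_{j,1},\infty]}((\Gamma_{j,1})_j,(\Gamma'_{j,1})_j)$; iterating Lemma \ref{lemmacorrectionmobius} over the $r$ factors and again using (\ref{intervalmobius}) in each factor yields $\prod_{j=1}^r \mu_{\mathcal{B}^{\Gamma}_{j,1},\sigma^{d_j}}(\Gamma_{j,1},\Gamma'_{j,1})$. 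I expect the only genuinely delicate point to be the bookkeeping in the surjectivity/well-definedness check — namely verifying that the graph reassembled from an arbitrary tuple $(\Delta_j)_j$ is genuinely $\sigma$-stable and admissible, which requires keeping careful track of how $\sigma$ cycles the blocks $\mathcal{B}^{\Gamma}_{j,1},\dots,\mathcal{B}^{\Gamma}_{j,d_j}$; everything after that is a routine application of the product formula for Möbius functions.
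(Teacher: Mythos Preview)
Your proposal is correct and follows the same approach as the paper: establish the poset isomorphism by showing that a $\sigma$-stable admissible $\Gamma'\geq\Gamma$ is uniquely determined by its restrictions $(\Gamma'_{j,1})_j$ (and conversely that any tuple of $\sigma^{d_j}$-stable admissible graphs on the $\mathcal{B}^{\Gamma}_{j,1}$ assembles uniquely into such a $\Gamma'$), then deduce the Möbius identity from Lemma~\ref{lemmacorrectionmobius} and eq.~(\ref{intervalmobius}). The paper's proof is a two-sentence sketch of exactly this; you have simply filled in the bookkeeping details (injectivity, surjectivity, $\sigma$-stability of the reassembled graph) that the paper leaves implicit.
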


\begin{proof}
Notice indeed that, given admissible graphs $\Gamma'_{j,1}$ with vertices $\mathcal{B}^{\Gamma}_{j,1}$ which are $\sigma^{d_j}$-stable, for each $j=1,\dots,r$ , there exist a unique $\sigma$-stable and admissible graph $\Gamma'$ with vertices $\mathcal{B}$ 
containing as subgraphs $\Gamma'_{1,1},\dots,\Gamma'_{r,1}$ and such that $\Gamma' \geq \Gamma$.

Eq.(\ref{mobiustorigraphs1}) is thus a consequence of eq.(\ref{productburnout}) and Lemma \ref{lemmacorrectionmobius}.

\end{proof}

\subsubsection{Admissible subtori and admissible graphs}
\label{graphdefinitions}
Fix now an  admissible torus $S \subseteq T$. Denote by $J_S \subseteq \Phi$ the subset $$ J_S\coloneqq \{\epsilon \in \Phi \ | \ S \subseteq \Ker(\epsilon) \} .$$

From Lemma \ref{levisubgroups} we deduce that we have $$ S=\bigcap_{\epsilon \in J_S} \Ker(\epsilon) $$ and $$L_S=T \prod_{\epsilon \in J_S}U_{\epsilon} .$$ Moreover, the subgroup $S$ is $F$-stable if and only if $J_S$ is $\sigma$-stable.
 
\vspace{ 8 pt}
 We now associate the following graph $\Gamma_S$ to the admissible torus $S$.  
 
 \begin{itemize}
\item  The set of vertices of  $\Gamma_S$ is $\mathcal{B}$  \item  $\Gamma_S$ has an edge between the vertices $\epsilon_i$ and $\epsilon_j$ if and only if $\epsilon_{i,j} \in J_S \cap \Phi^+$.

 \end{itemize}
 We denote by $\Omega_{\Gamma_S}$ be the set of edges of $\Gamma_S$. The group $S$ is $F$-stable if and only if $\Gamma_S$ is $\sigma$-stable.

\begin{esempio}
Let $I=\{ \cdot\}$ and $T$ be the torus of diagonal matrices $T \subseteq \Gl_m$. In this case, $\sigma$ is trivial.

The graph $\Gamma_T$ is thus the graph with no edges and $m$ vertices, while the graph $\Gamma_{Z_{\Gl_m}}$ associated to $Z_{\Gl_m}$ is the complete graph with $m$ vertices $K_m$. 
\end{esempio}

\vspace{4 pt}

\begin{esempio}
For any $I$ and any $\alpha \in \N^I$, we have that $\Gamma_{Z_{\alpha}}=\Gamma_{\alpha}$.

\end{esempio}

 We can now state the following Lemma, relating admissible graphs and subtori.
\begin{lemma}
\label{lemmaadmissible}
For any admissible torus $S$, the graph $\Gamma_S$ is admissible. Conversely, for any $\sigma$-stable admissible graph $\Gamma$ with set of vertices $\mathcal{B}$, there is a unique $F$-stable admissible torus $S \subseteq T$ such that $\Gamma_S=\Gamma$.
 \end{lemma}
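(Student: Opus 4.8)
The plan is to extract the combinatorial content of the root system $\Phi(T_m) = \{\pm\epsilon_i \mp \epsilon_j\}$ and the description of Levi subgroups of $\Gl_m$ from Example \ref{keyexampleLevi} and Lemma \ref{levisubgroups}, and translate it directly into the language of admissible graphs. First I would prove the forward direction: given an admissible torus $S \subseteq T$, I must show $\Gamma_S$ is admissible, i.e. (using the criterion of Remark \ref{admissiblecondition}) that the relation "$\epsilon_{i,j} \in J_S$" is transitive. The key observation is that $J_S = \{\epsilon \in \Phi \mid S \subseteq \Ker(\epsilon)\}$ is the root subsystem $\Phi_{L_S}(T)$ of Lemma \ref{levisubgroups}, and since $S = Z_{L_S}$ is the center of a Levi, $L_S$ is $\Gl_\alpha$-conjugate to a standard block-diagonal Levi $\Gl_{n_0} \times \cdots \times \Gl_{n_s}$ as in Example \ref{keyexampleLevi}. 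Under this conjugation, the basis $\mathcal{B}$ is partitioned into blocks $\mathcal{B} = \bigsqcup_a \mathcal{B}^{(a)}$ with $|\mathcal{B}^{(a)}| = n_a$, and $\epsilon_{i,j} \in J_S$ if and only if $\epsilon_i, \epsilon_j$ lie in the same block. This immediately gives both transitivity and "at most one edge", so $\Gamma_S$ is admissible, with connected components exactly the complete graphs on the blocks $\mathcal{B}^{(a)}$.

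For the converse, given a $\sigma$-stable admissible graph $\Gamma$ with vertex set $\mathcal{B}$, its connected components partition $\mathcal{B}$ into subsets $\mathcal{B}^{(0)}, \dots, \mathcal{B}^{(s)}$ each carrying a complete graph. I would define $J_\Gamma \coloneqq \{\epsilon_{i,j} \mid \epsilon_i, \epsilon_j \text{ in the same component}\} \subseteq \Phi$, check this is a root subsystem of the appropriate block-Levi type, and set $S \coloneqq \bigcap_{\epsilon \in J_\Gamma} \Ker(\epsilon)^\circ$. By Lemma \ref{levisubgroups}(2)--(3) applied to $L \coloneqq T\prod_{\epsilon \in J_\Gamma} U_\epsilon$ (which is the conjugate of a standard block Levi, hence genuinely a Levi subgroup with $Z_L^\circ = S$), one gets that $S$ is admissible with $J_S = J_\Gamma$, hence $\Gamma_S = \Gamma$. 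The $\sigma$-stability of $\Gamma$ translates, via $F(\epsilon_i) = q\epsilon_{\sigma(i)}$, into $\sigma$-stability of $J_\Gamma$, which by the remark in \cref{graphdefinitions} is equivalent to $F$-stability of $S$. For uniqueness: if $S, S' \subseteq T$ are both $F$-stable admissible with $\Gamma_S = \Gamma_{S'}$, then $J_S = J_{S'}$, so by Lemma \ref{levisubgroups}(2) we get $Z_{L_S}^\circ = Z_{L_{S'}}^\circ$; since for these standard-type Levi subgroups the center is connected (Example \ref{keyexampleLevi}) and equals $S$ resp.\ $S'$, we conclude $S = S'$.

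The main obstacle I anticipate is purely bookkeeping: carefully justifying that $J_S$ is exactly $\Phi_{L_S}(T)$ and that conjugating the Levi $L_S$ into standard block-diagonal form is compatible with the chosen basis $\mathcal{B}(T)$ — i.e. that one can arrange the conjugating element to permute the $\epsilon_i$ among themselves so that the blocks of the Levi correspond to a genuine partition of $\mathcal{B}$. This is where the construction of $\mathcal{B}(T) = \Psi_g(\mathcal{B}(T_m))$ in \cref{twistedcharactergroup} and the description $\Phi(T) = \{\pm\epsilon_i \mp \epsilon_j\}$ are used essentially; once this identification is in place, both directions are a direct translation between "partition of $\mathcal{B}$ into $\sigma$-stable blocks", "$\sigma$-stable root subsystem of block type", and "$\sigma$-stable admissible graph", and the equivalence is a formal consequence. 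Everything else — transitivity, the bijection with components, uniqueness — is then immediate from Lemma \ref{levisubgroups} and the explicit form of Levi subgroups in Example \ref{keyexampleLevi}.
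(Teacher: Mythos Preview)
Your proposal is correct and follows essentially the same approach as the paper's proof: for the converse direction you define $J_\Gamma$, take $S = \bigcap_{\epsilon \in J_\Gamma}\Ker(\epsilon)$, and invoke Lemma~\ref{levisubgroups} (the paper cites the same Springer lemma together with \cite[Corollary 3.3.4]{DM} for $J_\Gamma$ being a root subsystem), and you also make the uniqueness argument explicit, which the paper leaves implicit.

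For the forward direction, however, the paper is considerably more economical than your plan: rather than conjugating $L_S$ into block-diagonal form and worrying about compatibility with the basis $\mathcal{B}(T)$, it simply observes that $\epsilon_{i,h} = \epsilon_{i,j} + \epsilon_{j,h}$, so $S \subseteq \Ker(\epsilon_{i,j}) \cap \Ker(\epsilon_{j,h})$ forces $S \subseteq \Ker(\epsilon_{i,h})$. This one-line additivity argument gives transitivity directly from the definition of $J_S$ and Remark~\ref{admissiblecondition}, and entirely sidesteps the ``bookkeeping obstacle'' you anticipated.
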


 \begin{proof}
 
For an admissible subtorus $S$, we have that if $\epsilon_{i,j},\epsilon_{j,h} \in J_S$ then $\epsilon_{i,h} \in J_S$. From Remark \ref{admissiblecondition} we deduce that $\Gamma_S$ is admissible.

\vspace{6 pt}
 
 Consider now an admissible $\Gamma$ and the subset $$J_{\Gamma} \coloneqq \{\epsilon_{j,h} \in \Phi \ | \ \text{there is an edge of } \Gamma \text{ which has vertices } \epsilon_j, \epsilon_h  \} .$$ 

From \cite[Corollary 3.3.4]{DM}, the subset $J_{\Gamma}$ is a root subsystem and, from \cite[Lemma 8.4.2]{Springer}, we have that the torus \begin{equation}S_{\Gamma} \coloneqq \bigcap_{\epsilon \in J_{\Gamma}} \Ker(\epsilon)\end{equation} is admissible and $F$-stable with \begin{equation}
\label{Levigraph}
L_{S_{\Gamma}}=T\prod_{\epsilon \in J_{\Gamma}}U_{\epsilon} .\end{equation} It is not difficult to check that the graph associated to $S_{\Gamma}$ is $\Gamma$.

\end{proof}

\vspace{10 pt}

Let $S,S'$ be two admissible subtori such that $S \supseteq T, S'\supseteq T$. From Lemma \ref{lemmaadmissible}, we deduce the following Proposition

\begin{prop}
 \label{inclusionprop}
 Given $S, S' \subseteq T$, we have that $S \leq S'$ if and only if $\Gamma_S \leq \Gamma_{S'}$. 
\end{prop}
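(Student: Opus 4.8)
The plan is to reduce the statement about inclusion of tori to the statement about inclusion of their associated root subsystems $J_S, J_{S'} \subseteq \Phi$, and then to translate this into the graph-theoretic language via the correspondence set up in Lemma \ref{lemmaadmissible}. First I would record the elementary fact that for two admissible subtori $S, S' \subseteq T$ one has $S \subseteq S'$ if and only if $J_S \supseteq J_{S'}$: indeed, by Lemma \ref{levisubgroups} (applied through \cref{graphdefinitions}), $S = \bigcap_{\epsilon \in J_S}\Ker(\epsilon)$, so if $J_S \supseteq J_{S'}$ then $S$ is an intersection of more kernels and hence $S \subseteq S'$; conversely, if $S \subseteq S'$ and $\epsilon \in J_{S'}$, then $S' \subseteq \Ker(\epsilon)$ forces $S \subseteq \Ker(\epsilon)$, i.e. $\epsilon \in J_S$. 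This uses only that $J_S$ is exactly the set of roots vanishing on $S$, which is the defining property in \cref{graphdefinitions}, together with the fact (from Lemma \ref{lemmaadmissible} and its proof) that an admissible torus is recovered as the intersection of the kernels of the roots in its root subsystem.

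Next I would unwind the definition of the ordering on admissible $\sigma$-stable graphs. By definition $\Gamma_S \leq \Gamma_{S'}$ means $\Omega_{\Gamma_S} \supseteq \Omega_{\Gamma_{S'}}$, i.e. $\Gamma_S$ has at least as many edges as $\Gamma_{S'}$. But by construction $\Gamma_S$ has an edge between $\epsilon_i$ and $\epsilon_j$ precisely when $\epsilon_{i,j} \in J_S \cap \Phi^+$, and since $\Phi = \Phi^+ \sqcup (-\Phi^+)$ and $J_S$ is stable under $\epsilon \mapsto -\epsilon$ (being a root subsystem), the edge set $\Omega_{\Gamma_S}$ is in bijection with $J_S \cap \Phi^+$, which determines $J_S$. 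Hence $\Omega_{\Gamma_S} \supseteq \Omega_{\Gamma_{S'}}$ is equivalent to $J_S \cap \Phi^+ \supseteq J_{S'}\cap\Phi^+$, which is equivalent to $J_S \supseteq J_{S'}$.

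Combining the two equivalences gives the result: $S \leq S' \iff S \subseteq S' \iff J_S \supseteq J_{S'} \iff J_S \cap \Phi^+ \supseteq J_{S'}\cap \Phi^+ \iff \Omega_{\Gamma_S} \supseteq \Omega_{\Gamma_{S'}} \iff \Gamma_S \leq \Gamma_{S'}$. The only mild subtlety — and the step I would be most careful about — is making sure the passage "$S \subseteq S' \Leftrightarrow J_S \supseteq J_{S'}$" is genuinely an equivalence and not just one implication; the forward direction is immediate, and the reverse direction is where one needs that $S$ is actually equal to $\bigcap_{\epsilon \in J_S}\Ker(\epsilon)$ rather than merely contained in it, which is exactly what Lemma \ref{lemmaadmissible} (via \cite[Lemma 8.4.2]{Springer}) supplies. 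Everything else is formal bookkeeping with the $\Phi^+/\Phi$ dictionary, so no serious obstacle is expected.
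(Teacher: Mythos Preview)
Your proof is correct and is precisely the unpacking of the paper's one-line justification ``From Lemma \ref{lemmaadmissible}, we deduce the following Proposition.'' The paper relies on the bijection $S \leftrightarrow \Gamma_S$ established there, and your argument spells out why that bijection is order-preserving via the intermediate step $S \subseteq S' \Leftrightarrow J_S \supseteq J_{S'}$, using exactly the identity $S = \bigcap_{\epsilon \in J_S}\Ker(\epsilon)$ that the paper records just before Lemma \ref{lemmaadmissible}.
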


From Proposition \ref{inclusionprop}, we deduce the following Lemma.

\begin{lemma}
\label{mobiusprahs}
For any $S,S' \in \mathcal{Z}$ such that $S,S' \subseteq T$, we have an equality
\begin{equation}
\mu_{A(\mathcal{B},\sigma)}(\Gamma_S,\Gamma_{S'})=\mu(S,S')\end{equation} 

\end{lemma}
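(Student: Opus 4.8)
The plan is to deduce the identity $\mu_{A(\mathcal{B},\sigma)}(\Gamma_S,\Gamma_{S'})=\mu(S,S')$ from Proposition \ref{inclusionprop}, which already establishes that the map $S\mapsto\Gamma_S$ is an isomorphism of posets between the admissible $F$-stable subtori contained in $T$ and the $\sigma$-stable admissible graphs with vertex set $\mathcal{B}$. In principle, an isomorphism of posets automatically carries Möbius functions to Möbius functions, so the proof should be essentially formal. The only subtlety is that $\mu(S,S')$ is defined as the Möbius function of the \emph{whole} poset $\mathcal{Z}$ (or $\mathcal{Z}_\alpha$), not of the sub-poset of tori contained in a \emph{fixed} maximal torus $T$, whereas $\mu_{A(\mathcal{B},\sigma)}$ is computed inside the poset $A(\mathcal{B},\sigma)$ which corresponds only to tori containing $T$. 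So the first step is to reconcile these two ambient posets.

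First I would invoke eq.(\ref{intervalmobius}): the value $\mu(S,S')$ depends only on the interval $[S,\infty]_{\mathcal{Z}}$, i.e. $\mu(S,S') = \mu_{[S,\infty]_{\mathcal{Z}}}(S,S')$. Now I claim that every admissible subtorus $S''$ with $S'' \geq S$ automatically satisfies $S'' \subseteq T$ is \emph{not} what we want; rather, since $S \supseteq T$ by hypothesis and $S''\supseteq S \supseteq T$, every element of the interval $[S,\infty]_{\mathcal{Z}}$ contains $T$, hence (by Lemma \ref{levisubgroups}, which describes such tori as intersections of kernels of roots in $\Phi(T)$) is one of the tori $S_\Gamma$ for $\Gamma$ a $\sigma$-stable admissible graph with vertex set $\mathcal{B}$ containing $\Gamma_S$. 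Thus Proposition \ref{inclusionprop} restricts to an isomorphism of posets $[S,\infty]_{\mathcal{Z}} \cong [\Gamma_S,\infty]_{A(\mathcal{B},\sigma)}$. Similarly eq.(\ref{intervalmobius}) gives $\mu_{A(\mathcal{B},\sigma)}(\Gamma_S,\Gamma_{S'}) = \mu_{[\Gamma_S,\infty]_{A(\mathcal{B},\sigma)}}(\Gamma_S,\Gamma_{S'})$.

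Then I would finish by the standard fact that Möbius functions are invariant under poset isomorphism: the defining recursion (eq.(\ref{mobius111})) together with the normalization $\mu(x,x)=1$ determines $\mu$ uniquely on any locally finite poset, and this recursion is preserved by any order isomorphism. Applying this to the isomorphism $[S,\infty]_{\mathcal{Z}} \cong [\Gamma_S,\infty]_{A(\mathcal{B},\sigma)}$ sending $S'\mapsto \Gamma_{S'}$, we conclude $\mu_{[S,\infty]_{\mathcal{Z}}}(S,S') = \mu_{[\Gamma_S,\infty]_{A(\mathcal{B},\sigma)}}(\Gamma_S,\Gamma_{S'})$, and combining with the two applications of eq.(\ref{intervalmobius}) yields $\mu(S,S') = \mu_{A(\mathcal{B},\sigma)}(\Gamma_S,\Gamma_{S'})$, as desired.

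The only genuine point requiring care — the part I would treat as the ``main obstacle,'' though it is minor — is the verification that the interval $[S,\infty]$ in the global poset $\mathcal{Z}$ really coincides with the interval computed among tori containing $T$; this needs the observation that $S\supseteq T$ forces every larger admissible torus to contain $T$ as well, so that no ``new'' tori from outside the span of $\Phi(T)$ enter the interval. Everything else is the formal transport of the Möbius recursion across the order isomorphism of Proposition \ref{inclusionprop}.
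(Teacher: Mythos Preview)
Your overall strategy --- transport the M\"obius function across the poset isomorphism furnished by Proposition~\ref{inclusionprop} --- is exactly what the paper has in mind. However, the argument as written contains a genuine error in the step you yourself flag as the ``main obstacle.''

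You write ``since $S \supseteq T$ by hypothesis.'' This is backwards: the hypothesis is $S,S'\subseteq T$ (an admissible torus is the center of a Levi, hence is contained in any maximal torus of that Levi; it cannot strictly contain a maximal torus). With the correct inclusion $S\subseteq T$, your claim that $[S,\infty]_{\mathcal{Z}}$ consists of tori contained in $T$ is false. For instance, take $\alpha$ with $|\alpha|\geq 2$ and $S=Z_{\alpha}$: then every $F$-stable maximal torus of $\Gl_{\alpha}$ is an admissible torus lying in $[Z_\alpha,\infty]_{\mathcal{Z}}$, but most of these are not contained in your fixed $T$. So the interval $[S,\infty]_{\mathcal{Z}}$ genuinely escapes the sub-poset of tori inside $T$, and your appeal to eq.~(\ref{intervalmobius}) does not suffice.

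The fix is immediate: replace $[S,\infty]$ by the closed interval $[S,S']$. The same recursion argument behind eq.~(\ref{intervalmobius}) shows that $\mu_{\mathcal{Z}}(S,S')$ depends only on $[S,S']_{\mathcal{Z}}$. Now every $S''\in[S,S']_{\mathcal{Z}}$ satisfies $S''\subseteq S'\subseteq T$, so this interval lies entirely inside the sub-poset of admissible $F$-stable tori contained in $T$. Proposition~\ref{inclusionprop} (together with Lemma~\ref{lemmaadmissible}) then identifies $[S,S']_{\mathcal{Z}}$ with $[\Gamma_S,\Gamma_{S'}]_{A(\mathcal{B},\sigma)}$ as posets, and the M\"obius functions agree.
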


\vspace{6 pt}

 Consider now an admissible graph $\Gamma \in A(\mathcal{B},\sigma)$ and  the admissible $F$-stable torus $S_{\Gamma}$ associated to $\Gamma$. We have the following proposition.

\begin{prop}
\label{propositionmultitypeadmissibletori}
With the notations introduced in \cref{madremia},  we have $$[S_{\Gamma}]=\omega_{\Gamma} .$$
\end{prop}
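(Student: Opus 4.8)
The plan is to unwind both sides of the claimed identity $[S_\Gamma]=\omega_\Gamma$ and observe that they are computed from literally the same combinatorial data attached to $\Gamma$. Recall from \cref{graphdefinitions} that $S_\Gamma=\bigcap_{\epsilon\in J_\Gamma}\Ker(\epsilon)$ with Levi subgroup $L_{S_\Gamma}=T\prod_{\epsilon\in J_\Gamma}U_\epsilon$, where $J_\Gamma$ consists of the roots $\epsilon_{j,h}$ joined by an edge of $\Gamma$. The first step is to decompose $L_{S_\Gamma}$ into its block structure: since $\Gamma$ is admissible, its connected components are complete graphs $K_{d}$, and the vertex set of each connected component $C$ spans a block $\Gl$-factor of $L_{S_\Gamma}$ whose size is $|\mathcal{B}\cap V(C)|$ in $\Gl_{|\alpha|}$, with the decomposition along $I$ given by $|\mathcal{B}_i\cap V(C)|$. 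Concretely, $L_{S_\Gamma}=\prod_C \Gl_{|V(C)\cap\mathcal{B}|}$, the product over connected components $C$ of $\Gamma$.

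Next I would track the Frobenius/$\sigma$-action. Since $\Gamma$ is $\sigma$-stable, $\sigma$ permutes the connected components of $\Gamma$; let $O_1,\dots,O_r$ be the orbits of lengths $d_1,\dots,d_r$, and for each $j$ pick a representative component with vertex set $\mathcal{B}^\Gamma_{j,1}$, so that $\beta_j\in\N^I$ is given by $(\beta_j)_i=|\mathcal{B}^\Gamma_{j,1}\cap\mathcal{B}_i|$, exactly as in \cref{madremia}. By the analysis of $F$-stable Levi subgroups of general linear groups (Example \ref{keyexampleLevi}, together with the description in Example \ref{toriandLevi} of how a cyclic $\sigma$-orbit of blocks produces a twisted factor), the orbit $O_j$ of $d_j$ blocks each of "shape" $\beta_j$ contributes an $\F_q$-factor $(\Gl_{|\beta_j|})_{d_j}$ to $L_{S_\Gamma}$, and hence a factor $(Z_{|\beta_j|})_{d_j}$ to its center $Z_{L_{S_\Gamma}}=S_\Gamma$. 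Therefore $S_\Gamma$ is $\Gl_\alpha(\F_q)$-conjugate to $(Z_{|\beta_1|})_{d_1}\times\cdots\times(Z_{|\beta_r|})_{d_r}=S_{\omega_\Gamma}$, where $\omega_\Gamma=\psi_{d_1}(\omega_{\beta_1})\ast\cdots\ast\psi_{d_r}(\omega_{\beta_r})$. By definition of $[\,\cdot\,]$ (the unique semisimple multitype with $S_\Gamma$ conjugate to $S_{[S_\Gamma]}$) this gives $[S_\Gamma]=\omega_\Gamma$.

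The only real subtlety — and the step I expect to need the most care — is matching the twisted $\F_q$-structures: one must check that a $\sigma$-orbit of $d_j$ complete-graph components, each with vertex-shape $\beta_j$, yields precisely the \emph{twisted} factor $(\Gl_{|\beta_j|})_{d_j}$ (with cyclic Frobenius) rather than a split $\Gl_{|\beta_j|}^{d_j}$, and that the induced center is $(Z_{|\beta_j|})_{d_j}$ rather than the larger split torus. This is exactly the phenomenon already recorded in Example \ref{toriandLevi} and Example \ref{keyexampleLevi}: the $F$-conjugacy class of the relevant Weyl-group element, read off from how $\sigma$ cyclically permutes the $d_j$ blocks, is the class of a $d_j$-cycle, which is what defines the twisted factor. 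Once this identification of Frobenius structures is in place, the equality $[S_\Gamma]=\omega_\Gamma$ is immediate from the constructions, since both sides package the same partition-of-$\mathcal{B}$-with-$\sigma$-action data. I would also remark that by Lemma \ref{lemmaadmissible} every $F$-stable admissible torus $S\subseteq T$ arises as some $S_\Gamma$, so this computes $[S]$ for all such $S$; combined with $\Gl_\alpha(\F_q)$-conjugacy this recovers $[S]$ in general.
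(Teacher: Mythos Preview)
The paper states this proposition without proof, treating it as immediate from the constructions in \cref{madremia} and \cref{graphdefinitions}. Your elaboration is correct and is exactly the argument one would give: both $[S_\Gamma]$ and $\omega_\Gamma$ are read off from the same data, namely the $\sigma$-orbits $O_1,\dots,O_r$ on the connected components of $\Gamma$ together with the $I$-graded sizes $(\beta_j)_i=|\mathcal{B}^\Gamma_{j,1}\cap\mathcal{B}_i|$.

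One small point worth tightening: when you pass from ``$L_{S_\Gamma}$ has $\F_q$-factors $(\Gl_{|\beta_j|})_{d_j}$'' to ``$S_\Gamma$ is $\Gl_\alpha(\F_q)$-conjugate to $S_{\omega_\Gamma}$'', you are implicitly using that the $\Gl_\alpha(\F_q)$-conjugacy class of an admissible torus is determined by the data $(d_j,\beta_j)_j$ and not merely by the $\F_q$-isomorphism type $(d_j,|\beta_j|)_j$ of $L_{S_\Gamma}$. This is exactly what the Lemma immediately preceding the proposition asserts (existence and uniqueness of $[E]$), and your identification of the $\beta_j$ via $\mathcal{B}^\Gamma_{j,1}\cap\mathcal{B}_i$ is what pins down the correct multitype among those with the same underlying Levi type in $\Gl_{|\alpha|}$. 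So the argument is complete once you invoke that Lemma; you might just make that dependence explicit.
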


\vspace{8 pt}

Fix now $\Gamma',\Gamma \in A(\mathcal{B},\sigma)$ such that $\Gamma \leq \Gamma'$, and denote by $S=S_{\Gamma}$ and $S'=S_{\Gamma'}$. We use the notations introduced before Lemma \ref{mobiustorigraphs}. We have $S \subseteq S'$. Assume that the torus $S$ is $\Gl_{\alpha}(\F_q)$ conjugated to $\displaystyle \prod_{j=1}^r (Z_{\beta_j})_{d_j}$. 

The permutation $\sigma^{d_j}:\mathcal{B}^{\Gamma}_{j,1} \to \mathcal{B}^{\Gamma}_{j,1}$ determines an associated $F$-stable subtorus $T_j \subseteq \Gl_{\beta_j}$, as explained in \cref{twistedFrobeniuspara},\cref{twistedcharactergroup}. The admissible graphs $\Gamma'_{j,1}$ correspond to admissible tori $S'_j \subseteq T_j \subseteq \Gl_{\beta_j}$, for each $j=1,\dots,r$. 

From Lemma \ref{mobiusprahs} and Lemma \ref{mobiustorigraphs}, we deduce the following Proposition
\begin{prop}
\label{inclusion}
We have an equality
\begin{equation}
\displaystyle \mu(S,S')=\prod_{j=1}^r \mu(Z_{\beta_j},S'_j)=\prod_{j=1}^r C^o_{[S'_j]},
\end{equation}
where the last equality is a consequence of Proposition \ref{reviewhanlon}.
\end{prop}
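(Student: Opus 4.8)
The strategy is to translate the statement about the Möbius function of admissible subtori into a statement about the Möbius function of admissible $\sigma$-stable graphs, where the product decomposition is transparent, and then translate back. First I would invoke Lemma \ref{mobiusprahs} to replace $\mu(S,S')$ by $\mu_{A(\mathcal{B},\sigma)}(\Gamma_S,\Gamma_{S'})$, which is legitimate since both $S=S_\Gamma$ and $S'=S_{\Gamma'}$ lie in the fixed maximal torus $T$ by construction, and $\Gamma_S=\Gamma$, $\Gamma_{S'}=\Gamma'$ by Lemma \ref{lemmaadmissible}. Next, Lemma \ref{mobiustorigraphs} gives the factorization
\begin{equation}
\mu_{\mathcal{B},\sigma}(\Gamma,\Gamma')=\prod_{j=1}^r \mu_{\mathcal{B}^{\Gamma}_{j,1},\sigma^{d_j}}(\Gamma_{j,1},\Gamma'_{j,1}),
\end{equation}
using exactly the decomposition $\mathcal{B}=\bigsqcup_{j=1}^r\mathcal{B}^{\Gamma}_j$ with $\mathcal{B}^{\Gamma}_j=\bigsqcup_{h=1}^{d_j}\mathcal{B}^{\Gamma}_{j,h}$ set up before Lemma \ref{mobiustorigraphs}, and the fact that $\Gamma_{j,1}$ is the complete graph on $\mathcal{B}^{\Gamma}_{j,1}$, i.e. $\Gamma_{j,1}=\Gamma_{\beta_j}$ in the notation of \cref{madremia} applied to the vertex set $\mathcal{B}^{\Gamma}_{j,1}$ with permutation $\sigma^{d_j}$.

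The middle step is to identify each factor $\mu_{\mathcal{B}^{\Gamma}_{j,1},\sigma^{d_j}}(\Gamma_{j,1},\Gamma'_{j,1})$ with $\mu(Z_{\beta_j},S'_j)$. Here one uses that $\sigma^{d_j}$ acts on $\mathcal{B}^{\Gamma}_{j,1}$ and, as recalled in the paragraph preceding the Proposition, this determines an $F$-stable maximal torus $T_j\subseteq\Gl_{\beta_j}$ via the constructions of \cref{twistedFrobeniuspara},\cref{twistedcharactergroup}; the admissible $\sigma^{d_j}$-stable graph $\Gamma_{j,1}$ then corresponds to $Z_{\beta_j}\subseteq\Gl_{\beta_j}$ (since the complete graph corresponds to the center, by the Example stating $\Gamma_{Z_\beta}=\Gamma_\beta$), and $\Gamma'_{j,1}$ corresponds to $S'_j\subseteq T_j$. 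Applying Lemma \ref{mobiusprahs} again, this time inside $\Gl_{\beta_j}$, yields $\mu_{\mathcal{B}^{\Gamma}_{j,1},\sigma^{d_j}}(\Gamma_{j,1},\Gamma'_{j,1})=\mu(Z_{\beta_j},S'_j)$, so that $\mu(S,S')=\prod_{j=1}^r\mu(Z_{\beta_j},S'_j)$. The final equality $\mu(Z_{\beta_j},S'_j)=C^o_{[S'_j]}$ is then read off from Proposition \ref{reviewhanlon}: the graph $\Gamma'_{j,1}$ has complete graph $\Gamma_{\beta_j}$ below it, so $\mu_{A(\mathcal{B}^{\Gamma}_{j,1},\sigma^{d_j})}(\Gamma_{\beta_j},\Gamma'_{j,1})=C^o_{\omega_{\Gamma'_{j,1}}}$, and Proposition \ref{propositionmultitypeadmissibletori} gives $\omega_{\Gamma'_{j,1}}=[S_{\Gamma'_{j,1}}]=[S'_j]$.

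I expect the main obstacle to be bookkeeping rather than conceptual difficulty: one must verify carefully that the torus $S'_j\subseteq T_j\subseteq\Gl_{\beta_j}$ attached to the restricted graph $\Gamma'_{j,1}$ really is admissible in $\Gl_{\beta_j}$ and that its conjugacy-class multitype is $[S'_j]$ as used in the statement — i.e. that the ``restriction to a connected-component orbit'' operation on graphs corresponds correctly, under the dictionary of Lemma \ref{lemmaadmissible}, to passing from $S\subseteq\Gl_\alpha$ to its block factor in $\prod_j(\Gl_{\beta_j})_{d_j}$. This compatibility is essentially forced by Remark \ref{remarklevistarbucks} together with the explicit block-diagonal description of $\widetilde{L_\omega}$, but it requires matching the twisted $\F_q$-structure $\sigma^{d_j}F$ on $T_j$ with the Frobenius on $(\Gl_{\beta_j})_{d_j}$ as in Example \ref{twisted}. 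Once this identification is in place, the three applications of Lemma \ref{mobiusprahs} (globally, then on each factor) and the invocation of Proposition \ref{reviewhanlon} assemble into the claimed formula.
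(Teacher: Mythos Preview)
Your proposal is correct and follows exactly the route the paper indicates: the paper presents this Proposition as an immediate consequence of Lemma \ref{mobiusprahs} and Lemma \ref{mobiustorigraphs}, with the last equality coming from Proposition \ref{reviewhanlon}, and your plan simply unpacks these deductions in detail (including the identification $\omega_{\Gamma'_{j,1}}=[S'_j]$ via Proposition \ref{propositionmultitypeadmissibletori}). The bookkeeping concerns you raise are legitimate but are precisely what the paragraph preceding the Proposition sets up, so no additional argument is required.
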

\vspace{6 pt}

\begin{esempio}
\label{exampletoriputlate}
Consider the set $I=\{1,2,3,4\}$, the dimension vector $\alpha=(2,1,1,1)$ and the admissible tori $S_1,S_2,S_3 \in \mathcal{Z}_{\alpha}$ of Example \ref{exampleadmissible2}. Notice that $S_1,S_2,S_3$ are all contained in the maximal torus $T=T_2 \times \mathbb{G}_m \times \mathbb{G}_m \times \mathbb{G}_m$, where $T_2 \subseteq \Gl_2$ is the maximal torus of diagonal matrices. More precisely, we have that $T=S_3$.

With the notations just introduced, we have $\mathcal{B}=\{ \epsilon_1,\epsilon_2, \epsilon_3,\epsilon_4, \epsilon_5\}$, $\sigma$ is the identity and $$\mathcal{B}_1=\{\epsilon_1,\epsilon_2\} \ \ \  \mathcal{B}_2=\{\epsilon_3\} $$ and $$\mathcal{B}_3=\{\epsilon_4\} \ \ \ \mathcal{B}_4=\{\epsilon_5\} .$$

The graph $\Gamma_{S_1}$ associated to the torus $S_1$ is 
\begin{center}
\begin{tikzcd}
\epsilon_1 \arrow[dash]{r} \arrow[bend left=30, dash]{rr} 
\arrow[bend left=40, dash]{rrr}
&\epsilon_3 \arrow[dash]{r} \arrow[bend right=30, dash]{rr}  &\epsilon_4 \arrow[dash]{r} &\epsilon_5\\
\epsilon_2
\end{tikzcd}
\end{center}

The graph $\Gamma_{S_2}$ associated to the torus $S_2$ is 

\begin{center}
\begin{tikzcd}
\epsilon_1 \arrow[dash]{r} \arrow[bend left=50, dash]{rrr} &\epsilon_3 \arrow[bend left=30, dash]{rr}  &\epsilon_4  &\epsilon_5\\
\epsilon_2 \arrow[dash]{urr}
\end{tikzcd}
\end{center}

The graph associated to $S_3$ is 
\begin{center}
\begin{tikzcd}
\epsilon_1  &\epsilon_3  &\epsilon_4  &\epsilon_5\\
\epsilon_2
\end{tikzcd}
\end{center}

Notice that $\Gamma_{S_1},\Gamma_{S_2} \leq \Gamma_{S_3}$ and we have corresponding inclusions $S_1,S_2 \subseteq S_3$

\end{esempio}

\vspace{6 pt}

\begin{esempio}
Consider $I=\{1,2\}$ and the dimension vector $\alpha=(2,2) \in \N^I$, i.e. $\Gl_{\alpha}=\Gl_2 \times \Gl_2$ and let $T \subseteq \Gl_{\alpha}$ be the torus $$T=T_{\epsilon} \times T_{\epsilon} \subseteq \Gl_2 \times \Gl_2 ,$$
where $T_{\epsilon} \subseteq \Gl_2$ is the torus of Example \ref{twistedtorusGl_2}. In this case, we have $\mathcal{B}=\{\epsilon_1,\epsilon_2,\epsilon_3,\epsilon_4\}$ with $$\mathcal{B}_1=\{\epsilon_1,\epsilon_2\}  \ \ \ \mathcal{B}_2=\{\epsilon_3,\epsilon_4\} ,$$ and $\sigma$ is the permutation $\sigma=(1 2) (3 4) \in S_4$.

Let $S \subseteq \Gl_{\beta}$ be the admissible subtorus $$S=\Biggl\{\left(\begin{pmatrix}
\lambda &0\\ 0 &\lambda    
\end{pmatrix},\begin{pmatrix}
\mu &0\\ 0 &\mu    
\end{pmatrix}\right) \ | \ \lambda, \mu \in \overline{\F_q}^*\Biggr\} .$$

We have that $S \subseteq T$ and the graph $\Gamma_S$ is given by

\begin{center}
\begin{tikzcd}
\epsilon_1 \arrow[dash]{d} &\epsilon_3 \arrow[dash]{d}\\
\epsilon_2 &\epsilon_4
\end{tikzcd}
\end{center}

Notice that $\Gamma_S$ has two connected components which are both stabilized by $\sigma$. With the notations introduced before, we have therefore two orbits $O_1,O_2$ with $\mathcal{B}^{\Gamma_S}_1=\mathcal{B}_1$ and $\mathcal{B}^{\Gamma_S}_2=\mathcal{B}_2$ and $d_1=d_2=1$. Denote by $\Gamma_{S,1},\Gamma_{S,2}$ the restriction of $\Gamma_S$ to $\mathcal{B}_1,\mathcal{B}_2$ respectively.

Notice moreover that the associated elements $\beta_1,\beta_2$ are given by $$ \beta_1=(2,0) \ \ \ \beta_2=(0,2) $$ and, from Proposition \ref{propositionmultitypeadmissibletori} we have that $$[S]=\omega_{\beta_1} \ast \omega_{\beta_2} .$$

The torus $S$ is indeed $Z_{\beta_1} \times Z_{\beta_2}$ embedded block diagonally in $\Gl_{\alpha}$. 
\vspace{6 pt}

Moreover, from Proposition \ref{inclusion}, we deduce that
\begin{equation}
\label{exampleziopera}\mu(S,T)=\mu_{\mathcal{B}_1,\sigma}(\Gamma_{S,1},\Gamma_{T_{\epsilon}})\mu_{\mathcal{B}_2,\sigma}(\Gamma_{S,2},\Gamma_{T_{\epsilon}})=\mu(Z_{2},T_{\epsilon})^2 ,\end{equation} where $Z_2=Z_{\Gl_2}$.

Eq.(\ref{exampleziopera}) can be checked directly from the definition of the Möbius function $\mu$. We have $$\{Z_{\beta_1} \times Z_{\beta_2}, Z_{\beta_1} \times T_{\epsilon},T_{\epsilon} \times Z_{\beta_2}\}=\{S'' \in \mathcal{Z}_{\alpha} \ | \ Z_{\beta_1} \times Z_{\beta_2} \leq S'' \lneqq T_{\epsilon} \times T_{\epsilon} \} .$$

From. eq(\ref{mobius111}), we deduce that we have $$\mu(Z_{\beta_1} \times Z_{\beta_2},Z_{\beta_1} \times T_{\epsilon})=\mu(Z_{\beta_1} \times Z_{\beta_2},T_{\epsilon} \times Z_{\beta_2})=\mu(Z_2,T_{\epsilon})=-1 $$ and thus from eq.(\ref{mobius111}) that we have

$$\mu(S,T_{\epsilon} \times T_{\epsilon})=-(-1 -1+1)=1=\mu(Z_2,T_{\epsilon})^2 .$$
\vspace{6 pt}

Consider now the admissible and $\sigma$-stable graph $\Gamma'$ given by 
\begin{center}
\begin{tikzcd}
\epsilon_1 \arrow[dash]{dr} &\epsilon_3 \arrow[dash]{dl}\\
\epsilon_2 &\epsilon_4
\end{tikzcd}
\end{center}
and denote by $S'=S_{\Gamma'}$. The torus $S'$ is given by \begin{equation}\label{expressionS'}
S'=\Biggl\{\left(\dfrac{1}{x^q-x}\begin{pmatrix}
ax^q-bx &-a+b\\
(a-b)xx^q &-ax+bx^q
\end{pmatrix},\dfrac{1}{x^q-x}\begin{pmatrix}
ax^q-bx &-a+b\\
(a-b)xx^q &-ax+bx^q
\end{pmatrix}\right) \ | \ a,b \in \overline{\F_q}^* \Biggr\}  .\end{equation}

In this case, the graph $\Gamma'$ has $2$ connected components, which are swapped by $\sigma$. We have therefore a single orbit $O_1$ of length $d_1=2$  with $$\mathcal{B}^{\Gamma'}_{1,1}=\{\epsilon_1,\epsilon_3\} \ \ \  \mathcal{B}^{\Gamma'}_{1,2}=\{\epsilon_2,\epsilon_4\} .$$

Notice that the associated dimension vector $\beta_1'$ is $\beta_1'=(1,1)$.
 Proposition \ref{propositionmultitypeadmissibletori} states therefore that $S'$ is $\Gl_{\alpha}(\F_q)$-conjugated to the torus $$(Z_{(1,1)})_2 \subseteq \Gl_{\alpha} ,$$ which is also directly seen by the expression of $S'$ in eq.(\ref{expressionS'}).

Notice that $$\sigma^2=\mathrm{Id}:\mathcal{B}^{\Gamma'}_{1,1} \to \mathcal{B}^{\Gamma'}_{1,1} .$$ From Lemma \ref{mobiusprahs} and Lemma \ref{mobiustorigraphs}, we find therefore that $$\mu(S',T)=\mu_{\mathcal{B}^{\Gamma'}_{1,1},\mathrm{Id}}(\Gamma'_{1,1},\Gamma_{T_2})=\mu(Z_{(1,1)},Z_{(1,0)} \times Z_{(0,1)}) .$$ 

\end{esempio}

\subsection{Inclusion of admissible subtori}
\label{inclusion1}

Let $\nu,\nu' \in \mathbb{T}^{ss}_{\alpha}$ . Fix a maximal torus $T$ such that $S_{\nu'}\subseteq T \subseteq \Gl_{\alpha}.$ Define the set $P_{\nu,\nu'}$ as  $$P_{\nu,\nu'}\coloneqq \{S \in \mathcal{Z}_{\alpha} \   \ | \ [S]=\nu \ , S \leq S_{\nu'}\} .$$ In this paragraph we give a combinatorial description of $P_{\nu,\nu'}$ which will be used in the proof of Theorem \ref{mainteo}. 

\vspace{10 pt}

Assume that  $$\nu=\psi_{d_1}(\omega_{\beta_1})\ast\cdots \ast\psi_{d_r}(\omega_{\beta_r})$$ and $$\nu'=\psi_{d'_1}(\omega_{\beta'_1})\ast\cdots\ast\psi_{d_t'}(\omega_{\beta'_t}) $$ respectively. 

Up to reordering the factors in the product $\nu=\prod_{j=1}^r \psi_{d_j}(\omega_{\beta_j})$, we can assume that there exists a strictly increasing sequence $i_1< \dots <i_k \in \{1,\dots, r\}$   such that:
\begin{itemize}

\item $(d_j,\beta_j)=(d_1,\beta_1)$ for $j=1,\dots,i_1$

\item $(d_j,\beta_j)=(d_{i_p},\beta_{i_p})$ for all $i_{p-1} < j \leq i_{p}$ for $p \in \{2,\dots,k\}$

\item $(d_{i_p},\beta_{i_p}) \neq (d_{i_s},\beta_{i_s})$ for any $p \neq s$.
\end{itemize}

We have that $i_1=\nu((d_1,\beta_1))$ and, for each $h \in \{2,\dots,k\}$, we have that $\displaystyle i_h-i_{h-1}=\nu((d_{i_h},\beta_{i_h}))$. Let $M_{\nu,\nu'}$ be the set  of partitions  of $\{1,\dots ,t \}$ into $r$ non-empty disjoint subsets $X_1,\dots,X_r$ with the following properties:

\begin{itemize}
\item If $h$ belongs to  $X_i$, then $d_i |d'_h$
\item For every $j=1,\dots, r$, it holds $\displaystyle \sum_{h \in X_j}\dfrac{d'_h}{d_j} \beta'_h=\beta_j$.

\end{itemize}

We will denote the element of $M_{\nu,\nu'}$ associated to the subsets $X_1,\dots,X_r$ by $(X_1,\dots,X_r)$.

Consider now the group $W'_{\nu}$ defined as $$W'_{\nu} \coloneqq S_{\nu((d_{i_1},\beta_{i_1}))} \times \cdots \times S_{\nu((d_{i_k},\beta_{i_k}))} .$$ The set $M_{\nu,\nu'}$ is endowed with an action of the group $W'_{\nu}$ 
defined  as follows. Consider an element $\sigma=(\sigma_1,\dots,\sigma_k) \in W'_{\nu}$ and an element $(X_1,\dots,X_r) \in M_{\nu,\nu'}$. We put $$\sigma \cdot (X_1,\dots,X_r) \coloneqq (X_{\sigma_1(1)},\dots,X_{\sigma_1(i_1)},X_{\sigma_2(i_1+1)},\dots,X_{\sigma_k(r)}) .$$

The group $W_{\nu}'$ acts freely on $M_{\nu,\nu'}$. We denote by $\overline{M_{\nu,\nu'}}$ the quotient set $M_{\nu,\nu'}/W'_{\nu}$. We will now define the following morphism $$\pi_{\nu,\nu'}:P_{\nu,\nu'} \to \overline{M_{\nu,\nu'}} .$$

\vspace{8 pt}

We denote 
 by $\Gamma'$  the graph associated to $S_{\nu'}$ with respect to the torus $T$. Let $\{\mathcal{B}_{j,h}^{\Gamma'}\}_{\substack{j=1,\dots,t \\ h=1,\dots,d_j'}}$ be the partition of the set $\mathcal{B}$ introduced in paragraph \cref{graph} for the graph $\Gamma'$.
 
 Consider an $F$-stable admissible torus $S \subseteq S_{\nu'}$ with $[S]=\nu$ and the corresponding  $\sigma$-stable  graph $\Gamma \leq \Gamma'$, i.e. $\Omega_{\Gamma} \supseteq \Omega_{\Gamma'}$.  

Let $O_1,\dots,O_r$ be the $r$ orbits for the action of $\sigma$ on the connected components of $\Gamma$ of length $d_1,\dots,d_r$ respectively and assume to have fixed representatives $\Gamma_{1,1},\dots,\Gamma_{r,1}$ for each of the orbits.

For each $j=1,\dots,r$, there exists a subset $X_j \subseteq \{1,\dots,t\}$ and, for each $l \in X_j$, a subset $Z_l \subseteq \{1,\dots,d'_l\}$, such that $\Gamma_{j,1}$ is the complete graph with vertices $$\displaystyle \bigsqcup_{l \in X_j}\bigsqcup_{z \in Z_l}\mathcal{B}^{\Gamma'}_{l,z} .$$

The subsets $X_j$ do not depend on the choice of the representatives $\Gamma_{1,1},\dots,\Gamma_{r,1}$ and form a partition of the set $\{1,\dots,t\}$. The partition $(X_1,\dots,X_r)$ belongs to $M_{\nu,\nu'}$. 

Indeed, since the orbit $O_j$ has length $d_j$, we must have that $$\sigma^s\left(\displaystyle \bigsqcup_{l \in X_j}\bigsqcup_{z \in Z_l}\mathcal{B}^{\Gamma'}_{l,z}\right) \cap \displaystyle \bigsqcup_{l \in X_j}\bigsqcup_{z \in Z_l}\mathcal{B}^{\Gamma'}_{l,z}=\emptyset $$  
for any $0 < s \leq d_j-1$ and $$\sigma^{d_j}\left(\displaystyle \bigsqcup_{l \in X_j}\bigsqcup_{z \in Z_l}\mathcal{B}^{\Gamma'}_{l,z}\right)= \displaystyle \bigsqcup_{l \in X_j}\bigsqcup_{z \in Z_l}\mathcal{B}^{\Gamma'}_{l,z} .$$ Recall that $\sigma(\mathcal{B}^{\Gamma'}_{l,z})=\mathcal{B}^{\Gamma'}_{l,z+1}$, where the index $z$ of $\mathcal{B}^{\Gamma'}_{l,z}$ is  considered modulo $d_l'$. We deduce therefore that $Z_l$ is such that $$(Z_l +s) \cap Z_s=\emptyset \mod d_j $$ for each $ 0 < s \leq d_j-1$ and $$Z_l+d_j=Z_j \mod d_j .$$

This implies that $d_j | d_l'$ and that there exists $a_l \in Z_l$ such that $$Z_l=\left\{a_l+d_jk \ | \ k=1,\dots, \dfrac{d_l'}{d_j}\right\} .$$ 
 In particular, it holds that $|Z_l|=\dfrac{d_l'}{d_j}$, from which we deduce that $$\sum_{l \in X_j}\dfrac{d_l'}{d_j}\beta'_l=\beta_j .$$

We define then $$\pi_{\nu,\nu'}(S)\coloneqq [(X_1,\dots,X_r)] $$

where $[(X_1,\dots,X_r)]$ is the class of the element $(X_1,\dots,X_r)$ in the quotient $\overline{M_{\omega_1,\omega_2}}$.

The morphism $\pi_{\nu,\nu'}$ is well-defined, i.e. does not depend on the choice of an ordering of the orbits $O_1,\dots,O_r$  and it is surjective as we are taking the class $[(X_1,\dots,X_r)]$ in the quotient $\overline{M_{\nu,\nu'}}$ for the action of $W_{\nu}'$.

\vspace{10 pt}

From the description of the subsets $Z_l$ given above, we deduce that, for each $[(X_1,\dots,X_r)]$, the fiber $\pi_{\nu,\nu'}^{-1}([(X_1,\dots,X_r)])$ has cardinality \begin{equation}
\label{cardinality}
|\pi^{-1}_{\nu,\nu'}([(X_1,\dots,X_r)])|=\prod_{j=1}^r d_j^{|X_j|-1}.
\end{equation}

\section{Log compatible functions and plethystic identities}

\label{plethysticidentities}

In this Section, we recall the Definition of a Log compatible family, first introduced by Letellier \cite{letellierDT} and we prove our main Theorem \ref{mainteo} about these families, which will be the key tool  to compute the E-series  of non-generic character stacks.

We first introduce the following notation.

\vspace{6 pt}
Consider  a family of rational functions indexed by multypes  $\{F_{\omega}(t)\}_{\omega \in \mathbb{T}_{I}} \subseteq \Q(t)$. For any $V \subseteq \N^I$, we define the rational function $F_{\alpha,V}(t) \in \Q(t)$ in the following way:

\begin{equation}
\label{polynomialV}
F_{\alpha,V}(t)\coloneqq \sum_{\omega \in \mathbb{T}_{\alpha}}\dfrac{F_{\omega}(t)}{w(\omega)}\left(\sum_{\substack{S' \leq S_{\omega} \\ S' \text{ of level } V}}P_{[S']}(t) \mu(S',S_{\omega})\right).
\end{equation}

For $V=\{\alpha\}$ we will use the notation $F_{\alpha,gen}(t) \coloneqq F_{\alpha,\{\alpha\}}(t)$. From Proposition \ref{reviewhanlon}, we have an equality $$F_{\alpha,gen}(t)=\sum_{\omega \in \mathbb{T}_{\alpha}} \dfrac{F_{\omega}(t)}{w(\omega)}(t-1)\mu(Z_{\alpha},S_{\omega})=\sum_{\omega \in \mathbb{T}_{\alpha}}\dfrac{F_{\omega}(t)}{w(\omega)}(t-1)C^o_{\omega^{ss}}  .$$

\subsection{Plethysm and log compatibility: main result}

We give the following definition of a Log compatible family $\{F_{\omega}(t)\}_{\omega \in \mathbb{T}_I}$.

\begin{definizione}
\label{Logcompatible}
We say that $\{F_{\omega}(t)\}_{ \omega \in \mathbb{T}_I}$ is \textit{Log compatible} if for any $\alpha \in \N^I$, $\omega \in \mathbb{T}_{\alpha}$ and for every  multitypes $\nu_1,\dots,\nu_r$ and integers $d_1,\dots,d_r$ such that $\psi_{d_1}(\nu_1) \ast \cdots \ast \psi_{d_r}(\nu_r)=\omega$, we have $$ \prod_{j=1}^r F_{\nu_j}(t^{d_j})=F_{\omega}(t) .$$
\end{definizione}

  We have the following theorem:

\begin{teorema}
\label{mainteo}
For a Log compatible family $\{F_{\omega}(t)\}_{\omega \in \mathbb{T}_I}$ and any $V \subseteq \N^I$, we have the following equality:

\begin{equation}
\label{mainteo2}
\Plexp\left(\sum_{\beta \in V} F_{\beta,gen}(t)y^{\beta}\right)=\sum_{\alpha \in \N^I}F_{\alpha,V}(t)y^{\alpha}.
\end{equation}

\end{teorema}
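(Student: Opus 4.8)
The plan is to prove Theorem \ref{mainteo} by expanding the left-hand side using the combinatorial description of the plethystic exponential (Lemma \ref{plethysm3}, or rather Lemma \ref{Vgenlevi} at the level of the multitype $\lambda$-ring $\hat{\mathcal{K}}^{ss}_I$), and then matching the result coefficient-by-coefficient in $y^\alpha$ with the explicit formula \eqref{polynomialV} for $F_{\alpha,V}(t)$, using Log compatibility to assemble the per-leg factors into $F_\omega(t)$. First I would rewrite $F_{\beta,gen}(t)$ via Proposition \ref{reviewhanlon} as $\sum_{\omega\in\mathbb{T}_\beta} \frac{F_\omega(t)}{w(\omega)}(t-1)C^o_{\omega^{ss}}$, but it is cleaner to work in $\hat{\mathcal{K}}^{ss}_I$: define the element $\mathbf{F} := \sum_{\alpha}\big(\sum_{\omega\in\mathbb{T}^{ss}_\alpha}\text{(something)}\,\omega\big)y^\alpha$ so that applying the "evaluation" homomorphism $\omega\mapsto$ (the rational function built from $F$ and $P_{[\cdot]}$) recovers both sides. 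Concretely, the strategy is: apply $\Plexp$ to $\sum_{\beta\in V}\omega_\beta\, \tilde F_\beta\, y^\beta$ inside $\hat{\mathcal{K}}^{ss}_I$, where $\tilde F_\beta$ is a scalar encoding $F_{\beta,gen}(t)$, use Lemma \ref{Vgenlevi} to get $\sum_{\omega \text{ of level }V}\frac{\omega}{w(\omega)}y^{|\omega|}$ decorated with the appropriate product of $\tilde F$'s, and then evaluate.

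The key steps, in order: (1) By Lemma \ref{Vgenlevi} applied in the $\lambda$-ring $\hat{\mathcal{K}}^{ss}_I$, expand $\Plexp\big(\sum_{\beta\in V} F_{\beta,gen}(t)\,\omega_\beta\, y^\beta\big)$ as a sum over semisimple multitypes $\omega = \psi_{d_1}(\omega_{\beta_1})\ast\cdots\ast\psi_{d_r}(\omega_{\beta_r})$ of level $V$, with coefficient $\frac{1}{w(\omega)}\prod_{j=1}^r F_{\beta_j,gen}(t^{d_j})$ times $y^{|\omega|}$; here the Adams operation $\psi_{d}$ sends the scalar $F_{\beta,gen}(t)$ to $F_{\beta,gen}(t^d)$, which is the crucial compatibility of the $\lambda$-structure with the substitution $t\mapsto t^d$. (2) Using the definition of $F_{\beta_j,gen}(t)$ and Proposition \ref{reviewhanlon}, rewrite each factor $F_{\beta_j,gen}(t^{d_j})$ as $\sum_{\omega_j\in\mathbb{T}_{\beta_j}}\frac{F_{\omega_j}(t^{d_j})}{w(\omega_j)}(t^{d_j}-1)\,C^o_{\omega_j^{ss}}$; then, invoking Log compatibility, the product $\prod_j F_{\omega_j}(t^{d_j})$ collapses to $F_{\omega'}(t)$ where $\omega' = \psi_{d_1}(\omega_1)\ast\cdots\ast\psi_{d_r}(\omega_r)$ ranges over all multitypes (not necessarily semisimple) with $(\omega')^{ss}$ of level $V$. (3) On the right-hand side, expand $F_{\alpha,V}(t)$ via \eqref{polynomialV}, and use Proposition \ref{inclusion} together with \eqref{cardinality} and the fiber structure of the map $\pi_{\nu,\nu'}: P_{\nu,\nu'}\to\overline{M_{\nu,\nu'}}$ to rewrite the inner sum $\sum_{S'\leq S_\omega,\ S'\text{ of level }V}P_{[S']}(t)\mu(S',S_\omega)$ as a sum over partitions into legs, so that each term factors as a product of local contributions $(t^{d_j}-1)C^o$. (4) Match the two combinatorial expansions: the sum over $(\omega; d_1,\beta_1,\dots,d_r,\beta_r)$ with $\sum d_j\beta_j$-type bookkeeping on the left must agree with the sum over $(\omega; S'\leq S_\omega)$ on the right, the dictionary being exactly Proposition \ref{inclusion} and the freeness of the $W'_\nu$-action that makes $w(\omega)$ and the factor $\prod d_j^{|X_j|-1}$ conspire to the right multiplicities.

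The main obstacle I expect is step (3)–(4): reconciling the bookkeeping on the two sides. On the $\Plexp$ side the natural index is an unordered collection of pairs $(d_j,\omega_j)$ (coming from the product expansion of the plethystic exponential, with symmetry factor $w(\omega)$), whereas on the $F_{\alpha,V}$ side the natural index is a multitype $\omega$ of size $\alpha$ together with an admissible subtorus $S'\leq S_\omega$ of level $V$, weighted by $\mu(S',S_\omega)$ and $P_{[S']}(t)$. The identification of these two indexing sets is precisely the content of the combinatorics of \cref{inclusion1}: the fibers of $\pi_{\nu,\nu'}$ have size $\prod_j d_j^{|X_j|-1}$ (eq. \eqref{cardinality}), and Proposition \ref{inclusion} expresses $\mu(S',S_\omega)$ as $\prod_j C^o_{[S'_j]}$, which is exactly what is produced by differentiating the plethystic logarithm/Möbius inversion on each leg. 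Making this correspondence precise — in particular tracking how the central vertex (the $g$ loops) and the legs interact, and verifying that the level condition "of level $V$" is preserved on both sides — is where the real work lies; everything else is formal manipulation in the $\lambda$-ring $\hat{\mathcal{K}}^{ss}_I$. I would organize the argument so that steps (1)–(2) reduce the identity to a purely combinatorial statement about $\mathcal{Z}$, $\mathbb{T}^{ss}_I$, and the Möbius function, and then cite Propositions \ref{reviewhanlon} and \ref{inclusion} together with eq. \eqref{cardinality} to close it.
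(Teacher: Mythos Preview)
Your proposal is correct and follows essentially the same route as the paper's proof: reduce via the $\lambda$-ring homomorphism $\mathcal{K}^{ss}_I\to\Q(t)$ and Lemma~\ref{Vgenlevi} to an identity indexed by semisimple multitypes of level $V$, expand each $F_{\beta_j,gen}(t^{d_j})$ and use Log compatibility to collapse the product into $F_{\omega}(t)$, then match with the RHS using Proposition~\ref{inclusion}, the map $\pi_{\nu,\nu'}$, and the fiber count~\eqref{cardinality}. One small remark: your aside about ``tracking how the central vertex (the $g$ loops) and the legs interact'' is extraneous here, since Theorem~\ref{mainteo} is a statement about an arbitrary finite set $I$ with no quiver structure; the star-shaped quiver only enters later when the theorem is applied.
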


\vspace{2 pt}

\begin{oss}
The notion of a Log compatible family and the definition of the polynomials $F_{\alpha,gen}(t)$ had already been introduced in \cite[Paragraph 2.1.2]{letellierDT}

Letellier  \cite[Theorem 2.2]{letellierDT} used these objects to show the case in which $V=\N^I$ of Theorem \ref{mainteo}. However, Letellier's proof does not seem to extend immediately to the case of any level $V$.

 Notice that, for $V=\N^I$, we have that $\displaystyle F_{\alpha,\N^I}(t)=\sum_{\omega \in \mathbb{T}_{\alpha}}\dfrac{F_{\omega}(t)}{w(\omega)}|(S_{\omega}^{reg})^F|$.

To obtain the equality $$\Plexp\left(\sum_{\beta \in \N^I} F_{\beta,gen}(t)y^{\beta}\right)=\sum_{\alpha \in \N^I}\sum_{\omega \in \mathbb{T}_{\alpha}}\dfrac{F_{\omega}(t)}{w(\omega)}|(S_{\omega}^{reg})^F| ,$$ the author \cite{letellierDT} uses in a key way two classical lemmas regarding plethystic operations, i.e. \cite[Lemma 2.1]{letellierDT} and \cite[Lemma 22]{mozgovoy}.

These two latter statements do not have an analog when we truncate the sum inside the plethystic exponential $\displaystyle\Plexp\left(\sum_{\beta \in V} F_{\beta,gen}(t)y^{\beta}\right)$ at a general $V$. For this reason, we needed to study the combinatorial objects introduced in \cref{chaptermultitypes} and \cref{subtoriandmultitypes}.

In particular, in the proof, we will need the following three main tools.

\begin{itemize}
\item We will use the ring of multitypes introduced in \cref{paragraphring} and in particular Lemma \ref{Vgenlevi} to better understand the truncated exponentials $\displaystyle\Plexp\left(\sum_{\beta \in V} F_{\beta,gen}(t)y^{\beta}\right)$.
\item We use the results of \cref{madremia} and in particular Proposition \ref{inclusion} to relate the terms of the form $C^o_{\omega_1}\cdots C^o_{\omega_r}$ appearing in the LHS of eq.(\ref{mainteo2}) to the terms of the form $\mu(S,S_{\omega})$ appearing in the RHS of eq.(\ref{mainteo2}).
\item We use the results of \cref{inclusion1} to relate combinatorially the sums of the type $$\displaystyle\sum_{\substack{S' \leq S_{\omega} \\ S' \text{ of level } V}}\mu(S',S_{\omega})P_{[S']}(t)$$ appearing in the RHS of eq.(\ref{mainteo2}) to the LHS of eq.(\ref{mainteo2}).
\end{itemize}

\end{oss}

\vspace{4 pt}

\begin{proof}[Proof of Theorem 4.1.2]
We are going to give the proof through multiple steps.

\textit{I Step: rewriting the LHS of eq.(\ref{mainteo2})}.

We start by using the results of \cref{lambdarings} concerning the ring of multitypes. In particular, we remark that there is a unique morphism $\Theta$ of $\lambda$-rings $$\Theta:\mathcal{K}^{ss}_I \to \Q(t)  $$ obtained by extending  $$ \Theta(\omega_{\alpha})=F_{\alpha,gen}(t) .$$
By Lemma \ref{Vgenlevi}, Identity (\ref{mainteo2}) is thus equivalent to the following Identity \begin{equation}
\label{mainteo3}
\sum_{\substack{\nu \in \mathbb{T}^{ss}_{\alpha} \\ \text{level } V}}\dfrac{\Theta(\nu)}{w(\nu)}=F_{\alpha,V}(t)
\end{equation}

 The RHS of eq.(\ref{mainteo3}) is given by \begin{equation}
\label{mainteo4}
F_{\alpha,V}(t)=\sum_{\omega \in \mathbb{T}_{\alpha}}\dfrac{F_{\omega}(t)}{w(\omega)}\left(\sum_{\substack{S \subseteq S_{\omega}\\  \text{ level } V }}P_{[S]}(t)\mu(S,S_{\omega})\right).
\end{equation}

We are left to understand the LHS of eq.(\ref{mainteo3}).

\vspace{8 pt}
\textit{II Step: understanding the LHS of eq.(\ref{mainteo3})}.

Consider now $\nu \in \mathbb{T}^{ss}_I$ and $d_1,\dots,d_r \in \N_{>0}$ and $\beta_1,\dots,\beta_r \in \N^I$ such that  $$\nu=\psi_{d_1}(\omega_{\beta_1}) \ast \cdots \ast \psi_{d_r}(\omega_{\beta_r}) .$$ The value of $\dfrac{\Theta(\nu)}{w(\nu)}$  is given by \begin{equation}
\label{eqlogcompatibilityuseful}
\dfrac{1}{w(\nu)}\prod_{j=1}^r \psi_{d_j}(\Theta(\beta_j))=\dfrac{1}{w(\nu)}\prod_{j=1}^r\left(\sum_{\omega_j \in \mathbb{T}_{\beta_j}}\dfrac{F_{\omega_{j}}(t^{d_j})}{w(\omega_j)}(t^{d_j}-1)\mu(Z_{\beta_j},S_{\omega_j})\right)=
\end{equation}

\begin{equation}
\label{mainteo5}
\sum_{\omega \in \mathbb{T}_{\alpha}}\dfrac{F_{\omega}(t)P_{\nu}(t)}{w(\nu)}\left(\sum_{\substack{\omega_1 \in \mathbb{T}_{\beta_1},\dots,\omega_r \in \mathbb{T}_{\beta_r}\\ \psi_{d_1}(\omega_1)\ast\cdots\ast\psi_{d_r}(\omega_r)=\omega}}\dfrac{1}{w(\omega_1)\cdots w(\omega_r)}\prod_{j=1}^r \mu(Z_{\beta_j},S_{\omega_j}) \right)
\end{equation}

where the equality of eq.(\ref{eqlogcompatibilityuseful}) and eq.(\ref{mainteo5}) is a direct consequence of the Log compatibility of the family $\{F_{\omega}(t)\}_{\omega \in \mathbb{T}_I}$. 

\vspace{4 pt}

\textit{III Step: the function $f_{\nu,\omega}$}.

Fix  a multitype $\omega \in \mathbb{T}_{\alpha}$ with $\omega=(d'_1,\bm \lambda_1)\cdots(d_t',\bm\lambda_t)$ for multipartitions $\bm\lambda_1 , \dots ,\bm\lambda_t$ 
and its associated admissible torus $S_{\omega} \subseteq \Gl_{\alpha}$. 

Denote by $H_{\nu,\omega}$ the set defined by $$H_{\nu,\omega}\coloneqq \{(\omega_1,\dots,\omega_r) \in   \mathbb{T}_{\beta_1}\times \cdots \times \mathbb{T}_{\beta_r} \ | \  \psi_{d_1}(\omega_1)\ast\cdots \ast \psi_{d_r}(\omega_r)=\omega\} $$ and by $\delta_{\nu}:H_{\nu,\omega} \to \Z$ the function defined as $$\delta_{\nu}((\omega_1,\dots,\omega_r))\coloneqq \prod_{j=1}^r \mu(Z_{\beta_j},S_{\omega_j}) .$$

Let $M_{\nu,\omega^{ss}}$ be the set introduced in \cref{inclusion1}. Consider the following function $$f_{\nu,\omega}:M_{\nu,\omega^{ss}} \to H_{\nu,\omega}$$ defined as: $$f_{\nu,\omega}((X_1,\dots,X_r))=(\omega_1,\dots,\omega_r) $$ where $$\omega_i(d,\bm\lambda)=\#\{h \in X_i \text{ such that } \left(\dfrac{d'_h}{d_i},\bm\lambda_h\right)=(d,\bm\lambda)\}$$ for every $(d,\bm\lambda) \in \N \times \mathcal{P}^I$.

\vspace{6 pt}

We will now show that the function $f_{\nu,\omega}$ is surjective and for each $(\omega_1,\dots,\omega_r)$, the cardinality of the fiber is given by \begin{equation}
|f^{-1}_{\nu,\omega}(\omega_1,\dots,\omega_r)|=\prod_{(d,\bm\lambda) \in \N \times \mathcal{P}^I}\dfrac{\omega(d,\bm\lambda)!}{\omega_1(d,\bm\lambda)!\cdots \omega_r(d,\bm\lambda)!}
\end{equation}

As done in \cref{inclusion1} for the semisimple multitype $\nu$, for the multitype $\omega=(d_1',\bm \lambda_1')\dots (d_t',\bm \lambda_t')$ there exists a a strictly increasing sequence $j_1< \dots <j_c \in \{1,\dots, t\}$   such that:
\begin{itemize}

\item $(d'_j,\bm \lambda'_j)=(d'_1,\bm\lambda'_1)$ for $j=1,\dots,j_1$.

\item $(d'_j,\bm\lambda'_j)=(d'_{j_p},\bm \lambda'_{j_p})$ for all $j_{p-1} < j \leq j_{p}$ for $p \in \{2,\dots,c\}$.

\item $(d'_{j_p},\bm \lambda'_{j_p}) \neq (d'_{j_s},\bm \lambda_{j_s}')$ if $p \neq s$.
\end{itemize}

Recall that $j_p-j_{p-1}=\omega((d_{j_p}',\bm \lambda_{j_p}'))$ for every $1 \leq p \leq c$, where we put $j_0=0$. To prove the surjectivity of $f_{\nu,\omega}$, fix $(\omega_1,\dots,\omega_r) \in H_{\nu,\omega}$ and define  subsets $X_{\omega_1},\dots,X_{\omega_r} \subseteq \{1,\dots,t\}$ as follows.

$$X_{\omega_j}=\bigsqcup_{l=1}^c \{h \in \{j_{l-1},\dots,j_l\} \ | \ \sum_{m=1}^{j-1}\psi_{d_m}(\omega_m)((d_{j_l}',\bm \lambda_{j_l}'))< h-j_{l-1} \leq \sum_{m=1}^{j}\psi_{d_m}(\omega_m)((d_{j_l}',\bm \lambda_{j_l}')) \} .$$

We have that $(X_{\omega_1},\dots,X_{\omega_r}) \in H_{\nu,\omega}$ and $f_{\nu,\omega}((X_{\omega_1},\dots,X_{\omega_r}))=(\omega_1,\dots,\omega_r)$. 

\vspace{2 pt}

Through a similar argument, we see that the cardinality of $|f^{-1}_{\nu,\omega}(\omega_1,\dots,\omega_r)|$ is the product, for $l=1,\dots,c$ of the number of partitions of a set of cardinality $\omega((d_{j_l}',\bm \lambda_{j_l}'))=j_l-j_{l-1}$ into $r$ sets of cardinality $\psi_{d_1}(\omega_1)((d_{j_l}',\bm \lambda_{j_l}')),\dots,\psi_{d_r}(\omega_r)((d_{j_l}',\bm \lambda_{j_l}'))$ respectively, i.e.
\begin{equation}
\label{madonnasanta1}
|f^{-1}_{\nu,\omega}(\omega_1,\dots,\omega_r)|=\prod_{l=1}^c \dfrac{\omega((d_{j_l}',\bm \lambda_{j_l}'))!}{(\psi_{d_1}(\omega_1)((d_{j_l}',\bm \lambda_{j_l}')))!\cdots (\psi_{d_r}(\omega_r)((d_{j_l}',\bm \lambda_{j_l}')))!}=
\end{equation}
\begin{equation}
=\prod_{(d,\bm\lambda) \in \N \times \mathcal{P}^I}\dfrac{\omega(d,\bm\lambda)!}{\omega_1(d,\bm\lambda)!\cdots \omega_r(d,\bm\lambda)!}.
\end{equation}

\vspace{4 pt}

\textit{IV Step: describe the summands inside the parenthesis of eq.(\ref{mainteo5}) for any $\omega \in \mathbb{T}_{\alpha}$}.

\vspace{6 pt}

For any $(\omega_1,\dots,\omega_r) \in H_{\nu,\omega}$, we have the following equality: \begin{equation}
\label{eq1}
\prod_{(d,\bm\lambda) \in \N \times \mathcal{P}^I}\dfrac{\omega(d,\bm\lambda)!}{\omega_1(d,\bm\lambda)!\cdots \omega_r(d,\bm\lambda)!}=\dfrac{w(\omega)}{w(\omega_1)\cdots w(\omega_r)}\dfrac{\prod_{j=1}^r \prod_{h \in X_j}\left(\frac{d'_h}{d_j}\right)}{\prod_{l=1}^t d'_l}.
\end{equation}

As $\displaystyle \bigsqcup X_j=\{1,\dots,t\}$, the right hand side of eq.(\ref{eq1}) is equal to \begin{equation}
\label{eq2}
\dfrac{w(\omega)}{w(\omega_1)\cdots w(\omega_r)}\dfrac{1}{\prod_{j=1}^r d_j^{|X_j|}}.
\end{equation}
For an element $m=(X_1,\dots,X_r) \in M_{\nu,\omega^{ss}}$, denote by $d_m=\prod_{j=1}^r d_j^{|X_j|}$.

For each $\omega \in \mathbb{T}_{\alpha}$, we can thus rewrite the corresponding summand appearing in the RHS of eq.(\ref{mainteo5}) as:
\begin{equation}
\label{mainteo6}
\dfrac{F_{\omega}(t)P_{\nu}(t)}{w(\nu)}\left(\sum_{(\omega_1,\dots,\omega_r)\in H_{\nu,\omega}}\dfrac{1}{w(\omega_1)\cdots w(\omega_r)}\delta_{\nu}((\omega_1,\dots,\omega_r))\right)=
\end{equation}

\begin{equation}
\label{eq3}
\dfrac{F_{\omega}(t)P_{\nu}(t)}{w(\nu)w(\omega)}\left(\sum_{m \in M_{\nu,\omega^{ss}}} \dfrac{w(\omega)}{w(\omega_1)\cdots w(\omega_r)}\dfrac{\delta_{\nu}(f_{\nu,\omega}(m))}{|f_{\nu,\omega}^{-1}(m)|} \right)=\dfrac{F_{\omega}(t)P_{\nu}(t)}{w(\nu)w(\omega)}\left(\sum_{m \in M_{\nu,\omega^{ss}}} \delta_{\nu}(f_{\nu,\omega}(m))d_m \right),
\end{equation}
where the last equality is a consequence of eq.(\ref{madonnasanta1}), eq(\ref{eq1}),eq(\ref{eq2}).
\vspace{2 pt}

\textit{V Step: use the results of \cref{inclusion1} to rewrite the RHS of eq.(\ref{eq3})}

The set $H_{\nu,\omega}$ is endowed with the following  action of $W'_{\nu}$. We use the notations of \cref{inclusion1}. An element $\sigma=(\sigma_1,\dots,\sigma_k) \in W'_{\nu}$ acts on $(\omega_1,\dots,\omega_r) \in H_{\nu,\omega}$ by $$\sigma \cdot (\omega_1,\dots,\omega_r)=(\omega_{\sigma_1(1)},\dots,\omega_{\sigma_1(i_1)},\omega_{\sigma_2(1)},\dots,\omega_{\sigma_k(r)}) .$$ Notice that the function $\delta_{\nu}$ is $W_{\nu}'$ invariant and $f_{\nu,\omega}$ is $W_{\nu}'$ equivariant. The function $$\delta_{\nu}\circ f_{\nu,\omega}:M_{\nu,\omega^{ss}} \to \Z$$ is therefore $W_{\nu}'$ invariant and descends to a function $\overline{M_{\nu,\omega^{ss}}} \to \Z$ which we still denote by $\delta_{\nu}\circ f_{\nu,\omega}$. Notice moreover that the quantity $d_m$ is $W'_{\nu}$ invariant too and so $d_m$ is well defined for an element $m \in \overline{M_{\nu,\omega^{ss}}}$.

The right hand side of eq.(\ref{mainteo6}) is therefore equal to \begin{equation}
\label{eq4}
\dfrac{F_{\omega}(t)P_{\nu}(t)}{w(\omega)}\left(\sum_{m \in \overline{M_{\nu,\omega^{ss}}}}\delta_{\nu}(f_{\nu,\omega}(m))\dfrac{d_m |W_{\nu}'|}{w(\nu)}\right).
\end{equation}

For any $m \in \overline{M}_{\nu,\omega^{ss}}$, we have $$\dfrac{d_m |W_{\nu}'|}{w(\nu)}=\prod_{j=1}^r\dfrac{d_j^{|X_j|}}{d_j}=\prod_{j=1}^r d_j^{|X_j|-1} $$ By eq.(\ref{cardinality}), we can thus rewrite the sum of eq.(\ref{eq4}) as \begin{equation}
\label{eq5}
\dfrac{F_{\omega}(t)P_{\nu}(t)}{w(\omega)}\left(\sum_{S \in P_{\nu,\omega^{ss}}}\delta_{\nu}(f_{\nu,\omega}(\pi_{\nu,\omega^{ss}}(S)))\right).
\end{equation}

From  Proposition \ref{inclusion}, we see that $\delta_{\nu}(f_{\nu,\omega}(\pi_{\nu,\omega^{ss}}(S)))=\mu(S,S_{\omega})$ and so, from eq.(\ref{mainteo6}), we deduce that  $\dfrac{\Theta(\nu)}{w(\nu)}$ is equal to \begin{equation}
\label{mainteo9}
\sum_{\omega \in \mathbb{T}_{\alpha}}\dfrac{F_{\omega}(t)P_{\nu}(t)}{w(\omega)}\sum_{S \in P_{\nu,\omega^{ss}}}\mu(S,S_{\omega})
\end{equation}

Summing over the $\nu \in \mathbb{T}^{ss}_{\alpha}$ of level $V$, we have therefore: \begin{equation}
\label{mainteo10}
\Coeff_{\alpha}\left(\Plexp\left(\sum_{\beta \in V}F_{\beta,gen}(t)y^{\beta}\right)\right)=\sum_{\substack{\nu \in \mathbb{T}^{ss}_{\alpha} \\ \text{of level } V }}\dfrac{\Theta(\nu)}{w(\nu)}=
\end{equation}

\begin{equation}
\label{mainteo11}
\sum_{\substack{\nu \in \mathbb{T}^{ss}_{\alpha} \\ \text{of level } V}}\sum_{\omega \in \mathbb{T}_{\alpha}}\dfrac{F_{\omega}(t)P_{\nu}(t)}{w(\omega)}\sum_{S \in P_{\nu,\omega^{ss}}}\mu(S,S_{\omega})=\sum_{\omega \in \mathbb{T}_{\alpha}}\dfrac{F_{\omega}(t)}{w(\omega)}\left(\sum_{\substack{S \leq S_{\omega}\\ \text{ of level } V}}P_{[S]}(t)\mu(S,S_{\omega})\right)
\end{equation}

The right hand side is equal to $F_{\alpha,V}(t)$  by eq.(\ref{mainteo4}).
\end{proof}

\section{Computation of convolution product for products of finite general linear groups}
\label{plethysticidentitiesconvolution}

In this section, we will use Theorem \ref{mainteo} to compute some invariants  of certain class functions of $\Gl_{\alpha}(\F_q)$. These results will be used to understand the cohomology of multiplicative quiver stacks and character stacks for Riemann surfaces in \cref{duallogmomentmapchapter} and \cref{cohocharstacks}.

We will start by reviewing some generalities about convolution product for the class functions of a finite group. The finite groups that will interest us will be \textit{finite reductive groups.} 

In particular, consider the following situation. In this section, $G$ is a reductive group over $\F_q$ with associated Frobenius $F:G \to G$.

The finite group $G^F$ is called a finite reductive group. We will recall some generalities about the representation theory of the group $G^F$ in \cref{DLU}, \cref{DLU1}. We follow the book \cite{DM}.

We fix a prime $\ell$ such that $(\ell,q)=1$ and an isomorphism $\overline{\Q_{\ell}} \cong \C$. We will identify $\overline{\Q_{\ell}}$-vector spaces with $\C$-vector spaces through this isomorphism in the following.

\subsection{Convolution product of class functions of a  finite group}

Let $H$ be a finite group. We denote by $\mathcal{C}(H)$ the set of complex class functions, i.e. the functions $f:H \to \C$ which are constant on the conjugacy classes of $H$. We denote by $1 \in \mathcal{C}(H)$ the constant function equal to $1$.

For $f,g \in \mathcal{C}(H)$, we denote by $\langle f,g \rangle$ the quantity
$$\langle f,g\rangle=\dfrac{1}{|H|}\sum_{h \in H}f(h)\overline{g(h)} .$$
 
\vspace{8 pt}

Recall that a basis of $\mathcal{C}(H)$ is given by the irreducible characters of $H$. We will denote the set of irreducible characters of $H$  by $H^{\vee}$.

The vector space $\mathcal{C}(H)$ can be endowed with the following ring structure $(\mathcal{C}(H),\ast)$. Given two class functions $f_1,f_2 \in \mathcal{C}(H)$, the convolution  $f_1 \ast f_2$ is the class function  defined as $$f_1 \ast f_2(g)=\sum_{h \in H} f_1(gh)f_2(h^{-1}) .$$

Denote by $\Cl(H)$ the set of conjugacy classes of $H$. For any  $\mathcal{O} \in \Cl(H)$, we denote by $1_{\mathcal{O}} \in \mathcal{C}(H)$ the characteristic function of $\mathcal{O}$. For a central element $\eta \in H$, we denote by $1_\eta$ the characteristic function $1_{\{\eta\}}$. Notice that for any class function $f$, there is an equality $$\langle f \ast 1_{\eta^{-1}},1_{e} \rangle=\dfrac{f(\eta)}{|H|} .$$ Recall now that
\begin{equation}
\label{identityelementgroup}
\displaystyle 1_e=\sum_{\chi \in H^{\vee}}\dfrac{\chi(e)}{|H|}\chi .\end{equation} We have therefore  

\begin{equation}
\label{conv1}
\dfrac{f(\eta)}{|H|}=\sum_{\chi \in H^{\vee}}\langle f \ast 1_{\eta^{-1}},\chi \rangle \dfrac{\chi(e)}{|H|}.
\end{equation}

For any two class functions $f_1,f_2:H \to \C$ and an irreducible character $\chi \in H^{\vee}$, we have \begin{equation}
\label{convol}
\langle f_1 \ast f_2,\chi \rangle=\langle f_1,\chi \rangle \langle f_2,\chi \rangle \dfrac{|H|}{\chi(e)} 
\end{equation}
(see for example \cite[Theorem 2.13]{isaacs}).
In particular, from eq.(\ref{conv1}) and eq.(\ref{convol}), we deduce the identity:

\begin{equation}
\label{conv2}
\dfrac{f(\eta)}{|H|}=\sum_{\chi \in H^{\vee}}\langle f,\chi \rangle \langle 1_{\eta^{-1}},\chi \rangle=\sum_{\chi \in H^{\vee}}\langle f,\chi \rangle \dfrac{\chi(\eta^{-1})}{\chi(e)}\dfrac{\chi(e)}{|H|}.
\end{equation}

\subsection{Deligne-Lusztig induction}
\label{DLU}
In this section, we review the definition of the Deligne-Lusztig induction for finite reductive groups. This construction, which allows to give a geometric description of the irreducible characters of finite reductive groups, was first introduced in \cite{DLu}. 

\vspace{6 pt}

Consider an $F$-stable Levi subgroup $L$ of $G$,  a parabolic subgroup $P$ having $L$ as Levi factor  and denote by $U_P$ the unipotent radical of $P$. Recall that there is an isomorphism $$P/U_P \cong L$$ and denote by $\pi_L:P \to L$ the associated quotient map.

In general $P$ is not $F$-stable. We can find an $F$-stable parabolic subgroup $P \supseteq L$ if and only if $\epsilon_L=\epsilon_G$.

\vspace{6 pt}

Denote by $\mathcal{L}$ the Lang map $\mathcal{L}: G \to G$ given by $\mathcal{L}(g)=g^{-1}F(g)$. The variety $X_L \coloneqq \mathcal{L}^{-1}(U_P)$ has a left $G^F$-action and a right $L^F$-action by multiplication on the left/right respectively.

This action induces an action on the compactly supported \'etale cohomology groups $H^i_c(X_L,\overline{\Q}_{\ell})$ and so endows the virtual vector space $$\displaystyle H^*_c(X_L,\overline{\Q}_{\ell})\coloneqq \bigoplus_{i \geq 0}(-1)^i H^i_c(X_L,\overline{\Q}_{\ell})$$ with the structure of a virtual $G^F$-representation-$L^F$. For an $L^F$-representation $M$, we define the Deligne-Lusztig induction $R^G_L(M)$ as the virtual $G^F$-representation given by $$R^G_L(M)=H^*_c(X_L,\overline{\Q}_{\ell}) \otimes_{\C[L^F]} M .$$ We will denote by $R^G_L$ the induced linear map $$R^G_L:\mathcal{C}(L^F) \to \mathcal{C}(G^F) .$$

\begin{oss}
In the cases that will interest us in this article, i.e. when $G$ is a product of factors of the type $(\Gl_n)_d$, it will always be true that the functor $R^G_L$ does not depend on the choice of the parabolic subgroup $P \supseteq L$ (see for example \cite[Proposition 6.1.1]{bonnafe}).

\end{oss}

\vspace{9 pt}

Consider now a split Levi subgroup $L$, i.e. such that $\epsilon_L=\epsilon_G$. In this case, we can take an $F$-stable parabolic subgroup $P \supseteq L$. The variety  $X_L$ is a $U_P$-principal bundle over the finite variety $G^F/U_P^F$ and  $H_c^{*}(X_L,\overline{\Q}_{\ell})$ is therefore concentrated in degree $2\dim(U_P)$, see the discussion before \cite[Lemma 9.1.5]{DM} for more details.

\vspace{6 pt}

If $L$ is a split Levi subgroup, we have thus an equality $$R^G_L(M)=\C[G^F/U_P^F] \otimes_{\C[L^F]} M$$ for every $L^F$-representation $M$.

In the split case, we can give the following equivalent description of this functor. For an $L^F$-representation $M$, denote by $\Infl_{L^F}^{P^F}(M)$ the natural lift to a $P^F$-representation through the quotient map $\pi_L$. In \cite[Proposition 5.18 (1)]{DM}, it is shown the following Lemma:

\begin{lemma}
If $L \subseteq G$ is a split $F$-stable Levi subgroup, we have an isomorphism of functors:
$$R^G_L \cong \Ind_{P^F}^{G^F}(\Infl_{L^F}^{P^F}) .$$
\end{lemma}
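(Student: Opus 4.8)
The statement to prove is the identification of the Deligne--Lusztig induction functor $R^G_L$ with the Harish-Chandra induction $\Ind_{P^F}^{G^F}\circ\Infl_{L^F}^{P^F}$ in the case where $L$ is a split $F$-stable Levi subgroup sitting inside an $F$-stable parabolic $P$. The plan is to unwind the definition just given: since $L$ is split we may choose $P\supseteq L$ to be $F$-stable, and then the variety $X_L=\mathcal{L}^{-1}(U_P)$ is a $U_P$-principal bundle over $G^F/U_P^F$, so its compactly supported cohomology collapses to a single degree $2\dim(U_P)$, and there (up to the Tate twist/degree shift which acts trivially on the underlying $G^F\times L^F$-module) one has $H^*_c(X_L,\overline{\Q}_\ell)\cong \C[G^F/U_P^F]$ as a $G^F$-module-$L^F$. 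Substituting this into $R^G_L(M)=H^*_c(X_L,\overline{\Q}_\ell)\otimes_{\C[L^F]}M$ reduces the claim to a purely finite-group-theoretic identity, namely that $\C[G^F/U_P^F]\otimes_{\C[L^F]}M\cong \Ind_{P^F}^{G^F}\Infl_{L^F}^{P^F}(M)$ naturally in $M$.

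First I would make precise the $L^F$-action on $\C[G^F/U_P^F]$: because $P$ is $F$-stable, $P^F=L^F\ltimes U_P^F$, so $U_P^F$ is normal in $P^F$ and $P^F/U_P^F\cong L^F$; this gives a right $P^F$-action on $G^F/U_P^F$ factoring through $L^F$, which is the action used to form the tensor product. Then I would exhibit the natural isomorphism. One clean way: $\Ind_{P^F}^{G^F}\Infl_{L^F}^{P^F}(M)=\C[G^F]\otimes_{\C[P^F]}\Infl(M)$, and I would check the chain of natural isomorphisms
\[
\C[G^F]\otimes_{\C[P^F]}\Infl_{L^F}^{P^F}(M)\;\cong\;\C[G^F]\otimes_{\C[P^F]}\bigl(\C[P^F/U_P^F]\otimes_{\C[L^F]}M\bigr)\;\cong\;\C[G^F/U_P^F]\otimes_{\C[L^F]}M,
\]
where the first isomorphism is the standard description of inflation as induction from the quotient (i.e.\ $\Infl_{L^F}^{P^F}(M)\cong\C[P^F/U_P^F]\otimes_{\C[L^F]}M$ as a $P^F$-module), and the second is associativity of tensor product together with $\C[G^F]\otimes_{\C[P^F]}\C[P^F/U_P^F]\cong\C[G^F/U_P^F]$. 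Each of these is functorial in $M$, giving the isomorphism of functors. Alternatively one can simply cite \cite[Proposition 5.18 (1)]{DM} as the excerpt does, but I would still want to record the bundle argument that makes the cohomology degenerate, since that is the only genuinely geometric input.

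The step I expect to be the main (and really the only) obstacle is the degeneration of $H^*_c(X_L,\overline{\Q}_\ell)$ into a single degree and its identification with $\C[G^F/U_P^F]$ as a bimodule: one must argue that the map $X_L\to G^F/U_P^F$, $g\mapsto gU_P^F$, is well defined (using $\mathcal{L}(g)\in U_P$ and $F$-stability of $U_P$ to see the fibers are cosets of $U_P^F$), that it is a locally trivial $U_P$-bundle with $U_P\cong\mathbb{A}^{\dim U_P}$ affine space, hence $H^i_c(X_L)\cong H^{i-2\dim U_P}_c(G^F/U_P^F)$, and finally that the resulting identification is equivariant for both the left $G^F$- and right $L^F$-actions. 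This is exactly the content referenced before \cite[Lemma 9.1.5]{DM}, so in the write-up I would invoke that and spend my effort only on checking the bimodule-equivariance bookkeeping, after which the finite-group manipulation above finishes the proof.
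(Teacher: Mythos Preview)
Your argument is correct and is the standard proof; the paper itself does not prove this lemma but simply cites \cite[Proposition 5.18(1)]{DM}, and what you have written is essentially the content of that reference together with the bundle argument already sketched in the paragraph preceding the lemma. One small point of phrasing: the map $X_L\to G^F/U_P^F$ is not literally $g\mapsto gU_P^F$ (since $g$ need not lie in $G^F$), but rather sends $g$ to the unique $U_P^F$-coset contained in $gU_P\cap G^F$, which is nonempty by Lang's theorem applied to $U_P$; you already flag this as the step requiring care, so this is only a cosmetic remark.
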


\vspace{8 pt}

Lastly, we recall the following properties of Deligne-Lusztig induction.

\begin{lemma}
\label{delignel1}
For a reductive group $G$ and a Levi subgroup $L \subseteq G$, we have:

\begin{enumerate}
    \item Given an $F$-stable Levi subgroup $L'\supseteq L$, there is an isomorphism of functors: $R^G_{L'}(R^{L'}_L) \cong R^G_L$. 
    \item Assume there exist reductive groups $G_1,G_2$ and Levi subgroups $L_1,L_2$ such that $G=G_1 \times G_2$ and $L=L_1 \times L_2$. For an $L_1^F$-representation $M_1$ and an $L_2^F$-representation $M_2$, there is a natural isomorphism $R^G_L(M_1 \boxtimes M_2)=R^{G_1}_{L_1}(M_1) \boxtimes R^{G_2}_{L_2}(M_2)$.
    
\end{enumerate}

\end{lemma}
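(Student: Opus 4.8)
The plan is to deduce both statements directly from the definition $R^G_L(M) = H^*_c(X_L,\overline{\Q_{\ell}}) \otimes_{\C[L^F]} M$ together with elementary properties of the Lang map and of $\ell$-adic cohomology. Statement (2) is essentially formal; statement (1), the transitivity of Lusztig induction, is the substantive one, and for it I would reproduce the standard fibration argument of Digne--Michel \cite{DM}. The main obstacle is entirely contained in the proof of (1).

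For (2), I would first choose parabolic subgroups $P_i \subseteq G_i$ with Levi factor $L_i$, so that $P \coloneqq P_1 \times P_2$ is a parabolic of $G = G_1 \times G_2$ with Levi factor $L = L_1 \times L_2$ and unipotent radical $U_P = U_{P_1} \times U_{P_2}$. Since the Lang map of $G$ is the product of the Lang maps of $G_1$ and $G_2$, one gets an identification of varieties $X_L = \mathcal{L}_G^{-1}(U_P) = \mathcal{L}_{G_1}^{-1}(U_{P_1}) \times \mathcal{L}_{G_2}^{-1}(U_{P_2})$ which is equivariant for the action of $G^F \times L^F = (G_1^F \times L_1^F) \times (G_2^F \times L_2^F)$. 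The Künneth formula for compactly supported $\ell$-adic cohomology then gives an isomorphism of virtual $(G^F,L^F)$-bimodules between $H^*_c(X_L,\overline{\Q_{\ell}})$ and $H^*_c(\mathcal{L}_{G_1}^{-1}(U_{P_1}),\overline{\Q_{\ell}}) \otimes H^*_c(\mathcal{L}_{G_2}^{-1}(U_{P_2}),\overline{\Q_{\ell}})$, the alternating signs matching because the Euler characteristic is multiplicative. Tensoring over $\C[L^F] = \C[L_1^F] \otimes \C[L_2^F]$ with $M_1 \boxtimes M_2$ and invoking the elementary identity $(A_1 \otimes A_2) \otimes_{B_1 \otimes B_2} (M_1 \otimes M_2) \cong (A_1 \otimes_{B_1} M_1) \otimes (A_2 \otimes_{B_2} M_2)$ yields the claimed $R^G_L(M_1 \boxtimes M_2) \cong R^{G_1}_{L_1}(M_1) \boxtimes R^{G_2}_{L_2}(M_2)$. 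Since in the cases of interest $R^G_L$ is independent of the chosen parabolic, the special choice $P = P_1 \times P_2$ loses no generality.

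For (1), I would choose a parabolic $P \subseteq G$ with Levi factor $L'$ and a parabolic $Q' \subseteq L'$ with Levi factor $L$, and then take $Q$ to be the preimage of $Q'$ under $\pi_{L'}\colon P \to L'$; this is a parabolic of $G$ with Levi factor $L$ whose unipotent radical $U_Q$ is the semidirect product $U_{Q'} \cdot U_P$ with $U_P$ normal. The three relevant Lang-map varieties are $X^G_L = \mathcal{L}_G^{-1}(U_Q)$, $X^G_{L'} = \mathcal{L}_G^{-1}(U_P)$ and $X^{L'}_L = \mathcal{L}_{L'}^{-1}(U_{Q'})$, and the heart of the matter is to construct a $G^F \times L^F$-equivariant morphism from $X^G_L$ to the balanced product $X^G_{L'} \times_{{L'}^F} X^{L'}_L$ (through an explicit intermediate variety) whose fibers are affine spaces, so that it induces isomorphisms on compactly supported cohomology. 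This produces an isomorphism of virtual $(G^F,L^F)$-bimodules $H^*_c(X^G_L,\overline{\Q_{\ell}}) \cong H^*_c(X^G_{L'},\overline{\Q_{\ell}}) \otimes_{\C[{L'}^F]} H^*_c(X^{L'}_L,\overline{\Q_{\ell}})$, and tensoring on the right with an $L^F$-representation $M$ then gives the functorial isomorphism $R^G_L \cong R^G_{L'} \circ R^{L'}_L$. When $L'$ happens to be split one can shorten this using $R^G_{L'} \cong \Ind_{P^F}^{G^F} \circ \Infl$ together with transitivity of induction and inflation, but in general --- and in particular for the twisted Levi subgroups arising from products of factors $(\Gl_n)_d$ --- the geometric argument is required. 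The identification of the fibers of the comparison morphism with affine spaces is the only genuinely technical point, and I would carry it out following \cite{DM} verbatim.
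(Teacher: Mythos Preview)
Your proposal is correct and follows essentially the same route as the paper. For (2) your argument is identical to the paper's: choose $P=P_1\times P_2$, factor the Lang map, and apply K\"unneth to obtain the bimodule isomorphism before tensoring with $M_1\boxtimes M_2$. For (1) the paper simply cites \cite[Proposition~9.1.8]{DM} without reproducing the argument, whereas you have sketched that very fibration proof; so you are supplying more detail along the same line rather than taking a different path.
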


\begin{proof}

The first point is shown in \cite[Proposition 9.1.8]{DM}. For the second point, we can choose parabolic subgroups $G_1 \supseteq P_1 \supseteq L_1$ and $G_2 \supseteq P_2 \supseteq L_2$ such that $P_1, P_2$ have as Levi factor $L_1,L_2$ respectively. The group $P=P_1 \times P_2 \subseteq G$ is thus a parabolic subgroup having $L$ as Levi factor and $U_P=U_{P_1} \times U_{P_2}$.

We have therefore $$\mathcal{L}^{-1}(U_P)=\mathcal{L}^{-1}(U_{P_1}) \times \mathcal{L}^{-1}(U_{P_1}) $$ and  $$ H^*_c(\mathcal{L}^{-1}(U_P),\overline{\Q_{\ell}}) \cong H^*_c(\mathcal{L}^{-1}(U_{P_1}),\overline{\Q_{\ell}}) \otimes H^*_c(\mathcal{L}^{-1}(U_{P_2}),\overline{\Q_{\ell}}) ,$$ as $\C[L_1^F] \otimes \C[L_2^F]$-modules, from which we deduce that $$R^G_L(M_1 \boxtimes M_2)=R^{G_1}_{L_1}(M_1) \boxtimes R^{G_2}_{L_2}(M_2) .$$

\end{proof}

\begin{oss}
\label{twist5}
Consider an $F$-stable Levi subgroup $L'\supseteq L$ and a linear character $\theta:(L')^F \to \C^*$. By restriction, we can consider it as a character $\theta:L^F \to \C^*$.

For any $f \in \mathcal{C}(L^F)$, we have an identity $R^{L'}_L(\theta f)=\theta R^{L'}_L(f)$ and therefore, by Lemma \ref{delignel1}(1), an equality $R^G_L(\theta f)=R^G_{L'}(\theta R^{L'}_L(f))$.

\end{oss}

\subsection{Unipotent characters}
\label{DLU1}

Fix now an $F$-stable maximal torus $T \subseteq G$. We follow the notations of \cite[Chapter 11]{DM}.

Denote by $W$ the Weyl group $W_G(T)$ and by $\widetilde{W}$ the semidirect group $W \rtimes \langle F \rangle$, where $\langle F \rangle$ is the group generated by the finite order automorphism induced by $F$ on $W$. 

Denote by $\mathcal{C}(WF)$ the vector space of functions $f:W \to \C$ constant on $F$-conjugacy classes. Equivalenty, a function $f \in \mathcal{C}(WF)$ can be seen as 
 a function on the coset $WF \subseteq \widetilde{W}$, invariant under $W$-conjugation. 
 
 The vector space $\mathcal{C}(WF)$ is endowed with the  Hermitian product defined as $$\displaystyle \langle f,g \rangle_{WF}=\dfrac{1}{|W|}\sum_{w \in WF} f(w) \overline{g(w)} ,$$ for $f,g \in \mathcal{C}(WF) .$

\vspace{8 pt}

\begin{esempio}
\label{twist1}
Let $G=(\Gl_n)_d$ and $T$ be the torus of Example \ref{twisted}. In this case, the morphism $\psi: \mathcal{C}(S_n) \to \mathcal{C}(WF)$ defined as $\psi(f)(\sigma_1,\dots,\sigma_d)=f(\sigma_d\cdots \sigma_1)$ is an isomorphism. It is not difficult to see that the isomorphism $\psi$ is an isometry too.
\end{esempio}

\vspace{8 pt}

In \cite[Chapter 11.6]{DM}, it is shown how to associate to each $\chi \in (W^{\vee})^F$ a function in $\mathcal{C}(WF)$, which we denote  by $\tilde{\chi}.$ This association is isometric with respect to the natural Hermitian products on both sides. The elements $\{\tilde{\chi} \}_{\chi \in (W^{\vee})^F}$ form moreover a basis of $\mathcal{C}(WF)$. 

\vspace{8 pt}

\begin{oss}
\label{rmkmult}
Consider the group $G$ and the torus $T$ of Example \ref{twist1} above. An irreducible character $\chi \in W^{\vee}$ is determined by partitions $\lambda^1,\dots,\lambda^d \in \mathcal{P}_n$ such that $\chi=\chi^{\lambda^1} \boxtimes \cdots \boxtimes \chi^{\lambda^d}$. The character $\chi $ is $F$-stable if and only if $\lambda^1=\cdots =\lambda^d=\lambda$, so that $(W^{\vee})^F$ is in bijection with $\mathcal{P}_n$.

For $\lambda \in \mathcal{P}_n$, consider the associated function $\widetilde{(\chi^{\lambda})^{\boxtimes d}} \in \mathcal{C}(WF)$.
It is possible to check that $\psi^{-1}(\widetilde{(\chi^{\lambda})^{\boxtimes d}})=\chi^{\lambda} \in \mathcal{C}(S_n)$.

\end{oss}
\vspace{8 pt}

In \cite[Chapter 11.6]{DM} for $f \in \mathcal{C}(WF)$, it is defined a class function $R_{f}:G^F \to \C$ as

\begin{equation}
\label{defr}
R_{f}\coloneqq \dfrac{1}{|W|}\sum_{w \in W}f(w) R^G_{T_w}(1).
\end{equation}

In \textit{locus cit}, it shown that the map $f \to R_f$ induces an isometry $\mathcal{C}(WF) \to \mathcal{C}(G^F)$. In particular, the elements $\{R_{\Tilde{\chi}}\}_{\chi \in (W^{\vee})^F}$ have norm $1$ and are pairwise orthogonal in $\mathcal{C}(G^F)$.

\vspace{12 pt}

Consider now an $F$-stable Levi subgroup $L \supseteq T$ and the corresponding Weyl group $W_L \coloneqq W_L(T)$ which is an $F$-stable subgroup of $W$. Define the induction map $\Ind_{W_LF}^{WF}:\mathcal{C}(W_LF) \to \mathcal{C}(WF)$ as $$\displaystyle \Ind_{W_LF}^{WF}(f)(w)=\dfrac{1}{|W_L|}\sum_{\substack{h \in W_L \\h^{-1}wF(h) \in W_L}}f(h^{-1}wF(h)) .$$

In \cite[Proposition 11.6.6]{DM} it is shown the following Lemma.

\begin{lemma}
\label{twist33}
For any $f \in \mathcal{C}(W_LF)$, we have:
\begin{equation}
\label{twist3}
R^G_L(R_f)=R_{\Ind_{W_LF}^{WF}(f)}
\end{equation}
\end{lemma}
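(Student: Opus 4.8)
The plan is to expand both sides of eq.(\ref{twist3}) as explicit $\C$-linear combinations of the virtual characters $R^G_{T_w}(1)$, $w\in W$, and to match coefficients. For the left-hand side, I would first apply the definition (\ref{defr}) \emph{inside} the Levi subgroup $L$: since $T\subseteq L$, the Weyl group of $T$ in $L$ is $W_L$ and the chosen $F$-stable maximal tori of $L$ are the $T_v$ with $v\in W_L$, so
$$R_f=\frac{1}{|W_L|}\sum_{v\in W_L}f(v)\,R^L_{T_v}(1).$$
Applying $R^G_L$ and using transitivity of Deligne--Lusztig induction (Lemma \ref{delignel1}(1)), namely $R^G_L\bigl(R^L_{T_v}(1)\bigr)=R^G_{T_v}(1)$, gives
$$R^G_L(R_f)=\frac{1}{|W_L|}\sum_{v\in W_L}f(v)\,R^G_{T_v}(1).$$

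For the right-hand side, I would substitute the definition of the induction map and of $R_{(-)}$ (eq.(\ref{defr})) to write
$$R_{\Ind_{W_LF}^{WF}(f)}=\frac{1}{|W|}\sum_{w\in W}\Biggl(\frac{1}{|W_L|}\sum_{\substack{h\in W\\ h^{-1}wF(h)\in W_L}}f\bigl(h^{-1}wF(h)\bigr)\Biggr)R^G_{T_w}(1).$$
The key input is that $R^G_{T_w}(1)$ depends only on the $G^F$-conjugacy class of the torus $T_w$, hence (by the classification recalled in \cref{twistedFrobeniuspara}) only on the $F$-conjugacy class of $w$ in $W$; in particular $R^G_{T_w}(1)=R^G_{T_{h^{-1}wF(h)}}(1)$ for every $h\in W$. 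Then, for each fixed $h\in W$, the map $w\mapsto v:=h^{-1}wF(h)$ is a bijection from $\{w\in W:\ h^{-1}wF(h)\in W_L\}$ onto $W_L$, with inverse $v\mapsto hvF(h)^{-1}$. Reindexing the double sum by $(h,v)$ and using the preceding identity to replace $R^G_{T_w}(1)$ by $R^G_{T_v}(1)$ yields
$$R_{\Ind_{W_LF}^{WF}(f)}=\frac{1}{|W|\,|W_L|}\sum_{h\in W}\sum_{v\in W_L}f(v)\,R^G_{T_v}(1)=\frac{1}{|W_L|}\sum_{v\in W_L}f(v)\,R^G_{T_v}(1),$$
which is exactly the expression obtained for $R^G_L(R_f)$, completing the proof.

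The only delicate point is the bookkeeping in the second step: one must be careful that $R^G_{T_w}(1)$ is genuinely constant on $F$-conjugacy classes of $W$ (this combines the $G^F$-conjugacy parametrization of $F$-stable maximal tori with the conjugation-invariance of $R^G_{(-)}(1)$, both standard facts from \cite{DM}), and that the reindexing $w\leftrightarrow v$ for fixed $h$ accounts correctly for the normalization constants $|W|$ and $|W_L|$. Everything else is a direct unwinding of the definitions; indeed this is precisely \cite[Proposition 11.6.6]{DM}, which may simply be invoked.
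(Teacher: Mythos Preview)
Your proof is correct and is precisely the standard argument underlying \cite[Proposition 11.6.6]{DM}, which is all the paper invokes for this lemma. One minor note: you (correctly) take the sum in $\Ind_{W_LF}^{WF}(f)(w)$ over $h\in W$, which is the definition in \cite{DM}; the paper's displayed formula with $h\in W_L$ is a typo (with that range the ``induction'' would just be extension by zero).
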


\vspace{8 pt}

Let $G=(\Gl_{n})_d$ and $T$ be the torus considered above. In \cite[Chapter 11.7]{DM}, it is shown the following Lemma.

\begin{lemma}
\label{unipotentlemma}
For each $\chi \in (W^{\vee})^F$, the class function $R_{\tilde{\chi}}$ is an irreducible character of $(\Gl_n)_d^F$.
\end{lemma}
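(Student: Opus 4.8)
The plan is to prove that for $G=(\Gl_n)_d$ with the twisted maximal torus $T$ of Example \ref{twisted}, and for any $\chi\in(W^\vee)^F$, the class function $R_{\tilde\chi}$ is (up to sign) an actual irreducible character of $G^F$. The starting point is the case $d=1$: for $\Gl_n$ the irreducible unipotent characters are parametrized by $\mathcal P_n$, and for $\lambda\in\mathcal P_n$ one has that $R_{\tilde{\chi^\lambda}}$ is the unipotent character $U_\lambda$ associated to $\lambda$; this is classical (Green, or \cite[Chapter 11]{DM} in the split case, where there is no nontrivial $F$-action on $W=S_n$). The content of the lemma for general $d$ is to transport this through the isometry $\psi:\mathcal C(S_n)\to\mathcal C(WF)$ of Example \ref{twist1}.

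First I would recall from \cite[Chapter 11.6]{DM} that $f\mapsto R_f$ is an isometry $\mathcal C(WF)\to\mathcal C(G^F)$, so the $\{R_{\tilde\chi}\}_{\chi\in(W^\vee)^F}$ form an orthonormal family in $\mathcal C(G^F)$. Hence each $R_{\tilde\chi}$ is, up to sign, a virtual character of norm $1$, i.e. $\pm$ an irreducible character. To pin down that $R_{\tilde\chi}$ is a genuine (not just virtual up to sign) character and to identify the sign, I would use the compatibility of Deligne--Lusztig induction with the product and twist structure. Concretely, recall the embedding $(\Gl_n)_d\subseteq\Gl_{nd}$ of Remark \ref{exampleembeddingtwistedtori}, which realizes $G^F=(\Gl_n)_d^F\cong\Gl_n(\F_{q^d})$, and under which $T$ becomes an $F$-stable maximal torus of $\Gl_{nd}$ whose relative Weyl group is exactly $W=S_n^d$ with the cyclic $F$-action. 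Via the isomorphism $\psi$ of Example \ref{twist1} and Remark \ref{rmkmult}, the $F$-stable irreducible characters of $W$ correspond to $\lambda\in\mathcal P_n$, and $\psi^{-1}(\widetilde{(\chi^\lambda)^{\boxtimes d}})=\chi^\lambda\in\mathcal C(S_n)$.

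The key computation: by definition \eqref{defr}, $R_{\tilde\chi}=\frac1{|W|}\sum_{w\in W}\tilde\chi(w)R^G_{T_w}(1)$, and I would match this term by term with the corresponding expression defining the unipotent character of $\Gl_n(\F_{q^d})$ with the \emph{same} combinatorial label $\lambda$. For this one uses that, for $w\in W=S_n^d$, the twisted torus $T_w\subseteq(\Gl_n)_d$ is $G^F$-conjugate to the torus of $\Gl_n(\F_{q^d})$ labelled by the conjugacy class of $\prod_i w_i\in S_n$ (Example \ref{twisted}), and that $\tilde\chi$ evaluated on the $F$-class of $w$ equals (via $\psi$) $\chi^\lambda$ evaluated on $\prod_i w_i$; summing over $w$ with these fibers collapses the sum over $S_n^d$ to the sum over $S_n$ defining $R_{\tilde{\chi^\lambda}}$ for $\Gl_n(\F_{q^d})$, which is the classical unipotent character $U_\lambda$ and hence a genuine irreducible character. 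This also fixes the sign to be $+1$, since in type $A$ all unipotent characters $U_\lambda$ occur with sign $+1$ in the $R_{T_w}(1)$'s (the relevant $\epsilon$-signs are trivial for $\Gl$).

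I expect the main obstacle to be the careful bookkeeping in this term-by-term matching: one must check that the identification of twisted tori $T_w$ in $(\Gl_n)_d$ with tori in $\Gl_n(\F_{q^d})$ is compatible with the character values $\tilde\chi(w)$ in a way that makes the two averages literally equal, including getting the normalizing factors $|W|=|S_n|^d$ versus $|S_n|$ and the sizes of the $F$-conjugacy classes right (Example \ref{twisted} gives the bijection on $F$-classes, but one needs it to be measure-preserving in the appropriate sense). An alternative, cleaner route — which I might actually prefer — is to bypass $\Gl_{nd}$ and argue intrinsically: show directly that $\langle R_{\tilde\chi},R_{\tilde\chi}\rangle_{G^F}=1$ (immediate from the isometry), that $R_{\tilde\chi}$ is a \emph{uniform} function and a $\Z$-combination of the $R^G_{T_w}(1)$, and then invoke the classification of unipotent characters of $(\Gl_n)_d^F$ together with the fact that in type $A$ every unipotent character is uniform and equals some $\pm R_{\tilde\chi}$; a norm-$1$, sign argument then forces $R_{\tilde\chi}$ itself to be irreducible, as stated in \cite[Chapter 11.7]{DM}. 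Either way the proof is short modulo the cited results, and I would present the second route as the main line with the explicit $\Gl_{nd}$ picture as motivation.
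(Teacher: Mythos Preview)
The paper does not give its own proof of this lemma: it simply cites \cite[Chapter 11.7]{DM} and states the result. Your proposal is therefore strictly more detailed than what the paper provides, and your second route (norm $1$ from the isometry, then invoking the classification of unipotent characters in type $A$ from \cite[Chapter 11.7]{DM}) is precisely the content of that citation.

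Your first route is also correct in spirit, but there is a small confusion worth flagging: you do not need the embedding $(\Gl_n)_d\subseteq\Gl_{nd}$ from Remark \ref{exampleembeddingtwistedtori} for this argument. What you actually use is the isomorphism of finite groups $(\Gl_n)_d^F\cong\Gl_n(\F_{q^d})$, under which the Deligne--Lusztig construction for $(\Gl_n)_d$ with Frobenius $F_d$ matches the Deligne--Lusztig construction for $\Gl_n$ with Frobenius $F^d$. The embedding into $\Gl_{nd}$ is used elsewhere in the paper to realize $(\Gl_n)_d$ as an $F$-stable Levi subgroup, which is a different purpose. With that correction, your term-by-term matching works: the bijection of Example \ref{twisted} between $F$-conjugacy classes of $S_n^d$ and conjugacy classes of $S_n$ is indeed measure-preserving in the sense you need (each $F$-class in $S_n^d$ mapping to the class of $\sigma\in S_n$ has size $|S_n|^{d-1}\cdot|C_\sigma|$, so the average over $W=S_n^d$ collapses to the average over $S_n$), and Remark \ref{rmkmult} gives exactly the compatibility of $\tilde\chi$-values you need.
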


The irreducible characters of this form are called \textit{unipotent characters}. In particular, for every $\lambda \in \mathcal{P}_n$, there is a corresponding irreducible character $R_{\widetilde{\chi_{\lambda}^{\boxtimes d}}} \in \mathcal{C}((\Gl_n)_d^F)$, which we will denote by $R_{\lambda}$.

\vspace{6 pt}

From Lemma \ref{delignel1}(2), we deduce that Lemma \ref{unipotentlemma} is true more generally for any group $G$ of the form $(\Gl_{n_1})_{d_1} \times \cdots (\Gl_{n_r})_{d_r}$. In this case, the unipotent characters are  in bijection with the multipartitions $\bm \lambda \in \mathcal{P}_{n_1} \times \cdots \times \mathcal{P}_{n_r}$ and we denote by $R_{\bm \lambda}$ the associated irreducible unipotent character. 

\vspace{2 pt}

Consider now a group $G=(\Gl_{n_1})_{d_1} \times \cdots (\Gl_{n_r})_{d_r}$. An argument similar to that at the end of Example \ref{keyexampleLevi} shows that  all $F$-stable Levi subgroups are isomorphic to a group of the form $(\Gl_{n'_1})_{d'_1} \times \cdots (\Gl_{n'_s})_{d'_s}$. By eq.(\ref{twist3}), we deduce the following Proposition.

\begin{prop}
\label{twist4}
Let $G=(\Gl_{n_1})_{d_1} \times \cdots (\Gl_{n_r})_{d_r}$ and $L \subseteq G$ an $F$-stable Levi subgroup such that $$(L,F) \cong (\Gl_{n'_1})_{d'_1} \times \cdots (\Gl_{n'_s})_{d'_s} .$$ For any $\bm \mu \in \mathcal{P}_{n'_1} \times \cdots \times \mathcal{P}_{n'_s}$, the character $R^G_L(R_{\bm \mu})$ belongs to the vector space spanned by the unipotent characters of $G^F$.
\end{prop}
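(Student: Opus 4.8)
The plan is to reduce Proposition \ref{twist4} to the already-available tools: the transitivity of Deligne--Lusztig induction, its compatibility with direct products, the isometry $f\mapsto R_f$, and Lemma \ref{twist33} on the induction of class functions on Weyl groups. First I would fix an $F$-stable maximal torus $T\subseteq L\subseteq G$; since $L$ is $F$-isomorphic to $(\Gl_{n'_1})_{d'_1}\times\cdots\times(\Gl_{n'_s})_{d'_s}$ and $G$ to $(\Gl_{n_1})_{d_1}\times\cdots\times(\Gl_{n_r})_{d_r}$, the Weyl groups $W_L=W_L(T)$ and $W=W_G(T)$ are products of symmetric groups, with $W_L$ an $F$-stable subgroup of $W$. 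By Lemma \ref{unipotentlemma} (extended to products via Lemma \ref{delignel1}(2)), the unipotent character $R_{\bm\mu}$ of $L^F$ is exactly $R_{\widetilde{\chi_{\bm\mu}}}$ for the $F$-stable irreducible character $\chi_{\bm\mu}$ of $W_L$ attached to the multipartition $\bm\mu$; here I am using Example \ref{twist1} and Remark \ref{rmkmult} to identify $\mathcal{C}(W_LF)$ with the relevant space of class functions and to see that $\widetilde{\chi_{\bm\mu}}$ lies in it.

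Next I would apply Lemma \ref{twist33}: $R^G_L(R_{\widetilde{\chi_{\bm\mu}}}) = R_{\Ind_{W_LF}^{WF}(\widetilde{\chi_{\bm\mu}})}$. So $R^G_L(R_{\bm\mu})$ is of the form $R_f$ for some $f\in\mathcal{C}(WF)$. Now the key structural fact recalled after Example \ref{twist1} is that the functions $\{\widetilde{\chi}\}_{\chi\in(W^\vee)^F}$ form a basis of $\mathcal{C}(WF)$; hence $f=\Ind_{W_LF}^{WF}(\widetilde{\chi_{\bm\mu}})$ can be written as a $\C$-linear combination $f=\sum_{\chi\in(W^\vee)^F} c_\chi\,\widetilde{\chi}$. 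Since $f\mapsto R_f$ is linear, we get
\begin{equation}
R^G_L(R_{\bm\mu}) = \sum_{\chi\in(W^\vee)^F} c_\chi\, R_{\widetilde{\chi}},
\end{equation}
and by Lemma \ref{unipotentlemma} (again in the product form, via Lemma \ref{delignel1}(2)) each $R_{\widetilde{\chi}}$ is a unipotent character of $G^F$. Therefore $R^G_L(R_{\bm\mu})$ lies in the span of the unipotent characters of $G^F$, which is the claim.

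The only genuinely non-formal point is the claim that $\Ind_{W_LF}^{WF}$ maps $\mathcal{C}(W_LF)$ into $\mathcal{C}(WF)$ and, more specifically, that it sends the "F-stable-character basis" of the source into the span of the "F-stable-character basis" of the target — equivalently, that $\Ind_{W_LF}^{WF}(\widetilde{\chi_{\bm\mu}})$ is a combination of the $\widetilde\chi$ with $\chi\in(W^\vee)^F$ rather than something outside $\mathcal{C}(WF)$. But this is automatic: $\Ind_{W_LF}^{WF}$ is defined as a map $\mathcal{C}(W_LF)\to\mathcal{C}(WF)$ in the text preceding Lemma \ref{twist33}, and $\{\widetilde\chi\}_{\chi\in(W^\vee)^F}$ is stated there to be a basis of $\mathcal{C}(WF)$, so any element of $\mathcal{C}(WF)$ — in particular $\Ind_{W_LF}^{WF}(\widetilde{\chi_{\bm\mu}})$ — is a linear combination of the $\widetilde\chi$. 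Thus the main obstacle is essentially bookkeeping: making sure the identifications of Weyl groups, the $F$-stability of $\chi_{\bm\mu}$, and the product decompositions of $G$ and $L$ are all compatible so that Lemma \ref{twist33} and Lemma \ref{unipotentlemma} apply verbatim; no new estimate or construction is needed.
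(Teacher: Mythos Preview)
Your argument is correct and is exactly the route the paper takes: the paper simply says the proposition follows from eq.~(\ref{twist3}) (i.e.\ Lemma \ref{twist33}), and you have spelled out precisely that deduction by expanding $\Ind_{W_LF}^{WF}(\widetilde{\chi_{\bm\mu}})$ in the basis $\{\widetilde\chi\}_{\chi\in(W^\vee)^F}$ and invoking Lemma \ref{unipotentlemma}.
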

\vspace{12 pt}

Lastly, consider an $F$-stable Levi subgroup $L \subseteq G$, a class function $R_f \in \mathcal{C}(L^F)$ for $f \in \mathcal{C}(W_LF)$ and a linear character $\theta:L^F \to \C^*$. Fix a central element $\gamma \in G^F$. Notice that $\gamma \in L^F$ too. The Mackey's formula \cite[Proposition 10.1.2]{DM} implies the following Proposition.

\begin{prop}
\label{valueatcentralelement}
We have an equality:
\begin{equation}
\label{valuecentralelement}
R^G_L(\theta R_f)(\gamma)=R^G_L(\theta R_f)(e)\theta(\gamma).
\end{equation}
\end{prop}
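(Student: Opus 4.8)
The claim is that for an $F$-stable Levi $L\subseteq G$, a linear character $\theta:L^F\to\C^*$, a class function of the form $R_f$ with $f\in\mathcal{C}(W_LF)$, and a central element $\gamma\in G^F$ (which lies in $L^F$), one has $R^G_L(\theta R_f)(\gamma)=R^G_L(\theta R_f)(e)\,\theta(\gamma)$. The guiding principle is that $\gamma$, being central in $G^F$, acts by a scalar on any representation appearing in $R^G_L(\theta R_f)$, and that scalar is governed by the central character; the content of the statement is identifying that scalar with $\theta(\gamma)$. The plan is to reduce to the Mackey formula for Deligne--Lusztig induction as cited in \cite[Proposition 10.1.2]{DM}, which expresses the value of $R^G_L(\psi)$ at an element in terms of sums over $G^F$-conjugates involving the original class function $\psi=\theta R_f$ evaluated on certain conjugates of $\gamma$.

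\textbf{Key steps.} First I would write out the character formula (Mackey / the explicit trace formula for $R^G_L$, as in \cite[Chapter 10]{DM}): for $g\in G^F$, $R^G_L(\psi)(g)$ is a sum over a suitable index set of terms of the shape $\psi(x\text{-conjugate of the semisimple part of }g)$ weighted by Green-function data depending only on the unipotent part. Second, I would specialize to $g=\gamma$: since $\gamma$ is central semisimple, its unipotent part is trivial and every ``conjugate'' of $\gamma$ appearing is just $\gamma$ itself (centrality kills the conjugation), so the Green-function/geometric weights become exactly those occurring in the formula for $R^G_L(\psi)(e)$. Third, because $\gamma$ is central in $G^F$ it is in particular central in every $F$-stable subgroup that appears, so $\psi(\gamma\cdot h)=\psi(\gamma)\psi'(h)$-type factorizations do not literally hold for arbitrary $\psi$, but they do for $\psi=\theta R_f$: the function $R_f$ is, by its definition \eqref{defr} as an average of $R^G_{T_w}(1)$, a $\overline{\Q}_\ell$-linear combination of honest characters all of which have $\gamma$ acting as the scalar $\theta(\gamma)$ once twisted by $\theta$ — more precisely $R_f$ itself is unipotent-supported so $R_f(\gamma)=R_f(e)$ (central elements act trivially on unipotent characters, cf.\ the center being in the kernel of the relevant central character), whence $(\theta R_f)(\gamma)=\theta(\gamma)R_f(e)=\theta(\gamma)(\theta R_f)(e)$. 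Fourth, feeding this factorization back into the Mackey expansion of $R^G_L(\theta R_f)(\gamma)$ pulls the scalar $\theta(\gamma)$ out of the entire sum, leaving exactly $\theta(\gamma)R^G_L(\theta R_f)(e)$.

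\textbf{Alternative cleaner route.} A slicker way to organize the same argument: use \Cref{twist5} (Remark) to write $R^G_L(\theta R_f)=\theta\cdot R^G_{L}(R_f)$ when $\theta$ extends to $G^F$ — but here $\theta$ need not extend, so instead use that $R^G_L(\theta f')=\theta R^G_{L'}(R^{L'}_L(\theta f'))$ transitivity only after extending; the safe formulation is: $R_f$ is a uniform function (in the span of the $R^G_{T_w}(1)$), central elements act trivially on each $R^G_{T_w}(1)$ because $R^G_{T_w}(1)(\gamma g)=R^G_{T_w}(1)(g)$ for central $\gamma$ (this is the $\theta=1$ case of Mackey, or follows since $R^G_{T_w}(1)$ has the trivial central character), hence $R_f(\gamma g)=R_f(g)$ for all $g$, hence $(\theta R_f)(\gamma g)=\theta(\gamma)(\theta R_f)(g)$, and Mackey's formula for $R^G_L$ then gives the result directly by pulling out $\theta(\gamma)$.

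\textbf{Main obstacle.} The delicate point is justifying that central elements of $G^F$ act trivially on $R^G_{T_w}(1)$ (equivalently on unipotent characters), i.e.\ that the relevant central character is trivial; this is standard (it is where one genuinely needs that $\gamma$ is central in $G$, not merely in $L$, and it is exactly the input that Mackey's formula \cite[Proposition 10.1.2]{DM} packages), but it must be invoked carefully so that the bookkeeping of the geometric weights in the character formula at $\gamma$ matches that at $e$. Everything else is formal manipulation of the Deligne--Lusztig character formula.
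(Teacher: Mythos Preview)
Your proposal is correct and follows essentially the same approach as the paper, which simply cites Mackey's formula \cite[Proposition~10.1.2]{DM} without further details; you have unpacked exactly the mechanism by which that formula yields the statement (centrality of $\gamma$ trivializes the conjugation and makes the Green-function weights at $\gamma$ coincide with those at $e$, reducing the claim to $(\theta R_f)(\gamma)=\theta(\gamma)(\theta R_f)(e)$ on $L^F$). One small slip: in the present context $R_f\in\mathcal{C}(L^F)$ is built from $R^{L}_{T_w}(1)$, not $R^{G}_{T_w}(1)$ as you wrote when referring to \eqref{defr}; this does not affect the argument, since the same reasoning about trivial central character of unipotent virtual characters applies verbatim with $L$ in place of $G$.
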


\subsection{Characters of tori and graphs}
\label{charLevi}

Consider the group $\Gl_{\alpha}$ and fix an $F$-stable maximal torus $T \subseteq \Gl_{\alpha}$. We follow the notations of  \cref{graph}.

Recall that we have the dual root system $\Phi^{\vee} \subseteq Y_*(T)$ which is endowed with a canonical bijection $\Phi \leftrightarrow \Phi^{\vee}$ and that, for each $h\neq j \in \{1,\dots,m\}$, we denote by $\epsilon^{\vee}_{h,j}$ the element associated to $\epsilon_{h,j}$
through this bijection.

\vspace{8 pt}

Consider now a character $\theta:T^F \to \C^*$. In this paragraph, we  show how to associate an admissible graph $\Gamma_{\theta}$ with vertices $\mathcal{B}$ to the character $\theta$.

 In \cite[Proposition 11.7.1]{DM}, it is shown that there exists a canonical short exact sequence
\begin{center}
\begin{tikzcd}
1 \arrow[r,""] &Y_*(T) \arrow[r,""] &Y_*(T) \arrow[r,""] &T^F \arrow[r,""] &1.
\end{tikzcd}
\end{center}

In particular, the character $\theta:T^F \to \C^*$ induces by restriction a morphism $\widetilde{\theta}:Y_*(T) \to \C^*$.
\vspace{8 pt}

The graph $\Gamma_{\theta}$ is defined as follows. The set of the vertices of $\Gamma_{\theta}$ is $\mathcal{B}=\{\epsilon_1,\dots,\epsilon_m\}$ and, for each $h > j$, there is an edge between $\epsilon_h$ and $\epsilon_j$ if and only if $$\epsilon_{h,j}^{\vee} \in \Ker(\widetilde{\theta}) .$$

From Remark \ref{admissiblecondition}, we deduce the following Lemma.

\begin{lemma}
For any $\theta:T^F \to \C^*$, the graph $\Gamma_{\theta}$ is admissible.
\end{lemma}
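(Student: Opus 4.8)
The claim is that for any character $\theta:T^F\to\C^*$, the associated graph $\Gamma_\theta$ is admissible, i.e.\ each of its connected components is a complete graph. By \ref{admissiblecondition}, it suffices to verify the two combinatorial conditions: (i) there is at most one edge between any two vertices $\epsilon_h,\epsilon_j$ (which is automatic from the definition, since an edge is defined by a yes/no condition); and (ii) if there is an edge between $\epsilon_{h_1}$ and $\epsilon_{h_2}$ and an edge between $\epsilon_{h_2}$ and $\epsilon_{h_3}$, then there is an edge between $\epsilon_{h_1}$ and $\epsilon_{h_3}$. So the entire content is the transitivity statement (ii).

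\textbf{Key steps.} First, recall that the edges of $\Gamma_\theta$ are governed by the subgroup $\Ker(\widetilde\theta)\subseteq Y_*(T)$, where $\widetilde\theta:Y_*(T)\to\C^*$ is the morphism induced by $\theta$ via the short exact sequence of \cite[Proposition 11.7.1]{DM}. An edge joins $\epsilon_h$ and $\epsilon_j$ iff the dual root $\epsilon_{h,j}^\vee$ lies in $\Ker(\widetilde\theta)$. Second, I would use the key linear relation among the coroots: in $Y_*(T)$ (identified with the standard lattice via the basis dual to $\{\epsilon_i\}$) one has $\epsilon_{h,j}^\vee=\epsilon_h^\vee-\epsilon_j^\vee$, and hence for any three indices $h_1,h_2,h_3$,
\[
\epsilon_{h_1,h_3}^\vee=\epsilon_{h_1,h_2}^\vee+\epsilon_{h_2,h_3}^\vee .
\]
Third, since $\Ker(\widetilde\theta)$ is a subgroup of $Y_*(T)$, it is closed under addition: if $\epsilon_{h_1,h_2}^\vee\in\Ker(\widetilde\theta)$ and $\epsilon_{h_2,h_3}^\vee\in\Ker(\widetilde\theta)$, then their sum $\epsilon_{h_1,h_3}^\vee\in\Ker(\widetilde\theta)$, which gives exactly the edge between $\epsilon_{h_1}$ and $\epsilon_{h_3}$. (One should be mildly careful with the orientation conventions, i.e.\ that we only defined edges for $h>j$; but $\epsilon_{j,h}^\vee=-\epsilon_{h,j}^\vee$ and $\Ker(\widetilde\theta)$ is also closed under negation, so the condition "$\epsilon_{h,j}^\vee\in\Ker(\widetilde\theta)$" is symmetric in $h,j$ and the ordering restriction is harmless.) Invoking \ref{admissiblecondition} then concludes that $\Gamma_\theta$ is admissible.

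\textbf{Main obstacle.} There is no serious obstacle: the proof is a one-line observation that the kernel of a group homomorphism is a subgroup, combined with the additivity of coroots. The only point requiring a little care is to make precise the identification of $\Phi^\vee\subseteq Y_*(T)$ with differences $\epsilon_h^\vee-\epsilon_j^\vee$ in the appropriate basis of $Y_*(T)$ — this is standard for $\Gl_m$ and its $F$-stable maximal tori (the basis $\mathcal B$ and its dual were set up in \cref{twistedcharactergroup}), and one should note that the twisting by $\sigma$ does not affect the underlying abelian group structure, so the additive relation among coroots holds regardless of the Frobenius action. Once that identification is in hand, transitivity of the edge relation is immediate and the lemma follows from \ref{admissiblecondition}.
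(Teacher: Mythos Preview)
Your proposal is correct and follows essentially the same approach as the paper: the paper simply states that the lemma follows from Remark~\ref{admissiblecondition}, and your argument fills in exactly the obvious details behind that reference, namely that the additive relation $\epsilon_{h_1,h_3}^\vee=\epsilon_{h_1,h_2}^\vee+\epsilon_{h_2,h_3}^\vee$ among coroots together with the fact that $\Ker(\widetilde\theta)$ is a subgroup gives the required transitivity.
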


In particular, from Lemma \ref{lemmaadmissible}, there exists a unique  admissible subtorus $S_{\theta} \subseteq \Gl_{\alpha}$ such that $\Gamma_{S_{\theta}}=\Gamma_{\theta}$. 

We will denote by $L_{\theta}=C_{\Gl_{|\alpha|}}(S_{\theta})$ and  by $\widetilde{L_{\theta}}=L_{\theta} \cap\Gl_{\alpha}$. The Levi subgroup $L_{\theta}$ is the \textit{connected centralizer} of $\theta$ in $\Gl_{|\alpha|}$, defined in \cite[Definition 5.19]{DLu} and $\widetilde{L_{\theta}}$ is the connected centralizer of $\theta$ in $\Gl_{\alpha}$.

\vspace{6 pt}

\begin{esempio}
Consider the subset $I=\{1,2,3,4\}$, the dimension vector $\alpha=(2,1,1,1)$ and the torus $T$ of diagonal matrices of Example \ref{exampletoriputlate}. Notice that $$T^F=\Biggl\{\left(\begin{pmatrix} \lambda &0 \\ 0 &\mu  \end{pmatrix}, \gamma ,\delta,\eta \right) \ | \ \lambda,\mu, \gamma ,\delta,\eta  \in \ \F_q^* \Biggr\} .$$ 

Consider $(\beta_1,\beta_2) \in \Hom(\F_q^*,\C^*)^2$ and let $$\theta_{\beta_1,\beta_2}:T^F \to \C^*$$ defined as $$\theta_{\beta_1,\beta_2}(\lambda,\mu, \gamma ,\delta,\eta)=\beta_1(\lambda \gamma \delta\eta) \beta_2(\mu) .$$

If $\beta_1 \neq \beta_2$, the graph $\Gamma_{\theta_{\beta_1,\beta_2}}$ is \begin{center}
\begin{tikzcd}
\epsilon_1 \arrow[dash]{r} \arrow[bend left=30, dash]{rr} 
\arrow[bend left=40, dash]{rrr}
&\epsilon_3 \arrow[dash]{r} \arrow[bend right=30, dash]{rr}  &\epsilon_4 \arrow[dash]{r} &\epsilon_5\\
\epsilon_2
\end{tikzcd}
\end{center}

and the admissible torus $S_{\theta_{\beta_1,\beta_2}}$ is therefore the torus $S_1$ of Example \ref{exampleadmissible2}.

If $\beta_1=\beta_2$, the graph $\Gamma_{\theta_{\beta_1,\beta_1}}$ is \begin{center}
\begin{tikzcd}
\epsilon_1 \arrow[dash]{r} \arrow[bend left=30, dash]{rr} 
\arrow[bend left=40, dash]{rrr}
&\epsilon_3 \arrow[dash]{r} \arrow[bend right=30, dash]{rr}  &\epsilon_4 \arrow[dash]{r} &\epsilon_5\\
\epsilon_2 \arrow[dash]{u} \arrow[dash]{ur} \arrow[dash]{urr} \arrow[dash]{urrr}
\end{tikzcd}
\end{center}
i.e. the complete graph with $5$ vertices and  the admissible torus $S_{\theta_{\beta_1,\beta_1}}$ is thus $Z_{\alpha}$.

\end{esempio}

From \cite[Proposition 5.11]{DLu}, we deduce the following Proposition.

\begin{prop}
\label{propositionnotclear}
For any $\theta:T^F \to \C^*$, the character $\theta$ can be extended to a character $\theta:L_{\theta}^F \to \C^*$.
\end{prop}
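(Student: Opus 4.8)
The plan is to identify $L_\theta$ with the \emph{connected centralizer} of the pair $(T,\theta)$ in $\Gl_{|\alpha|}$, in the sense of \cite[Definition 5.19]{DLu}; once this identification is in place, the statement is an immediate consequence of \cite[Proposition 5.11]{DLu}. To make the identification I would start from the construction of $\Gamma_\theta$ in \cref{charLevi}: the character $\theta$ induces the morphism $\widetilde\theta\colon Y_*(T)\to\C^*$, and $\Gamma_\theta$ has an edge between $\epsilon_h$ and $\epsilon_j$ exactly when $\epsilon^\vee_{h,j}\in\Ker(\widetilde\theta)$. Setting $J_\theta\coloneqq\{\epsilon\in\Phi\mid\epsilon^\vee\in\Ker(\widetilde\theta)\}$, the set of roots $J_{\Gamma_\theta}$ attached to the edges of $\Gamma_\theta$ in the proof of Lemma \ref{lemmaadmissible} is exactly $J_\theta$, since every root of $\Phi$ is of the form $\epsilon_{h,j}$. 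Applying \cite[Corollary 3.3.4]{DM} and \cite[Lemma 8.4.2]{Springer} to the admissible $\sigma$-stable graph $\Gamma_\theta$, together with Lemma \ref{sonouncoglione} and eq.(\ref{Levigraph}), one then gets
\[
S_\theta=\bigcap_{\epsilon\in J_\theta}\Ker(\epsilon),\qquad L_\theta=C_{\Gl_{|\alpha|}}(S_\theta)=T\prod_{\epsilon\in J_\theta}U_\epsilon,
\]
and the right–hand description is precisely the definition of the connected centralizer of $\theta$, which is what is needed.

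To keep the argument reasonably self-contained I would also sketch the extension explicitly, as a check. After conjugating by an element of $\Gl_\alpha(\F_q)$ one may assume $L_\theta\cong(\Gl_{m_1})_{d_1}\times\cdots\times(\Gl_{m_r})_{d_r}$, the factors corresponding to the connected components of $\Gamma_\theta$; let $T_j$ be the maximal torus of the $j$-th factor. On $Y_*(T_j)$ the morphism $\widetilde\theta$ kills every coroot of that factor, hence factors through the map $Y_*(T_j)\to\Z$ induced by the determinant $\Gl_{m_j}\to\mathbb{G}_m$. Feeding this datum through the exact sequence of \cite[Proposition 11.7.1]{DM} applied to $\mathbb{G}_m$ produces a linear character of $\Gl_{m_j}(\F_{q^{d_j}})$, and I would then check that the product over $j$ of these characters is a linear character of $L_\theta^F$ whose restriction to $T^F$ is $\theta$. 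This realizes concretely the abstract extension of \cite[Proposition 5.11]{DLu}.

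The main obstacle is not a deep one: it is the bookkeeping needed to be certain that the combinatorial torus $S_\theta$ built from $\Gamma_\theta$ coincides with the torus used in Deligne–Lusztig's definition of the connected centralizer — i.e. that their defining condition on roots is exactly ``$\epsilon^\vee\in\Ker(\widetilde\theta)$'' — and that \cite[Proposition 5.11]{DLu} is available in the present, possibly twisted, situation. Here $\Gl_{|\alpha|}$ is very well behaved: derived groups of Levi subgroups are simply connected, centres of Levi subgroups are connected, and the twisted factors $(\Gl_m)_d$ are again general linear groups over extension fields, so no disconnectedness pathologies occur and the cited extension statement applies verbatim. This is the step that uses essentially all of \cref{graph}–\cref{charLevi}; beyond it nothing further is required.
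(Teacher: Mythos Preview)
Your approach is correct and is essentially the same as the paper's: the paper states just before the proposition that $L_\theta$ is the connected centralizer of $\theta$ in $\Gl_{|\alpha|}$ in the sense of \cite[Definition~5.19]{DLu}, and then deduces the proposition directly from \cite[Proposition~5.11]{DLu}. Your additional explicit verification of the extension via the determinant characters is a nice sanity check, but the paper itself does not include it and relies solely on the Deligne--Lusztig reference.
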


\subsection{Characters of tori and Levi subgroups of the finite general linear groups}

\subsubsection{Characters of Levi subgroups}

Consider now $G=\Gl_{\alpha}$, an admissible torus $S$, the associated Levi subgroups $L_S=C_{\Gl_{|\alpha|}}(S) \subseteq \Gl_{|\alpha|}$ and $\widetilde{L_S}=L_S \cap \Gl_{\alpha}$. Let $T$ be an $F$-stable maximal torus $T \subseteq \Gl_{\alpha}$ such that $\widetilde{L_S} \supseteq T \supseteq S$. 

We use the notations introduced in \cref{graphdefinitions}, \cref{charLevi} for the graphs associated to $T, S , \theta$ in this situation.

Consider now a character $\theta:L_S^F \to \C^*$. By restriction, we obtain a character $\theta:T^F \to \C^*$, from which we define an associated admissible torus $S_{\theta}$ and the corresponding Levi subgroups $L_{\theta}, \widetilde{L_{\theta}}$, as in \cref{charLevi}.

In general, we have that $S_{\theta} \leq S$, i.e. $L_{\theta} \supseteq L_S$ (or equivalently $\Gamma_{\theta} \leq \Gamma_S$). In this case, $\Gamma_{\theta}=\Gamma_S$ if and only if $\widetilde{L_{\theta}}=\widetilde{L_S}$.

\vspace{2 pt}

As recalled in Proposition \ref{propositionnotclear}, the character $\theta$ can be extended to the connected centralizer $\theta:L_{\theta}^F \to \C^*$.  Since $L_{\theta} \supseteq \widetilde{L_{\theta}}$, in particular, $\theta$ can be extended to a character $\theta: \widetilde{L_{\theta}}\to \C^*$.

\vspace{4 pt}

Conversely, for each character $\theta:T^F \to \C^*$ such that $L_{\theta} \supseteq L_S$ (i.e. $\Gamma_{\theta} \leq \Gamma_S)$, the character $\theta$ can be first extended to $\theta:L_{\theta}^F \to \C^*$ and then restricted to obtain a linear character $\theta:L_S^F \to \C^*$ and $\theta:\widetilde{L_S}^F \to \C^*$.

We obtain therefore the following correspondence.

\begin{prop}
\label{correspondenceadmissible}
There are bijections:
\begin{equation}
\Hom(L_S^F,\C^*) \leftrightarrow \ \{\theta \in \Hom(T^F,\C^*) \ | \ \Gamma_{\theta} \leq \Gamma_S \} \leftrightarrow \{\theta \in \Hom(\widetilde{L_S}^F,\C^*)\ | \ \Gamma_{\theta} \leq \Gamma_S\}.
\end{equation}
\end{prop}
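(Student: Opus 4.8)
The strategy is to build the middle term of the correspondence as a common refinement and to show that restriction/extension along the chains $L_S \supseteq \widetilde{L_S} \supseteq T$ and $L_\theta \supseteq \widetilde{L_\theta} \supseteq T$ are mutually inverse on the appropriate subsets. First I would fix, once and for all, the $F$-stable maximal torus $T$ with $S \subseteq T \subseteq \widetilde{L_S} \subseteq L_S$, so that every Levi or torus in sight contains $T$ and all the graph combinatorics of \cref{graphdefinitions},\cref{charLevi} are available. The key structural input is the short exact sequence $1 \to Y_*(T) \xrightarrow{F-1} Y_*(T) \to T^F \to 1$ from \cite[Proposition 11.7.1]{DM}, together with Lemma \ref{lemmaadmissible}, which gives the bijection between $\sigma$-stable admissible graphs on $\mathcal{B}$ and $F$-stable admissible subtori of $T$; and Proposition \ref{propositionnotclear} (i.e. \cite[Proposition 5.11]{DLu}), which says every $\theta\colon T^F\to\C^*$ extends to its connected centralizer $L_\theta^F$.

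The right-hand bijection, $\Hom(\widetilde{L_S}^F,\C^*) \leftrightarrow \{\theta\in\Hom(T^F,\C^*) : \Gamma_\theta \le \Gamma_S\}$, I would handle first since it is the "internal" one. Restriction gives a map $\Hom(\widetilde{L_S}^F,\C^*)\to\Hom(T^F,\C^*)$; one checks its image lands in $\{\theta : \Gamma_\theta\le\Gamma_S\}$ because a character of $\widetilde{L_S}^F$ is trivial on all coroots $\epsilon^\vee_{h,j}$ with $\epsilon_{h,j}\in\Phi_{\widetilde{L_S}}(T)$, hence $\Gamma_\theta$ contains every edge of $\Gamma_S$, i.e. $\Gamma_\theta\le\Gamma_S$. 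For the inverse, given $\theta\colon T^F\to\C^*$ with $\Gamma_\theta\le\Gamma_S$ we have $L_\theta\supseteq L_S$, hence $\widetilde{L_\theta}\supseteq\widetilde{L_S}$; extend $\theta$ to $L_\theta^F$ by Proposition \ref{propositionnotclear} and restrict to $\widetilde{L_S}^F$. The two composites are the identity: restricting-then-extending recovers $\theta$ on $T^F$ by construction, and extending-then-restricting is the identity because $\widetilde{L_S}$, being generated by $T$ and the root subgroups $U_\epsilon$ with $\Ker(\epsilon)\supseteq S$, has its characters determined by their restriction to $T$ (a character of $\widetilde{L_S}^F$ kills the commutator, and $[\widetilde{L_S}^F,\widetilde{L_S}^F]$ together with $T^F$ generate $\widetilde{L_S}^F$). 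The left-hand bijection, $\Hom(L_S^F,\C^*)\leftrightarrow\{\theta\in\Hom(T^F,\C^*):\Gamma_\theta\le\Gamma_S\}$, is proved by the identical argument, now over $\Gl_{|\alpha|}$, using that $L_S = C_{\Gl_{|\alpha|}}(S)$ is itself a connected reductive group with maximal torus $T$ and root system $\Phi_{L_S}(T)=J_S$, whose characters are again determined by restriction to $T^F$ and whose trivial-on-$T$-part forces exactly the edges of $\Gamma_S$.

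The step I expect to be the main obstacle is verifying that restriction to $T^F$ is injective on $\Hom(L_S^F,\C^*)$ (and on $\Hom(\widetilde{L_S}^F,\C^*)$), i.e. that $T^F$ together with the derived subgroup generates $L_S^F$, so that a linear character is pinned down by its values on $T^F$. This is a standard but slightly delicate fact about finite reductive groups: it follows because $L_S$ is connected reductive, so $L_S = Z_{L_S}^\circ\cdot [L_S,L_S]$ with $[L_S,L_S]$ semisimple, and a linear character of $L_S^F$ is trivial on $[L_S^F,L_S^F]$, while $[L_S,L_S]^F$ is (up to the usual small-rank/small-$q$ caveats, absent here since all factors are $\Gl$'s) perfect and contained in $T^F\cdot[L_S^F,L_S^F]$; a clean way to run the argument is to use that for a product of $(\Gl_n)_d$-factors every linear character is of the form $g\mapsto\prod_i(\det g_i)^{a_i}$-type built out of the norm maps, which is manifestly determined by $T^F$. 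Once injectivity is in place, surjectivity onto the indicated set of $\theta$'s is immediate from the extension statement Proposition \ref{propositionnotclear}, and matching up the three sets via their common realization as characters of $T^F$ with $\Gamma_\theta\le\Gamma_S$ completes the proof.
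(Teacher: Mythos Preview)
Your approach matches the paper's, which proves the proposition via the discussion immediately preceding it: restriction to $T^F$ in one direction, and extension to $L_\theta^F$ (via Proposition~\ref{propositionnotclear}) followed by restriction to $L_S^F$ or $\widetilde{L_S}^F$ in the other. Your identification of injectivity of restriction to $T^F$ as the substantive point, and your argument for it (linear characters of a product of $(\Gl_n)_d$-factors are determinant-type, hence determined on the maximal torus), are correct and in fact fill in a detail the paper leaves implicit.

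However, you have misread the statement for the right-hand bijection. The third set in the proposition is \emph{not} all of $\Hom(\widetilde{L_S}^F,\C^*)$ but the proper subset $\{\theta \in \Hom(\widetilde{L_S}^F,\C^*) \mid \Gamma_\theta \le \Gamma_S\}$. Your claim that restriction from all of $\Hom(\widetilde{L_S}^F,\C^*)$ lands in $\{\theta \in \Hom(T^F,\C^*) \mid \Gamma_\theta \le \Gamma_S\}$ is false: since $\widetilde{L_S} = L_S \cap \Gl_\alpha$, its root system $\Phi_{\widetilde{L_S}}(T)$ is in general a \emph{proper} subset of $J_S = \Phi_{L_S}(T)$, so a linear character of $\widetilde{L_S}^F$ is only forced to kill the coroots of $\Phi_{\widetilde{L_S}}(T)$, not all of $J_S$. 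Concretely, take the torus $S_1$ of Example~\ref{exampleadmissible2}: there $\widetilde{L_{S_1}} = T$, so $\Hom(\widetilde{L_{S_1}}^F,\C^*) = \Hom(T^F,\C^*)$ has order $(q-1)^5$, whereas $\{\theta : \Gamma_\theta \le \Gamma_{S_1}\}$ has order $(q-1)^2$. Once you correct the domain to already carry the constraint $\Gamma_\theta \le \Gamma_S$, the right-hand bijection becomes essentially tautological (restriction to $T^F$ preserves the constraint by definition of $\Gamma_\theta$), and the rest of your argument goes through unchanged. Note that for the \emph{left}-hand bijection your reasoning is sound as written, since there the constraint $\Gamma_\theta \le \Gamma_S$ is automatic for characters of $L_S^F$.
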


\vspace{4 pt}

Lastly, we give the following definition of a \textit{reduced} character. 

\begin{definizione}
Given an $F$-stable Levi subgroup $L \subseteq \Gl_{\alpha}$ and a character $\theta:L^F \to \C^*$, we say that $\theta$ is reduced if there exists an admissible $F$-stable subtorus $S \subseteq \Gl_{\alpha}$ and an $F$-stable subtorus $T \supseteq S$ such that

\begin{itemize}
\item $L=\widetilde{L_S}$
\item For the connected centralizer $L_{\theta}$ defined from $T$, we have $\widetilde{L_{\theta}}=\widetilde{L_S}$.
\end{itemize}

\end{definizione}

\subsubsection{Reduced characters and connected centralizers}
\label{connectedcentralizer1}

Let now $G=\Gl_n$ (i.e. $|I|=1$) and consider an $F$-stable Levi subgroup $L \subseteq \Gl_n$ and a linear character $\theta:L^F \to \C^*$. The Levi subgroup $L$ contains an $F$-stable maximal torus $T$.

From $\theta,T$, we determine the connected centralizer  $L_{\theta} \supseteq L$, as defined above. In this case, the character $\theta$ is reduced if $L_{\theta}=L$.

\vspace{6 pt}

\begin{oss}
While the connected centralizer $L_{\theta}$ does depend on the choice of the torus $T$, from \cite[Proposition 5.11(ii), Proposition 5.20]{DLu} we deduce that for any two $F$-stable maximal tori $T,T'\subseteq L$ and the corresponding connected centralizers $L_{\theta},L'_{\theta}$, there exists an element $g \in \Gl_n(\F_q)$ such that $gL_{\theta}g^{-1}=L_{\theta}'$.

In particular, the property of being reduced does not depend on the choice of the maximal torus $T$.
\end{oss}

We will now describe the connected centralizers $L_{\theta}$ for  certain Levi subgroups $L \subseteq \Gl_n$ and certain linear characters $\theta:L^F \to \C^*$. This description is going to be useful both for recalling the construction of irreducible characters of $\Gl_n(\F_q)$ in \cref{construcirredchar} and for the proof of Lemma \ref{firstep}, which is the key technical point to prove our main result about multiplicative quiver stacks over $\F_q$, Theorem \ref{charstack}. 

\vspace{6 pt}

For any two positive integers $r,d$ such that $r |d$, the norm map $N_{\mathbb{F}^*_{q^d}/\mathbb{F}^*_{q^r}}: \mathbb{F}^*_{q^d} \to \mathbb{F}_{q^r}^*$ induces by precomposition an injective homomorphism $$\Gamma_{r,d}\coloneqq \Hom(\mathbb{F}_{q^r}^*,\C^*) \to \Hom(\F^*_{q^d},\C^*) .$$ We denote by $\Gamma$ the inductive limit via these maps  $$\Gamma \coloneqq \varinjlim \Hom(\mathbb{F}_{q^d}^*,\C^*).$$ Notice that, for any $n \geq 1$, we can view $\Hom(\F^*_{q^n},\C^*) $ as a subgroup of $\Gamma$ through the universal maps of the limit. The Frobenius morphism acts by precomposition on each term $\Hom(\F_{q^d}^*,\C^*)$ (i.e. $F(\gamma)=\gamma \circ F$) and so defines a morphism $F:\Gamma \to \Gamma$.

Consider the Levi subgroup $$L=(\Gl_{n_{1}})_{d_1} \times  \cdots \times (\Gl_{n_{r}})_{d_r} $$ with $n_{1},\dots,n_{r},$ $d_1,\dots,d_r$ positive integers such that $d_1n_{1}+\cdots +d_rn_{r}=n$ and let $T$ be the maximal torus $$(T_{n_1})_{d_1} \times \cdots (T_{n_r})_{d_r} .$$

The group $L^F$ is isomorphic to $\Gl_{n_1}(\F_{q^{d_1}}) \times \cdots \times \Gl_{n_r}(\F_{q^{d_r}})$. A character $\theta:L^F \to \C^*$ corresponds thus to an element $(\theta_1,\dots,\theta_r) \in \Hom(\F_{q^{d_1}}^*,\C^*) \times \cdots \times  \Hom(\F_{q^{d_r}}^*,\C^*)$ such that $$\displaystyle \theta(M_1,\dots,M_r)=\prod_{j=1}^r \theta_j(\det(M_j))$$ with $M_j \in \Gl_{n_j}(\F_{q^{d_j}})$. We have the following Lemma

\begin{lemma}
\label{connectedcentralizer11}
The character $\theta$ is \textit{reduced} if and only if the $F$-orbits of $\theta_1,\dots,\theta_r$ inside $\Gamma$ have length $d_1,\dots,d_r$ respectively and are pairwise disjoint.
\end{lemma}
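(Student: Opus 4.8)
The plan is to unwind the definitions on both sides and reduce the claim to the characterization of the connected centralizer in terms of kernels of the $\epsilon_{h,j}^\vee$'s, via the graph $\Gamma_\theta$ attached to a character of a maximal torus. First I would fix the standard maximal torus $T=(T_{n_1})_{d_1}\times\cdots\times(T_{n_r})_{d_r}$ inside $L$, with its basis $\mathcal{B}=\bigsqcup_{i,j}\mathcal{B}^{(j)}_i$ of $X_*(T)$ indexed so that $\mathcal{B}$ breaks into $r$ blocks, the $j$-th block of size $n_jd_j$, on which the Frobenius permutation $\sigma$ acts as a product of $n_j$ disjoint $d_j$-cycles (as in Example~\ref{twisted} and \cref{twistedcharactergroup}). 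By Proposition~\ref{correspondenceadmissible} (applied with $S$ such that $\widetilde{L_S}=L$, i.e. $\Gamma_S$ the disjoint union of the $n_j$ complete graphs $K_{d_j}$ on the cycles of the $j$-th block), extending the character $\theta=(\theta_1,\dots,\theta_r)$ to $T^F$ and then computing $\Gamma_\theta$ makes sense, and by definition $\theta$ is reduced exactly when $\widetilde{L_\theta}=\widetilde{L_S}=L$, equivalently $\Gamma_\theta=\Gamma_S$.

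Next I would compute $\Gamma_\theta$ explicitly. Using the exact sequence from \cite[Proposition 11.7.1]{DM}, the induced map $\widetilde\theta:Y_*(T)\to\C^*$ sends the coroot $\epsilon_{h,j}^\vee$ (for $h,j$ in the same $F$-orbit, say in block $m$) to a value which, after identifying a single $F$-orbit with $\F_{q^{d_m}}^*$, is essentially $\theta_m(x^{q^a}x^{-q^b})$ for the relevant exponents; more precisely, an edge between two vertices joins them in $\Gamma_\theta$ iff the "difference character" obtained from $\theta_m$ along that pair is trivial, and a short computation with the norm map shows this happens iff $F^{a-b}(\theta_m)=\theta_m$ in $\Gamma$. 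Vertices lying in different blocks $m\ne m'$ are never joined because $\widetilde\theta$ on the corresponding coroot is $\theta_m(\det)\cdot\theta_{m'}(\det)^{-1}$-type data living in incompatible groups; one has to be a little careful here but the point is that two blocks collapse together in $\Gamma_\theta$ precisely when, after the norm identifications, $\theta_m$ and $\theta_{m'}$ become equal in $\Gamma$. Assembling this: $\Gamma_\theta$ is obtained from $\Gamma_S$ by (i) enlarging each block's connected components if the $F$-orbit of $\theta_m$ in $\Gamma$ is strictly shorter than $d_m$ — because then additional coroots within the block die under $\widetilde\theta$ — and (ii) merging two blocks $m,m'$ whenever $\theta_m,\theta_{m'}$ have a common $F$-translate in $\Gamma$.

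Finally I would read off the equivalence. The graph $\Gamma_\theta$ equals $\Gamma_S$ if and only if neither enlargement (i) nor merging (ii) occurs: (i) fails to occur iff each $F$-orbit of $\theta_m$ has full length $d_m$, and (ii) fails to occur iff the $F$-orbits of $\theta_1,\dots,\theta_r$ in $\Gamma$ are pairwise disjoint. That is exactly the asserted condition, so $\theta$ is reduced iff the $F$-orbits of the $\theta_j$ have lengths $d_1,\dots,d_r$ and are pairwise disjoint. The main obstacle I anticipate is step two: getting the bookkeeping exactly right for how the short exact sequence of \cite[Proposition 11.7.1]{DM} translates the vanishing of $\widetilde\theta$ on a coroot $\epsilon_{h,j}^\vee$ into a concrete condition on $F$-translates of $\theta_m$ under the norm embeddings $\Gamma_{r,d}\hookrightarrow\Gamma$, and in particular pinning down precisely when coroots crossing two distinct blocks can lie in $\Ker(\widetilde\theta)$. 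Once that dictionary is in place, comparing $\Gamma_\theta$ with $\Gamma_S$ is purely combinatorial and immediate.
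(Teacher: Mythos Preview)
Your overall strategy is exactly the paper's: restrict $\theta$ to the maximal torus $T=(T_{n_1})_{d_1}\times\cdots\times(T_{n_r})_{d_r}$, compute the graph $\Gamma_\theta$ via the kernel of $\widetilde\theta$ on coroots, and compare with $\Gamma_S$ for $S=Z_L$. The paper carries this out more directly by writing down, for each index $h\in\{1,\dots,n\}$, the pair $(i_h,j_h)$ recording the block $i_h\in\{1,\dots,r\}$ and the copy $j_h\in\{1,\dots,d_{i_h}\}$, and then stating the single uniform criterion
\[
\widetilde\theta(\epsilon_{h_1,h_2}^\vee)=1 \quad\Longleftrightarrow\quad \theta_{i_{h_1}}^{\,q^{j_{h_1}}}=\theta_{i_{h_2}}^{\,q^{j_{h_2}}} \ \text{in }\Gamma,
\]
from which both your conditions (i) (orbit length) and (ii) (disjointness) fall out at once, with no case split.

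There is, however, a concrete error in your plan that would derail the argument if taken literally. You describe $\Gamma_S$ within the $j$-th block as ``the disjoint union of the $n_j$ complete graphs $K_{d_j}$ on the cycles'' of $\sigma$. This is backwards: the connected components of $\Gamma_{Z_L}$ in block $j$ are the $d_j$ \emph{copies} of $\Gl_{n_j}$, i.e.\ $d_j$ copies of $K_{n_j}$, and these are \emph{transverse} to the $\sigma$-cycles (each $\sigma$-cycle meets each component in exactly one vertex). Indeed, an element of $Z_{(\Gl_{n_j})_{d_j}}$ has the form $(z_1 I_{n_j},\dots,z_{d_j} I_{n_j})$, so $\epsilon_{h_1,h_2}$ vanishes on $Z_L$ iff $h_1,h_2$ lie in the same copy, i.e.\ $j_{h_1}=j_{h_2}$. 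With your stated $\Gamma_S$, the ``always present'' edges between two vertices in the same copy but different $\sigma$-cycles would already force $\Gamma_\theta\neq\Gamma_S$ whenever some $n_j>1$, which is wrong. Your conditions (i) and (ii) are nonetheless the correct ones, but they follow from the correct $\Gamma_S$ (or, more cleanly, from the paper's single criterion above), not from the graph you wrote down.
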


\begin{proof}
Notice that, for any $h\in\{1,\dots,n\}$ there exist unique $i_h\in\{1,\dots,r\}$ and $j_h\in\{1,\dots,d_{i_h}\}$ such that $$\sum_{s=1}^{i_h-1} d_s n_s < h \leq \sum_{s=1}^{i_h} d_s n_s $$ and $$\sum_{s=1}^{i_h-1} d_s n_s+ n_{i_h}(j_h-1) < h \leq \sum_{s=1}^{i_h-1} d_s n_s+ n_{i_h}j_h .$$

For $h_1,h_2 \in \{1,\dots,n\}$, we have that $$\widetilde{\theta}(\epsilon_{h_1,h_2}^{\vee})=1$$ if and only if $$ \theta_{i_{h_1}}^{q^{j_{h_1}}}=\theta_{i_{h_2}}^{q^{j_{h_2}}}$$ as elements of $\Gamma$, from which we deduce the Lemma above.

\end{proof}

\vspace{6 pt}

Consider now two $F$-stable Levi subgroups $L \subseteq \Gl_n$ and $L' \subseteq \Gl_{n'}$ and the Levi subgroup $M=L \times L' \subseteq \Gl_m$ embedded block diagonally, where $m=n+n'$. 

Assume that $L=(\Gl_{n_1})_{d_1} \times \cdots \times (\Gl_{n_r})_{d_r}$ and $L'=(\Gl_{n'_1})_{d_1'} \times \cdots \times (\Gl_{n'_s})_{d'_s}$ and consider two reduced characters $\theta:L^F \to \C^*$ and $\theta':(L')^F \to \C^*$ corresponding to $(\theta_1,\dots,\theta_r),(\theta_1',\dots,\theta'_s)$ where $\theta_i \in \Hom(\F_{q^{d_i}}^*,\C^*),\theta'_j \in \Hom(\F_{q^{d'_j}}^*,\C^*)$ for $i=1,\dots,r$, $j=1,\dots,s$.

Consider the character $$\gamma=\theta \times \theta':M^F \to \C^* .$$ Its connected centralizer $M_{\gamma}$ admits the following description. For $i\in \{1,\dots,r\}$, consider the subset $J_i \subseteq \{1,\dots,s\}$ defined as $$J_i \coloneqq \bigl\{j \in \{1,\dots, s\}\ | \ d'_j=d_i \text{ and the } F \text{-orbits 
 inside } \Gamma \text{ of } \theta_i,\theta'_j \text{ have nonempty intersection} \bigr\} .$$

We have that either $J_i=\emptyset$ or $J_i=\{j_i\}$ for an element $j_i \in \{1,\dots,s\}$, since the characters $\theta,\theta'$ are both reduced. Denote by $I'\subseteq \{1,\dots,r\}$ the subset $I'\coloneqq\{i \ | \ J_i=\emptyset \}$ and by $J' \subseteq \{1,\dots,s\}$ the subset $\displaystyle J'=\{1,\dots,s\}\setminus\bigsqcup_{i=1}^r J_i$.

\vspace{6 pt} 

 In a similar way to the one used to prove Lemma \ref{connectedcentralizer11}, we see that any connected centralizer $M_{\gamma}$ is $\Gl_n(\F_q)$-conjugated to the Levi subgroup $$M'_{\gamma}=\displaystyle \prod_{i \in I'}(\Gl_{n_i})_{d_i} \prod_{j \in J'} (\Gl_{n'_j})_{d'_j} \prod_{i \in (I')^c} (\Gl_{n_i+n'_{j_i}})_{d_i} .$$ Through this conjugation, the character $\gamma$ corresponds to the character $\gamma:(M'_{\gamma})^F \to \C^*$ associated to $((\theta_i)_{i \in I'},(\theta'_j)_{j \in J'},(\theta_i)_{i \in (I')^c})$.

\subsection{Irreducible characters of finite general linear group}\label{construcirredchar}
In this paragraph, we quickly recall how to build the character table of the groups $\Gl_{\alpha}(\F_q)$. We start from the following Lemma, which will also be needed later. Its proof is a consequence of \cite[Lemma 11.4.3,Lemma 11.4.4]{DM} and \cite[Theorem 8]{Lusztig}.

\begin{lemma}
\label{lemmamult}
Consider $G=\Gl_{\alpha}$, an $F$-stable Levi subgroup $L \subseteq G$ 
and two characters $R_{\phi_1},R_{\phi_2} \in \mathcal{C}(L^F)$ with $\phi_1,\phi_2 \in \mathcal{C}(W_LF)$. Let $\theta:L^F \to \C^*$ be a reduced character. 

We have that:
$$\langle R^G_L(\theta R_{\phi_1}),R^G_{L}(\theta R_{\phi_2}) \rangle_{G^F}=\langle R_{\phi_1},R_{\phi_2} \rangle_{L^F} .$$

\end{lemma}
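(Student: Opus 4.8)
The plan is to reduce the orthogonality statement to a computation about the induction maps $\Ind_{W_LF}^{WF}$ on the spaces $\mathcal{C}(WF)$, using Lemma \ref{twist33} together with Remark \ref{twist5}. First I would use Remark \ref{twist5}: since $\theta$ extends to a character of the connected centralizer $L_\theta \supseteq L$ (by Proposition \ref{propositionnotclear}, which applies because $L = \widetilde{L_S}$ is the relevant centralizer and $\theta$ is reduced means $L_\theta$ is conjugate to a Levi containing $L$), we can write $R^G_L(\theta R_{\phi_i}) = \theta\, R^G_{L_\theta}(R^{L_\theta}_L(R_{\phi_i}))$ when $\theta$ is viewed as a character of $L_\theta^F$. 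Actually the cleaner route, matching the cited references \cite[Lemma 11.4.3, Lemma 11.4.4]{DM}, is: because $\theta$ is reduced, $L$ \emph{is} its own connected centralizer $L_\theta$, so $\theta$ is ``in general position'' among the characters obtained by restriction from larger Levi subgroups. Then the key input is that for a reduced $\theta$, the Harish-Chandra / Lusztig series attached to $(L,\theta)$ and to $(L,\theta)$ via different $\phi$ are governed by the relative Weyl group, which for a reduced character is forced to be small.

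The main computation is then: expand both sides using eq.(\ref{defr}) and Lemma \ref{twist33}. We have $R^G_L(\theta R_{\phi_i})$; I would like to write this as $\theta_{\mathrm{ext}} \cdot R_{\Ind_{W_LF}^{WF}(\phi_i)}$ up to identifying things on the connected centralizer, but more precisely the statement to invoke is \cite[Theorem 8]{Lusztig}: the characters $R^G_L(\theta R_{\widetilde\chi})$, as $\chi$ ranges over $(W_L^\vee)^F$, form (up to sign) an orthonormal set indexed by a Lusztig series, and the inner product $\langle R^G_L(\theta R_{\phi_1}), R^G_L(\theta R_{\phi_2})\rangle_{G^F}$ equals $\langle \phi_1, \phi_2\rangle_{W_{L,\theta}F}$ where $W_{L,\theta}$ is the stabilizer in $W_G(L)/W_L$ of the character $\theta$. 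The crucial point is that for a \emph{reduced} $\theta$, this stabilizer is trivial, i.e. $W_{L,\theta} = W_L$, so the relative Weyl group contributes nothing and $\langle \phi_1,\phi_2\rangle_{W_{L,\theta}F} = \langle \phi_1,\phi_2\rangle_{W_LF}$. Since $f \mapsto R_f$ is an isometry $\mathcal{C}(W_LF) \to \mathcal{C}(L^F)$, this last quantity equals $\langle R_{\phi_1}, R_{\phi_2}\rangle_{L^F}$, giving the result.

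So concretely the steps are: (1) observe $\theta$ reduced $\Rightarrow$ $L = \widetilde{L_\theta}$, hence in the decomposition of $\Hom(T^F,\C^*)$ the character $\theta$ has connected centralizer exactly $L$; (2) apply the Mackey/scalar-product formula for Deligne--Lusztig induction of the twisted classes $\theta R_{\phi_i}$ --- this is where \cite[Lemma 11.4.3, Lemma 11.4.4]{DM} and \cite[Theorem 8]{Lusztig} enter, expressing $\langle R^G_L(\theta R_{\phi_1}), R^G_L(\theta R_{\phi_2})\rangle_{G^F}$ as a sum over $\{w \in W_G(L) : {}^w(L,\theta) \cong (L,\theta)\}/W_L$ of terms $\langle \phi_1, {}^w\phi_2\rangle$; (3) show this index set is a singleton (just $W_L$ itself) precisely because $\theta$ is reduced, using the description of connected centralizers from Lemma \ref{connectedcentralizer11} and its multi-Levi analogue in \cref{connectedcentralizer1} to see that no nontrivial element of the relative Weyl group can fix $\theta$; (4) conclude $\langle R^G_L(\theta R_{\phi_1}), R^G_L(\theta R_{\phi_2})\rangle_{G^F} = \langle \phi_1, \phi_2\rangle_{W_LF} = \langle R_{\phi_1}, R_{\phi_2}\rangle_{L^F}$ by the isometry property of $f \mapsto R_f$ recalled after eq.(\ref{defr}).

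\textbf{Main obstacle.} The delicate point is step (3): carefully justifying that ``$\theta$ reduced'' exactly kills the relative Weyl group contribution. One must match the notion of reduced character (defined via $\widetilde{L_\theta} = \widetilde{L_S} = L$) with the group-theoretic condition that $N_{G}(L,\theta)/L$ is trivial, i.e. that the $F$-orbits of the components $\theta_i$ in $\Gamma$ are pairwise disjoint and of the right length (Lemma \ref{connectedcentralizer11}). This requires unwinding the block structure of $L = \prod_i (\Gl_{n_i})_{d_i}$ and checking that any $w \in W_G(L)$ permuting blocks and fixing $\theta$ would force two of the $\theta_i$ to lie in the same $F$-orbit, contradicting reducedness; the bookkeeping with the twisted Frobenius on $S_n^d$ (Example \ref{twisted}) is the fiddly part. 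Everything else is a direct appeal to the cited results of \cite{DM} and \cite{Lusztig}.
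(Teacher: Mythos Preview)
Your proposal is correct and follows exactly the route the paper indicates: the paper does not spell out a proof but simply says the lemma ``is a consequence of \cite[Lemma 11.4.3, Lemma 11.4.4]{DM} and \cite[Theorem 8]{Lusztig},'' and your outline explains precisely how these references combine (Mackey-type inner product formula over the relative Weyl group, triviality of the $\theta$-stabilizer forced by reducedness, and the isometry $f\mapsto R_f$). Your identification of step (3) as the crux --- that reducedness of $\theta$ kills the relative Weyl group contribution via Lemma \ref{connectedcentralizer11} --- is exactly the point.
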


Notice in particular that if $\phi_1=\phi_2=\tilde{\psi}$ with $\psi \in (W_L^{\vee})^F$, from Lemma \ref{unipotentlemma}, we have that $$\langle R^G_L(\theta R_{\Tilde{\psi}}), R^G_L(\theta R_{\Tilde{\psi}})\rangle=\langle R_{\Tilde{\psi}},R_{\Tilde{\psi}}\rangle=1 .$$ In particular, the character $R^G_L(\theta R_{\Tilde{\psi}})$ is a virtual irreducible character, i.e. an irreducible character up to a sign. 

From these remarks, in \cite[Theorem 3.2]{LSr}, it is shown the following Theorem.

\begin{teorema}
\label{theoremirreduciblecharacters}
For an irreducible character $\chi \in \Gl_{\alpha}(\F_q)^{\vee}$, we have $$\chi=\epsilon_G \epsilon_L R^G_L(\theta R_{\Tilde{\phi}}) ,$$ where $L$ is an $F$-stable Levi subgroup, $\phi \in (W^{\vee}_L)^F$ and $\theta:L^F \to \C^*$ is a reduced character.

Two characters $\chi_1,\chi_2$ with associated data $(L_1,\theta_1,\phi_1)$ and $(L_2,\theta_2,\phi_2)$ are equal if and only if the triples $(L_1,\theta_1,\phi_1),(L_2,\theta_2,\phi_2)$ are $\Gl_{\alpha}(\F_q)$-conjugated.
\end{teorema}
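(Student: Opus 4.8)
The plan is to prove Theorem~\ref{theoremirreduciblecharacters} by combining the classification of irreducible characters of $\Gl_n(\F_q)$ in the style of Lusztig--Srinivasan \cite{LSr} with the multiplicativity results already assembled in the preceding paragraphs, in particular Lemma~\ref{lemmamult}, Lemma~\ref{unipotentlemma}, and Proposition~\ref{twist4}. First I would reduce to the case $|I|=1$, i.e. $G=\Gl_n$: indeed $\Gl_\alpha=\prod_{i\in I}\Gl_{\alpha_i}$, its irreducible characters are external tensor products of irreducible characters of the factors, and by Lemma~\ref{delignel1}(2) the construction $R^G_L(\theta R_{\widetilde\phi})$ is compatible with such products (a reduced character of a product Levi is an external product of reduced characters, and an $F$-stable Levi of $\Gl_\alpha$ is a product of $F$-stable Levis of the $\Gl_{\alpha_i}$, by the discussion after Remark~\ref{remarklevistarbucks}). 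So the statement for general $\alpha$ follows formally once it is known for each $\Gl_{\alpha_i}$.

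Next, for $G=\Gl_n$, I would first show that every character of the stated form $\epsilon_G\epsilon_L R^G_L(\theta R_{\widetilde\phi})$ is genuinely irreducible. By Lemma~\ref{lemmamult} applied with $\phi_1=\phi_2=\widetilde\phi$ (and using $\langle R_{\widetilde\phi},R_{\widetilde\phi}\rangle_{L^F}=1$ from the isometry $\mathcal{C}(W_LF)\to\mathcal{C}(L^F)$ of \cref{DLU1}), the class function $R^G_L(\theta R_{\widetilde\phi})$ has norm $1$, hence is $\pm$ an irreducible character; multiplying by $\epsilon_G\epsilon_L\in\{\pm1\}$ produces an actual irreducible character (the sign is pinned down by evaluating at the identity, where $\epsilon_G\epsilon_L R^G_L(\dots)(e)>0$, a standard fact about Deligne--Lusztig induction). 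Then I would prove surjectivity: every $\chi\in\Gl_n(\F_q)^\vee$ arises this way. This is where one invokes the Jordan decomposition of characters — each $\chi$ lies in a Lusztig series $\mathcal{E}(G^F,[s])$ for a semisimple $s$ in the dual group, and for $\Gl_n$ this series is in bijection with unipotent characters of the centralizer $C_{G^*}(s)^F$, which is itself a product of $(\Gl_{n_i})_{d_i}$'s. Concretely, $s$ corresponds to a reduced character $\theta$ of an $F$-stable Levi $L$ (its connected centralizer, as set up in \cref{connectedcentralizer1}), and the unipotent characters of $L^F$ are exactly the $R_{\widetilde\phi}$, $\phi\in(W_L^\vee)^F$, by Lemma~\ref{unipotentlemma}; pushing forward by $R^G_L$ and correcting the sign gives $\chi$. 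A counting argument can double-check surjectivity: the number of data $(L,\theta,\phi)$ up to $\Gl_n(\F_q)$-conjugacy equals the number of irreducible characters, which by \cite{LSr} equals the number of conjugacy classes of $\Gl_n(\F_q)$.

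For the uniqueness clause, I would argue that two triples $(L_1,\theta_1,\phi_1)$, $(L_2,\theta_2,\phi_2)$ give the same character iff they are conjugate. If they are $\Gl_n(\F_q)$-conjugate the characters plainly coincide. For the converse: equality of characters forces $L_1,L_2$ to have conjugate centres (the relevant semisimple class $[s]$, recovered from $\theta$ via \cref{connectedcentralizer1}, is an invariant of $\chi$), so after conjugating we may take $L_1=L_2=L$ and, by Lemma~\ref{connectedcentralizer11} (which characterises reduced characters by the $F$-orbit structure of their components), $\theta_1=\theta_2=\theta$ up to $N_{G^F}(L)$-conjugacy; then $\langle R^G_L(\theta R_{\widetilde\phi_1}),R^G_L(\theta R_{\widetilde\phi_2})\rangle=\langle R_{\widetilde\phi_1},R_{\widetilde\phi_2}\rangle=\langle\widetilde\phi_1,\widetilde\phi_2\rangle_{W_LF}$ by Lemma~\ref{lemmamult} and the isometry, so equality of the characters forces $\widetilde\phi_1=\widetilde\phi_2$, i.e. $\phi_1=\phi_2$. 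I expect the main obstacle to be the surjectivity step: making precise the passage through Jordan decomposition of characters and the identification of Lusztig series of $\Gl_n$ with unipotent characters of the (disconnected-looking but for $\Gl_n$ actually connected) centralizer, and matching the dual-group bookkeeping of $s$ with the concrete "reduced character'' language of \cref{connectedcentralizer1}. The cleanest route is simply to quote \cite[Theorem 3.2]{LSr} for this, as the excerpt already does, and spend the effort instead on the irreducibility and uniqueness bookkeeping, which follow from the lemmas collected above.
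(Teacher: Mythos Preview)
Your proposal is correct and aligns with the paper's own treatment: the paper does not give a self-contained proof but, after noting (as you do) that $R^G_L(\theta R_{\widetilde\phi})$ has norm $1$ via Lemma~\ref{lemmamult}, simply cites \cite[Theorem~3.2]{LSr} for the full classification. Your sketch fills in more detail (the reduction to $|I|=1$, the Jordan decomposition argument for surjectivity, and the uniqueness via Lemma~\ref{lemmamult}) than the paper provides, but the approach is the same.
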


For an irreducible character $\chi$ with associated datum $(L,\theta,\phi)$, we will call the couple $(L,\theta)$ the \textit{semisimple part} of $\chi$. This is well defined up to $\Gl_{\alpha}(\F_q)$-conjugacy.

\vspace{10 pt}

\begin{oss}
\label{irreducibleinduced1}

Let $G=\Gl_{n}$ and consider now an $F$-stable Levi subgroup $L$, a character $\gamma:L^F \to \C^*$ (not necessarily reduced) and a unipotent irreducible character $R_{\tilde{\psi}}$ for $\psi \in (W^{\vee}_L)^F$.

Let $L_{\gamma}$ be a connected centralizer of $\gamma$. By Remark \ref{twist5}, we
have an equality \begin{equation}R^G_L(\gamma R_{\tilde{\psi}})=R^G_{L_{\gamma}}(\gamma R^{L_{\gamma}}_{L}(R_{\Tilde{\psi}})) .\end{equation}

\end{oss}

Notice that from Proposition \ref{twist4}, we have that  $R^{L_{\gamma}}_L(R_{\Tilde{\psi}})$ belongs to the vector space spanned by
the unipotent characters of $L_{\gamma}$. We deduce thus the following Proposition.

\begin{prop}
\label{irreducibleinduced}
For $G=\Gl_n$, any $F$-stable Levi subgroup $L \subseteq G$, any $\gamma:L^F \to \C^*$ and any $\psi \in (W^{\vee}_L)^F$, the character $R^G_L(\gamma R_{\Tilde{\psi}})$ belongs to the vector space spanned by the
irreducible characters of $\Gl_n(\F_q)$ with semisimple part $(L_{\gamma},\gamma)$.
\end{prop}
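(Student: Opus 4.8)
\textbf{Proof plan for Proposition \ref{irreducibleinduced}.}

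The plan is to reduce everything to the two structural facts already established just above: Remark \ref{irreducibleinduced1}, which rewrites the Deligne--Lusztig induction through the connected centralizer, and Proposition \ref{twist4}, which controls the image of unipotent characters under Deligne--Lusztig induction between groups of the form $(\Gl_{n_1})_{d_1}\times\cdots\times(\Gl_{n_r})_{d_r}$. First I would fix a connected centralizer $L_{\gamma}$ of $\gamma$ (computed with respect to some $F$-stable maximal torus $T\subseteq L$); by Proposition \ref{propositionnotclear} applied to the character $\gamma:T^F\to\C^*$ obtained by restriction, the character $\gamma$ extends to $L_{\gamma}^F$, so the expression $R^G_{L_{\gamma}}(\gamma\,(\cdot))$ makes sense. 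By Remark \ref{irreducibleinduced1} (which is itself an instance of Remark \ref{twist5} combined with transitivity, Lemma \ref{delignel1}(1)) we have the equality $R^G_L(\gamma R_{\widetilde{\psi}})=R^G_{L_{\gamma}}(\gamma\,R^{L_{\gamma}}_L(R_{\widetilde{\psi}}))$.

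Next I would analyze the inner term $R^{L_{\gamma}}_L(R_{\widetilde{\psi}})$. Both $L$ and $L_{\gamma}$ are $F$-stable Levi subgroups of $\Gl_n$ with $L\subseteq L_{\gamma}$, hence (by the argument indicated at the end of Example \ref{keyexampleLevi}) each is $\Gl_n(\F_q)$-conjugate to a product of factors of type $(\Gl_m)_d$. Proposition \ref{twist4} then applies verbatim: $R^{L_{\gamma}}_L(R_{\widetilde{\psi}})$ lies in the $\C$-span of the unipotent characters of $L_{\gamma}^F$. Write this as a linear combination $R^{L_{\gamma}}_L(R_{\widetilde{\psi}})=\sum_{\bm\mu} c_{\bm\mu}\, R_{\bm\mu}$ over multipartitions indexing the unipotent characters of $L_{\gamma}^F$.

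Applying $R^G_{L_{\gamma}}(\gamma\,(\cdot))$ and using linearity, it remains to check that each $R^G_{L_{\gamma}}(\gamma R_{\bm\mu})$ is a $\C$-combination of irreducible characters of $\Gl_n(\F_q)$ with semisimple part $(L_{\gamma},\gamma)$. Here I would invoke Theorem \ref{theoremirreduciblecharacters}: by construction $L_{\gamma}$ is a connected centralizer, so the character $\gamma:L_{\gamma}^F\to\C^*$ is \emph{reduced} in the sense of the definition preceding \cref{connectedcentralizer1} (this is precisely the condition $\widetilde{L_{\theta}}=\widetilde{L_S}$ for $|I|=1$, i.e.\ $L_\gamma=L_\gamma$). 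Expanding each unipotent character $R_{\bm\mu}$ of $L_{\gamma}^F$ in the orthonormal basis $\{R_{\widetilde{\phi}}\}_{\phi\in(W_{L_\gamma}^\vee)^F}$, linearity reduces us to $R^G_{L_{\gamma}}(\gamma R_{\widetilde{\phi}})$, which by Theorem \ref{theoremirreduciblecharacters} is $\pm$ an irreducible character of $\Gl_n(\F_q)$ with semisimple part exactly $(L_{\gamma},\gamma)$. This proves the claim.

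\textbf{Main obstacle.} The genuinely delicate point is the bookkeeping of the semisimple part: one must be sure that passing from $L$ (with the possibly non-reduced $\gamma$) to $L_{\gamma}$ (with $\gamma$ now reduced) does not change which irreducible characters can occur, and that \emph{all} occurring irreducibles have semisimple part $(L_{\gamma},\gamma)$ rather than some smaller datum. This is handled by the uniqueness clause in Theorem \ref{theoremirreduciblecharacters} together with the fact that the unipotent characters of $L_{\gamma}^F$ are supported, after twisting by $\gamma$ and inducing, only on irreducibles whose semisimple datum is $(L_{\gamma},\gamma)$ — so the potential subtlety is notational rather than mathematical, provided one has correctly identified $L_{\gamma}$ as a connected centralizer and hence $\gamma$ as reduced there.
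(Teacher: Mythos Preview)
Your proposal is correct and follows exactly the paper's approach: the paper's argument is precisely the line preceding the proposition, namely apply Remark~\ref{irreducibleinduced1} to pass through $L_{\gamma}$, then use Proposition~\ref{twist4} to express $R^{L_{\gamma}}_{L}(R_{\widetilde{\psi}})$ as a combination of unipotent characters of $L_{\gamma}^F$, and conclude via Theorem~\ref{theoremirreduciblecharacters}. Your added discussion of why $\gamma$ is reduced on $L_{\gamma}$ simply makes explicit what the paper leaves implicit.
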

\vspace{6 pt}

From Proposition \ref{valueatcentralelement}, we deduce in particular the following Proposition:

\begin{prop}
\label{valueatacentralelement1}
Given $\chi \in \Gl_{\alpha}^{\vee}(\F_q)$ with $\chi=\epsilon_G \epsilon_L R^G_L(\theta R_{\Tilde{\psi}})$ and a central element $\eta=(\eta_iI_{\alpha_i})_{i \in I}$, we have \begin{equation}
\dfrac{\chi(\eta)}{\chi(e)}=\theta(\eta).
\end{equation}
\end{prop}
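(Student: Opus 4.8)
The statement to prove is Proposition \ref{valueatacentralelement1}: for $\chi=\epsilon_G\epsilon_L R^G_L(\theta R_{\tilde\psi})\in\Gl_\alpha^\vee(\F_q)$ and a central element $\eta=(\eta_iI_{\alpha_i})_{i\in I}$, one has $\chi(\eta)/\chi(e)=\theta(\eta)$. The plan is to reduce this directly to Proposition \ref{valueatcentralelement}, which already gives the identity $R^G_L(\theta R_f)(\gamma)=R^G_L(\theta R_f)(e)\theta(\gamma)$ for any central $\gamma\in G^F$ and any $f\in\mathcal C(W_LF)$. First I would observe that $\eta=(\eta_iI_{\alpha_i})_{i\in I}$ is a central element of $\Gl_\alpha=\prod_{i\in I}\Gl_{\alpha_i}$, and in particular $\eta\in Z(G^F)\subseteq L^F$, so it lies in the center of $L^F$ as well; thus $\theta(\eta)$ makes sense and Proposition \ref{valueatcentralelement} applies with $\gamma=\eta$ and $f=\tilde\psi$.

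Next I would apply Proposition \ref{valueatcentralelement} to get
\begin{equation*}
R^G_L(\theta R_{\tilde\psi})(\eta)=R^G_L(\theta R_{\tilde\psi})(e)\,\theta(\eta).
\end{equation*}
Multiplying both sides by the sign $\epsilon_G\epsilon_L$ (which is a global constant independent of the group element), the left side becomes $\chi(\eta)$ and the first factor on the right becomes $\chi(e)$, so $\chi(\eta)=\chi(e)\theta(\eta)$. Finally, since $\chi$ is a genuine irreducible character (Theorem \ref{theoremirreduciblecharacters}) we have $\chi(e)=\dim\chi\neq 0$, so we may divide to obtain $\chi(\eta)/\chi(e)=\theta(\eta)$, as claimed.

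The only point requiring a small amount of care — and what I expect to be the \emph{only} real content beyond a one-line citation — is checking that Proposition \ref{valueatcentralelement} is genuinely applicable here: it is stated for a central element $\gamma\in G^F$, and one must confirm that the block-scalar element $\eta=(\eta_iI_{\alpha_i})_{i\in I}$ is indeed central in $\Gl_\alpha$ (it commutes with each block factor $\Gl_{\alpha_i}$, being scalar on each block) and that $\theta$, a priori a character of $L^F$, is being evaluated at an element of $L^F$ (true since $Z(\Gl_\alpha)\subseteq L$ for any Levi $L$). There is no serious obstacle; this is a direct corollary, and the proof is essentially the two displayed lines above together with these verifications. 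If one wanted to be slightly more self-contained one could also invoke Remark \ref{twist5} / Remark \ref{irreducibleinduced1} to reduce first to the connected centralizer $L_\theta$, but this is not necessary since Proposition \ref{valueatcentralelement} is already stated at the required level of generality.
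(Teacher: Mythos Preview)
Your proof is correct and follows exactly the paper's approach: the paper simply states that Proposition \ref{valueatacentralelement1} is deduced from Proposition \ref{valueatcentralelement}, and your argument spells out precisely this deduction, including the harmless multiplication by the sign $\epsilon_G\epsilon_L$ and the division by $\chi(e)\neq 0$.
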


\subsection{Type of an irreducible character} 
\label{typesirreduciblechar}
Let $\chi \in \Gl_n(\F_q)^{\vee}$ with associated datum $(L,\theta,\phi)$. Up to $\Gl_n(\F_q)$-conjugacy, $L$ is equal to $(\Gl_{n_1})_{d_1} \times \cdots \times (\Gl_{n_r})_{d_r}$ and $\phi=R_{\bm \lambda}$ for $\bm \lambda=(\lambda_1,\dots,\lambda_r) \in \mathcal{P}_{n_1} \times \cdots \times \mathcal{P}_{n_r}$ a multipartition. Up to reordering, the $(d_i,\lambda_i)$s define a  type $$\omega_{\chi}\coloneqq(d_1,\lambda_1)\dots (d_r,\lambda_r) \in \mathbb{T}_n$$ which is called the type of the irreducible character $\chi$.

\vspace{8 pt}

\begin{esempio}
Consider a partition $\mu=(\mu_1,\dots,\mu_h) \in \mathcal{P}_n$ and the associated split Levi subgroup $L_{\mu}=\Gl_{\mu_1} \times  \Gl_{\mu_2}\times \cdots 
 \times \Gl_{\mu_h} \subseteq \Gl_n$. For each reduced character $\theta:L_{\mu}^F \to \C^*$, the type of the character $R^G_{L_{\mu}}(\theta)$ is $(1,(\mu_1))\cdots (1,(\mu_h))$.

\end{esempio}

\vspace{10 pt}

In a similar way, for any finite set $I$ and any $\alpha \in \N^I$, to each irreducible character $\chi \in \Gl_{\alpha}(\F_q)^{\vee}$, we can associate a multitype $\omega_{\chi} \in \mathbb{T}_{\alpha}$ in the following way.

Let $\chi=\epsilon_{\Gl_{\alpha}} \epsilon_L R^{\Gl_{\alpha}}_L( R_{\Tilde{\phi}}\theta)$, where $\theta:L^F \to \C^*$ is a reduced character and $R_{\Tilde{\phi}}$ is a unipotent character of $L^F$ with $\phi \in (W^{\vee}_L)^F$.

\vspace{6 pt}

Consider an $F$-stable torus $T \subseteq L$ and the restricted character $\theta:T^F \to \C^*$. As explained in \cref{charLevi}, this determines a Levi subgroup $L_{\theta} \subseteq \Gl_{|\alpha|}$ with admissible center $S_{\theta} \subseteq T$ 
 and such that $L_{\theta} \cap \Gl_{\alpha}=L$, since $\theta$ is reduced.

Consider the semisimple multitype $[S_{\theta}]=(d_1,(1^{\beta_1}))\dots (d_r,(1^{\beta_r}))$. As in the case of $\Gl_n(\F_q)$, the character $\phi$ determines multipartition $\bm \lambda_1 ,\dots ,\bm \lambda_r \in \mathcal{P}^I$ such that $|\bm \lambda_j|=\beta_j$. Up to reordering, the $(d_i,\bm \lambda_i)$s define a multitype  $$\omega_{\chi} \coloneqq (d_1,\bm \lambda_1)\dots (d_r,\bm \lambda_r) .$$

For $\omega \in \mathbb{T}_I$ and $\chi \in \Gl_{\alpha}(\F_q)^{\vee}$, we will use the notation $\chi \sim \omega$ if $\omega_{\chi}=\omega$.

\begin{esempio}
Let $I=\{1,2,3,4\}$ and $\alpha=(2,1,1,1)$. Let $T \subseteq \Gl_2$ be the $F$-stable torus of diagonal matrices,  consider $ \beta \neq \gamma  \in \Hom(\F_q^*,\C^*)$ and the associated character $(\beta,\gamma):T^F \to \C^*$. Let $\chi$ be the character $\chi \in \Gl_{\alpha}(\F_q)^{\vee}$ $$ \chi=R^G_T((\beta,\gamma)) \boxtimes \gamma\circ \det \boxtimes \gamma \circ\det \boxtimes \gamma\circ\det .$$

Let $\beta_1=(1,1,1,1)$ and $\beta_2=(1,0,0,0)$. The associated multitype is $$ \omega_{\chi}=(1,(\beta_1))(1,(\beta_2)) .$$

\end{esempio}

\vspace{8 pt}

\begin{oss}
Given $\omega \in \mathbb{T}_{\alpha}$, consider  an irreducible character $\chi \in \Gl_{\alpha}(\F_q)^{\vee}$ of type $\omega$. Fix $S \in \mathcal{Z}_{\alpha}$ such that $[S]=\omega^{ss}$ (for instance $S=S_{\omega}$). We can assume then that $$\chi=\epsilon_{\widetilde{L_S}} \epsilon_{\Gl_{\alpha}}R^{\Gl_{\alpha}}_{\widetilde{L_S}}(\theta R_{\widetilde{\phi}})$$ with $\theta:\widetilde{L_S}^F \to \C^*$ such that $S_{\theta}=S$ and $R_{\widetilde{\phi}}$ a unipotent character of $\widetilde{L_S}^F$. Notice that for any $\gamma:\widetilde{L_S}^F \to \C^*$ such that $S_{\gamma}=S$, the character $ \epsilon_{\widetilde{L_S}} \epsilon_{\Gl_{\alpha}}R^{\Gl_{\alpha}}_{\widetilde{L_S}}(\gamma R_{\widetilde{\phi}})$ is irreducible and of type $\omega$.

The map from $\{\gamma:\widetilde{L_S}^F \to \C^* \ | \ S_{\gamma}=S\}$ to $\{\chi \in \Gl_{\alpha}(\F_q)^{\vee} \text{ of multitype } \omega\}$ which sends $\gamma$ to $\epsilon_{\widetilde{L_S}} \epsilon_{\Gl_{\alpha}}R^{\Gl_{\alpha}}_{\widetilde{L_S}}(\gamma R_{\widetilde{\phi}})$ is surjective and its fibers have cardinality $w(\omega)$, see for example \cite[Proof of Theorem 2.2]{letellierDT} for the analogous statement for conjugacy classes of $\Gl_{\alpha}(\F_q)$.

\end{oss}

\vspace{10 pt}

Recall that the value $\dfrac{\chi(e)}{|\Gl_{\alpha}(\F_q)|}$ for $\chi \in \Gl_{\alpha}(\F_q)^{\vee}$ depends only on the multitype of $\chi.$  More precisely, for a partition $\lambda \in \mathcal{P}$, let $H_{\lambda}(t)$ be the \textit{hook polynomial} $$\displaystyle H_{\lambda}(t) \coloneqq \prod_{s \in \lambda}(1-t^{h(s)}) .$$ For a multipartition $\bm \lambda=(\lambda^i)_{i \in I} \in \mathcal{P}^I$, we define $\displaystyle H_{\bm \lambda}(t)\coloneqq\prod_{i \in I}H_{\lambda^i}(t)$. Given a multitype $\omega=(d_1,\bm \lambda_1) \dots (d_r,\bm \lambda_r)$, define $H_{\omega}^{\vee}(t)$ as
\begin{equation}
\label{duallog5}
H^{\vee}_{\omega}(t)\coloneqq \dfrac{(-1)^{f(\omega)}}{q^{\left(\sum_{i \in I}\frac{\alpha_i(\alpha_i-1)}{2}-n(\omega)\right)}\prod_{j=1}^r H_{\bm \lambda_j}(t^{d_j}) },
\end{equation}

where if $|\bm \lambda_1|=\beta_1,\dots, |\bm \lambda_r|=\beta_r$, then $$f(\omega) \coloneqq \displaystyle\sum_{j=1}^r |\beta_j| $$ and $$n(\omega)\coloneqq\sum_{j=1}^r d_j n(\bm \lambda_j) .$$ We have the following Proposition (see \cite[IV, 6.7]{macdonald}).
\begin{prop}
For any $\chi \in \Gl_{\alpha}(\F_q)^{\vee}$, we have:
\begin{equation}
\label{valueirreduciblecharacter}
\dfrac{\chi(e)}{|\Gl_{\alpha}(\F_q)|}=H_{\omega_{\chi}}^{\vee}(q).
\end{equation}
\end{prop}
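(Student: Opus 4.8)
The plan is to reduce the statement to the case $|I|=1$ — where it is the classical degree formula of Green and Macdonald \cite[IV, (6.7)]{macdonald} — and then to recover the general case from the multiplicativity of both sides over the factors of $\Gl_{\alpha}=\prod_{i\in I}\Gl_{\alpha_i}$.

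\emph{Reduction to $|I|=1$.} Every $\chi\in\Gl_{\alpha}(\F_q)^{\vee}$ is a box product $\chi=\boxtimes_{i\in I}\chi_i$ with $\chi_i\in\Gl_{\alpha_i}(\F_q)^{\vee}$, so $\chi(e)/|\Gl_{\alpha}(\F_q)|=\prod_{i\in I}\chi_i(e)/|\Gl_{\alpha_i}(\F_q)|$. Writing $\chi=\epsilon_{\Gl_{\alpha}}\epsilon_{\widetilde{L_S}}R^{\Gl_{\alpha}}_{\widetilde{L_S}}(\theta R_{\widetilde\phi})$ with $\theta$ reduced, $S_{\theta}=S$ and $[S]=(d_1,(1^{\beta_1}))\cdots(d_r,(1^{\beta_r}))$, Remark \ref{remarklevistarbucks} gives $\widetilde{L_S}=\prod_{i\in I}\bigl((\Gl_{(\beta_1)_i})_{d_1}\times\cdots\times(\Gl_{(\beta_r)_i})_{d_r}\bigr)$, and I would use Lemma \ref{delignel1}(2) together with the description of unipotent characters of a product to factor $\theta$ and $R_{\widetilde\phi}$ accordingly; this exhibits $\chi_i$ of type $\omega_{\chi_i}=(d_1,\lambda^i_1)\cdots(d_r,\lambda^i_r)$, where $\bm\lambda_j=(\lambda^i_j)_{i\in I}$ are the multipartitions of $R_{\widetilde\phi}$ (empty components discarded). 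From the definitions in \cref{typesirreduciblechar} one then checks $f(\omega_{\chi})=\sum_{i\in I}f(\omega_{\chi_i})$, $n(\omega_{\chi})=\sum_{i\in I}n(\omega_{\chi_i})$, and $\prod_{j}H_{\bm\lambda_j}(t^{d_j})=\prod_{i\in I}\prod_{j}H_{\lambda^i_j}(t^{d_j})$ (via $H_{\bm\lambda_j}=\prod_{i\in I}H_{\lambda^i_j}$ and $H_{\emptyset}=1$); since $\sum_{i\in I}\alpha_i(\alpha_i-1)/2$ is visibly additive in $i$, this yields $H^{\vee}_{\omega_{\chi}}(q)=\prod_{i\in I}H^{\vee}_{\omega_{\chi_i}}(q)$, so it suffices to prove the identity when $|I|=1$.

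\emph{The case $G=\Gl_n$.} By Theorem \ref{theoremirreduciblecharacters} I would write $\chi=\epsilon_G\epsilon_L R^G_L(\theta R_{\bm\mu})$ with $L=(\Gl_{n_1})_{d_1}\times\cdots\times(\Gl_{n_r})_{d_r}$, $\theta:L^F\to\C^{*}$ reduced, $R_{\bm\mu}$ the unipotent character attached to $\bm\mu=(\mu_1,\dots,\mu_r)$, $|\mu_j|=n_j$, $\sum_j d_j n_j=n$, so that $\omega_{\chi}=(d_1,\mu_1)\cdots(d_r,\mu_r)$. Applying the Deligne--Lusztig degree formula $R^G_L(\pi)(e)=\epsilon_G\epsilon_L\,|G^F|_{p'}\,|L^F|_{p'}^{-1}\,\pi(e)$ (which in the split case is ordinary induction, cf.\ \cref{DLU}, and holds in general, see \cite{DM}) to $\pi=\theta R_{\bm\mu}$, the sign $\epsilon_G\epsilon_L$ occurs twice and cancels, so, writing $m_{p'}$ for the prime-to-$p$ part of $m$ and $\alpha=n$,
\begin{equation*}
\frac{\chi(e)}{|G^F|}=\frac{|G^F|_{p'}}{|L^F|_{p'}}\cdot\frac{R_{\bm\mu}(e)}{|G^F|}=\frac{R_{\bm\mu}(e)}{q^{\alpha(\alpha-1)/2}\,|L^F|_{p'}}.
\end{equation*}
Then $R_{\bm\mu}(e)=\prod_{j=1}^{r}R_{\mu_j}(e)$ with $R_{\mu_j}(e)=q^{d_j n(\mu_j)}\prod_{i=1}^{n_j}(q^{d_j i}-1)\big/\prod_{s\in\mu_j}(q^{d_j h(s)}-1)$ the classical unipotent degree of $\Gl_{n_j}(\F_{q^{d_j}})$; using $|L^F|_{p'}=\prod_{j}\prod_{i=1}^{n_j}(q^{d_j i}-1)$ and $\prod_{s\in\mu_j}(q^{d_j h(s)}-1)=(-1)^{n_j}H_{\mu_j}(q^{d_j})$ this simplifies to
\begin{equation*}
\frac{\chi(e)}{|G^F|}=\frac{(-1)^{\sum_j n_j}\,q^{\sum_j d_j n(\mu_j)}}{q^{\alpha(\alpha-1)/2}\prod_{j=1}^{r}H_{\mu_j}(q^{d_j})}=H^{\vee}_{\omega_{\chi}}(q),
\end{equation*}
since $f(\omega_{\chi})=\sum_j n_j$ and $n(\omega_{\chi})=\sum_j d_j n(\mu_j)$; this is exactly \cite[IV, (6.7)]{macdonald} made explicit.

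\emph{Main obstacle.} Nothing here is deep; the two points that genuinely need care are that the global sign $(-1)^{f(\omega)}$ must come out entirely from $\prod_{s\in\mu_j}(q^{d_j h(s)}-1)=(-1)^{n_j}H_{\mu_j}(q^{d_j})$ while $\epsilon_G\epsilon_L$ cancels (getting this wrong is the natural pitfall), and — to legitimately invoke \cite{macdonald} — that the type attached to $\chi$ via admissible subtori in \cref{typesirreduciblechar} coincides with the Frobenius--orbit parametrisation used in \textit{loc.\ cit.}; the latter follows from Theorem \ref{theoremirreduciblecharacters} and the description of connected centralizers in \cref{connectedcentralizer1}.
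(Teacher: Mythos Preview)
The paper does not give a proof of this proposition; it simply records the identity and refers to \cite[IV, 6.7]{macdonald}, which treats the case $|I|=1$. Your argument is correct and supplies the missing details. The reduction to $|I|=1$ is valid: the point to check is that projecting $\omega_\chi=(d_1,\bm\lambda_1)\cdots(d_r,\bm\lambda_r)$ to the $i$-th coordinate (discarding empty parts) really gives $\omega_{\chi_i}$, and this holds because the blocks of $[S_\theta]$ correspond to pairwise distinct Frobenius orbits of linear characters, so the restriction of $\theta$ to each factor $\prod_{j:(\beta_j)_i>0}(\Gl_{(\beta_j)_i})_{d_j}$ is again reduced in the $|I|=1$ sense. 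Your verification that $f$, $n$, the hook factor and the $q$-power split multiplicatively is then straightforward. The $\Gl_n$ computation via the degree formula $R^G_L(\pi)(e)=\epsilon_G\epsilon_L\,|G^F|_{p'}\,|L^F|_{p'}^{-1}\,\pi(e)$ and the hook-length expression for unipotent degrees is exactly the content of the cited reference, and your handling of the sign (the $\epsilon_G\epsilon_L$ cancels, while $(-1)^{f(\omega)}$ arises solely from $\prod_s(q^{d_jh(s)}-1)=(-1)^{n_j}H_{\mu_j}(q^{d_j})$) is correct.
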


\subsection{Multiplicative parameters}
\label{zioperonemattinalevanto}
Given an element $\eta=(\eta_i)_{i \in I} \in (\F_q^*)^I$ and $\delta \in \N^I$, we define $$\eta^{\delta}\coloneqq \prod_{i \in I}\eta_i^{\delta_i} .$$ We  denote by $\mathcal{H}_{\eta}$ the subset of $\N^I$ defined as $$\mathcal{H}_{\eta}\coloneqq\{\delta \in \N^I \ | \ \eta^{\delta}=1\}$$ and, for any $\alpha \in \N^I$, we denote by $\mathcal{H}_{\eta,\alpha}$  the intersection $\mathcal{H}_{\eta,\alpha}\coloneqq\mathcal{H}_{\eta} \cap \N^I_{\leq \alpha}$. For an admissible torus $S \in \mathcal{Z}_{\alpha}$ , we say that $S$ is of level $\eta$ if it is of level $\mathcal{H}_{\eta,\alpha}$. 

\vspace{6 pt}

Fix now $\alpha \in \N^I$. We still denote by $\eta$ the central element $\eta\coloneqq (\eta_i I_{\alpha_i} ) \in \Gl_{\alpha}(\F_q)$. Assume now to have fixed, for each $S \in \mathcal{Z}_{\alpha}$, an $F$-stable maximal torus $\Gl_{\alpha} \supseteq T_S \supseteq S$ in such a way that if $S \leq S'$ then $T_{S}=T_{S'}$. Define then the functions $g_{\eta},f_{\eta}:\mathcal{Z}_{\alpha} \to \C$ as $$g_{\eta}(S)\coloneqq\sum_{\substack{\theta :T_S^F \to \C^* \\ \Gamma_{\theta}=\Gamma_S}}\theta(\eta) $$ and $$f_{\eta}(S) \coloneqq \sum_{\substack{\theta :T_S^F \to \C^* \\ \Gamma_{\theta} \leq \Gamma_S}}\theta(\eta) .$$

\vspace{2 pt}

The understanding of the functions $f_{\eta},g_{\eta}$ is a key part of the proof of Theorem \ref{mainteochar}, which is going to be the main technical result needed to prove Theorem \ref{Epolynomialtheorem}.

\vspace{4 pt}

By Identity (\ref{mobius}), we have
\begin{equation}
\label{dualmobius}
g_{\eta}(S)=\sum_{S'\leq S}\mu(S',S)f_{\eta}(S')
\end{equation}

Notice that by the bijection of eq.(\ref{correspondenceadmissible}),  for each $S \in \mathcal{Z}_{\alpha}$, we have $$f_{\eta}(S)=\sum_{\theta : L_S^F \to \C^* }\theta(\eta) .$$ Fix now $S$ with associated semisimple multitype $[S]=(d_1,\beta_1)\dots(d_r,\beta_r)$, where $\beta_1,\dots,\beta_r \in \N^I$. Notice that there exists $h \in \Gl_{\alpha}(\F_q)$ such that $\displaystyle hSh^{-1}=\prod_{j=1}^r(Z_{\beta_j})_{d_j}$ and so $$\displaystyle hL_Sh^{-1}=\prod_{j=1}^r (\Gl_{|\beta_j|})_{d_j} .$$ In particular, a character $\theta:L_S^F \to \C^*$, thorugh the conjugation by $h$, corresponds to  an element $(\theta_1,\dots,\theta_r) \in \Hom(\F^*_{q^{d_1}},\C^*) \times \cdots \times  \Hom(\F_{q^{d_r}}^*,\C^*)$ such that $$\displaystyle \theta(M_1,\dots,M_r)=\prod_{j=1}^r \theta_j(\det(M_j))$$ with $M_j \in \Gl_{|\beta_j|}(\F_{q^{d_j}})$.

As the element $\eta \in \Gl_{\alpha}(\F_q)$ is central, we have the following equality \begin{equation}
\label{charactersLevi1}
\theta(\eta)=\prod_{j=1}^r \theta_j(\eta^{\beta_j}).
\end{equation}

In particular, eq.(\ref{charactersLevi1}) implies that $f_{\eta}(S) \neq 0$ if and only if $\eta^{\beta_j}=1$ for each $j=1,\dots,r$, i.e. if and only if $S$ is of level $\eta$. From eq.(\ref{dualmobius}), we therefore deduce that, for each $S \in \mathcal{Z}_{\alpha}$, we have

\begin{equation}
\label{sangueelacrime}
g_{\eta}(S)=\sum_{\substack{S'\leq S \\ \text{ of level } \eta}}|\Hom(L_{S'}^F,\C^*)|\mu(S',S)=\sum_{\substack{S'\leq S \\ \text{ of level } \eta}}P_{[S']}(q)\mu(S',S).
\end{equation}

\subsection{Dual Log compatiblity}
\label{duallogcompdefin}
Consider a family of class functions $\{c_{\alpha} \in \mathcal{C}(\Gl_{\alpha}(\F_q))\}$.

\begin{definizione}
The family $\{c_{\alpha}\}_{\alpha \in \N^I}$ is said to be \textit{dual Log compatible} if the product $\langle c_{\alpha},\chi \rangle$ depends only on the multitype of $\chi$ and the value of $\langle c_{\alpha},\chi \rangle$ for $\chi  \sim \omega$ is of the form $C_{\omega}(q)$ where $\{C_{\omega}(t) \in \Q(t)\}_{\omega \in \mathbb{T}_I} $ is a family of rational functions such that  for any $d_1,\dots,d_r \in \N$ and $\omega_1 \in \mathbb{T}_{\beta_1},\dots,\omega_r \in \mathbb{T}_{\beta_r}$ such that $\psi_{d_1}(\omega_1) \ast \cdots \ast \psi_{d_r}(\omega_r)=\omega$, it holds \begin{equation}\label{duallog2}C_{\omega_1}(t^{d_1})\cdots C_{\omega_r}(t^{d_r})\prod_{j=1}^r H^{\vee}_{\omega_j}(t^{d_j})=C_{\omega}(t)H_{\omega}^{\vee}(t) .\end{equation} 
i.e. the family $\{C_{\omega}(t)H^{\vee}_{\omega}(t)\}_{\omega \in \mathbb{T}_I}$ is Log compatible.
\end{definizione}

\vspace{8 pt}

For each $\omega \in \mathbb{T}$, denote by $\displaystyle\widetilde{C}_{\omega}(t)\coloneqq C_{\omega}(t)H^{\vee}_{\omega}(t)$. The family $\{\widetilde{C}_{\omega}\}_{\omega \in \mathbb{T}_I}$ is therefore Log compatible. For each $\alpha \in \N^I$ and for each $\eta \in (\F_q^*)^I$, we will denote by $\widetilde{C}_{\alpha,\eta}(t)$ the polynomial $\widetilde{C}_{\alpha,\mathcal{H}_{\eta,\alpha}}(t)$ introduced in eq.(\ref{polynomialV}).

\vspace{6 pt}

\begin{esempio}
\label{conv3}
Let $I=\{\cdot\}$ and, for any $n \in \N$, let $f_n:\Gl_n(\F_q) \to \C$ be the class function $$f_n(h)\coloneqq\# \{(x,y) \in \Gl_n(\F_q) \times \Gl_n(\F_q) \ | \ [x,y]=h\} $$

for $h \in \Gl_n(\F_q)$. For any $\chi \in \Gl_n(\F_q)^{\vee}$ of type $\omega$, it holds $\langle f_n,\chi \rangle=\dfrac{1}{H^{\vee}_{\omega}(q)}$. More generally for any finite group $G$ and any irreducible character  $\chi \in G^{\vee}$ we have: \begin{equation}
\label{commutator}\sum_{(a,b) \in G^2}\chi([a,b])=\dfrac{|G|}{\chi(1)} .\end{equation}
This equality is obtained by applying Schur's lemma in a classical way as explained in \cite[Paragraph 2.3]{HRV}. Notice that, from the identity $\langle f_n,\chi \rangle=\dfrac{1}{H^{\vee}_{\omega}(q)}$, we immediately deduce that the family $\{f_{n}\}_{n \in \N}$ is  dual Log compatible and, more precisely, for each $\omega$, the associated function $\widetilde{F_{\omega}}(t)$ is equal to $1$.

\end{esempio}

The notion of dual Log compatible families will be one of the key elements to show Theorem \ref{Epolynomialtheorem} about E-series of non-generic character stacks. Their importance comes from the following Theorem.

\begin{teorema}
\label{mainteochar}
For any dual Log compatible family $\{c_{\alpha}\}_{\alpha \in \N^I}$ and any $\eta \in (\F_q^*)^I$, there is an equality

\begin{equation}
\label{mainteochar1}
\dfrac{c_{\alpha}(\eta)}{|\Gl_{\alpha}(\F_q)|}=\Coeff_{\alpha}\left(\Plexp\left(\sum_{\beta \in \mathcal{H}_{\eta}}\widetilde{C}_{\beta,gen}(q)y^{\beta}\right)\right)
\end{equation}
\end{teorema}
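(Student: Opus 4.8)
The strategy is to express $c_\alpha(\eta)/|\Gl_\alpha(\F_q)|$ via the character-theoretic formula (\ref{conv2}), and then to reorganize the resulting sum over irreducible characters $\chi\in\Gl_\alpha(\F_q)^\vee$ according to the multitype $\omega_\chi$, so as to produce exactly the right-hand side of (\ref{mainteochar1}) by invoking Theorem \ref{mainteo} applied to the Log compatible family $\{\widetilde C_\omega(t)\}_{\omega\in\mathbb{T}_I}$. The key identity to exploit is
\begin{equation*}
\frac{c_\alpha(\eta)}{|\Gl_\alpha(\F_q)|}=\sum_{\chi\in\Gl_\alpha(\F_q)^\vee}\langle c_\alpha,\chi\rangle\,\frac{\chi(\eta^{-1})}{\chi(e)}\,\frac{\chi(e)}{|\Gl_\alpha(\F_q)|},
\end{equation*}
coming from eq.(\ref{conv2}) with $f=c_\alpha$ (note $\eta^{-1}$ vs $\eta$ is harmless after replacing $\eta$ by $\eta^{-1}$, which has the same $\mathcal H$-data). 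Now by dual Log compatibility $\langle c_\alpha,\chi\rangle=C_{\omega_\chi}(q)$ and by (\ref{valueirreduciblecharacter}) $\chi(e)/|\Gl_\alpha(\F_q)|=H^\vee_{\omega_\chi}(q)$, so the $\chi$-summand depends on $\chi$ only through $\omega_\chi$ and the factor $\chi(\eta^{-1})/\chi(e)$.

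\textbf{Main steps.} First I would handle the factor $\chi(\eta^{-1})/\chi(e)$: by Proposition \ref{valueatacentralelement1}, if $\chi=\epsilon_{\Gl_\alpha}\epsilon_L R^{\Gl_\alpha}_L(\theta R_{\widetilde\psi})$ with $\theta$ reduced, then $\chi(\eta^{-1})/\chi(e)=\theta(\eta^{-1})$. So the sum over $\chi$ of multitype $\omega$ becomes a sum over the corresponding semisimple data: fixing $S$ with $[S]=\omega^{ss}$, the characters of multitype $\omega$ are indexed (with multiplicity $w(\omega)$, as in the Remark following Theorem \ref{theoremirreduciblecharacters}) by reduced $\theta:\widetilde{L_S}^F\to\C^*$ with $S_\theta=S$, and $\theta(\eta^{-1})$ only depends on $\theta$ restricted to a maximal torus. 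This is precisely where the functions $g_\eta$ and $f_\eta$ of \cref{zioperonemattinalevanto} enter: summing $\theta(\eta^{-1})$ over $\theta$ with $\Gamma_\theta=\Gamma_S$ gives $g_{\eta^{-1}}(S)$, and by eq.(\ref{sangueelacrime}),
\begin{equation*}
g_{\eta^{-1}}(S)=\sum_{\substack{S'\le S\\ \text{of level }\eta}}P_{[S']}(q)\,\mu(S',S).
\end{equation*}
Next I would assemble: summing over $\omega\in\mathbb{T}_\alpha$, over characters of multitype $\omega$ (using multiplicity $w(\omega)$ and $S_\omega$ as a representative of $S$), and dividing appropriately, I get
\begin{equation*}
\frac{c_\alpha(\eta)}{|\Gl_\alpha(\F_q)|}=\sum_{\omega\in\mathbb{T}_\alpha}\frac{C_\omega(q)H^\vee_\omega(q)}{w(\omega)}\left(\sum_{\substack{S'\le S_\omega\\ \text{of level }\eta}}P_{[S']}(q)\,\mu(S',S_\omega)\right)=\widetilde C_{\alpha,\mathcal H_{\eta,\alpha}}(q),
\end{equation*}
by definition (\ref{polynomialV}) of $F_{\alpha,V}$ with $F_\omega=\widetilde C_\omega$ and $V=\mathcal H_{\eta,\alpha}$. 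Finally, apply Theorem \ref{mainteo} to the Log compatible family $\{\widetilde C_\omega(t)\}$ with $V=\mathcal H_{\eta,\alpha}$; since $\widetilde C_{\alpha,\mathcal H_{\eta,\alpha}}(t)=\widetilde C_{\alpha,\mathcal H_\eta}(t)$ when we only look at $\alpha'\le\alpha$ (the condition $\delta\le\alpha$ is automatic inside $\Coeff_\alpha$), Theorem \ref{mainteo} gives $\widetilde C_{\alpha,\mathcal H_{\eta,\alpha}}(q)=\Coeff_\alpha\big(\Plexp(\sum_{\beta\in\mathcal H_\eta}\widetilde C_{\beta,gen}(q)y^\beta)\big)$, which is the claim.

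\textbf{Main obstacle.} The technical heart is the bookkeeping in the second step: correctly matching the sum over irreducible characters $\chi$ of fixed multitype $\omega$ with the sum over reduced characters $\theta$ of a fixed admissible torus $S$, keeping track of the multiplicity $w(\omega)$, and verifying that $\sum_{\theta:\;\Gamma_\theta=\Gamma_S}\theta(\eta^{-1})$ is genuinely $g_{\eta^{-1}}(S)$ rather than something twisted by the unipotent datum $\psi$ (this is where Proposition \ref{valueatcentralelement} / \ref{valueatacentralelement1} is essential: the value at a central element is insensitive to the unipotent part). One must also check that the passage from $\eta$ to $\eta^{-1}$ does not change $\mathcal H_\eta$ (it does not, since $\eta^\delta=1\iff(\eta^{-1})^\delta=1$) and that, inside $\Coeff_\alpha$, truncating the level set at $\N^I_{\le\alpha}$ versus all of $\mathcal H_\eta$ is irrelevant. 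Everything else is a direct substitution using results already established in the excerpt.
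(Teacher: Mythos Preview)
Your proposal is correct and follows essentially the same route as the paper's proof: apply eq.(\ref{conv2}), regroup by multitype using dual Log compatibility and eq.(\ref{valueirreduciblecharacter}), replace $\chi(\eta^{-1})/\chi(e)$ by $\theta(\eta^{-1})$ via Proposition \ref{valueatacentralelement1}, use the $w(\omega)$-to-$1$ parametrization of characters of a given multitype by reduced $\theta$'s to produce $g_{\eta^{-1}}(S_\omega)$, invoke eq.(\ref{sangueelacrime}) to obtain $\widetilde C_{\alpha,\eta}(q)$, and finish with Theorem \ref{mainteo}. You also correctly flag the two harmless technicalities (that $\mathcal H_{\eta}=\mathcal H_{\eta^{-1}}$ and that truncating at $\N^I_{\le\alpha}$ is immaterial inside $\Coeff_\alpha$).
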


\begin{proof}

By Theorem \ref{mainteo} and the Log compatibility of the family $\{\widetilde{C}_{\omega}(t)\}_{\omega \in \mathbb{T}_I}$, to show eq.(\ref{mainteochar1}) it is enough to show that for any $\eta \in (\F_q^*)^I$, we have $$\dfrac{c_{\alpha}(\eta)}{|\Gl_{\alpha}(\F_q)|}=\widetilde{C}_{\alpha,\eta}(q) .$$

 By eq.(\ref{conv2}), we deduce the following  equality \begin{equation}
\label{convolution0}
\dfrac{c_{\alpha}(\eta)}{|\Gl_{\alpha}(\F_q)|} =\sum_{\chi \in \Gl_{\alpha}(\F_q)^{\vee}} \langle c_{\alpha},\chi \rangle\dfrac{\chi(\eta^{-1})}{\chi(e)} \dfrac{\chi(e)}{|\Gl_{\alpha}(\F_q)|}
\end{equation}

Rearranging the sum of the RHS of eq.(\ref{convolution0}) in terms of the multitypes of the characters, we have:

\begin{equation}
\label{convolution}
\sum_{\omega \in \mathbb{T}_{\alpha}}\sum_{\substack{\chi \in \Gl_{\alpha}(\F_q)^{\vee}\\ \chi \sim \omega}}\langle c_{\alpha},\chi \rangle \dfrac{\chi(\eta^{-1})}{\chi(e)}H_{\omega}^{\vee}(q)= \sum_{\omega \in \mathbb{T}_{\alpha}}\widetilde{C}_{\omega}(q)\left(\sum_{\substack{\chi \in \Gl_{\alpha}(\F_q)^{\vee}\\ \chi \sim \omega}} \dfrac{\chi(\eta^{-1})}{\chi(e)}\right).
\end{equation}

By Proposition \ref{valueatacentralelement1} and the description of the irreducible characters of $\Gl_{\alpha}(\F_q)$ given in \cref{construcirredchar}, we can rewrite the RHS of eq.(\ref{convolution}) as follows, using the notations of \cref{zioperonemattinalevanto}: 
\begin{equation}
\label{secondeqdual}
\sum_{\omega \in \mathbb{T}_{\alpha}}\dfrac{\widetilde{C}_{\omega}(q)}{w(\omega)}\left(\sum_{\substack{\theta :T_{S_{\omega}} \to \C^* \\ \Gamma_{\theta}=\Gamma_{S_{\omega}}}}\theta(\eta^{-1})\right)=\sum_{\omega \in \mathbb{T}_{\alpha}}\dfrac{\widetilde{C}_{\omega}(q)}{w(\omega)}\left(\sum_{\substack{S'\leq S_{\omega} \\ \text{ of level } \eta^{-1}}}P_{[S']}(q)\mu(S',S_{\omega})\right)=\widetilde{C}_{\alpha,\eta}(q)
\end{equation}
where the equality at the middle is a consequence of eq.(\ref{sangueelacrime}) and the last equality is a consequence of the fact that $\mathcal{H}_{\eta,\alpha}=\mathcal{H}_{\eta^{-1},\alpha}$.

\end{proof}

\vspace{12 pt}

\begin{oss}
\label{duallogcompatiblegeneric}
For $\beta \in \N^I$ and $\eta \in (\F_q^*)^I$, we say that $\eta$ is \textit{generic} with respect to $\beta$ if $\mathcal{H}_{\eta,\beta}=\{\beta\}$.

For any $\beta$, if $q$ is sufficienty big, there exists an element $\eta$ of $(\F_q^*)^I$ generic with respect to it. From eq.(\ref{mainteochar1}), we deduce that, if $\eta$ is generic with respect to $\beta$, we have  \begin{equation}
\label{duallogcompatibleclassfunctions2}
\dfrac{c_{\beta}(\eta)}{|\Gl_{\beta}(\F_q)|}=\widetilde{C}_{\beta,gen}(q) .
\end{equation}

In particular, we remark that, for generic central elements, the quantity  $\dfrac{c_{\beta}(\eta)}{|\Gl_{\beta}(\F_q)|}$ is  given by a rational function in $q$ which does not depend on the choice of the generic $\eta$ but only on the dimension vector $\beta$.

\vspace{8 pt}

Fix now $\alpha \in \N^I$. Assume that $q$ is sufficiently big and for any $0 < \beta \leq \alpha$, choose $\eta_{\beta,gen} \in (\F_q^*)^I$ generic with respect to $\beta$.

Eq.(\ref{mainteochar1}) and eq.(\ref{duallogcompatibleclassfunctions2}) give therefore a way to express the multiplicity $\dfrac{c_{\alpha}(\eta)}{|\Gl_{\alpha}(\F_q)|} $ for any central element $\eta \in (\F_q^*)^I$ in terms of the analogous values for the generic parameters $\eta_{\beta,gen}$.

\end{oss}

\vspace{12 pt }

The notion of dual Log compatibility is well behaved with respect to convolution, as explained by the following Lemma.

\begin{lemma}
\label{duallog1}

Let $\{f_{\alpha}\}_{\alpha \in \N^I},\{f'_{\alpha}\}_{\alpha \in \N^I}$ be two families of dual Log compatible class functions. The family $\{k_{\alpha}\}_{\alpha \in \N^I}$, defined as $k_{\alpha}\coloneqq \dfrac{f_{\alpha} \ast f'_{\alpha}}{q^{\sum_{i \in I}\alpha_i^2}}$ is dual Log compatible.

\end{lemma}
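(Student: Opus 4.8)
The statement to prove is that if $\{f_\alpha\}$ and $\{f'_\alpha\}$ are dual Log compatible, then so is $\{k_\alpha\}$ with $k_\alpha = \dfrac{f_\alpha \ast f'_\alpha}{q^{\sum_{i\in I}\alpha_i^2}}$.

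\textbf{Proof plan.} The plan is to reduce everything to the convolution multiplicativity formula eq.(\ref{convol}) together with the character‑degree formula eq.(\ref{valueirreduciblecharacter}). Write $C_\omega(t)$, $C'_\omega(t)$ for the rational functions attached to the dual Log compatible families $\{f_\alpha\}$, $\{f'_\alpha\}$, so that $\widetilde C_\omega := C_\omega H^\vee_\omega$ and $\widetilde C'_\omega := C'_\omega H^\vee_\omega$ are Log compatible. First I would observe that for any $\chi \in \Gl_\alpha(\F_q)^\vee$ of multitype $\omega=\omega_\chi$, eq.(\ref{convol}) gives $\langle f_\alpha \ast f'_\alpha,\chi\rangle = \langle f_\alpha,\chi\rangle\,\langle f'_\alpha,\chi\rangle\,|\Gl_\alpha(\F_q)|/\chi(e)$, and combining with $\chi(e)/|\Gl_\alpha(\F_q)| = H^\vee_\omega(q)$ from eq.(\ref{valueirreduciblecharacter}) we get $\langle f_\alpha \ast f'_\alpha,\chi\rangle = C_\omega(q)C'_\omega(q)/H^\vee_\omega(q)$. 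Dividing by $q^{\sum_i\alpha_i^2}$, the pairing $\langle k_\alpha,\chi\rangle$ depends only on $\omega$ and equals $D_\omega(q)$, where $D_\omega(t) := C_\omega(t)C'_\omega(t)\big/\big(t^{\sum_i\alpha_i^2}H^\vee_\omega(t)\big)$ and $\alpha=|\omega|$.

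It then remains to check that $\widetilde D_\omega(t) := D_\omega(t)H^\vee_\omega(t)$ is Log compatible. Rewriting, $\widetilde D_\omega = \widetilde C_\omega\,\widetilde C'_\omega\big/\big(t^{\sum_i\alpha_i^2}(H^\vee_\omega)^2\big)$. Since products and reciprocals of Log compatible families are again Log compatible (both immediate from Definition \ref{Logcompatible}: if $\psi_{d_1}(\nu_1)\ast\cdots\ast\psi_{d_r}(\nu_r)=\omega$ then $\prod_j(F_{\nu_j}G_{\nu_j})(t^{d_j}) = \big(\prod_jF_{\nu_j}(t^{d_j})\big)\big(\prod_jG_{\nu_j}(t^{d_j})\big)$, and likewise for $1/F$), it is enough to prove that $\{t^{\sum_i\alpha_i^2}(H^\vee_\omega(t))^2\}_\omega$ is Log compatible. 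Using the explicit form eq.(\ref{duallog5}), the square removes the sign $(-1)^{f(\omega)}$ and the factor $t^{\sum_i\alpha_i^2}$ cancels the quadratic part of the exponent, leaving the clean expression $t^{\sum_i\alpha_i^2}(H^\vee_\omega(t))^2 = t^{\sum_i\alpha_i + 2n(\omega)}\big/\prod_{j}H_{\bm\lambda_j}(t^{d_j})^2$ (here $\sum_i\alpha_i^2-\sum_i\alpha_i(\alpha_i-1)=\sum_i\alpha_i$); this cancellation is precisely what the normalisation by $q^{\sum_i\alpha_i^2}$ is designed to produce.

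Finally, for $\omega=\psi_{d_1}(\nu_1)\ast\cdots\ast\psi_{d_r}(\nu_r)$ with $\nu_j\in\mathbb{T}_{\beta_j}$, the Log compatibility of this last expression follows from the additivity properties $|\omega|=\sum_j d_j\beta_j$ (so $\sum_i\alpha_i=\sum_j d_j\sum_i(\beta_j)_i$) and $n(\omega)=\sum_j d_j\,n(\nu_j)$, together with the fact that the multiset of pairs $(d,\bm\lambda)$ occurring as factors of $\omega$ is the disjoint union over $j$ of the pairs $(d_je,\bm\mu)$ with $(e,\bm\mu)$ a factor of $\nu_j$; the latter makes the hook‑polynomial denominator split as $\prod_j\prod_{(e,\bm\mu)}H_{\bm\mu}((t^{d_j})^{e})^2 = \prod_{(d,\bm\lambda)}H_{\bm\lambda}(t^{d})^2$, exactly matching $\prod_j \widetilde D_{\nu_j}(t^{d_j})=\widetilde D_\omega(t)$. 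The only genuine work is this last combinatorial bookkeeping of how $n(\cdot)$, $f(\cdot)$, $|\cdot|$ and the hook factors behave under $\psi_d$ and $\ast$; the rest is formal manipulation of eq.(\ref{convol}) and eq.(\ref{valueirreduciblecharacter}).
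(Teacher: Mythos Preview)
Your proof is correct and follows essentially the same route as the paper's. Both arguments use eq.(\ref{convol}) and eq.(\ref{valueirreduciblecharacter}) to compute $\langle k_\alpha,\chi\rangle$, then reduce the dual Log compatibility of $\{k_\alpha\}$ to the identity $\left(\prod_j H^\vee_{\omega_j}(t^{d_j})/H^\vee_\omega(t)\right)^2 = t^{\sum_i\alpha_i^2}/\prod_j t^{d_j\sum_i(\beta_j)_i^2}$; your reformulation of this as ``$\{t^{\sum_i\alpha_i^2}(H^\vee_\omega)^2\}$ is Log compatible'' is exactly the same statement, and your explicit bookkeeping of $n(\omega)$, $|\omega|$, and the hook factors spells out what the paper summarises as ``a direct consequence of Identity (\ref{duallog5}).''
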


\begin{proof}

Let $F_{\omega}(t),F'_{\omega,\alpha}(t)$ be the polynomials such that $\langle f_{\alpha},\chi \rangle=F_{\omega}(q)$ and $\langle f'_{\alpha},\chi \rangle=F'_{\omega}(q) $ for every $\chi \in \Gl_{\alpha}(\F_q)^{\vee}$ of multitype $\omega \in \mathbb{T}_{\alpha}$. By eq.(\ref{convol}), we see that $$\langle k_{\alpha},\chi \rangle =\dfrac{F_{\omega}(q)F'_{\omega}(q)}{H^{\vee}_{\omega}(q)q^{\sum_{i \in I}\alpha_i^2}} .$$

Fix $d_1,\dots,d_r \in \N$ and $\omega_1 \in \mathbb{T}_{\beta_1},\dots,\omega_r \in \mathbb{T}_{\beta_r}$ such that $\psi_{d_1}(\omega_1) \ast \cdots \ast \psi_{d_r}(\omega_r)=\omega.$ To check eq.(\ref{duallog2}) for the functions $k_{\alpha}$, we need to verify that \begin{equation}
\label{duallog3}
\prod_{j=1}^r \dfrac{F_{\omega_j}(t^{d_j})F'_{\omega_j}(t^{d_j})}{H^{\vee}_{\omega_j}(t^{d_j})t^{d_j\sum_{i \in I}(\beta_j)_i^2}}\prod_{j=1}^r H^{\vee}_{\omega_j}(t^{d_j})=\dfrac{F_{\omega}(t)F'_{\omega}(t)}{H^{\vee}_{\omega}(t)t^{\sum_{i \in I}\alpha_i^2}}H^{\vee}_{\omega}(t).
\end{equation} Since the families $\{f_{\alpha}\}_{\alpha \in \N^I},\{f'_{\alpha}\}_{\alpha \in \N^I}$ are dual Log compatible, this is equivalent to  verify the equality \begin{equation}
\label{duallog4}
\left(\dfrac{\prod_{j=1}^r H^{\vee}_{\omega_j}(t^{d_j})}{H^{\vee}_{\omega}(t)}\right)^2=\dfrac{t^{\sum_{i \in I}\alpha_i^2}}{\prod_{j=1}^r t^{\sum_{j=1}^r d_j\sum_{i \in I}(\beta_j)_i^2}}
\end{equation}

which is a direct consequence of Identity (\ref{duallog5}).

\end{proof}

\begin{oss}
\label{inductivestep}
From Lemma \ref{duallog1} above and Example \ref{conv3}, we deduce that for any $g \geq 1$, the family of class function $\{f^g_n:\Gl_n(\F_q) \to \C\}$, where $$f^g_n(h)\coloneqq\dfrac{\#\{(x_1,y_1,\dots,x_g,y_g) \ | \ \prod_{i=1}^g [x_i,y_i]=h\}}{q^{(n^2(g-1))}} $$ is dual Log compatible.

In \cite[Section 2.3]{HRV}, the authors prove by a direct computation that $\langle f^g _n,\chi \rangle =\left(\dfrac{|\Gl_n(\F_q)|}{\chi(1)}\right)^{2g-1}$, from which it is possible to check dual Log compatibility directly from eq.(\ref{duallog2}).
\end{oss}

\section{Multiplicative quiver stacks for star-shaped quivers and character stacks for Riemann surfaces}
\label{chaptercharstack}

In this chapter, we will apply the results of Theorem \ref{mainteochar} to  the count of points over finite fields of multiplicative quiver stacks and character stacks for Riemann surfaces. We start by recalling the definitions and the constructions of these objects.

\subsection{Character stacks for Riemann surfaces}

\label{paragraphcharstacks}

Fix integers $g,k \in \N$, a Riemann surface 
 $\Sigma$ of genus $g$ and a divisor $D=\{d_1,\dots,d_k\} \subseteq \Sigma$. In this paragraph we recall the definition of the character stacks for the Riemann surface $\Sigma$ with fixed monodromies along $D$.

\vspace{10 pt}

Let $\mathcal{C}=(\mathcal{C}_1,\dots,\mathcal{C}_k)$ be a $k$-tuple of conjugacy classes of $\Gl_n(\C)$. Denote by $X_{\mathcal{C}}$ the following variety $$X_{\mathcal{C}}\coloneqq \{\rho \in \Hom(\pi_1(\Sigma \setminus D),\Gl_n(\C)) \ | \ \rho(\delta_h) \in \overline{\mathcal{C}_h}  \ \text{ for } h=1,\dots,k\} $$ where, for each $h=1,\dots,k$,  $\delta_h$ is a loop around the point $d_h$. The variety $X_{\mathcal{C}}$ is the variety of representations of the fundamental group of $\Sigma \setminus D$ with prescribed monodromy around the points of $D$ lying in $\overline{\mathcal{C}_1},\dots,\overline{\mathcal{C}_k}$ respectively.

\vspace{8 pt}

Recall that the fundamental group $\pi_1(\Sigma\setminus D)$
admits the following explicit presentation $$\pi_1(\Sigma \setminus D)=\langle a_1,b_1,\dots,a_g,b_g,\delta_1,\dots,\delta_k \ | \  [a_1,b_1]\cdots [a_g,b_g]\delta_1\cdots \delta_k=1 \rangle .$$
Therefore, the variety $X_{\mathcal{C}}$ has the following explicit expression in terms of matrix equations:
$$X_{\mathcal{C}}=\Biggl\{(A_1,B_1,\dots,A_g,B_g,X_1,\dots,X_k)\in \Gl_n(\C)^{2g} \times \overline{\mathcal{C}_1} \times \cdots \times \overline{\mathcal{C}_k} \ | \ [A_1,B_1]\cdots[A_g,B_g]X_1\cdots X_k=1\Biggr\} .$$

The character stack $\mathcal{M}_{\mathcal{C}}$ associated to $(\Sigma,D,\mathcal{C})$ is defined as the quotient stack $$\mathcal{M}_{\mathcal{C}}\coloneqq [X_{\mathcal{C}}/\Gl_n(\C)] .$$

We also consider the character variety $M_{\mathcal{C}}$, defined as the GIT quotient $$M_{\mathcal{C}}\coloneqq X_{\mathcal{C}}/\!\!/ \Gl_n(\C) .$$

\vspace{4 pt}

We will also consider certain quotient stacks, defined from Springer resolution of conjugacy classes, whose definition we briefly review here.

\subsubsection{Springer resolutions of conjugacy classes}
\label{springerresolutions}
In this paragraph, the base field is $\C$. Consider a Levi subgroup $L \subseteq \Gl_n(\C)$ and  a parabolic subgroup $P \supseteq L$ having $L$ as Levi factor. Let $U_P \subseteq P$ be the unipotent radical. Fix an element $z \in Z_L$ and let $Y_z$ be the variety $$Y_z \coloneqq \{(X,gP) \in \Gl_n(\C) \times \Gl_{n}(\C)/P \ | \ g^{-1}Xg \in zU_P \} .$$ Let $\pi_z:Y_z \to \Gl_n(\C)$ be the projection $\pi_z((gP,X))=X$. The following proposition is well known (see for instance \cite{Bao} and the reference therein for unipotent orbits).

\begin{prop}
The image of $\pi_z$ is the Zariski closure $\overline{C}$ of a conjugacy class $ C \subseteq \Gl_n(\C)$ and the morphism $\pi_z$ is a resolution of singularities.

If $z \in (Z_L)^{\reg}$, the map $\pi_z$ is an isomorphism between $Y_z$ and the conjugacy class of $z$ in $\Gl_n(\C)$.
\end{prop}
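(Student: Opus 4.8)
The proposition is a standard fact about Springer-type resolutions associated to parabolic induction, so the plan is to reduce it to known statements rather than reprove them from scratch. First I would set up the geometry: the group $\Gl_n(\C)$ acts on $Y_z$ by $h\cdot(X,gP)=(hXh^{-1},hgP)$, and $\pi_z$ is equivariant for this action and the conjugation action on $\Gl_n(\C)$. The projection $q\colon Y_z\to\Gl_n(\C)/P$, $(X,gP)\mapsto gP$, exhibits $Y_z$ as a fiber bundle over the (smooth, projective) partial flag variety $\Gl_n(\C)/P$ with fiber over the base point $eP$ equal to $zU_P$, which is an affine space; hence $Y_z$ is smooth and irreducible of dimension $\dim\Gl_n(\C)/P+\dim U_P$. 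In fact $Y_z\cong\Gl_n(\C)\times^P(zU_P)$, the associated bundle.

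Next I would identify the image. Since $Y_z$ is irreducible and $\pi_z$ is equivariant, the image is an irreducible conjugation-stable constructible subset of $\Gl_n(\C)$, and its closure is therefore the closure $\overline{C}$ of a single conjugacy class $C$ (the one meeting the dense orbit in the image). To pin down $C$ and to get that $\pi_z$ is a resolution of singularities, I would compute the fiber of $\pi_z$ over a point $X=zu$ with $u\in U_P$: it is $\{gP : g^{-1}Xg\in zU_P\}$, which by a standard Bruhat/parabolic argument (conjugacy of the elements $zU_P$ under $P$, plus the fact that $z$ is the semisimple part determining $L=C_{\Gl_n}(z)^\circ$ only when $z$ is regular in $Z_L$ — in general one uses that the $P$-conjugates of $zU_P$ sweep out exactly the subset of $\overline C$ with prescribed semisimple part) is a single point, so $\pi_z$ is generically injective; combined with properness (it factors through the projective $\Gl_n(\C)/P$) and smoothness of $Y_z$, this gives that $\pi_z$ is birational and proper onto $\overline C$ with smooth source, i.e.\ a resolution of singularities. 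Here I would cite the references already mentioned in the excerpt (\cite{Bao} and references therein, which treat the unipotent case $z=1$; the general semisimple-times-unipotent case follows by translating by the central element $z$, since conjugation by $P$ fixes $z$ and the map $u\mapsto zu$ identifies $U_P$ with $zU_P$ $P$-equivariantly for the twisted action).

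For the last sentence, suppose $z\in(Z_L)^{\reg}$. Then $C_{\Gl_n(\C)}(z)=L$ (this uses exactly the regularity: $z$ lies in no smaller $Z_{L'}$, so its centralizer is the Levi $L$ and not a larger one), and I would show $g^{-1}Xg\in zU_P$ forces $X$ to be conjugate to $z$ and the fiber of $\pi_z$ to be a single point, whence $\pi_z\colon Y_z\to C$ is a bijective proper morphism between smooth varieties, hence an isomorphism by Zariski's main theorem. Concretely: if $g^{-1}Xg=zu$ with $u\in U_P$, the semisimple part of $zu$ is $z$ (as $z$ is central in $P$ and $u$ unipotent), so $X$ is conjugate to $z$; and if $g_1^{-1}Xg_1=g_2^{-1}Xg_2=z$ then $g_2g_1^{-1}\in C_{\Gl_n}(z)=L\subseteq P$, so $g_1P=g_2P$. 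The main obstacle in writing this out cleanly is the general (non-regular) case: one must be careful that the image is precisely the closure of one conjugacy class and that $\pi_z$ is birational onto it, which is where the parabolic-induction combinatorics (the image consists of elements whose $L$-part is conjugate to $z$ and whose $U_P$-part is suitably nilpotent, with a dense orbit) does the real work; I would handle it by the reduction-to-the-unipotent-case argument above and cite the literature for that case rather than redo it.
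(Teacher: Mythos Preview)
The paper does not actually prove this proposition: it states it as ``well known'' and refers to \cite{Bao} (and references therein) for the unipotent case. Your proposal is therefore strictly more detailed than what the paper provides, and your overall strategy---smoothness of $Y_z$ as an associated bundle, equivariance and irreducibility to identify the image, properness plus generic injectivity for the resolution statement, and a direct fiber computation in the regular case---is the standard one and is essentially correct.

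One small correction in your regular-case argument: you write ``the semisimple part of $zu$ is $z$ (as $z$ is central in $P$ and $u$ unipotent)'', but $z\in Z_L$ is central in $L$, not in $P$, so $z$ and $u$ need not commute and $zu$ is not a priori a Jordan decomposition. The right argument is that regularity gives $C_{\Gl_n}(z)=L$, hence $C_{\Gl_n}(z)\cap U_P=\{1\}$, and then the map $U_P\to zU_P$, $v\mapsto vzv^{-1}$, is an isomorphism of varieties; this shows every element of $zU_P$ is $U_P$-conjugate to $z$, so the image is exactly the (closed, semisimple) conjugacy class of $z$, and the fiber over any $X$ in that class is the single coset $gP$ with $g^{-1}Xg=z$ modulo $C_{\Gl_n}(z)=L\subseteq P$. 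With this fix your injectivity argument goes through cleanly.
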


The morphism $\pi_z:Y_z \to \overline{C} \subseteq \Gl_n(\C)$ is called a \textit{partial Springer resolution}. Its image does not depend on the choice of the parabolic subgroup $P$.

\begin{oss}
The variety $Y_z$ can be described in the following equivalent way. Consider $n_0,\dots,n_s$ such that $L \cong \Gl_{n_0} \times \cdots \times \Gl_{n_s}$. The element $z \in Z_L$ corresponds therefore to $z_0,\dots,z_s \in \C^*$ such that $$z=(z_0 I_{n_0}, \dots, z_s I_{n_s}) .$$

Let $P$ be the parabolic subgroup containing $L$ as a Levi factor and which contains the subgroup of upper triangular matrices. Identify $\Gl_n/P$ with the corresponding partial flag variety in the classical way, i.e. \begin{equation}\label{flagvariety}
\Gl_n/P \cong \{\mathcal{F}= (\mathcal{F}_s \subseteq \mathcal{F}_{s-1} \subseteq \cdots \subseteq \mathcal{F}_{0}=\C^n) \ | \ \dim(\mathcal{F}_i)=\sum_{j=i}^s n_j \}. \end{equation} We have $$Y_z=\Bigg\{(X,\mathcal{F}) \in \Gl_n(\C) \times \Gl_{n}(\C)/P  \ | \ X(\mathcal{F}_j) \subseteq \mathcal{F}_j \text{ for each $j=0,\dots,s$} $$ $$ \text{ and the morphism induced by } X \text{ on } \mathcal{F}_j/\mathcal{F}_{j+1} \text{ is } z_jI_{n_j} \Bigg\} $$
\end{oss}

Consider now a $k$-tuple of Levi subgroups $\mathbf{L}=(L_1,\dots,L_k)$, a $k$-tuple of parabolic subgroups $\mathbf{P}=(P_1,\dots,P_k)$ where each $P_i$ has $L_i$ as Levi factor and of central elements $\mathbf{z}=(z_1,\dots,z_k) \in Z_{L_1} \times \cdots Z_{L_k}$.

Let now $X_{\mathbf{L},\mathbf{P},\mathbf{z}}$ be the variety defined as $$X_{\mathbf{L},\mathbf{P},\mathbf{z}}\coloneqq\Biggl\{(A_1,B_1,\dots,A_g,B_g,g_1P_1,X_1,\dots) \in \Gl_n^{2g}(\C) \times  \prod_{h=1}^k Y_{z_h} \ | \ 
 \prod_{i=1}^g [A_i,B_i]X_1\cdots X_k=1 
\Biggr\} $$

and $\mathcal{M}_{\mathbf{L},\mathbf{P},\mathbf{z}}$ the quotient stack $$\mathcal{M}_{\mathbf{L},\mathbf{P},\mathbf{z}}\coloneqq [X_{\mathbf{L},\mathbf{P},\mathbf{z}}/\Gl_n(\C)] .$$

\vspace{2 pt}

Let $\mathcal{C}=(\mathcal{C}_1,\dots,\mathcal{C}_k)$ be the $k$-tuple such that $\overline{\mathcal{C}_j}$ is the image of the projection $Y_{z_j} \to \Gl_n$.

Notice that the projections $\pi_{z_1},\dots,\pi_{z_h}$ induce a morphism $$\pi: X_{\mathbf{L},\mathbf{P},\mathbf{z}} \to X_{\mathcal{C}} $$ $$(A_1,B_1,\dots,A_g,B_g,g_1P_1,X_1,\dots,g_kP_k,X_k) \to (A_1,B_1,\dots,A_g,B_g,X_1,\dots,X_k)  .$$

As $\pi$ is $\Gl_n(\C)$-equivariant, it descends to a morphism of quotient stacks , which we still denote by $\pi:\mathcal{M}_{\mathbf{L},\mathbf{P},\mathbf{z}}\to \mathcal{M}_{\mathcal{C}}$.

\vspace{4 pt}

\begin{oss}
\label{isomorphismsemisimple}
Notice that, if  $z_1 \in (Z_{L_1})^{\reg},\dots,z_k \in (Z_{L_k})^{\reg}$, i.e. if each $\mathcal{C}_h$ is semisimple, the morphism $\pi$ is actually an isomorphism.

\end{oss}

\vspace{6 pt}

\begin{oss}
The morphism $\pi$ is obtained by restricting the product of the partial Springer resolutions $Y_{z_h} \to \mathcal{C}_h$ and then quotienting by $\Gl_n$. The decomposition theorem (and its equivariant version) for partial Springer resolutions are well understood in terms of the representation theory of Weyl groups. 

Although we will not cover this in this article, it is natural to expect that the cohomological properties of the morphism $\pi$ could have a similar description.
\end{oss}

\vspace{6 pt}

In what follows, we will show how to relate the stacks $\mathcal{M}_{\mathbf{L},\mathbf{P},\mathbf{z}}$ to multiplicative quiver stacks for star-shaped quivers. We start by recalling some generalities about quivers and their multiplicative moment maps and fixing some notations.

\subsection{Quiver representations}
\label{quiverrep}
A quiver $Q$ is an oriented graph $Q=(I,\Omega)$, where $I$ is its set of vertices and  $\Omega$ is its set of arrows. We will always assume that $I,\Omega$ are finite. For an arrow $a:i \to j$  in $\Omega$ we denote by $i=t(a)$ its \textit{tail} and by $j=h(a)$ its \textit{head}. 

Fix a field $K$. A representation $M$ of $Q$ over $K$ is given by a (finite dimensional) $K$-vector space $M_i$ for each vertex $i \in I$ and by linear  maps $M_a:M_{t(a)} \to M_{h(a)}$ for each $a \in \Omega$. 

Given two representations $M,M'$ of $Q$, a morphism $f:M \to M'$  is given by maps $f_i:M_i \to M'_i$ such that, for all $a \in \Omega $, the following equality holds:  $f_{h(a)} M_a=M'_a f_{t(a)} $.
The category of representations of $Q$ over $K$ is denoted by $\Rep_K(Q)$. For a representation $M$, the \textit{dimension vector} $\dim M\in \N^I$ is the vector $\dim M=(\dim M_i)_{i \in I}  $. It is an isomorphism invariant of the category $\Rep_K(Q)$.

For a representation $M$ of dimension $\alpha$, up to fixing a basis of the vector spaces $M_i$ for each $i \in I$, we can assume that $M_i=K^{\alpha_i}$. For $a \in \Omega$, the linear map $M_{a}:K^{t(a)} \to K^{h(a)}$ can be therefore identified with a matrix in $\Mat(\alpha_{h(a)},\alpha_{t(a)},K)$. 

Consider then the affine space $$R(Q,\alpha)\coloneqq \bigoplus_{a \in \Omega} \Mat(\alpha_{h(a)},\alpha_{t(a)},K) .$$
 We can endow $R(Q,\alpha)$ with the action of the group $\Gl_{\alpha}=\displaystyle \prod_{i \in I}\Gl_{\alpha_i}$  defined as  $$g \cdot (M_a)_{a \in \Omega}=(g_{h(a)}M_ag_{t(a)}^{-1})_{a \in \Omega} .$$ The orbits of this action are exactly the isomorphism classes of representations of $Q$ of $\dim=\alpha$.

\vspace{8 pt}

Finally, denote by $(-,-):\Z^I \times \Z^I \to \Z$ the Euler form of $Q$, defined as $$(\alpha,\beta)=\sum_{i \in I}\alpha_i \beta_i- \sum_{a \in \Omega}  \alpha_{t(a)}\beta_{h(a)} .$$

\subsection{Star-shaped quivers and multiplicative quiver stacks}
\label{defcharstack}
Let $Q=(I,\Omega)$ be the following star-shaped quiver with $g$ loops on the central vertex \begin{center}
    \begin{tikzcd}[row sep=1em,column sep=3em]
    & &\circ^{[1,1]} \arrow[ddll,""] &\circ^{[1,2]} \arrow[l,""]  &\dots \arrow[l,""] &\circ^{[1,s_1]} \arrow[l,""]\\
    & &\circ^{[2,1]} \arrow[dll,""] &\circ^{[2,2]} \arrow[l,""] &\dots \arrow[l,""] &\circ^{[2,s_2]} \arrow[l,""]\\
    \circ^0 \arrow[out=170,in=200,loop,swap] \arrow[out=150,in=210,loop,"\cdots"] \arrow[out=140,in=220,loop,swap]  & &\cdot &\cdot\\
    & &\cdot &\cdot\\
    & &\cdot &\cdot\\
    & &\circ^{[k,1]} \arrow[uuull,""] &\circ^{[k,2]} \arrow[l,""]  &\dots \arrow[l,""] &\circ^{[k,s_k]} \arrow[l,""]
    \end{tikzcd}
\end{center}

\vspace{8 pt}

Let $(\N^I)^* \subseteq \N^I$ be the subset of vectors with non-increasing coordinates along the legs and, more generally, for any subset $V \subseteq \N^I$, denote by $V^*\coloneqq V \cap (\N^I)^*$. 

Denote by $\overline{Q}$ the double quiver $\overline{Q}=(I,\overline{\Omega})$ with the same set of  vertices of $Q$ and as set of arrows $\overline{\Omega}=\{a,a^* \ | \ a \in \Omega\}$, where $a^*:h(a) \to t(a)$. 

\vspace{6 pt}

For a representation $x \in R(Q,\alpha)$, for each $h=1,\dots,k$ and $j=0,\dots,s_h$, we denote by $x_{h,j} \in \Mat(\alpha_{[h,j]},\alpha_{[h,j+1]},K)$ the matrix associated to the arrow $a$ having $t(a)=[h,j+1]$ and $h(a)=[h,j]$, where we put $x_{h,s_h}=0$. 
 
 Similarly, for an element $\overline{x} \in R(\overline{
Q},\alpha)$ we denote by $x_{h,j}^* \in \Mat(\alpha_{[h,j+1]},\alpha_{[h,j]},K) $ the matrix associated to the arrow $a^* \in \overline{\Omega}$.

Lastly, for a representation $\overline{x} \in R(\overline{Q},\alpha)$, we denote by $e_1,\dots,e_g,e_1^*,\dots,e_g^* \in \Mat(\alpha_0,K)$ the matrix associated to the $g$ loops of $Q$ and the corresponding reversed arrows of $\overline{Q}$ respectively.
\vspace{4 pt}

 For $\alpha \in \N^I$, let $R(\overline{Q},\alpha)^{\circ} \subseteq R(\overline{Q},\alpha)$ be the open subset of representations $(x_a,x_{a^*})_{a \in \Omega}$ such that $(1+x_ax_{a^*}),(1+x_{a^*}x_a)$ is invertible for every $a \in \Omega$. 

Let moreover $R(\overline{Q},\alpha)^{\circ,*} \subseteq R(\overline{Q},\alpha)^{\circ}$ be the open subset of representations $(x_a,x_{a^*})_{a \in \Omega}$ such that $x_a$ is injective for each $a \in \Omega$. Notice that $R(\overline{Q},\alpha)^{\circ,\ast}=\emptyset$ if $\alpha \notin (\N^I)^*$.

\vspace{8 pt}

Assume to have fixed an ordering $<$ on $\Omega$. The multiplicative moment map $\Phi_{\alpha}^{*}$ is the $\Gl_{\alpha}$-equivariant morphism
$$\Phi^{*}_{\alpha}: R(\overline{Q},\alpha)^{\circ,\ast} \to \Gl_{\alpha} $$ 
\begin{equation}
\label{definitionmultiplicativemomentmap}
(x_a,x_{a^*}) \rightarrow \prod_{a \in \Omega}(1+x_ax_{a^*})(1+x_{a^*}x_a)^{-1}
\end{equation}
where we are taking the ordered product with respect to $<$.

 For $\sigma \in (K^*)^I$, the fiber $(\Phi^{*}_{\alpha})^{-1}(\sigma)$ is $\Gl_{\alpha}$-stable and we define the multiplicative quiver stack $\mathcal{M}^{*}_{\sigma,\alpha}$ of parameter $\sigma,\alpha$ as the quotient stack $$\mathcal{M}^{*}_{\sigma,\alpha}\coloneqq [(\Phi^{*}_{\alpha})^{-1}(\sigma)/\Gl_{\alpha}] .$$

\begin{oss}
The isomorphism class of the multiplicative quiver stack $\mathcal{M}^{*}_{\sigma,\alpha}$ does not depend on the ordering $<$ of the arrows, see for example \cite[Theorem 1.4]{cb-shaw}.

\end{oss}

\vspace{10 pt}

\begin{esempio}
\label{Jordanquivermomentmap}
Let $Q=(I,\Omega)$ be the Jordan quiver, i.e. the quiver with one vertex and one arrow. For $n \in \N$, the variety $R(\overline{Q},n)$ is $\mathfrak{gl}_n(K) \times \mathfrak{gl}_n(K)$ and the variety $R(\overline{Q},n)^{\circ,\ast}$ is given by $$R(\overline{Q},n)^{\circ,\ast}=\{(e,e^*) \in \mathfrak{gl}_n(K) \times \mathfrak{gl}_n(K) \ | \ e, 1+ee^*,1+e^*e \in \Gl_n(K) \} .$$

Notice that the variety $R(\overline{Q},n)^{\circ,\ast}$ is isomorphic to $\Gl_n(K) \times \Gl_n(K)$ via the isomorphism $$R(\overline{Q},n)^{\circ,\ast} \to \Gl_n(K) \times \Gl_n(K) $$ $$(e,e^*) \to (e,e^{-1}+e^*) .$$

Through this identification, the multiplicative moment map $\Phi^{\ast}_n$ corresponds to the morphism $$\Phi_n^{\ast}:\Gl_n(K) \times \Gl_n(K) \to \Gl_n(K)$$  $$\Phi^{\ast}_n(A,B)=[A,B] .$$

\end{esempio}

\vspace{6 pt}

For a point $\overline{x} \in (\Phi_{\alpha}^{*})^{-1}(\sigma)$, we have the following relationships. At the central vertex, we have:

\begin{equation}
\label{centralvertexequation}
\prod_{l=1}^g (1+e_le_l^*)(1+e_l^*e_l)^{-1}\prod_{h=1}^k (1+x_{h,0}x_{h,0}^*)=\sigma_0 I_{\alpha_0}
\end{equation}

For any $h=1,\dots,k$ and $j=1,\dots, s_h$, we have
\begin{equation}
(1+x_{h,j}x^*_{h,j})(1+x_{h,j-1}^*x_{h,j-1})^{-1}=\sigma_{[h,j]}I_{\alpha_{[h,j]}}    
\end{equation}
 which can be rewritten as
\begin{equation}
\label{relationshipmultiplicative1}
x_{h,j}x^*_{h,j}-\sigma_{[h,j]}x_{h,j-1}^*x_{h,j-1}=(\sigma_{[h,j]}-1)I_{\alpha_{[h,j]}}.
 \end{equation}

For $j=s_h$, we have
\begin{equation}
\sigma_{[h,s_h]}x_{h,s_h-1}^*x_{h,s_h-1}=(1-\sigma_{[h,s_h]})I_{\alpha_{[h,s_h]}}.
\end{equation}

\subsection{Multiplicative quiver stacks and character stacks: the main isomorphism}

Consider a $k$-tuple of Levi subgroups $\mathbf{L}=(L_1,\dots,L_k)$, a $k$-tuple of parabolic subgroups $\mathbf{P}=(P_1,\dots,P_k)$ such that $P_h$ has $L_h$ as Levi factor for every $h$ and a $k$-tuple of central elements $\mathbf{z}=(z_1,\dots,z_k) \in Z_{L_1} \times \cdots \times Z_{L_k}$, as in \cref{springerresolutions}.

Assume that, for each $h=1,\dots,k$,  $L_h$ is the Levi subgroup of $\Gl_n(\C)$ $$L_h=\prod_{j=0}^{s_h}\Gl_{n_{h,j}}(\C) $$ and $$z_h=(z_{h,0}I_{n_{h,0}},\dots,z_{h,s_h}I_{n_{h,s_h}})$$ with $z_{h,0},\dots,z_{h,s_h} \in \C^*$.

\vspace{2 pt}

Consider now a star-shaped quiver $Q=(I,\Omega)$ with $g$ loops on the central vertex and $k$ legs of length $s_1,\dots,s_k$ respectively. Define the following dimension vector $\alpha_{\mathbf{L}} \in (\N^I)^*$. For each $h=1,\dots,k$ and $j=0,\dots,s_h$, put \begin{equation}
\label{partitionsofdimensionvector}
\displaystyle (\alpha_{\mathbf{L}})_{[h,j]} \coloneqq \sum_{l=j}^{s_h}n_{h,j}\end{equation} where we are identifying $[h,0]=0$. Define the following element $\sigma_{\mathbf{z}} \in (\C^*)^I$. $$\begin{cases}
(\sigma_{\mathbf{z}})_0\coloneqq \displaystyle \prod_{j=1}^k z^{-1}_{j,0} \\
(\sigma_{\mathbf{z}})_{[h,j]}\coloneqq z_{h,j-1}z_{h,j}^{-1} \text{ if } j \geq 1.
\end{cases}$$

We have the following result, relating multiplicative quiver stacks for star-shaped quivers and Springer resolutions of conjugacy classes.

\begin{teorema}
\label{teoremamultiplicative}
For any $\mathbf{L},\mathbf{P},\mathbf{z}$ as above, there is an isomorphism of stacks $$\mathcal{M}^*_{\alpha_{\mathbf{L}},\sigma_{\mathbf{z}}} \cong \mathcal{M}_{\mathbf{L},\mathbf{P},\mathbf{z}} .$$
\end{teorema}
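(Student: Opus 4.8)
\textbf{Proof strategy for Theorem \ref{teoremamultiplicative}.}

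The plan is to produce an explicit $\Gl_n(\C)$-equivariant isomorphism between the representation space $X_{\mathbf{L},\mathbf{P},\mathbf{z}}$ underlying $\mathcal{M}_{\mathbf{L},\mathbf{P},\mathbf{z}}$ and the fiber $(\Phi^*_{\alpha_{\mathbf{L}}})^{-1}(\sigma_{\mathbf{z}})$ underlying $\mathcal{M}^*_{\alpha_{\mathbf{L}},\sigma_{\mathbf{z}}}$, and then to check that it intertwines the relevant group actions so that it descends to the quotient stacks. The core of the argument is a "linear algebra dictionary" between, on the one hand, a point $(X,gP)$ of a partial Springer resolution $Y_{z_h}$ (equivalently, an endomorphism $X$ preserving a partial flag $\mathcal{F}_{s_h}\subseteq\cdots\subseteq\mathcal{F}_0=\C^n$ and acting as $z_{h,j}\,\mathrm{id}$ on $\mathcal{F}_j/\mathcal{F}_{j+1}$) and, on the other hand, the data of maps $x_{h,j}\colon \C^{(\alpha_{\mathbf{L}})_{[h,j+1]}}\hookrightarrow \C^{(\alpha_{\mathbf{L}})_{[h,j]}}$, $x_{h,j}^*$ along the $h$-th leg of the quiver satisfying the moment-map relations \eqref{relationshipmultiplicative1}. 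The guiding principle is the identification in Example \ref{Jordanquivermomentmap}: on the central vertex, the loops $(e_l,e_l^*)$ get turned into a genuine $\Gl_n$-pair $(A_l,B_l)=(e_l,e_l^{-1}+e_l^*)$ with $(1+e_le_l^*)(1+e_l^*e_l)^{-1}=[A_l,B_l]$, and the factor $\prod_h(1+x_{h,0}x_{h,0}^*)$ at the central vertex, which by \eqref{centralvertexequation} equals $\sigma_0^{-1}\prod_l[A_l,B_l]^{-1}$, will be identified with the product $X_1\cdots X_k$ of the monodromies $X_h=\pi_{z_h}(g_hP_h,X_h)$ (each rescaled by $z_{h,0}$).

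The steps, in order, are as follows. \emph{Step 1: the leg dictionary.} Fix $h$ and set $n_{h,j}$, $z_{h,j}$, and the partial flag $\mathcal{F}_{s_h}\subseteq\cdots\subseteq\mathcal{F}_0=\C^n$ with $\dim\mathcal{F}_j=(\alpha_{\mathbf{L}})_{[h,j]}$ as in \eqref{flagvariety}. Given $(X,\mathcal{F})\in Y_{z_h}$, I would produce maps $x_{h,j},x_{h,j}^*$ along the $h$-th leg out of the nilpotent-type operator $X-z_{h,0}$ restricted to the flag, following the standard Crawley–Boevey–Shaw recipe for encoding a filtered module by a representation of a type-$A$ quiver; concretely, choosing splittings $\C^n=\bigoplus_j V_j$ with $\mathcal{F}_j=\bigoplus_{l\geq j}V_l$ and $\dim V_j=n_{h,j}$, the strictly-lower-triangular part of $X$ in this decomposition provides the $x_{h,j}$'s (after suitably normalizing so each becomes injective, using that $X$ acts as a scalar on the graded pieces, which forces the relevant blocks to have full rank) and the upper part provides the $x_{h,j}^*$'s, with \eqref{relationshipmultiplicative1} becoming the condition that $X$ preserves each $\mathcal{F}_j$ and is scalar on the quotients. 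This is the key technical content and should be framed as a bijection between $Y_{z_h}$ (with its $\Gl_n$-action) and the $\Gl_{(\alpha_{\mathbf{L}})_{[h,1]}}\times\cdots\times\Gl_{(\alpha_{\mathbf{L}})_{[h,s_h]}}$-bundle over $\Gl_n$ of leg-data, in such a way that the monodromy $X_h$ around $d_h$ is recovered as $z_{h,0}(1+x_{h,0}x_{h,0}^*)$ (inverse, up to convention) acting on $\C^n=\mathcal{F}_0$.

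\emph{Step 2: the central vertex.} Use Example \ref{Jordanquivermomentmap} to turn the $g$ loop-pairs $(e_l,e_l^*)$ on the central vertex into the $\Gl_n$-pairs $(A_l,B_l)$, establishing a $\Gl_n$-equivariant isomorphism between the loop data and $\Gl_n^{2g}$. \emph{Step 3: matching the equations.} Combine Steps 1 and 2: the central moment-map relation \eqref{centralvertexequation} with parameter $(\sigma_{\mathbf{z}})_0=\prod_j z_{j,0}^{-1}$ becomes exactly $\prod_l[A_l,B_l]\,X_1\cdots X_k=1$ (here I would carefully check the ordering convention in \eqref{definitionmultiplicativemomentmap} versus the fundamental-group relation, invoking the order-independence Remark after \eqref{definitionmultiplicativemomentmap}), while the leg relations cut out precisely the condition $(g_h P_h, X_h)\in Y_{z_h}$ with the correct $z_{h,j}$'s as dictated by $(\sigma_{\mathbf{z}})_{[h,j]}=z_{h,j-1}z_{h,j}^{-1}$. \emph{Step 4: equivariance and descent.} Observe that the torus factors $\prod_{i\neq 0}\Gl_{\alpha_i}$ of $\Gl_{\alpha_{\mathbf{L}}}$ act freely on $R(\overline{Q},\alpha_{\mathbf{L}})^{\circ,*}$ along the legs (the injectivity of the $x_{h,j}$ guarantees this) and the quotient by them recovers the flag-and-operator data, so that $[(\Phi^*_{\alpha_{\mathbf{L}}})^{-1}(\sigma_{\mathbf{z}})/\Gl_{\alpha_{\mathbf{L}}}]\cong[X_{\mathbf{L},\mathbf{P},\mathbf{z}}/\Gl_n(\C)]$ as stacks, since the residual central-vertex group $\Gl_{\alpha_0}=\Gl_n(\C)$ matches. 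I expect the main obstacle to be Step 1: getting the normalizations right so that the injectivity/openness conditions defining $R(\overline{Q},\alpha_{\mathbf{L}})^{\circ,*}$ correspond exactly to the locus $Y_{z_h}$ (rather than some larger or smaller subvariety), and verifying that the leg construction is not merely a bijection on points but an isomorphism of varieties with no stacky discrepancy — this is where I would lean on \cite{cb-shaw} and the isomorphism \eqref{eqisomintro} already asserted in the introduction, of which this theorem is essentially the geometric heart.
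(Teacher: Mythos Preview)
Your overall strategy is the paper's: build an explicit correspondence between leg data and Springer data, handle the loops via Example~\ref{Jordanquivermomentmap}, quotient out the freely-acting factor $\Gl_{\alpha}'=\prod_{i\neq 0}\Gl_{\alpha_i}$, and match the residual $\Gl_n$-quotients. Steps~2--4 are exactly what the paper does.

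Step~1, however, is misdescribed and would not work as written. You propose to choose a splitting $\C^n=\bigoplus_j V_j$ compatible with the flag and then read off $x_{h,j}$ from the ``strictly lower-triangular part of $X$'' and $x_{h,j}^*$ from the upper part. But since $X$ preserves the flag $\mathcal{F}_{s_h}\subseteq\cdots\subseteq\mathcal{F}_0$, it is block \emph{upper} triangular in any adapted basis; its strictly lower-triangular part vanishes identically, so there is nothing there to produce the injective maps $x_{h,j}$, and your parenthetical (``$X$ acts as a scalar on the graded pieces, which forces the relevant blocks to have full rank'') is a non sequitur. The point is that the $x_{h,j}$'s must encode the \emph{flag}, not $X$.

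The paper's dictionary is the clean one and you should use it. In the forward direction $f:(\Phi^*_\alpha)^{-1}(\sigma)\to X_{\mathbf{L},\mathbf{P},\mathbf{z}}$, the flag is $\mathcal{F}_{h,j}=\mathrm{Im}(x_{h,0}\cdots x_{h,j-1})$ (injectivity of the $x$'s gives the correct dimensions) and the monodromy is $X_h=z_{h,0}(1+x_{h,0}x_{h,0}^*)$; the recursion \eqref{relationshipmultiplicative1} then yields $X_h\bigl(x_{h,0}\cdots x_{h,j-1}(v)\bigr)=z_{h,j}\,x_{h,0}\cdots x_{h,j-1}(v)+x_{h,0}\cdots x_{h,j}x_{h,j}^*(v)$, which is exactly the flag-preservation and scalar-on-quotients condition. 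In the inverse direction $\theta$, one takes $x_{h,j-1}$ to be the inclusion $\mathcal{F}_{h,j}\hookrightarrow\mathcal{F}_{h,j-1}$ (scaled by $z_{h,j-1}^{-1}$, in chosen bases) and $x_{h,j}^*$ to be the map $\mathcal{F}_{h,j}\to\mathcal{F}_{h,j+1}$ induced by $X_h-z_{h,j}I_n$; the moment-map relation \eqref{relationshipmultiplicative1} then drops out of the tautology $z_{h,j}x_{h,j}x_{h,j}^*-z_{h,j-1}x_{h,j-1}^*x_{h,j-1}=(z_{h,j}-z_{h,j-1})I$. With this corrected Step~1, the rest of your outline goes through exactly as in the paper.
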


In the proof, we suppose $g=0$ and  we put $\sigma=\sigma_{\mathbf{z}},\alpha=\alpha_{\mathbf{L}}$ to simplify the notations. The case of $g >0$ is a combination of the arguments used in this proof and that of \cite[Proposition 2]{cb-monod}. 

\begin{proof}
We define the following morphism $$f: (\Phi^*_{\alpha})^{-1}(\sigma) \to X_{\mathbf{L},\mathbf{P},\mathbf{z}}.$$

For an element $\overline{x} \in (\Phi^*_{\alpha})^{-1}(\sigma)$, consider the flag $$\mathcal{F}_{j,\overline{x}}=(\C^n \supseteq \Imm(x_{j,0}) \supseteq \Imm(x_{j,0}x_{j,1}) \supseteq \cdots \supseteq \Imm(x_{j,0}\cdots x_{j,s_j-1})) .$$

Notice that, for each $h=0,\dots,s_j-1$, we have $$\dim(\Imm(x_{j,0}\cdots x_{j,h}))=\alpha_{[j,h+1]} ,$$ since $x_{j,r}$ is injective for each $j$ and $r$. In particular, $\mathcal{F}_{j,\overline{x}}$ belongs to the partial flag variety $\Gl_n/P_j$. We define therefore $$f(\overline{x})=(\mathcal{F}_{1,\overline{x}},z_{1,0}+z_{1,0}x_{1,0}x_{1,0}^*,\mathcal{F}_{2,\overline{x}},\dots,z_{k,0}+z_{k,0}x_{k,0}x_{k,0}^* ) .$$

 For each $h=1,\dots,k$, put $X_h \coloneqq z_{h,0}+z_{h,0}x_{h,0}x_{h,0}^*$. Notice that from eq.(\ref{centralvertexequation}), we have that $$X_1 \cdots X_k=1 .$$

To check that the morphism $f$ is well defined we need to check thus the following two conditions.
\begin{enumerate}
\item The flag $\mathcal{F}_{h,\overline{x}}$ is $X_h$ invariant for each $h$.
\item The morphism that $X_h$ induces on the quotient space $$\Imm(x_{h,0} \cdots x_{h,j-1})/\Imm(x_{h,0} \cdots x_{h,j}) $$ and which we denote by $\overline{X}_{h,j}$ is equal to $z_{h,j}I_{n_{h,j}}$. Here we are putting $\Imm(x_{h,-1})=\C^n$.
\end{enumerate}

From eq.(\ref{relationshipmultiplicative1}), by recurrence, we deduce that for each $h=1,\dots,k$ and each $j=0,\dots,s_h-1$ and $v \in \C^{\alpha_{[h,j]}}$, we have

\begin{equation}
\label{charstackverifying}
X_j(x_{h,0}\cdots x_{h,j-1}(v))=z_{h,0}x_{h,0}\cdots x_{h,j}(v)+z_{h,0}x_{h,0}x_{h,0}^*x_{h,0}\cdots x_{h,j}(v)=
\end{equation}

\begin{equation}
\label{charstackverifying1}
=\dfrac{z_{h,0}}{\sigma_{[h,1]}\cdots \sigma_{[h,j]}}x_{h,0}\cdots x_{h,j-1}(v)+x_{h,0}\cdots x_{h,j}x_{h,j}x_{h,j}^*(v)
\end{equation}

where  we are putting $x_{h,s_h}=x_{h,s_h}^*=0$. 

Notice that $$\dfrac{z_{h,0}}{\sigma_{[h,1]}\cdots \sigma_{[h,j]}}=z_{h,j}.$$ From eq.(\ref{charstackverifying1}), we deduce therefore that properties $1),2)$ above are respected for each $h,j$ and therefore $f$ is well defined.

\vspace{6 pt}

Denote by $I'=I\setminus \{0\}$ and put $\displaystyle \Gl_{\alpha}'\coloneqq \prod_{i \in I'}\Gl_{\alpha_i}$. We have thus that \begin{equation}
    \label{splittinggroup}
\Gl_{\alpha}=\Gl_n \times \Gl_{\alpha}' .\end{equation}  It can be verified that the action of $\Gl_{\alpha}'$ on $ (\Phi_{\alpha}^*)^{-1}(\sigma)$ is schematically free and therefore the multiplicative quiver stack $[(\Phi_{\alpha}^*)^{-1}(\sigma)/\Gl'_{\alpha}]$ is actually an algebraic variety.

In addition, notice that the map $f$ is $\Gl_{\alpha}'$-invariant. Denote by $$\widetilde{f}:(\Phi_{\alpha}^*)^{-1}(\sigma)/\Gl_{\alpha}' \to X_{\mathbf{L},\mathbf{P},\mathbf{z}}$$ the associated morphism. From the identity (\ref{splittinggroup}), to show that $$\mathcal{M}^*_{\alpha,\sigma} \cong \mathcal{M}_{\mathbf{L},\mathbf{P},\mathbf{z}} ,$$ it is sufficient to show that $\widetilde{f}$ is an isomorphism. 

\vspace{6 pt}

We define the following morphism $\theta:X_{\mathbf{L},\mathbf{P},\mathbf{z}} \to  (\Phi_{\alpha}^*)^{-1}(\sigma)/\Gl_{\alpha}'$. Consider an element $$(\mathcal{F}_1,X_1,\dots,\mathcal{F}_k,X_k) \in X_{\mathbf{L},\mathbf{P},\mathbf{z}} .$$ For each $h=1,\dots,k$ and $j=1,\dots,s_{h}$, fix a basis of the vector space $\mathcal{F}_{h,j}$ and denote by $$x_{h,j-1}:\C^{\alpha_{[h,j]}} \to \C^{\alpha_{[h,j-1]}} $$ the matrix such that $z_{h,j-1}x_{h,j-1}$ is the matrix of the  inclusion  $\mathcal{F}_{h,j} \subseteq \mathcal{F}_{h,j-1}$ in the respective fixed basis.

By definition of $X_{\mathbf{L},\mathbf{P},\mathbf{z}}$, we have that $$(X_h -z_{h,j}I_{n})(\mathcal{F}_{h,j}) \subseteq \mathcal{F}_{h,j+1} ,$$ i.e. $X_h -z_{h,j}I_{n}$ defines a morphism $\mathcal{F}_{h,j}  \to \mathcal{F}_{h,j+1}$ and we denote by $$x_{h,j}^*:\C^{\alpha_{[h,j]}} \to \C^{\alpha_{[h,j+1]}} $$ its associated matrix in the fixed basis.

By definition, for each $h=1,\dots,k$, we have

\begin{equation}
\label{verification1}
X_h=z_{h,0}+z_{h,0}x_{h,0}x_{h,0}^*
\end{equation}

Moreover, for each $j=1,\dots,s_h$, we have that $z_{h,j}x_{j,h}x_{h,j}^*$ is the matrix associated to the morphism $$X_{j}-z_{h,j}I_n:\mathcal{F}_{h,j} \to \mathcal{F}_{h,j}$$ and $x_{h,j-1}^*z_{h,j-1}x_{h,j-1}$ is the matrix associated to the morphism $$X_j-z_{h,j-1}I_n:\mathcal{F}_{h,j} \to \mathcal{F}_{h,j} $$ in the respective basis.

In particular, we have that 
\begin{equation}
\label{verification2}
z_{h,j}x_{j,h}x_{h,j}^*-z_{h,j-1}x_{h,j-1}^*x_{h,j-1}=(z_{h,j}-z_{h,j-1})I_{\alpha_{[h,j]}}
\end{equation}
and, since $\dfrac{z_{h,j-1}}{z_{h,j}}=\sigma_{[h,j]}$, we find

\begin{equation}
\label{verification3}
x_{j,h}x_{h,j}^*-\sigma_{[h,j]}x_{h,j-1}^*x_{h,j-1}=(1-\sigma_{[h,j]})I_{\alpha_{[h,j]}}
\end{equation}

By eq.(\ref{relationshipmultiplicative1}), we deduce that $(x_{h,j},x_{h,j}^*)_{\substack{h=1,\dots,k\\j=0,\dots,s_h-1}}$ defines a point $\overline{x} \in (\Phi_{\alpha}^*)^{-1}(\sigma)$ and we put $$\theta(\mathcal{F}_1,X_1,\dots,\mathcal{F}_k,X_k)=\overline{x} .$$

From eq.(\ref{verification1}) and the definition of $\overline{x}$, we deduce that $\theta$ and $\widetilde{f}$ are inverse to one aother, i.e. that $\widetilde{f}$ is an isomorphism.
\end{proof}

\vspace{6 pt}

Consider now a $k$-tuple of semisimple conjugacy classes $\mathcal{C}=(\mathcal{C}_1,\dots,\mathcal{C}_k)$ such that each $\mathcal{C}_h$ is semisimple (i.e.  
  $\overline{\mathcal{C}_h}=\mathcal{C}_h$) and it is the conjugacy class of a diagonal matrix $C_h$ with distinct eigenvalues $\gamma_{h,0},\dots,\gamma_{h,s_h} \in \C^*$ and multiplicities $n_{h,0},\dots,n_{h,s_h}$ respectively. 

Let $Q=(I,\Omega)$ be the star-shaped quiver introduced in \cref{defcharstack}, with $g$ loops on the central vertex and $k$ legs of length $s_1,\dots,s_k$ respectively.

Let $\alpha_{\mathcal{C}} \in (\N^I)^*$ be the dimension vector defined as $$(\alpha_{\mathcal{C}})_{[h,j]}=\sum_{l=j}^{s_h}n_{h,l} 
 $$ and $\gamma_{\mathcal{C}} \in (\C^*)^I$ the element defined as 
$$\displaystyle (\gamma_{\mathcal{C}})_{[i,j]}=\begin{cases} \displaystyle\prod_{i=1}^k\gamma^{-1}_{i,0} \text{ if } j=0\\
\gamma^{-1}_{i,j}\gamma_{i,j-1} \text{ otherwise }

\end{cases}.$$

From Remark \ref{isomorphismsemisimple} and Theorem \ref{teoremamultiplicative}, we deduce the following Theorem:
\begin{teorema}
\label{isomstacks}
For any $k$-tuple of semisimple conjugacy classes $\mathcal{C}$, there is an isomorphism of stacks
$$\mathcal{M}_{\mathcal{C}} \cong \mathcal{M}^{\ast}_{\gamma_{\mathcal{C}},\alpha_{\mathcal{C}}} .$$
\end{teorema}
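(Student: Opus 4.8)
The plan is to deduce Theorem~\ref{isomstacks} directly from Theorem~\ref{teoremamultiplicative} together with Remark~\ref{isomorphismsemisimple}, reducing everything to a bookkeeping check that the combinatorial data $(\alpha_{\mathcal C},\gamma_{\mathcal C})$ attached to a $k$-tuple of semisimple conjugacy classes agrees with the data $(\alpha_{\mathbf L},\sigma_{\mathbf z})$ attached to a suitable triple $(\mathbf L,\mathbf P,\mathbf z)$.

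First I would recall that a semisimple conjugacy class $\mathcal C_h$ of $\Gl_n(\C)$ with distinct eigenvalues $\gamma_{h,0},\dots,\gamma_{h,s_h}$ of multiplicities $n_{h,0},\dots,n_{h,s_h}$ is precisely the conjugacy class of the regular central element $z_h=(\gamma_{h,0}I_{n_{h,0}},\dots,\gamma_{h,s_h}I_{n_{h,s_h}})\in Z_{L_h}$, where $L_h=\prod_{j=0}^{s_h}\Gl_{n_{h,j}}(\C)$. Choosing for each $h$ a parabolic $P_h\supseteq L_h$ (say the one containing the upper triangular matrices) yields a triple $(\mathbf L,\mathbf P,\mathbf z)$ with $\mathbf z=(z_1,\dots,z_k)$, and by construction the closure $\overline{\mathcal C_h}$ of the image of $\pi_{z_h}\colon Y_{z_h}\to\Gl_n(\C)$ is $\overline{\mathcal C_h}=\mathcal C_h$ since $z_h\in(Z_{L_h})^{\mathrm{reg}}$. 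Hence Remark~\ref{isomorphismsemisimple} applies: the map $\pi\colon\mathcal M_{\mathbf L,\mathbf P,\mathbf z}\to\mathcal M_{\mathcal C}$ is an isomorphism of stacks. Combined with Theorem~\ref{teoremamultiplicative} this gives
\[
\mathcal M_{\mathcal C}\;\cong\;\mathcal M_{\mathbf L,\mathbf P,\mathbf z}\;\cong\;\mathcal M^{\ast}_{\alpha_{\mathbf L},\sigma_{\mathbf z}}.
\]

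It then remains to identify $\alpha_{\mathbf L}=\alpha_{\mathcal C}$ and $\sigma_{\mathbf z}=\gamma_{\mathcal C}$. The first is immediate from the definitions: $(\alpha_{\mathbf L})_{[h,j]}=\sum_{l=j}^{s_h}n_{h,l}=(\alpha_{\mathcal C})_{[h,j]}$, with the convention $[h,0]=0$, and in particular $(\alpha_{\mathcal C})_0=n$. For the second, $(\sigma_{\mathbf z})_0=\prod_{j=1}^k z_{j,0}^{-1}=\prod_{j=1}^k\gamma_{j,0}^{-1}=(\gamma_{\mathcal C})_0$, and for $j\ge 1$, $(\sigma_{\mathbf z})_{[h,j]}=z_{h,j-1}z_{h,j}^{-1}=\gamma_{h,j-1}\gamma_{h,j}^{-1}=(\gamma_{\mathcal C})_{[h,j]}$. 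This matches the displayed formulas for $\alpha_{\mathcal C}$ and $\gamma_{\mathcal C}$ in the statement, so the two multiplicative quiver stacks coincide and the theorem follows.

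There is essentially no serious obstacle here, since all the analytic and geometric content is already packaged in Theorem~\ref{teoremamultiplicative} and Remark~\ref{isomorphismsemisimple}; the only thing to be careful about is the indexing conventions (the reversal of the eigenvalue order along each leg, the identification $[h,0]=0$, and the sign/inverse conventions in the definition of $\gamma_{\mathcal C}$ versus $\sigma_{\mathbf z}$), and the mild point that the isomorphism class of $\mathcal M^{\ast}_{\sigma,\alpha}$ is independent of the chosen ordering of the arrows and of the choice of parabolics $\mathbf P$, both of which are recorded in the excerpt (Remark after the definition of $\mathcal M^{\ast}_{\sigma,\alpha}$, and the Remark in \cref{DLU} on independence of $P$). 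So the proof is a short assembly of these facts once the dictionary between $(\mathcal C)$ and $(\mathbf L,\mathbf P,\mathbf z)$ is spelled out.
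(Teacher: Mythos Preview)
Your proof is correct and follows exactly the paper's approach: the paper states that Theorem~\ref{isomstacks} is deduced directly from Remark~\ref{isomorphismsemisimple} and Theorem~\ref{teoremamultiplicative}, and you have spelled out precisely this deduction together with the (implicit in the paper) verification that $\alpha_{\mathbf L}=\alpha_{\mathcal C}$ and $\sigma_{\mathbf z}=\gamma_{\mathcal C}$.
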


\vspace{2 pt}

\begin{oss}
The relation between multiplicative moment map for star-shaped quivers and character stacks and varieties was first introduced in the articles \cite[Proposition 2]{cb-monod}, \cite[Lemma 8.2]{cb-shaw}.

The analogous of Theorem \ref{isomstacks} for the corresponding GIT quotients, i.e. that $M_{\mathcal{C}} \cong M^*_{\gamma_{\mathcal{C}},\alpha_{\mathcal{C}}}$ has been shown in \cite[Proposition 4.1]{daisuke}. In \cite[Proposition 4.1]{daisuke}, the author actually shows a stronger statement. In particular, the author does not consider only $k$-tuples of semisimple conjugacy classes.

In this case, it is necessary to slightly modify the definition of the multiplicative moment map and define it as a map $$\Phi^l_{\beta}:R(\overline{Q},\alpha)^{\circ} \to \Gl_{\alpha} ,$$ i.e. we are not asking that all the maps $x_{h,j}$ are injective. The proof of \cite[Proposition 4.1]{daisuke} is quite similar to the one given for Theorem \ref{teoremamultiplicative}.

The main difference is that, in the case of GIT quotients, in the part of the proof of Theorem \ref{teoremamultiplicative} where we use that we restricted to $R(\overline{Q},\alpha)^{\circ,*}$, i.e. that the action of $\Gl_{\alpha}'$ is free, Daisuke instead uses the fundamental theorem of invariant theory. This latter type of technique has already been introduced by \cite{kraft}.

\end{oss}

\subsection{Dual Log compatibility for moment map}
\label{duallogmomentmapchapter}
In this chapter, we show how to relate the results about dual Log compatible families of \cref{duallogcompdefin} to the  study of multiplicative quiver stacks for star-shaped quivers.

\vspace{6 pt}

Consider now the construction of \cref{defcharstack} in the case in which $K=\F_q$. We denote by $m_{\alpha}:\Gl_{\alpha}(\F_q) \to \C$ the class function defined as $$m_{\alpha}(g)\coloneqq \dfrac{|(\Phi^{*}_{\alpha})^{-1}(g)^F|}{q^{-(\alpha,\alpha)}} .$$ 

Notice that, for $\sigma \in (\F_q^*)^I$, we have an equality $$ \dfrac{m_{\alpha}(\sigma)}{|\Gl_{\alpha}(\F_q)|}=\dfrac{\#\mathcal{M}^{*}_{\sigma,\alpha}(\F_q)}{q^{-(\alpha,\alpha)}} .$$

\vspace{10 pt}

For the family of class functions $\{m_{\alpha}\}_{\alpha \in \N^I}$ we have the following Theorem:

\begin{teorema}
\label{charstack}
The family $\{m_{\alpha}\}_{\alpha \in \N^I}$ is dual Log compatible
 \end{teorema}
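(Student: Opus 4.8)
The plan is to prove dual Log compatibility of $\{m_\alpha\}$ by decomposing the fibre $(\Phi^*_\alpha)^{-1}(g)$ over $\F_q$ along the legs of the star-shaped quiver, so as to reduce the count to the Jordan-quiver case (the commutator count $f^g_n$ of Remark \ref{inductivestep}, which is already known to be dual Log compatible) glued with the combinatorics of the flag directions on the legs. First I would compute $\langle m_\alpha,\chi\rangle$ for $\chi\in\Gl_\alpha(\F_q)^\vee$. Using the convolution formalism of \cref{charLevi} and the description of the multiplicative moment map (eq.(\ref{definitionmultiplicativemomentmap})), the class function $m_\alpha$ is, up to the normalising power of $q$, a convolution over the arrows of $\overline Q$: at each leg vertex the map $(x_{h,j},x^*_{h,j})\mapsto (1+x_{h,j}x^*_{h,j})(1+x^*_{h,j}x_{h,j})^{-1}$ contributes, and at the central vertex one has the $g$ loops contributing the commutator factors $\prod[e_l,e_l^*]$ together with the interaction terms $(1+x_{h,0}x^*_{h,0})$. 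The injectivity condition defining $R(\overline Q,\alpha)^{\circ,*}$ means that each leg contributes, after choosing the image flag, a count governed by parabolic induction $R^{\Gl_{\alpha_{[h,j]}}}_{L}$ from the relevant Levi of $\Gl$'s; this is exactly where the factor $\epsilon_{G}\epsilon_L R^G_L$ and the hook polynomials $H^\vee_\omega$ of \cref{typesirreduciblechar} enter.

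The key computation is then to show that $\langle m_\alpha,\chi\rangle$ depends only on the multitype $\omega=\omega_\chi$ and to identify the resulting rational function $C_\omega(t)$. Here I would argue as follows: by Frobenius reciprocity and eq.(\ref{convol}), the pairing $\langle m_\alpha,\chi\rangle$ factors as a product of pairings, one for each ``building block'' of the character. Using Lemma \ref{lemmamult}, Proposition \ref{twist4} and Proposition \ref{irreducibleinduced}, the induction $R^{\Gl_\alpha}_{L}(\theta R_{\tilde\phi})$ interacts with the leg-count so that the $\theta$-dependence drops out entirely (only the central-element value $\theta(\gamma)$ survives, but we are pairing, not evaluating), leaving a quantity that depends on $\theta$ only through the admissible torus $S_\theta$, i.e.\ through $\omega^{ss}$, and on $\phi$ only through the multipartition data, i.e.\ through $\omega$. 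The explicit shape of $C_\omega(t)$ should then come out as a product, over the factors $(\Gl_{|\beta_j|})_{d_j}$ of the Levi $L_\omega$, of the commutator-type contribution $\big(\tfrac{|\Gl_{|\beta_j|}(\F_{q^{d_j}})|}{\text{(degree)}}\big)^{2g-1}$ times the leg Poincaré-type factors, all of which are manifestly multiplicative under the operations $\psi_d$ and $\ast$. Concretely I expect $\widetilde C_\omega(t)=C_\omega(t)H^\vee_\omega(t)$ to be a finite product indexed by the parts $(d_j,\bm\lambda_j)$ of $\omega$, each factor being a rational function evaluated at $t^{d_j}$; this product form is precisely what Definition of dual Log compatibility (eq.(\ref{duallog2})) asks for.

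The main obstacle will be establishing the factorisation of $\langle m_\alpha,\chi\rangle$ cleanly — i.e.\ showing that the Deligne--Lusztig induction governing the legs is ``compatible'' with the convolution defining $m_\alpha$ in the precise multiplicative way needed. This requires combining Lemma \ref{delignel1}(1),(2) (transitivity and the $\boxtimes$-decomposition of $R^G_L$), Lemma \ref{twist33} (behaviour of $R_f$ under induction of Weyl-group functions), and Lemma \ref{lemmamult} (orthogonality of induced characters with reduced $\theta$), in a bookkeeping that tracks how a character of type $\omega=\psi_{d_1}(\omega_1)\ast\cdots\ast\psi_{d_r}(\omega_r)$ restricts to the relevant Levi subgroup $\widetilde{L_\omega}=\prod_j(\Gl_{|\beta_j|})_{d_j}$. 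Once one knows $\langle m_\alpha,\chi\rangle=C_\omega(q)$ and that $\{C_\omega(t)H^\vee_\omega(t)\}$ is Log compatible, the theorem follows. The inductive base is Remark \ref{inductivestep}: the pure genus-$g$ commutator family $\{f^g_n\}$ is dual Log compatible with $\widetilde{F_\omega}(t)=1$, and the passage from it to $\{m_\alpha\}$ adds only the leg contributions, each of which is again a finite product of $t^{d_j}$-evaluations of a fixed rational function (essentially a product of $1-t^{d_j k}$ type factors coming from the partial-flag varieties $\Gl_{n}/P$ over $\F_{q^{d_j}}$), so Log compatibility is preserved by the same argument as in Lemma \ref{duallog1}. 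I would also note explicitly that, since $(\Phi^*_\alpha)^{-1}(\sigma)=\emptyset$ for $\sigma\notin(\N^I)^*$-admissible $\alpha$, the normalisation by $q^{-(\alpha,\alpha)}$ and the identity $\dim\mathcal M_{\mathcal C}=-2(\alpha_{\mathcal C},\alpha_{\mathcal C})+1$ make the resulting $\widetilde C_{\alpha,gen}(q)$ match the predicted generic E-series $q\mathbb H_\alpha(\sqrt q,1/\sqrt q)/(q-1)$, which is the consistency check tying this back to Theorem \ref{mainteointro}.
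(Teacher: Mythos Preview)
Your strategy --- reduce to the Jordan-quiver base case via Remark \ref{inductivestep} and then build up the legs by convolution using Lemma \ref{duallog1} --- is exactly the paper's strategy. But what you have written is a plan, not a proof: the ``main obstacle'' you correctly identify (the clean factorisation of $\langle m_\alpha,\chi\rangle$ along the Levi $\widetilde{L_\omega}$) is precisely the content of the argument, and you have not carried it out.

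The paper isolates this obstacle as a single concrete lemma: the Kronecker quiver case (two vertices, one arrow). For $\alpha=(\alpha_0,\alpha_1)$ and $\chi=\chi_0\boxtimes\chi_1$ of type $(1,\bm\lambda)=(1,(\lambda^0,\lambda^1))$, one computes explicitly
\[
\langle m_\alpha,\chi\rangle \;=\; \frac{1}{q^{-(\alpha,\alpha)}}\sum_{\chi_2\in\Gl_{\alpha_0-\alpha_1}(\F_q)^\vee}\langle \chi_0,\,R^{\Gl_{\alpha_0}}_{\Gl_{\alpha_1}\times\Gl_{\alpha_0-\alpha_1}}(\chi_1\boxtimes\chi_2)\rangle\,\frac{\chi_2(1)}{|\Gl_{\alpha_0-\alpha_1}(\F_q)|},
\]
and the multiplicities on the right are governed by Littlewood--Richardson coefficients: the answer is
\[
C_{\lambda^0,\lambda^1}(q)=\frac{1}{q^{-(\alpha,\alpha)}}\sum_{\nu\in\mathcal P_{|\lambda^0|-|\lambda^1|}}c^{\lambda^0}_{\lambda^1,\nu}\,\frac{(-1)^{|\nu|}}{q^{\frac{|\nu|(|\nu|-1)}{2}-n(\nu)}H_\nu(q)}.
\]
This is not ``essentially a product of $1-t^{d_jk}$ type factors coming from partial flag varieties'': the injectivity in $R(\overline Q,\alpha)^{\circ,*}$ lets you parametrise $f$ by a coset in $\Gl_\alpha/P_\alpha$, but the $f^*$-sum and the pairing with $\chi_0$ leave a genuine character-ring computation that brings in the $c^{\lambda^0}_{\lambda^1,\nu}$. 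Your guess at the shape of $C_\omega(t)$ is therefore wrong, and without the correct formula you cannot check the Log compatibility identity \eqref{duallog2}.

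Once the Kronecker case is in hand, the general star-shaped quiver follows by the induction you sketch: remove the outermost vertex of a leg, express $m_\alpha$ (up to the $q$-power normalisation) as the convolution of the function attached to the shorter quiver with the Kronecker-quiver function on the last two vertices, and apply Lemma \ref{duallog1}. This step also needs a small argument (eqs.\ analogous to \eqref{proofworks}--\eqref{proofwokrs5}) showing that ``inflating'' a dual Log compatible family by an extra $\Gl$-factor via $1_e$ preserves dual Log compatibility; you gesture at this but do not verify it. In short: right architecture, but the load-bearing lemma (Kronecker) is missing and your heuristic for it is incorrect.
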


The proof of Theorem \ref{charstack} will be given in paragraph \cref{bebbo8} below. For the consequences of Theorem \ref{charstack} on the computation of the cohomology of  character stacks of Riemann surfaces over $\C$ see \cref{cohocharstacks}.

\subsection{Proof of Theorem \ref{charstack} }
\label{bebbo8}

Theorem \ref{charstack} will be proved through several steps.

\vspace{6 pt}

We start by showing Theorem \ref{charstack} in the case where $Q=(I,\Omega)$ is  the star-shaped  quiver with two vertices $I=\{0,1\}$ and one arrow $a:1 \to 0$ between them (i.e. $g=0$ and $k=1$). This is usually called the Kronecker quiver.

\begin{oss}

The proof of the case of the Kronecker quiver (i.e. of Lemma \ref{firstep} below) is the main technical point of the proof of Theorem \ref{charstack}. The proof of this Lemma involves computations in the character ring of $\Gl_n(\F_q)$.

In particular, we will have to understand combinatorially multiplicities of the type $\langle \chi_0,R^G_L(\chi_1 \boxtimes \chi_2) \rangle$ where $\chi_0 \in \Gl_m(\F_q)^{\vee}$, $L=\Gl_n \times \Gl_{n'}$ with $n+n'=m$ and $\chi_1 \in \Gl_n(\F_q)^{\vee},\chi_2 \in \Gl_{n'}(\F_q)^{\vee}$.

We will mainly need two results about representations of finite reductive groups that we recalled before.
\begin{itemize}

 \item Proposition \ref{irreducibleinduced} regarding the decomposition into irreducible $\Gl_n(\F_q)$-representations of the Deligne-Lusztig induction of an irreducible $L^F$-representation, with $L \subseteq \Gl_n$ an $F$-stable Levi subgroup.
 
 \item The description of certain connected centralizers in the case of $\Gl_n$ given in \cref{connectedcentralizer1}. 

\end{itemize}

\vspace{2 pt}

We show how to extend the result from the case of the Kronecker quiver to any star-shaped quiver in Proof \ref{zioper}.

\end{oss}

\vspace{6 pt}

For the Kronecker quiver $Q$, a dimension vector $\alpha$ is thus a couple $\alpha=(\alpha_0,\alpha_1) \in \N^2$ and the function $m_{\alpha}(g_0,g_1)$ for a couple $(g_0,g_1) \in \Gl_{\alpha_0}(\F_q) \times \Gl_{\alpha_1}(\F_q)$ is given by $$m_{\alpha}(g_0,g_1)=\dfrac{\#\{f \in \Hom^{inj}(\F_q^{\alpha_1},\F_q^{\alpha_0}),f^* \in \Hom(\F_q^{\alpha_0},\F_q^{\alpha_1}) \ | \ 1+ff^*=g_0 ,\ 1+f^*f=g_1^{-1} \}}{q^{\alpha_0\alpha_1-\alpha_0^2-\alpha_1^2}} .$$

\vspace{10 pt}

\begin{oss}
Notice that given $f \in \Hom^{inj}(\F_q^{\alpha_1},\F_q^{\alpha_0})$ and $f^* \in \Hom(\F_q^{\alpha_0},\F_q^{\alpha_1})$ such that $1+ff^* \in \Gl_{\alpha_0}(\F_q)$, then $1+f^*f$ is invertible too. It is enough to check that $1+f^*f$ is injective. Given $x,y \in \F_q^{\alpha_1}$ such that $(1+f^*f)(x)=(1+f^*f)(y)$ we have indeed $$ f \circ (1+f^*f)(x)=f \circ (1+f^*f)(y) $$ and, given that $f \circ (1+f^*f)=(1+ff^*) \circ f$ and $1+ff^*$ is invertible,  we deduce that $f(x)=f(y)$ and so that $x=y$. 
\end{oss}

\vspace{8 pt}

\begin{lemma}
\label{firstep}

In the case in which $Q$ is the Kronecker quiver, the family $\{m_{\alpha}\}_{\alpha \in \N^I}$ is dual Log compatible.
\end{lemma}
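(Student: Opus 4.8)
The strategy is to compute the Fourier coefficients $\langle m_\alpha, \chi\rangle$ for $\chi\in\Gl_{\alpha}(\F_q)^\vee$ explicitly and to show they depend only on the type $\omega$ of $\chi$, via a rational function $M_\omega(t)$ for which $\{M_\omega(t)H^\vee_\omega(t)\}$ is Log compatible. The basic identity to exploit is the one relating the moment map count to a convolution of characteristic functions of twisted conjugacy classes. Concretely, writing $h_{\alpha_0}$ for the function on $\Gl_{\alpha_0}(\F_q)$ counting pairs $(f,f^*)$ with $f$ injective, $1+ff^*=g_0$, and $1+f^*f$ running over $\Gl_{\alpha_1}(\F_q)$, one recognizes $m_{(\alpha_0,\alpha_1)}(g_0,g_1)$ as (up to the normalizing power of $q$) a sum over $f$ of delta-type functions, which after taking Fourier transforms over $\Gl_{\alpha_0}(\F_q)\times\Gl_{\alpha_1}(\F_q)$ becomes a sum involving $\langle \mathrm{(something)},\chi_0\boxtimes\chi_1\rangle$. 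The key combinatorial input is that the map $(f,f^*)\mapsto(1+ff^*,1+f^*f)$, for $f$ injective, interpolates between the $\Gl_{\alpha_0}$-conjugation orbit structure and that of $\Gl_{\alpha_1}$, so that summing against irreducible characters produces exactly a Deligne–Lusztig-type restriction/induction pattern.

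\textbf{Key steps.} First I would reduce, using eq.(\ref{conv2}) and the orthogonality of irreducible characters, the computation of $m_{(\alpha_0,\alpha_1)}$ at a central pair $(\eta_0,\eta_1)$ to a sum $\sum_{\chi_0,\chi_1}\langle m_{(\alpha_0,\alpha_1)},\chi_0\boxtimes\chi_1\rangle\,\chi_0(\eta_0^{-1})\chi_1(\eta_1^{-1})/(\chi_0(e)\chi_1(e))\cdot(\dots)$; but actually the cleaner route is to compute the pairing $\langle m_{(\alpha_0,\alpha_1)},\chi_0\boxtimes\chi_1\rangle$ directly. Writing the injective $f:\F_q^{\alpha_1}\hookrightarrow\F_q^{\alpha_0}$ and letting $W=\Imm(f)$, the stabilizer/parabolic structure shows that $f$ together with $f^*$ determines, on $W$, the element $g_1^{-1}=1+f^*f$ conjugate to $1+ff^*|_W$, while on a complement the data is free. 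This is exactly the geometric content behind the Deligne–Lusztig induction $R^{\Gl_{\alpha_0}}_{\Gl_{\alpha_1}\times\Gl_{\alpha_0-\alpha_1}}$ when $\alpha_1\le\alpha_0$. So the computation should yield $\langle m_{(\alpha_0,\alpha_1)},\chi_0\boxtimes\chi_1\rangle$ as a multiplicity $\langle \chi_0, R^{\Gl_{\alpha_0}}_{L}(\chi_1\boxtimes\mathbf{1})\rangle$ (times an explicit power of $q$ and $H^\vee$-factors), where $L=\Gl_{\alpha_1}\times\Gl_{\alpha_0-\alpha_1}$, with the convention that this vanishes unless $\alpha_1\le\alpha_0$. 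Then, using Proposition \ref{irreducibleinduced}, this multiplicity is nonzero only when $\chi_0$ and $\chi_1$ have compatible semisimple parts, and by the description of connected centralizers in \cref{connectedcentralizer1} one reads off that the multiplicity — hence $\langle m_{(\alpha_0,\alpha_1)},\chi_0\boxtimes\chi_1\rangle$ — depends only on the types $\omega_0,\omega_1$ of $\chi_0,\chi_1$. Finally I would check eq.(\ref{duallog2}) for the resulting rational functions: the factorization $\psi_{d_1}(\omega_1)\ast\cdots\ast\psi_{d_r}(\omega_r)=\omega$ corresponds to a splitting of the Levi $L$ and of the relevant characters into norm-orbit blocks, and Lemma \ref{delignel1}(2) together with the multiplicativity of $H^\vee_\omega$ (eq.(\ref{duallog5})) and the power-of-$q$ bookkeeping gives the required identity.

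\textbf{Extending to general star-shaped quivers (Proof \ref{zioper}).} Once the Kronecker case is established, a general star-shaped quiver with $g$ loops and legs of lengths $s_1,\dots,s_k$ is built by iterated convolution: the relation eq.(\ref{relationshipmultiplicative1}) along each leg expresses $m_\alpha$ as an iterated convolution of Kronecker-type moment-map functions (one per edge of each leg) with the genus-$g$ commutator functions $f^g_n$ from Remark \ref{inductivestep}, all in the convolution ring $\mathcal{C}(\Gl_{\alpha}(\F_q))$, normalized by the appropriate power of $q$ so that the Euler form $(\alpha,\alpha)$ comes out correctly. Since dual Log compatibility is preserved under convolution (Lemma \ref{duallog1}) and the building blocks $f^g_n$ (Example \ref{conv3}, Remark \ref{inductivestep}) and the Kronecker moment maps (Lemma \ref{firstep}) are dual Log compatible, the family $\{m_\alpha\}_{\alpha\in\N^I}$ is dual Log compatible.

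\textbf{Main obstacle.} The hard part will be Lemma \ref{firstep}: carefully identifying the count of pairs $(f,f^*)$ — with $f$ injective and $1+ff^*=g_0$, $1+f^*f=g_1^{-1}$ — with a Deligne–Lusztig induction multiplicity, and in particular getting the normalization (the power of $q$ from $q^{\alpha_0\alpha_1-\alpha_0^2-\alpha_1^2}$, the $\epsilon$-signs, and the $H^\vee$ factors) exactly right so that eq.(\ref{duallog2}) holds on the nose. The representation-theoretic heart is controlling the multiplicity $\langle\chi_0,R^{\Gl_{\alpha_0}}_L(\chi_1\boxtimes\mathbf 1)\rangle$: one must show it is governed entirely by the combinatorics of norm-orbits of the underlying characters of tori (via Proposition \ref{irreducibleinduced} and the connected-centralizer analysis), so that it only sees the types. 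Everything downstream — the convolution argument for general quivers — is then formal given the lemmas already in the paper.
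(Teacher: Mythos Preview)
Your overall strategy is right: compute $\langle m_{(\alpha_0,\alpha_1)},\chi_0\boxtimes\chi_1\rangle$ via the parabolic structure coming from $\Imm(f)$, then use Proposition~\ref{irreducibleinduced} and the connected-centralizer description of \S\ref{connectedcentralizer1} to see that the answer depends only on types. The extension to general star-shaped quivers by iterated convolution and Lemma~\ref{duallog1} is also essentially what the paper does.

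However, there is a genuine gap in your key formula. You claim the pairing equals (up to normalization) the single multiplicity $\langle \chi_0, R^{\Gl_{\alpha_0}}_{L}(\chi_1\boxtimes\mathbf 1)\rangle$. This is not correct. The phrase ``on the complement the data is free'' is accurate at the level of the matrix count --- after fixing $f=J_\alpha$ one finds the bottom-right $\Gl_{\alpha_2}$-block of $1+J_\alpha f^*$ is the \emph{identity}, not arbitrary --- but when you Fourier-expand, the delta function $1_e$ on $\Gl_{\alpha_2}(\F_q)$ is $\sum_{\chi_2}\chi_2(e)|\Gl_{\alpha_2}(\F_q)|^{-1}\chi_2$, not the trivial character. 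The outcome (eq.~(\ref{fatto3}) in the paper) is
\[
\langle m_{(\alpha_0,\alpha_1)},\chi_0\boxtimes\chi_1\rangle
=\frac{1}{q^{-(\alpha,\alpha)}}\sum_{\chi_2\in\Gl_{\alpha_2}(\F_q)^\vee}
\langle \chi_0, R^{\Gl_{\alpha_0}}_{L}(\chi_1\boxtimes\chi_2)\rangle\,\frac{\chi_2(1)}{|\Gl_{\alpha_2}(\F_q)|}.
\]
In the unipotent case $\chi_i=R_{\lambda^i}$ this is $\sum_{\lambda^2\in\mathcal P_{\alpha_2}} c^{\lambda^0}_{\lambda^1,\lambda^2}\cdot(-1)^{\alpha_2}q^{n(\lambda^2)-\binom{\alpha_2}{2}}H_{\lambda^2}(q)^{-1}$, a genuine sum over partitions, not the single coefficient $c^{\lambda^0}_{\lambda^1,(\alpha_2)}$ that your formula would give.

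This matters for the verification of eq.~(\ref{duallog2}). For a general type $\omega=(d_1,\bm\lambda_1)\cdots(d_r,\bm\lambda_r)$, the connected-centralizer argument does force a unique $(L_2,\theta^2)$ up to conjugacy, but one still has to sum over all unipotent characters $R_{\overline{\bm\lambda_2}}$ of $L_2^F$, and the check that $\langle m_\delta,\chi\rangle H^\vee_\omega(q)=\prod_j C_{\lambda^0_j,\lambda^1_j}(q^{d_j})H^\vee_{(1,\bm\lambda_j)}(q^{d_j})$ then hinges on the explicit $q$-bookkeeping for this sum (the paper's eqs.~(\ref{bebbo4})--(\ref{bebbo5})). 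With $\chi_2=\mathbf 1$ only, you would not recover the correct rational functions $C_{\lambda,\mu}(t)$, and the Log-compatibility identity would fail. Fixing this --- replacing $\mathbf 1$ by the full weighted sum over $\chi_2$ and carrying the extra hook-polynomial factors through the factorization --- is exactly the ``hard part'' you correctly flagged, but your current sketch does not yet contain it.
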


\begin{proof}

We have that $m_{\alpha} \equiv 0$ if $\alpha \not\in (\N^I)^*$. Fix then $\alpha \in (\N^I)^*$ and denote by $\alpha_2=\alpha_0-\alpha_1$. Fix an irreducible character $\chi=\chi_0 \boxtimes \chi_1 \in \Gl_{\alpha}(\F_q)^{\vee}$ with $\chi_i \in \Gl_{\alpha_i}(\F_q)^{\vee}$ for $i=0,1$. We have: \begin{equation}\label{fatto1}
\langle m_{\alpha},\chi \rangle=\dfrac{1}{|\Gl_{\alpha}(\F_q)|q^{-(\alpha,\alpha)}}\sum_{\substack{f \in \Hom^{inj}(\F_q^{\alpha_1},\F_q^{\alpha_0})\\f^* \in \Hom(\F_q^{\alpha_0},\F_q^{\alpha_1}) \\ \text{ s.t } 1+ff^* \in \Gl_{\alpha_0}(\F_q) }}\chi_0(1+ff^*)\chi_1((1+f^*f)^{-1}) .\end{equation}

\vspace{4 pt}

\textit{I Step: rewriting the RHS of eq.(\ref{fatto1})}

\vspace{4 pt}

Let $J_{\alpha} \in \Hom^{inj}(\F_q^{\alpha_1},\F_q^{\alpha_0})$ be the block matrix given by the identity on the first $\alpha_1$ rows and $0$ everywhere else and let $P_{\alpha}$ be the stabiliser of $J_{\alpha}$ inside $\Gl_{\alpha}$. 

Notice that $P_{\alpha}$ is isomorphic to the parabolic subgroup $P \subseteq \Gl_{\alpha_0}$, given by the matrices which preserve the image of $J_{\alpha}$. 

We denote by $L \subseteq P$ the Levi subgroup given by $\Gl_{\alpha_1} \times \Gl_{\alpha_2}$ embedded block diagonally. 

\vspace{8 pt}

The action of $\Gl_{\alpha}(\F_q)$ on $\Hom^{inj}(\F_q^{\alpha_1},\F_q^{\alpha_0})$ is transitive and  we can  therefore identity the latter set with $\Gl_{\alpha}(\F_q)/P_{\alpha}(\F_q)$ via the map which sends $(g_0,g_1)P_{\alpha}(\F_q) \to g_0J_{\alpha}g_1^{-1}$.

We can thus rewrite the sum above as : \begin{equation}
\label{proof1}
\dfrac{1}{|\Gl_{\alpha}(\F_q)|q^{-(\alpha,\alpha)}}\sum_{(g_0,g_1)P_{\alpha}(\F_q) \in \Gl_{\alpha}(\F_q)/P_{\alpha}(\F_q)}
\end{equation}
\begin{equation}
\label{proof1'}
\sum_{\substack{f^* \in \Hom(\F_q^{\alpha_0},\F_q^{\alpha_1}) \\ \text{ s.t. } 1+ g_0J_{\alpha}g_1^{-1}f^{*}\in \Gl_{\alpha_0}(\F_q)}}\chi_0(1+g_0J_{\alpha}g_1^{-1}f^*)\chi_1((1+f^*g_0J_{\alpha}g_1^{-1})^{-1}).
\end{equation}

For each $(g_0,g_1)P_{\alpha}(\F_q)$ we can rewrite the last term of eq.(\ref{proof1'}) as \begin{equation}
\label{proof2}
\sum_{\substack{f^* \in \Hom(\F_q^{\alpha_0},\F_q^{\alpha_1}) \\ \text{ s.t. } 1+ J_{\alpha}g_1^{-1}f^{*}g_0\in \Gl_{\alpha_0}(\F_q)}}\chi_0(g_0(1+J_{\alpha}g_1^{-1}f^*g_0)g_0^{-1})\chi_1(g_1(1+g_1^{-1}f^*g_0J_{\alpha})^{-1}g_1^{-1}).
\end{equation}

As $\chi_0,\chi_1$ are class functions, we can rewrite the sum in eq.(\ref{proof2}) as \begin{equation}
\label{proof3}
\sum_{\substack{f^* \in \Hom(\F_q^{\alpha_0},\F_q^{\alpha_1}) \\ \text{ s.t. } 1+ J_{\alpha}g_1^{-1}f^{*}g_0\in \Gl_{\alpha_0}(\F_q)}}\chi_0(1+ J_{\alpha}g_1^{-1}f^{*}g_0)\chi_1((1+g_1^{-1}f^*g_0J_{\alpha})^{-1}).
\end{equation}

Moreover, for each $(g_0,g_1) \in \Gl_{\alpha}(\F_q)$, we have a bijection 

$$ \{f^* \in \Hom(\F_q^{\alpha_0},\F_q^{\alpha_1}) \ | \ 1+ J_{\alpha}f^{*}\in \Gl_{\alpha_0}\}  \leftrightarrow \{f^* \in \Hom(\F_q^{\alpha_0},\F_q^{\alpha_1}) \ | \  1+ J_{\alpha}g_1^{-1}f^{*}g_0\in \Gl_{\alpha_0}\} $$ $$f^* \to g_1f^*g_0^{-1} .$$

and so \begin{equation}
\label{devocambiaretuttop*}
\sum_{\substack{f^* \in \Hom(\F_q^{\alpha_0},\F_q^{\alpha_1}) \\ \text{ s.t. } 1+ J_{\alpha}g_1^{-1}f^{*}g_0\in \Gl_{\alpha_0}(\F_q)}}\chi_0(1+ J_{\alpha}g_1^{-1}f^{*}g_0)\chi_1((1+g_1^{-1}f^*g_0J_{\alpha})^{-1})=
\end{equation}
$$
\sum_{\substack{f^* \in \Hom(\F_q^{\alpha_0},\F_q^{\alpha_1}) \\ \text{ s.t. } 1+ J_{\alpha}f^{*}\in \Gl_{\alpha_0}(\F_q)}}\chi_0(1+ J_{\alpha}f^{*})\chi_1((1+f^*J_{\alpha})^{-1}).
$$

From eq.(\ref{devocambiaretuttop*}), we deduce that the sum in eq.(\ref{proof1}) can be rewritten as 

\begin{equation}
\label{zzziooo}
\dfrac{1}{|P_{\alpha}(\F_q)|q^{-(\alpha,\alpha)}}\sum_{\substack{f^* \in \Hom(\F_q^{\alpha_0},\F_q^{\alpha_1}) \\ \text{ s.t. } 1+ J_{\alpha}f^{*}\in \Gl_{\alpha_0}(\F_q)}}\chi_0(1+ J_{\alpha}f^{*})\chi_1((1+f^*J_{\alpha})^{-1})
\end{equation}

Writing $f^*$ as a block matrix $(A|B)$ with $A\in \Mat(\alpha_1,\F_q)$ and $B \in \Mat(\alpha_2,\alpha_1,\F_q)$, where $\alpha_2=\alpha_0-\alpha_1$, we have $$ 1+J_{\alpha}f^*=\begin{pmatrix}
1+A &B\\
0 &1
\end{pmatrix}$$
and $1+f^*J_{\alpha}=1+A$.  We can rewrite the sum of eq.(\ref{zzziooo}) as \begin{equation}
\label{proof4}
\dfrac{1}{|P_{\alpha}(\F_q)|q^{-(\alpha,\alpha)}}\sum_{\substack{M \in \Gl_{\alpha_1}(\F_q)\\ B \in \Mat(\alpha_2,\alpha_1,\F_q)}}\chi_0\left(\begin{pmatrix}
M &B\\
0 &1
\end{pmatrix}\right)\chi_1(M^{-1})
\end{equation}

Moreover, for any $M \in \Gl_{\alpha_1}(\F_q)$, we have that $\chi_1(M^{-1})=\chi_1^*(M)$ and we can thus rewrite the sum in eq.(\ref{proof4}) as

\begin{equation}
\label{proof44}
\dfrac{1}{|P_{\alpha}(\F_q)|q^{-(\alpha,\alpha)}}\sum_{\substack{M \in \Gl_{\alpha_1}(\F_q)\\ B \in \Mat(\alpha_2,\alpha_1,\F_q)}}\chi_0\left(\begin{pmatrix}
M &B\\
0 &1
\end{pmatrix}\right)\chi^*_1(M)
\end{equation}

\vspace{4 pt}

\textit{II Step: relate the sum in eq.(\ref{proof44}) to computations in the character ring of $\Gl_{\alpha_0}(\F_q)$}

\vspace{4 pt}

Consider now the class function $H \in \mathcal{C}(\Gl_{\alpha_2}(\F_q) \times \Gl_{\alpha_1}(\F_q))$ defined as $$H \coloneqq \sum_{\chi_2 \in \Gl_{\alpha_2}(\F_q)^{\vee}} (\chi_2 \boxtimes \chi^*_1)\dfrac{\chi_2(1)}{\Gl_{\alpha_2}(\F_q)} .$$
By eq.(\ref{identityelementgroup}), for any $(N,M) \in \Gl_{\alpha_2}(\F_q) \times \Gl_{\alpha_1}(\F_q)$ we have that $$H((N,M))=\begin{cases} 0 \text{ if } N \neq 1 \\
\chi_1^*(M) \text{ otherwise }.
\end{cases} $$ We deduce that the sum in eq.(\ref{proof44}) can be rewritten as \begin{equation}
\label{proof5}
\dfrac{1}{|P(\F_q)|q^{-(\alpha,\alpha)}}\sum_{\chi_2 \in \Gl_{\alpha_2}(\F_q)^{\vee}}\sum_{h \in P(\F_q)}\chi_0(h)\Infl_L^{P}(\chi^*_1 \boxtimes \chi_2)(h)\dfrac{\chi_2(1)}{|\Gl_{\alpha_2}(\F_q)|},
\end{equation}
where we are using that $|P(\F_q)|=|P_{\alpha}(\F_q)|$. Denote by $\Res_P:\mathcal{C}(\Gl_{\alpha_0}(\F_q)) \to \mathcal{C}(P(\mathbb{F}_q))$ the restriction of class functions. We have therefore that the quantity in eq.(\ref{proof5}) is equal to
\begin{equation}
\label{proof6}
=\dfrac{1}{q^{-(\alpha,\alpha)}}\sum_{\chi_2 \in \Gl_{\alpha_2}(\F_q)^{\vee}}\langle \Res_{P}(\chi_0),\Infl_L^P(\chi_1 \boxtimes \chi_2)\rangle \dfrac{\chi_2(1)}{|\Gl_{\alpha_2}(\F_q)|}
\end{equation}
which by Frobenius reciprocity is equal to
\begin{equation}
\label{fatto3}
=\dfrac{1}{q^{-(\alpha,\alpha)}}\sum_{\chi_2 \in \Gl_{\alpha_2}(\F_q)^{\vee}}\langle \chi_0,R^G_L(\chi_1 \boxtimes \chi_2) \rangle \dfrac{\chi_2(1)}{|\Gl_{\alpha_2}(\F_q)|}.
\end{equation}

\vspace{8 pt}

\textit{III Step: understand the RHS of eq.(\ref{fatto3}) for characters of unipotent type}

\vspace{4 pt}

Suppose now that the type of $\chi=\chi_0 \boxtimes\chi_1$ is $(1,\bm\lambda)$, where $\bm\lambda=( \lambda^0, \lambda^1)$ with $\lambda^0 \in \mathcal{P}_{\alpha_0},\lambda^1 \in \mathcal{P}_{\alpha_1}$. We have then $\chi_0=(\gamma \circ \det) R_{\lambda^0}$ and $\chi_1=(\gamma \circ \det) R_{ \lambda^1}$, with $\gamma\in \Hom(\F_q^*,\C^*)$.

\vspace{6 pt}

Let $\chi_2=\epsilon_{\Gl_{\alpha_2}}\epsilon_{L_2}R^{\Gl_{\alpha_2}}_{L_2}(\theta_2 R_{\tilde{\phi_2}})$ for a certain $\phi_2 \in (W_{L_2})^F$ and a certain reduced $\theta_2:L_2^F \to \C^*$. From Lemma \ref{delignel1}, we have an equality $$R^G_L(\chi_1 \boxtimes \chi_2)=\epsilon_{\Gl_{\alpha_2}}\epsilon_{L_2}R^G_{\Gl_{\alpha_1} \times L_2}(((\gamma \circ \det) \times \theta_2) (R_{\lambda^1} \boxtimes R_{\tilde{\phi_2}})) .$$

Let $L'$ be the connected centralizer of $(\gamma \circ \det) \times \theta_2: \Gl_{\alpha_1}(\F_q) \times L^F_2 \to \C^*$. By Remark \ref{irreducibleinduced}, the character $R^G_L(\chi_1 \boxtimes \chi_2)$ belongs to the vector space spanned by irreducible characters with semisimple part $(L',(\gamma \circ \det) \times \theta_2)$.

The multiplicity $\langle(\gamma\circ \det) R_{\lambda_0},R^G_L((\gamma \circ \det) R_{\lambda^1} \boxtimes \chi_2)\rangle $ is therefore equal to  $0$ if $L'$ is different from $\Gl_{\alpha_0}$. Since $\theta_2:L_2^F \to \C^*$ is reduced, from the remarks made in \cref{connectedcentralizer1}, we deduce that if $L'=\Gl_{\alpha_0}$, we must have that $L_2=\Gl_{\alpha_2}$ and that $\chi_2$ is given by $(\gamma \circ \det)R_{\lambda^2}$ for $\lambda^2 \in \mathcal{P}_{\alpha_2}$.

\vspace{4 pt}

From Remark \ref{twist5}, the RHS of eq. (\ref{proof6}) is thus equal to: 
\begin{equation}
\label{proof7}
\dfrac{1}{q^{-(\alpha,\alpha)}}\sum_{\lambda^2 \in \mathcal{P}_{\alpha_2}}\langle(\gamma \circ \det) R_{ \lambda^0},(\gamma \circ \det)R^G_L( R_{ \lambda^1} \boxtimes R_{\lambda^2})\rangle \dfrac{(-1)^{\alpha_2}}{q^{\frac{\alpha_2(\alpha_2-1)}{2}-n(\lambda^2)}H_{\lambda^2}(q)}.
\end{equation}

From eq.(\ref{twist3}) , the sum in eq.(\ref{proof7}) is equal to \begin{equation}
\label{proof8}
\dfrac{1}{q^{-(\alpha,\alpha)}}\sum_{\lambda^2 \in \mathcal{P}_{\alpha_2}}\langle \chi_{\lambda^0},\Ind_{S_{\alpha_1} \times S_{\alpha_2}}^{S_{\alpha_0}}( \chi_{\lambda^1} \boxtimes \chi_{\lambda^2})\rangle \dfrac{(-1)^{\alpha_2}}{q^{\frac{\alpha_2(\alpha_2-1)}{2}-n(\lambda^2)}H_{\lambda^2}(q)}=
\end{equation}
\begin{equation}
=\dfrac{1}{q^{-(\alpha,\alpha)}}\sum_{\lambda^2 \in \mathcal{P}_{\alpha_2}}c_{\lambda^1,\lambda^2}^{\lambda^0}\dfrac{(-1)^{\alpha_2}}{q^{\frac{\alpha_2(\alpha_2-1)}{2}-n(\lambda^2)}H_{\lambda^2}(q)}
\end{equation}

\vspace{10 pt}

For any couple of partitions $(\lambda,\mu)$ denote by $C_{\lambda,\mu}(t) \in \Q(t)$ the function defined as

$$
C_{\lambda,\mu}(t)= \begin{cases}0 \text{ if } |\lambda|< |\mu| \\ \displaystyle
\dfrac{1}{t^{|\lambda||\mu|-|\lambda|^2-|\mu|^2}}\sum_{\nu \in \mathcal{P}_{|\lambda|-|\mu|}}c^{\lambda}_{\mu,\nu}\dfrac{(-1)^{|\lambda|-|\mu|}}{t^{\frac{|\nu|(|\nu|-1)}{2}-n(\nu)}H_{\nu}(t)}.
\end{cases} .$$

The reasoning above shows that for any $\chi \in \Gl_{\alpha}(\F_q)^{\vee}$ of type $(1,\bm \lambda)$, there is an equality
$$\langle m_{\alpha},\chi \rangle=C_{\lambda^0,\lambda^1}(q) .$$

\vspace{8 pt}

\textit{IV Step: understand the RHS of eq.(\ref{fatto3}) for characters of any type}

\vspace{4 pt}

Let now $\delta \in \N^I$ and consider $\chi=\chi_0 \boxtimes \chi_1 \in \Gl_{\delta}(\F_q)^{\vee}$ of multitype $\omega \in \mathbb{T}_{\delta}$, where $\omega=(d_1,\bm \lambda_1)\cdots (d_r,\bm \lambda_r)$, where for $j=1,\dots,r$ we have $\bm \lambda_j=(\lambda_{j}^0,\lambda_{j}^1) \in \mathcal{P}^2$ and we put $\beta_j=|\bm \lambda_j|$. Consider the Levi subgroups  $\displaystyle L_0=\prod_{j=1}^r (\Gl_{(\beta_j)_0})_{d_j}$ and $\displaystyle L_1=\prod_{j=1}^r (\Gl_{(\beta_j)_1})_{d_j}$.

\vspace{6 pt}

 There exist reduced characters $\theta^0:L_0^F \to \C^*$ and $\theta^1:L_1^F \to \C^*$ such that $$\chi_0=R^G_{L_0}(\theta^0 R_{\lambda_{1}^0}\boxtimes \cdots \boxtimes R_{\lambda_{r}^0}) $$ and $$\chi_1=R^{\Gl_{\delta_1}}_{L_1}(\theta^1 R_{\lambda_{1}^1}\boxtimes \cdots \boxtimes R_{\lambda_{r}^1})  $$ and $\theta^0,\theta^1$ are associated to the same $r$-tuple $(\theta_1,\dots,\theta_r) \in \Hom(\F_{q^{d_1}}^*,\C^*) \times \cdots \times \Hom(\F^*_{q^{d_r}},\C^*)$, via 
the correspondence of \cref{connectedcentralizer1}. We denote by $\overline{\bm \lambda_0}, \overline{\bm \lambda_1} \in \mathcal{P}^r$ the multipartitions $\overline{\bm \lambda_0}=(\lambda_{1}^0,\dots,\lambda_{r}^0),$ $  \overline{\bm  \lambda_1}=(\lambda_{1}^1,\dots,\lambda_{r}^1)$.

To verify the dual Log compatibility of the family $\{m_{\alpha}\}_{\alpha \in \N^I}$, it is enough to check that we have:

\begin{equation}
\label{bebbo}
\langle m_{\delta},\chi \rangle H^{\vee}_{\omega}(q)=\prod_{j=1}^r C_{\lambda_{j}^0,\lambda_{j}^1}(q^{d_j})H^{\vee}_{(1,\bm \lambda_j)}(q^{d_j})
\end{equation}

Notice that, if $\delta \not\in (\N^I)^*$, there must exist $\beta_j$ such that $\beta_j \not\in (\N^I)^*$, i.e. such that $\lambda_j^0 < \lambda_j^1$. Eq.(\ref{bebbo}) therefore is true as both sides are equal to $0$.

\vspace{8 pt}

Assume then that $\delta \in (\N^I)^*$. From eq.(\ref{proof6}), there is an equality \begin{equation}
\label{bebbo1}
\langle m_{\delta},\chi \rangle=\dfrac{1}{q^{-(\delta,\delta)}}\sum_{\chi_2 \in \Gl_{\delta_2}(\F_q)^{\vee}}\langle \chi_0,R^G_M(\chi_1 \boxtimes \chi_2) \rangle \dfrac{\chi_2(1)}{|\Gl_{\delta_2}(\F_q)|}
\end{equation}
where $M=\Gl_{\delta_1} \times \Gl_{\delta_2} \subseteq G$. Let $\chi_2=\epsilon_{\Gl_{\delta_2}}\epsilon_{L_2}R^G_{L_2}(\theta^2 R_{\tilde{\phi_2}})$, with $L_2 \subseteq \Gl_{\delta_2}$ a Levi subgroup and $\theta^2:L_2^F \to \C^*$ a reduced character. From Lemma \ref{delignel1}, there is an equality $$R^G_M(\chi_1 \boxtimes \chi_2)=R^G_{L_1 \times L_2}((\theta^1 \times \theta^2)(R_{\overline{\bm \lambda_1}} \boxtimes R_{\Tilde{\phi}_2})) .$$

Let $L'$ be the connected centralizer of $\theta^1 \times \theta^2 :L_1 \times L_2 \to \C^*$. From Proposition \ref{irreducibleinduced}, we have that $\langle \chi_0,R^G_M(\chi_1 \boxtimes \chi_2) \rangle=0 $ if the semisimple part $(L',\theta^1 \times \theta^2)$ is not conjugated to $(L_0,\theta^0)$.

From the discussion made in  \cref{connectedcentralizer1}, we deduce that, if $(L',\theta^1 \times \theta^2)$ is conjugated to $(L^0,\theta^0)$, we must have that $|\lambda_{j}^0| \geq |\lambda_{j}^1|$, i.e. $\beta_j \in (\N^I)^*$, for each $j=1,\dots,r$.

If there exists $j\in \{1,\dots,r\}$ such that $\beta_j \not \in (\N^I)^*$, eq.(\ref{bebbo}) is thus verified as both sides are equal to $0$. 

\vspace{8 pt}

If $\beta_1,\dots,\beta_r \in (\N^I)^*$, from the remarks made in \ref{connectedcentralizer1}, we deduce that there exist a unique couple $(L_2,\theta^2)$, up to $\Gl_{\delta_2}(\F_q)$-conjugacy, such that $(L',\theta^1 \times \theta^2)$ is conjugated to $(L^0,\theta^0)$. In particular, we can take $L_2$ to be  $$L_2=\prod_{j=1}^r (\Gl_{(\beta_j)_2})_{d_j}$$ and $\theta^2:L_2^F \to \C^*$ the reduced character associated to the $r$-tuple $(\theta_1,\dots,\theta_r)$. We have therefore \begin{equation}
\label{bebbo2}
\langle m_{\delta},\chi \rangle=\sum_{\substack{\overline{\bm \lambda_2}=(\lambda_{1}^2,\dots,\lambda_{r}^2) \in \\ \mathcal{P}_{(\beta_1)_2} \times \cdots \times \mathcal{P}_{(\beta_r)_2}}}
\end{equation}

\begin{equation}
\dfrac{\langle R^G_{L_0}(\theta^0 R_{\overline{\bm \lambda_0}}),R^G_{L_1 \times L_2}((\theta^1 \times \theta^2 )R_{\overline{\bm \lambda_1}} \boxtimes R_{\overline{\bm \lambda_2}})\rangle}{q^{-(\delta,\delta)}}\dfrac{(-1)^{(\beta_1)_2+\cdots (\beta_r)_2}}{q^{\frac{\delta_2(\delta_2-1)}{2}-\sum_{j=1}^r d_j n(\lambda_{j}^2)}\prod_{j=1}^r H_{\lambda_{j}^2}(q^{d_j})}
\end{equation}

By Remark \ref{irreducibleinduced1}, we have that
\begin{equation}
\label{equazione1}
\langle R^G_{L_0}(\theta^0 R_{\overline{\bm \lambda_0}}),R^G_{L_1 \times L_2}((\theta^1 \times \theta^2 )R_{\overline{\bm \lambda_1}} \boxtimes R_{\overline{\bm \lambda_2}})\rangle=\langle R^G_{L_0}(\theta^0 R_{\overline{\bm \lambda_0}}),R^G_{L_0}(\theta^0 R^{L_0}_{L_1 \times L_2}(R_{\overline{\bm \lambda_1}} \boxtimes R_{\overline{\bm \lambda_2}}))\end{equation}

and by Lemma \ref{lemmamult} the RHS of eq.(\ref{equazione1}) is equal to 

$$\prod_{j=1}^r \langle R_{\lambda_{j}^0},R^{(\Gl_{(\beta_j)_0})_{d_j}}_{(\Gl_{(\beta_j)_1})_{d_j} \times (\Gl_{(\beta_j)_2})_{d_j}}(R_{\lambda_{j}^1} \boxtimes R_{\lambda_{j}^2}) \rangle_{(\Gl_{(\beta_j)_0})_{d_j}(\F_q)} .$$

By Remark \ref{rmkmult} and Lemma \ref{twist33}, for each $j=1,\dots,r $, we deduce that we  have an equality \begin{equation}
\label{bebbo3}
\langle R_{\lambda_{j}^0},R^{(\Gl_{(\beta_j)_0})_{d_j}}_{(\Gl_{(\beta_j)_1})_{d_j} \times (\Gl_{(\beta_j)_2})_{d_j}}(R_{\lambda_{j}^1} \boxtimes R_{\lambda_{j}^2}) \rangle_{(\Gl_{(\beta_j)_0})_{d_j}(\F_q)}=
\end{equation}
\begin{equation}
\label{bebbo3'}    
=\langle \chi_{\lambda_{j}^0},\Ind_{S_{(\beta_j)_1}}^{S_{(\beta_j)_0}}\times S_{(\beta_j)_2}(\chi_{\lambda_{j}^1}\boxtimes \chi_{\lambda_{j}^2}) \rangle_{S_{(\beta_j)_0}}=c^{\lambda_{j}^0}_{\lambda_{j}^1,\lambda_{j}^2}.
\end{equation}

From eq.(\ref{bebbo3'}), we deduce that we have \begin{equation}
\label{bebbo4}
\langle m_{\delta},\chi \rangle=\dfrac{1}{q^{-(\delta,\delta)+\frac{\delta_2(\delta_2-1)}{2}}}\prod_{j=1}^r \left(\sum_{\lambda_{j}^2\in \mathcal{P}_{(\beta_j)_2}}c^{\lambda_{j}^0}_{\lambda_{j}^1,\lambda_{j}^2} \dfrac{(-1)^{(\beta_j)_2}}{q^{-d_jn(\lambda_{j}^2)}H_{\lambda_{j}^2}(q^{d_j})}\right)
\end{equation}

From eq.(\ref{bebbo4}) above, we deduce that we have \begin{equation}
\label{bebbo5}
\dfrac{\langle m_{\delta},\chi \rangle}{\prod_{j=1}^rC_{\lambda_{j}^0,\lambda_{j}^1}(q^{d_j})}=\dfrac{\prod_{j=1}^r q^{d_j(|\lambda_{j}^0||\lambda_{j}^1|-|\lambda_{j}^0|^2-|\lambda_{j}^1|^2+\frac{|\lambda_{j}^2|^2-|\lambda_{j}^2|}{2})}}{q^{-(\delta,\delta)+\frac{\delta_2(\delta_2-1)}{2}}}
\end{equation}

From the fact that $\delta_2^2=\delta_0^2+\delta_1^2-2\delta_0\delta_1$ and, for each $j=1,\dots,r$, $$|\lambda_{j}^2|^2=|\lambda_{j}^0|^2+|\lambda_{j}^1|^2-2|\lambda_{j}^0||\lambda_{j}^1|$$ and Identity (\ref{duallog4}), we deduce that we have $$\dfrac{\prod_{j=1}^r q^{d_j(|\lambda_{j}^0||\lambda_{j}^1|-|\lambda_{j}^0|^2-|\lambda_{j}^1|^2+\frac{|\lambda_{j}^2|^2-|\lambda_{j}^2|}{2})}}{q^{-(\delta,\delta)+\frac{\delta_2(\delta_2-1)}{2}}}=\dfrac{q^{\sum_{j=1}^r d_j(-\frac{|\lambda_{j}^0|^2}{2}-\frac{|\lambda_{j}^1|^2}{2}) }}{q^{-\frac{\delta^2_0}{2}-\frac{\delta_1^2}{2}}}=\dfrac{H^{\vee}_{\omega}(q)}{\prod_{j=1}^r H^{\vee}_{(1,\bm \lambda_j)}(q^{d_j})} .$$

From the identity above and eq.(\ref{bebbo5}), we deduce therefore equality (\ref{bebbo}).

\end{proof}

We now show how Lemma \ref{firstep} implies Theorem \ref{charstack}.

\begin{proof}[Proof of Theorem 6.4.1]
\label{zioper}
We proceed by induction on the cardinality of $I$.

Let $|I|=1$. The quiver $Q$ has thus $1$ vertex and $g$ loops. Example \ref{Jordanquivermomentmap} shows that in this case, for each $n \in \N$, we have an equality $m_n=f^g_n$, where $f^g_n:\Gl_n(\F_q) \to \C$  is the  function defined as $$f^g_n(h)=\dfrac{\#\{(x_1,y_1,\dots,x_g,y_g) \in \Gl_n(\F_q)^{2g} \ | \ \prod_{i=1}^g [x_i,y_i]=h\}}{q^{(n^2(g-1))}}$$ introduced in Remark \ref{inductivestep}. It was thereby shown that $\{f^g_n\}_{n \in \N}$ is a dual Log compatible family.

\vspace{8 pt}
 Assume now to have shown Proposition \ref{charstack}  for all star-shaped quivers with $k$ legs and $m$ vertices and fix a star-shaped quiver $Q=(I,\Omega)$ with $|I|=m+1$. We can assume that $s_k >1$.
 
 Let $\Tilde{Q}=(\Tilde{I},\Tilde{\Omega})$ be the subquiver of $Q$ with set of vertices $\Tilde{I}=I-\{[k,s_k]\}$ and as set of arrows the arrows of $Q$ between elements of $\Tilde{I}$. 
 
 For a dimension vector $\alpha \in \N^I$, we denote by $\Tilde{\alpha}$ the element of $\N^{\Tilde{I}}$ obtained by the natural projection $\N^I \to \N^{\Tilde{I}}$ and we denote by $\pi_{\alpha}$ the natural projection $\pi_{\alpha}:\Gl_{\alpha}(\F_q) \to \Gl_{\Tilde{\alpha}}(\F_q)$.
 
 For $\alpha \in \N^I$, let $m_{\Tilde{\alpha}}$ be the function associated to the star-shaped quiver $\Tilde{Q}=(\Tilde{I},\Tilde{\Omega})$ and $\Tilde{\alpha}$ and denote by $n_{\alpha}:\Gl_{\alpha}(\F_q) \to \C$ the class function defined by 
 
 $$n_{\alpha}(h)=\begin{cases}\dfrac{m_{\Tilde{\alpha}}(\pi_{\alpha}(h))}{q^{-\alpha_{[k,s_k]}^2}} \text{ if } h_{[k,s_k]}=1,\\
 0 \text{ otherwise }
 \end{cases}$$
 
From eq.(\ref{identityelementgroup}), the function $n_{\alpha}$ can be  rewritten as:
 \begin{equation}
 \label{inductivestep1}
 n_{\alpha}(h)=\dfrac{m_{\Tilde{\alpha}}(\pi_{\alpha}(h))}{q^{-\alpha_{[k,s_k]}^2}}\sum_{\mathcal{X}\in \Gl_{\alpha_{[k,s_k]}}(\F_q)^{\vee}}\mathcal{X}(h_{[k,s_k]})\dfrac{\mathcal{X}(1)}{|\Gl_{\alpha_{[k,s_k]}}(\F_q)|}.
 \end{equation}

From Identity (\ref{inductivestep1}) and the dual Log compatibility of the functions $\{m_{\Tilde{\alpha}}\}$, we can deduce  that the family of functions $\{n_{\alpha}\}_{\alpha \in \N^I}$ is dual Log compatible. Indeed, for each $\chi \in \Gl_{\alpha}(\F_q)$, write $\chi=\widetilde{\chi} \boxtimes \chi_k$, with $\Tilde{\chi} \in \Gl_{\Tilde{\alpha}}(\F_q)^{\vee}$ and $\chi_k \in \Gl_{\alpha_{[k,s_k]}}^{\vee}(\F_q)$. We have \begin{equation}
\label{proofworks}
\langle n_{\alpha},\chi \rangle=\dfrac{1}{|\Gl_{\alpha}(\F_q)|}\sum_{h \in \Gl_{\alpha}(\F_q)}n_{\alpha}(h)\chi(h)=
\end{equation}

\begin{equation}
\label{proofworks2}
\left(\sum_{\Tilde{h} \in \Gl_{\Tilde{\alpha}}(\F_q)}\dfrac{m_{\Tilde{\alpha}}(\Tilde{h})\Tilde{\chi}(\Tilde{h})}{|\Gl_{\Tilde{\alpha}}(\F_q)|}\right)\left(\dfrac{q^{\alpha_{[k,s_k]}^2}}{|\Gl_{\alpha_{[k,s_k]}}(\F_q)|}\sum_{\substack{\mathcal{X}\in \Gl_{\alpha_{[k,s_k]}}(\F_q)^{\vee}\\h_k \in \Gl_{\alpha_{[k,s_k]}}}}\dfrac{\mathcal{X}(1)\mathcal{X}\chi_k(h_k)}{|\Gl_{\alpha_{[k,s_k]}}(\F_q)|}  \right)=
\end{equation}

\begin{equation}
\label{proofworks3}
=\langle m_{\Tilde{\alpha}},\Tilde{\chi} \rangle q^{\alpha^2_{[k,s_k]}} H^{\vee}_{\omega_{\chi_k}}(q).
\end{equation}

and therefore, if we put $\omega_{\chi}=\omega$, $\omega_{\Tilde{\chi}}=\Tilde{\omega}$ and $\omega_{\chi_k}=\omega_k$, we have \begin{equation}
\label{proofworks4}
\langle n_{\alpha},\chi \rangle H^{\vee}_{\omega}(q)=\langle m_{\Tilde{\alpha}},\Tilde{\chi} \rangle H^{\vee}_{\Tilde{\omega}}(q)q^{\alpha^2_{[k,s_k]}} (H^{\vee}_{\omega_{k}}(q))^2.
\end{equation}

Since the family $\{m_{\Tilde{\alpha}}\}_{\widetilde{\alpha} \in \N^{\widetilde{I}}}$ is dual Log compatible, from eq.(\ref{proofworks4}), we deduce that Identity (\ref{inductivestep1}) is equivalent to show that, for any $n \in \N$, any $\nu \in \mathbb{T}_n$ and any $d_1,\dots,d_r$ and types $\nu_1,\dots,\nu_r$ such that $\nu=\psi_{d_1}(\nu_1) \ast \cdots \ast \psi_{d_r}(\nu_r)$, we have

\begin{equation}
\label{proofwokrs5}
\dfrac{(H^{\vee}_{\nu}(t))^2}{\prod_{j=1}^r (H^{\vee}_{\nu_j}(t^{d_j}))^2}=\dfrac{\prod_{j=1}^r t^{d_j |\nu_j|^2}}{t^{n^2}}
\end{equation}
which is a direct consequence of eq.(\ref{valueirreduciblecharacter}). 

\vspace{6 pt}

Let now $\overline{I}=I-\{[k,s_k-1],[k,s_k]\}$ and, for $\alpha \in \N^I$, denote by $\overline{\alpha}$ the element of $\N^{\overline{I}}$ obtained by the natural projection $\N^I \to \N^{\overline{I}}$ and by $\overline{\pi}_{\alpha}:\Gl_{\alpha}(\F_q) \to \Gl_{\overline{\alpha}}(\F_q)$ the associated projection. For a couple $(\beta,\gamma) \in \N^2$, denote by $m_{(\beta,\gamma)}^{Kr}$ the class function associated to the Kronecker quiver and the dimension vector $(\beta,\gamma)$ for it, which was studied in Lemma \ref{firstep}. 

Consider then the function $k_{\alpha}:\Gl_{\alpha}(\F_q) \to \C$ defined by $$k_{\alpha}(h)=\begin{cases} \dfrac{m^{Kr}_{\alpha_{[k,s_k-1]},\alpha_{[k,s_k]}}(h_{[k,s_k-1]},h_{[k,s_k]})}{q^{-\sum_{i \in \overline{I}}\alpha_i^2}} \text{ if } \overline{\pi}_{\alpha}(h)=1 \\ 0 \text{ otherwise }\end{cases} .$$

As above, from eq.(\ref{identityelementgroup}), the function $k_{\alpha}$ can be rewritten as \begin{equation}
\label{inductivestep3}
k_{\alpha}(h)=\sum_{\chi \in \Gl^{\vee}_{\overline{\alpha}}(\F_q)}\chi(\overline{\pi}_{\alpha}(h))\dfrac{\chi(1)}{|\Gl_{\overline{\alpha}}(\F_q)|}\dfrac{m^{Kr}_{\alpha_{[k,s_k-1]},\alpha_{[k,s_k]}}(h_{[k,s_k-1]},h_{[k,s_k]})}{q^{-\sum_{i \in \overline{I}}\alpha_i^2}}. 
\end{equation}

By Identity (\ref{inductivestep3}) and Lemma \ref{firstep}, similarly to what we did to show eq.(\ref{inductivestep1}), we deduce that the family of functions $\{k_{\alpha}\}_{\alpha \in \N^I}$ is dual Log compatible. By Lemma \ref{duallog1}, the family of class functions  $$ \Biggl\{\dfrac{n_{\alpha} \ast k_{\alpha}}{q^{\sum_{i \in I}\alpha_i^2}}\Biggr\}_{\alpha \in \N^I}$$
is dual Log compatible too.

Lastly, a direct calculation shows that, for every $\alpha \in \N^I$, we have $$m_{\alpha}=\dfrac{n_{\alpha} \ast k_{\alpha}}{q^{\sum_{i \in I}\alpha_i^2}}. $$ Lemma \ref{duallog1} implies therefore the family $\{m_{\alpha}\}_{\alpha \in \N^I}$ is dual Log compatible.

\end{proof}

\subsection{Remarks}
Consider any quiver $Q=(I,\Omega)$. As done for a star-shaped quiver in paragraph \cref{defcharstack}, we can define the open subset $R(\overline{Q},\alpha)^{\circ} \subseteq R(\overline{Q},\alpha)$ and the multiplicative moment map $$\mu_{\alpha}^{\circ}:R(\overline{Q},\alpha)^{\circ} \to \Gl_{\alpha} $$ $$ (x_a,x_{a^*})_{a \in \Omega} \to \prod_{a \in \Omega}(1+x_ax_{a^*})(1+x_{a^*}x_a)^{-1} .$$

For an element $s \in (K^*)^I$, the multiplicative quiver stack $\mathcal{M}_{s,\alpha}$ associated to $s$ is defined as the quotient stack $$ \mathcal{M}_{s,\alpha} \coloneqq [(\mu_{\alpha}^{\circ})^{-1}(s)/\Gl_{\alpha}] $$

Consider the case in which $K=\F_q$. We denote by $c_{\alpha}:\Gl_{\alpha}(\F_q) \to \C$ the class function given by $$c_{\alpha}(g)\coloneqq \dfrac{|(\mu^{\circ}_{\alpha})^{-1}(g)^F|}{q^{-(\alpha,\alpha)}} .$$ 

\vspace{8 pt}

We expect that the following result is true:

\begin{conjecture}
\label{mainteomultvar}
The family of class functions $\{c_{\alpha}\}_{\alpha \in \N^I}$ is dual Log compatible.
\end{conjecture}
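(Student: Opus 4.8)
The natural strategy is to mimic the proof of Theorem \ref{charstack} (the star-shaped case) and reduce the statement for an arbitrary quiver $Q=(I,\Omega)$ to pieces that we already control. The key observation is that any quiver can be built up from the Jordan quiver (one vertex, one loop) and the Kronecker quiver (two vertices, one arrow) by (i) adding vertices together with arrows connecting them and (ii) adding extra arrows (in particular loops) between already-present vertices. Since the multiplicative moment map $\mu^\circ_\alpha$ is, up to the ordered-product structure, a ``convolution'' of the contributions of the individual arrows, the function $c_\alpha$ should decompose as a convolution product (normalized by the appropriate power of $q$) of the class functions attached to those elementary pieces. Then Lemma \ref{duallog1} (dual Log compatibility is preserved under normalized convolution), together with Example \ref{conv3}/Remark \ref{inductivestep} (the commutator/loop contribution is dual Log compatible) and Lemma \ref{firstep} (the Kronecker contribution is dual Log compatible), would finish the argument by induction on $|\Omega|$ and $|I|$.

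\textbf{Key steps.} First I would set up the inductive skeleton exactly as in Proof \ref{zioper}: induct on $|I|$, and within a fixed $I$ induct on the number of arrows. The base case $|I|=1$ with $g$ loops is handled by Remark \ref{inductivestep} since $c_n = f^g_n$ there (this uses Example \ref{Jordanquivermomentmap}). For the inductive step, pick an arrow $a\colon i\to j$ of $Q$ and let $Q'=Q\setminus\{a\}$. One must show $c_\alpha = \dfrac{c^{Q'}_\alpha \ast k^{(a)}_\alpha}{q^{\text{(correction)}}}$ where $k^{(a)}_\alpha$ is the class function built from the single factor $(1+x_ax_{a^*})(1+x_{a^*}x_a)^{-1}$ together with a $\delta$-function forcing all components of $\Gl_\alpha(\F_q)$ other than the $i$- and $j$-components to be trivial; this is the analogue of the identity $m_\alpha = \dfrac{n_\alpha \ast k_\alpha}{q^{\sum_i\alpha_i^2}}$ at the end of Proof \ref{zioper}, and it is a direct (if slightly tedious) rewriting using that $\mu^\circ_\alpha$ is an ordered product and that inserting $1_e = \sum_{\chi}\frac{\chi(1)}{|G|}\chi$ (eq.~(\ref{identityelementgroup})) splits a product of matrix equations into a convolution. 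Then $k^{(a)}_\alpha$ is, up to normalization and an inflation/$\delta$-function along the vanishing components, either the Jordan-quiver function (if $a$ is a loop, $i=j$) — dual Log compatible by Remark \ref{inductivestep} — or the Kronecker-quiver function (if $i\neq j$) — dual Log compatible by Lemma \ref{firstep}. The passage from ``dual Log compatible function on $\Gl_{(\alpha_i,\alpha_j)}(\F_q)$'' to ``dual Log compatible function on $\Gl_\alpha(\F_q)$ supported on the trivial locus of the other components'' is exactly the manipulation carried out in eqs.~(\ref{inductivestep1})--(\ref{proofwokrs5}) and goes through verbatim, the bookkeeping of $q$-powers being controlled by eq.~(\ref{valueirreduciblecharacter}). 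Finally, Lemma \ref{duallog1} assembles the two factors.

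\textbf{The main obstacle.} For a star-shaped quiver the factorization $m_\alpha = \dfrac{n_\alpha\ast k_\alpha}{q^{\cdots}}$ works cleanly because the legs are linear and the distinguished outer arrow touches the rest of the quiver in a single vertex; for a general quiver an arrow $a\colon i\to j$ may sit inside cycles, and it is no longer obvious that the moment-map equation ``splits'' into the $Q'$-part times the $a$-part after inserting the identity-element decomposition. Concretely, the issue is that $(1+x_ax_{a^*})(1+x_{a^*}x_a)^{-1}$ appears as one factor in the ordered product defining $\mu^\circ_\alpha$, but the matrices $x_a,x_{a^*}$ are not constrained to be injective (we are on $R(\overline Q,\alpha)^\circ$, not $R(\overline Q,\alpha)^{\circ,\ast}$), so one cannot use the orbit-counting trick with $J_\alpha$ that made Lemma \ref{firstep} work. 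This is precisely why the statement is only a \emph{conjecture}: the reduction to the Kronecker case requires either a version of Lemma \ref{firstep} without the injectivity hypothesis, or a different argument (e.g.\ a stratification of $R(\overline Q,\alpha)^\circ$ by the ranks of the $x_a$'s, handling each stratum by a star-shaped-type computation and showing the dual Log compatibility survives the summation over strata). Proving such a rank-stratified refinement of Lemma \ref{firstep}, and checking that the $q$-power corrections still conspire to give Log compatibility of $\{C_\omega(t)H^\vee_\omega(t)\}$, is the hard part, and is where I would expect the real work to lie.
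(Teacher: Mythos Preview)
The statement you are attempting is a \emph{conjecture} in the paper, not a theorem; the paper gives no proof and explicitly flags it as open. Your overall plan --- reduce to the single-vertex and Kronecker cases, then assemble by convolution via Lemma \ref{duallog1} --- is exactly what the paper suggests as the natural route (see the discussion immediately after Conjecture \ref{mainteomultvar}), so on that level you are aligned with the paper's thinking.

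However, there is a genuine gap in your proposal. You claim that the base case $|I|=1$ with $g$ loops is handled by Remark \ref{inductivestep} because ``$c_n=f^g_n$ there (this uses Example \ref{Jordanquivermomentmap})''. This is false: Example \ref{Jordanquivermomentmap} is about $R(\overline{Q},n)^{\circ,\ast}$ (with the injectivity condition $e\in\Gl_n$), while $c_n$ is defined on the larger $R(\overline{Q},n)^{\circ}$ where $e\in\mathfrak{gl}_n$ is unrestricted apart from $1+ee^*,1+e^*e\in\Gl_n$. The paper makes this explicit in the Example at the end of \S6.6: for the Jordan quiver,
\[
c_n(g)=\#\{(e,e^*)\in\mathfrak{gl}_n(\F_q)^2 : 1+ee^*,1+e^*e\in\Gl_n(\F_q),\ (1+ee^*)(1+e^*e)^{-1}=g\},
\]
and ``there is no straightforward way to adapt the strategy of proof of Example \ref{conv3}'' to compute $\langle c_n,\chi\rangle$. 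So the single-vertex case is \emph{not} already done; the injectivity obstruction you correctly identify for the Kronecker piece bites just as hard here. The paper says this in so many words: ``in \emph{both} cases we used in a key way the fact that we considered representations of $\overline{Q}$ which were injective along the arrows of $\Omega$''. Your ``main obstacle'' paragraph should therefore cover both building blocks, not just the Kronecker one, and your inductive base is currently missing.
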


Similarly to how we proved  Theorem \ref{charstack}, to prove Conjecture \ref{mainteomultvar} above, it is enough to show it in the cases where $Q$ has a single vertex or $Q$ is the Kronecker quiver.

It would be natural to expect that the proofs in these cases are a natural generalization of the proofs of  Remark \ref{inductivestep} and Lemma \ref{firstep}. 

However, in both cases we  used in a key way the fact that we considered representations of $\overline{Q}$ which were injective along the arrows of $\Omega$. Without the injectivity hypothesis, the combinatorics involved becomes more complicated. 

\vspace{8 pt}

\begin{esempio}

Let $Q$ be the Jordan quiver and $\alpha=n \in \N$. The function $c_n:\Gl_n(\F_q) \to \C$ is given by $$c_n(g)=\# \{e,e^* \in \mathfrak{gl}_n(\F_q) \ | \ (1+e e^*),(1+e^*e) \in \Gl_n(\F_q) \text{ and } (1+ee^*)(1+e^*e)^{-1}=g\} .$$

Notice that there is no straightforward way to adapt the strategy of proof of Example \ref{conv3} to compute the quantity $\langle c_n,\chi \rangle$ for an irreducible character $\chi \in \Gl_n(\F_q)^{\vee}.$
\end{esempio}

\section{Cohomology of character stacks}
\label{cohocharstacks}
\subsection{Mixed Poincaré series of Artin stacks}
\label{finitefiledsandcoho}
Let $\mathcal{X}$ be an algebraic stack of finite type over an algebraically closed field $K$. Compactly supported cohomology for algebraic stacks have been fully developed in \cite{laszlo}, where the authors develop a full theory of six-functors formalism for stacks.

In the following, we will always assume that $K=\C$ and  $\mathcal{X}$ is a quotient stack $[X/G]$, where $X$ is a $G$-quasi projective variety and $G$ is a linear algebraic group.

For such a stack $[X/G]$, compactly supported cohomology can be defined in a simpler way, which is an algebro-geometric version of the Borel construction of  equivariant cohomology in differential geometry.

 Historically, this was initially taken as a definition of $G$-equivariant compactly supported cohomology or $G$-equivariant Borel-Moore homology of $X$. We briefly review this construction, for more details see \cite{eddin} where the authors define in a similar way the equivariant Chow groups or \cite[Section 2.5]{
 davisonequivariant}.

\vspace{6 pt}

Consider an embedding $G \subseteq \Gl_r(\C)$. For any $m \in \N$, denote by $V_m=\Hom(\C^m,\C^r)$. Notice that $G$ acts on the right on the vector space $V_m$ and acts freely on the open dense subset $U_m=\Hom^{surj}(\C^m,\C^r)$, given by surjective homomorphisms.

Consider the left action of $G$ on $X \times V_m$ defined as $g \cdot (x,u)=(g \cdot x, u \cdot g^{-1})$. It is a known fact that the action of $G$ on $U_m$ is schematically free and the quotient stack $[X \times U_m/G]$ is thus a scheme, which is usually denoted by $X \times_G U_m$, see for instance \cite[Proposition 23]{eddin}.

Let $Z_m=V_m\setminus U_m$. The codimension $\codim_{V_m}(Z_m)$ goes to $\infty$ for $m \to + \infty$. We put then $$H^i_c([X/G],\C)\coloneqq H^{i+2\dim (V_m)}_c(X \times_G U_m,\C)$$ for $m$ sufficiently big. This definition does not depend on $m$ if $m$ is sufficiently big, see for example \cite[Proposition 2.16]{davisonequivariant}.

We denote by $H^*_c([X/G])\coloneqq H^*_c([X/G],\C)$. We can endow these latter cohomology groups with the weight filtration $W^*_{\bullet}H^*_c([X/G])$, defined as follows $$W^i_jH^i_c([X /G])=W^i_{j+\dim (V_m)}H^{i+2\dim (V_m)}_c(X \times_G U_m) $$ for $m$ sufficiently big. In this case too, \cite[Proposition 2.16]{davisonequivariant} shows that this definition does not depend on $m$ if $m$ is sufficiently big.

\vspace{4 pt}

For $\mathcal{X}$ over $\C$, we define the mixed-Poincar\'e series $H_c(\mathcal{X},q,t)$ as  \begin{equation}
    \label{poinc}H_c(\mathcal{X};q,t)\coloneqq \sum_{k,m}\dim(W^k_m/W^k_{m-1})q^{\frac{m}{2}}t^k.
\end{equation}
Notice that the specialization  $H_c(\mathcal{X},1,t)$ of $H_c(\mathcal{X},q,t)$ at $q=1$ is equal to  the Poincar\'e series $P_c(\mathcal{X},t)$ of the stack $\mathcal{X}$. When $\displaystyle \sum_{k}(-1)^k \dim(W^k_m/W^k_{m-1})$ is finite for each $m$, we define the E-series: \begin{equation}
    \label{epol}
    E(\mathcal{X},q) \coloneqq H_c(\mathcal{X};q,-1)=\sum_{m,k}\dim(W^k_m/W^k_{m-1})(-1)^kq^{\frac{m}{2}}.
\end{equation}

We have the following Theorem, for a proof see \cite[Theorem 2.5]
{NonOr}.

\begin{teorema}
For a quotient stack $\mathcal{X}=[X/G]$ where $G$ is a connected linear algebraic group and $X$ an affine variety, the E-series $E(\mathcal{X},q)$ is well defined and $E(\mathcal{X},q)=E(X,q)E(BG,q)$ where $BG$ is the classifying stack of $G$, .

\end{teorema}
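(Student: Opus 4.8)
The statement to prove is that for a quotient stack $\mathcal{X}=[X/G]$ with $G$ a connected linear algebraic group and $X$ an affine variety, the E-series $E(\mathcal{X},q)$ is well defined and factors as $E(\mathcal{X},q)=E(X,q)E(BG,q)$. The plan is to work entirely through the Borel-type approximation $X\times_G U_m$ recalled just before the statement, and to reduce everything to the multiplicativity of the E-polynomial in Zariski-locally-trivial fibrations together with the known computation of $E(BG,q)$.

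First I would recall that for a variety $Y$ over $\C$ (or more generally a finite type $\C$-scheme), $E(Y,q)$ is well defined because the compactly supported cohomology is finite-dimensional in each degree and the weight filtration has bounded length, so the alternating sum over $k$ in \eqref{epol} is a finite sum for each $m$; this is the classical theory of mixed Hodge structures of Deligne. Applying this to $Y=X\times_G U_m$ gives that each $H^i_c([X/G])$, being a shift of $H^{i+2\dim V_m}_c(X\times_G U_m)$ with its weight filtration, carries a mixed Hodge structure with finitely many nonzero weight graded pieces; hence $\dim(W^k_m/W^k_{m-1})$ vanishes for all but finitely many $(k,m)$ with fixed $m$ — more precisely one uses that $W^k_m=0$ for $m<0$ and $W^k_m=H^k$ for $m>2k$, and that for the affine quotient stack $H^i_c$ is concentrated in a range controlled by $\dim X$ and $\dim G$ — so $\sum_k(-1)^k\dim(W^k_m/W^k_{m-1})$ is finite and $E(\mathcal{X},q)$ is well defined. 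This is the first, essentially formal, step.

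Next comes the geometric heart: the projection $X\times_G U_m\to U_m/G$ is a Zariski-locally-trivial fibration with fiber $X$ (since $U_m\to U_m/G$ is a Zariski-locally-trivial $G$-bundle, as $G$ is special — connected linear algebraic groups over $\C$ are built from $\GL_n$'s and hence special — or one can cite \cite[Proposition 23]{eddin} and the fact that the relevant transition data is algebraic). For a Zariski-locally-trivial fibration $f:Z\to B$ with fiber $F$, there is a multiplicativity $E(Z,q)=E(F,q)E(B,q)$: this follows because over a stratification of $B$ by affines trivializing $f$ one has $E$ additive over the strata (from the long exact sequence in compactly supported cohomology, which is a sequence of mixed Hodge structures) and multiplicative on products $F\times(\text{affine stratum})$ by the Künneth formula for mixed Hodge structures. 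Thus $E(X\times_G U_m,q)=E(X,q)\,E(U_m/G,q)$. Taking $E$ of the defining relation $H^i_c([X/G])=H^{i+2\dim V_m}_c(X\times_G U_m)$ (the shift by $2\dim V_m$ multiplies the E-polynomial by $q^{\dim V_m}$), we get
\begin{equation}
E([X/G],q)=q^{-\dim V_m}E(X\times_G U_m,q)=E(X,q)\cdot q^{-\dim V_m}E(U_m/G,q).
\end{equation}
Since by the very same approximation $E(BG,q)=q^{-\dim V_m}E(U_m/G,q)$ for $m$ large (take $X=\mathrm{pt}$), the right-hand side is $E(X,q)E(BG,q)$, and the independence of $m$ (which also shows the limit stabilizes, as $\codim Z_m\to\infty$) is exactly \cite[Proposition 2.16]{davisonequivariant}. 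Assembling these gives the claim.

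The main obstacle I anticipate is not any single deep input but making the first step — well-definedness of the E-series for the stack — fully rigorous: one must be careful that although $H^i_c([X/G])$ can be nonzero in infinitely many degrees $i$ (unlike for a variety), for each fixed weight $m$ only finitely many degrees contribute, and the alternating sum $\sum_k(-1)^k\dim\mathrm{gr}^W_m H^k_c$ is finite. For an affine quotient stack this is where one invokes that $H^i_c(X\times_G U_m)$ vanishes for $i$ below $2\dim V_m - 2\dim(X\times_G U_m)$ and, crucially, that weights in $H^i_c$ of a variety lie in $[\max(0,2i-2d),i]$ for $d=\dim$, so fixing $m=$ weight pins down $i$ to a finite window even as $m\to\infty$ slides the window — hence $E(\mathcal{X},q)$ is a well-defined element of $\Z[q^{1/2},q^{-1/2}]$ (indeed $q^{-\dim G}\Z[q^{-1}]$, reflecting $E(BG,q)=q^{-\dim G}/\prod(1-q^{-d_i})$ type denominators). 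Everything else — the fibration multiplicativity and the $m$-independence — is standard and already cited in the excerpt, so I would present those compactly and spend the bulk of the write-up on pinning down the finiteness/well-definedness and the bookkeeping of the degree shift $2\dim V_m$.
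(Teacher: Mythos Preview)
The paper does not supply its own proof of this theorem; it simply cites \cite[Theorem 2.5]{NonOr}. Your strategy via the Borel approximation $X\times_G U_m$ and multiplicativity of the E-polynomial is the standard one and is essentially correct in outline.

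There is, however, a genuine error in your justification of the fibration step. You assert that $U_m\to U_m/G$ is Zariski-locally trivial because ``connected linear algebraic groups over $\C$ are built from $\Gl_n$'s and hence special.'' This is false: $\PGl_n$ for $n\ge 2$ is connected but not special, and there exist $\PGl_n$-torsors over varieties that are only \'etale-locally trivial. Your fallback citation of \cite[Proposition 23]{eddin} only guarantees that $X\times_G U_m$ exists as a scheme, not that the bundle is Zariski-locally trivial.

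The repair is to drop Zariski-local triviality and use instead that the monodromy is trivial. For $m$ large, $U_m$ is simply connected (being the complement of a subset of codimension $\ge 2$ in affine space), and since $G$ is connected the homotopy exact sequence of the bundle gives $\pi_1(U_m/G)\cong\pi_0(G)=1$. Hence every local system on $U_m/G$ is constant, so for $f:X\times_G U_m\to U_m/G$ one has $R^qf_!\Q\cong\underline{H^q_c(X)}$. The Leray spectral sequence is then a spectral sequence of mixed Hodge structures with $E_2^{p,q}=H^p_c(U_m/G)\otimes H^q_c(X)$; since the E-polynomial is an Euler characteristic in the Grothendieck group of mixed Hodge structures, it is unchanged by the differentials, and the multiplicativity $E(X\times_G U_m,q)=E(X,q)\,E(U_m/G,q)$ follows. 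The paper itself invokes exactly this ``$G$ connected $\Rightarrow$ local systems on $BG$ trivial'' mechanism in the proof of Proposition~\ref{tensorprodisomgerbe}.
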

\vspace{8 pt}

 Consider now a complex stack $\mathcal{X}$ such that its E-series is well defined. Let now $E$ be a  $\Z$-finitely generated algebra and $\mathcal{Y}$ be an $E$-stack. Assume that there exists $\psi:E \to \C$ such that $\mathcal{Y} \times_{\spec(R),\psi} \spec(\C) \cong \mathcal{X}$. The stack $\mathcal{Y} $ is called a \textit{spreading out} of $\mathcal{X}$.

For any $\phi: E \to \F_q$, denote by $\mathcal{X}^{\phi}\coloneqq\mathcal{Y}\times_{\spec(R),\phi} \spec(\F_q)$. We say that the stack $\mathcal{X}$ is (strongly) \textit{rational count} if  there exists an open $U \subseteq \spec(E)$ and a rational function $Q(t)$ such that for any $\phi: E \to \F_q$ with $\phi(\spec(\F_q)) \subseteq U$, it holds
$$\# \mathcal{X}^{\phi}(\F_{q^n})=Q(q^n) $$ for every $n \geq 1$.

In \cite[Theorem 2.9]{NonOr} it is shown the following result.

\begin{teorema}
\label{Eseriesquotientstacks}
For a (strongly) rational count quotient stack $\mathcal{X}$, there is an identity:
\begin{equation}
\label{Epolynomialidentity}
E(\mathcal{X},q)=Q(q) .
\end{equation}
\end{teorema}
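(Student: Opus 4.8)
\textbf{Proof plan for Theorem \ref{Eseriesquotientstacks}.}
The plan is to reduce the statement for the stack $\mathcal{X}=[X/G]$ to the already-known case of schemes by using the approximation of the quotient stack by the schemes $X\times_G U_m$ introduced in \cref{finitefiledsandcoho}, and then to invoke the Katz-type appendix argument (à la \cite{HRV}) relating point-counting polynomials over finite fields to $E$-series for schemes. First I would fix an embedding $G\subseteq\Gl_r(\C)$ and, for each $m$, consider $V_m=\Hom(\C^m,\C^r)$ with its open dense $G$-invariant subscheme $U_m=\Hom^{\mathrm{surj}}(\C^m,\C^r)$; recall from the excerpt that $X\times_G U_m$ is a scheme, that $H^{i}_c(\mathcal{X})=H^{i+2\dim V_m}_c(X\times_G U_m)$ with the corresponding shift of the weight filtration for $m\gg0$, and that $\codim_{V_m}Z_m\to\infty$. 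Consequently $E(\mathcal{X},q)=q^{-\dim V_m}\,E(X\times_G U_m,q)$ once $m$ is large enough, and similarly all the truncated Betti/weight data stabilize.

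Next I would spread out the whole picture. Given the spreading out $\mathcal{Y}$ of $\mathcal{X}$ over the finitely generated $\Z$-algebra $E$, one can (after shrinking $\spec(E)$) also spread out $G$, its linear representation on $\C^r$, the schemes $V_m,U_m,Z_m$ and the quotient schemes $Y\times_G U_m$, in such a way that base change along any $\phi\colon E\to\F_q$ with image in the relevant open $U\subseteq\spec(E)$ commutes with forming $\times_G U_m$ and preserves the codimension estimate $\codim_{V_m}Z_m\to\infty$. For the finite field $\F_q$ one has the elementary count
\begin{equation}
\#\,(X\times_G U_m)^{\phi}(\F_{q^n})=\frac{\#\,X^{\phi}(\F_{q^n})\cdot\#\,U_m(\F_{q^n})}{\#\,G^{\phi}(\F_{q^n})},
\end{equation}
because $G$ acts freely on $U_m$ with quotient a scheme, so the $\F_{q^n}$-points of the quotient are exactly the $G(\F_{q^n})$-orbits of $\F_{q^n}$-points of $X\times U_m$ (here one uses $H^1_{\mathrm{\acute et}}(\F_{q^n},G)=1$ for connected $G$ by Lang's theorem). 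Since $\#\,\mathcal{X}^{\phi}(\F_{q^n})=Q(q^n)$ by hypothesis and $\#\,U_m(\F_{q^n})$, $\#\,G^{\phi}(\F_{q^n})$ are visibly polynomial in $q^n$ (after possibly shrinking $U$ so that $G$ has a polynomial count, e.g. for $G$ a product of $\Gl$'s this is automatic), the right-hand side is a rational function $Q_m(q^n)$ of $q^n$; thus each scheme $X\times_G U_m$ is strongly rational count with counting function $Q_m(t)=Q(t)\cdot\#U_m(t)/\#G(t)$.

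Now I would apply the scheme-level statement: by the theorem of \cite[Appendix]{HRV} (Katz), or equivalently by the already-proved case of \cite[Theorem 2.9]{NonOr} for schemes, a strongly rational count variety has $E$-polynomial equal to its counting polynomial, so $E(X\times_G U_m,q)=Q_m(q)$. Combining with $E(\mathcal{X},q)=q^{-\dim V_m}E(X\times_G U_m,q)$ and the identities $\#U_m(t)=\#V_m(t)-\#Z_m(t)$ with $\#V_m(t)=t^{\dim V_m}$ and $\#Z_m(t)/t^{\dim V_m}\to 0$ coefficientwise, one gets $q^{-\dim V_m}E(X\times_G U_m,q)\to Q(q)$; but the left side is independent of $m$ for $m\gg0$, hence equals $Q(q)$, which is exactly \eqref{Epolynomialidentity}. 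The main obstacle I anticipate is the bookkeeping in the spreading-out step: one must choose a single open $U\subseteq\spec(E)$ over which simultaneously $\mathcal{Y}$, $G$, the representation, and all the $X\times_G U_m$ (for the finitely many $m$ that matter, in fact for all $m$ uniformly) have good reduction and the freeness of the $G$-action and the codimension bound persist; granting the weight-filtration stabilization recalled in the excerpt, the rest is the standard Lang-theorem orbit count plus the known scheme case, so no genuinely new input is needed.
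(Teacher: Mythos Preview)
The paper does not give its own proof of this statement; it simply cites \cite[Theorem 2.9]{NonOr}. Your outline is in the spirit of that reference, but there is one genuine slip worth flagging. You assert that $q^{-\dim V_m}\,E(X\times_G U_m,q)=E(\mathcal{X},q)$ once $m$ is large enough, and later that this quantity is independent of $m$ for $m\gg0$. This is false in general: already for $\mathcal{X}=B\mathbb{G}_m$ (so $r=1$, $U_m=\C^m\setminus\{0\}$, $X\times_G U_m\cong\mathbb{P}^{m-1}$) one has $q^{-m}E(\mathbb{P}^{m-1},q)=(1-q^{-m})/(q-1)$, which depends on $m$ for every $m$ and only \emph{converges} to $1/(q-1)=E(B\mathbb{G}_m,q)$ as $m\to\infty$. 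The stabilization in the definition of $H^i_c([X/G])$ is degree-by-degree; since a quotient stack typically has nonzero compactly supported cohomology in infinitely many degrees, no single $m$ captures all of $E(\mathcal{X},q)$.

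The fix is either to make the limit argument honest (both sides are rational functions of $q$, and one matches their Laurent expansions at $q=\infty$ term by term, using that the stabilization range grows with $m$), or, more cleanly, to bypass the approximating schemes entirely. The paper already records that $E([X/G],q)=E(X,q)E(BG,q)$ and that $\#[X/G]^\phi(\F_{q^n})=\#X^\phi(\F_{q^n})/\#G^\phi(\F_{q^n})$ (the Theorem immediately preceding and Remark \ref{countingpointsfinitefields}). So $[X/G]$ is rational count exactly when $X$ is polynomial count; applying Katz's theorem \cite[Appendix]{HRV} to the scheme $X$ and using the polynomial count of $G$ gives $E([X/G],q)=E(X,q)/E(G,q)=Q(q)$ directly. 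This is essentially how \cite{NonOr} proceeds, and it avoids the convergence bookkeeping.
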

\vspace{10 pt}

\begin{oss}
\label{countingpointsfinitefields}
Let $\mathcal{X}=[X/G]$ be a complex quotient stack with $G$ a connected linear algebraic group. Let $E$ be a $\Z$-subalgebra as above and $Y_1,Y_2$ be $E$-schemes which are spreading out of $X,G$ respectively.
The stack $\mathcal{Y} \coloneqq [Y_1/Y_2]$ is a spreading out of $\mathcal{X}$. 

Notice that for any homomorphism $\phi: E \to \F_q$, there is an isomorphism $\mathcal{X}^{\phi}=[X^{\phi}/G^{\phi}]$ and by \cite[Lemma 2.5.1]{B} there is therefore an equality 
\begin{equation}
\label{countingoverfinitefields}
\#\mathcal{X}^{\phi}(\F_{q^n})=\dfrac{\#X^{\phi}(\F_{q^n})}{\#G^{\phi}(\F_{q^n})} .
\end{equation}

In particular, the stack $\mathcal{X}$ is rational count if and only the variety $X$ is polynomial count, i.e. there exists $S(t) \in \Q[t]$ such that $S(q^n)=\#X^{\phi}(\F_{q^n})$ for $\phi$ having image in an appropriate open of $\spec(E)$. This was already remarked in \cite[Remark 2.7]{NonOr}.

\end{oss}

Consider a reductive algebraic group $G$. In \cite{HodgeIII} it is shown that each cohomology group $H^m_c(BG)$ is pure of weight $m$. In \cite{HodgeIII} this is stated for cohomology rather than cohomology with compact support. The latter case is an immediate consequence thanks to Poincaré duality.
 
From Theorem \ref{Eseriesquotientstacks}, we deduce the following Lemma:

\begin{lemma}
Suppose  that $G$ is (strongly) polynomial count. The classifying stack $BG$ is strongly polynomial count and we have \begin{equation}
    H_c(BG,q,t)=E(BG,qt^2)=\dfrac{1}{E(G,qt^2)}.
\end{equation}

\end{lemma}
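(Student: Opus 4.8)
The statement has two parts: first, that $BG$ is strongly polynomial count when $G$ is; second, the identity $H_c(BG,q,t) = E(BG,qt^2) = 1/E(G,qt^2)$. The plan is to deduce everything from the point-counting formula for quotient stacks recalled in Remark \ref{countingpointsfinitefields} together with the purity statement for $H^*_c(BG)$ attributed to \cite{HodgeIII}.

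First I would establish that $BG = [\mathrm{pt}/G]$ is strongly polynomial count. Since $G$ is (strongly) polynomial count, there is a $\Z$-finitely generated algebra $E$, a spreading out $Y$ of $G$ over $E$, an open $U \subseteq \spec(E)$ and a polynomial $P(t) \in \Q[t]$ with $\#G^{\phi}(\F_{q^n}) = P(q^n)$ for all $\phi$ with image in $U$ and all $n \geq 1$. Taking $[\mathrm{pt}/Y]$ as a spreading out of $BG$, Equation (\ref{countingoverfinitefields}) gives $\#(BG)^{\phi}(\F_{q^n}) = 1/P(q^n)$, which is a rational function of $q^n$. Hence $BG$ is strongly rational count with $Q(t) = 1/P(t)$, and by Theorem \ref{Eseriesquotientstacks} we get $E(BG,q) = 1/P(q) = 1/E(G,q)$, where the last equality uses that $E(G,q) = P(q)$ (again Theorem \ref{Eseriesquotientstacks} applied to the polynomial count variety $G$, viewed as the trivial quotient stack). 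In particular $E(BG,qt^2) = 1/E(G,qt^2)$, which is the second equality of the Lemma.

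Next I would prove the first equality $H_c(BG,q,t) = E(BG,qt^2)$. By the result of \cite{HodgeIII} (transported to compactly supported cohomology via Poincaré duality, as noted in the excerpt), each $H^m_c(BG)$ is pure of weight $m$; that is, $W^m_j/W^m_{j-1} = 0$ unless $j = m$, and $W^m_m/W^m_{m-1} = H^m_c(BG)$. Plugging this into the definition $H_c(\mathcal{X},q,t) = \sum_{k,m}\dim(W^k_m/W^k_{m-1})q^{m/2}t^k$ collapses the double sum to $\sum_m \dim H^m_c(BG)\, q^{m/2}t^m = \sum_m \dim H^m_c(BG)(qt^2)^{m/2}$. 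On the other hand, the same purity shows $E(BG,u) = \sum_{m,k}\dim(W^k_m/W^k_{m-1})(-1)^k u^{m/2} = \sum_m \dim H^m_c(BG)(-1)^m u^{m/2}$; since $H^m_c(BG)$ vanishes for $m$ odd (purity forces the weight of an odd-degree class to be odd, but $H^*_c(BG)$ is concentrated in even degrees — this follows from $E(BG,q)=1/E(G,q)$ being a power series in $q$, or directly from the cell structure of the approximations $U_m$), the signs $(-1)^m$ are all $+1$ and $E(BG,qt^2) = \sum_m \dim H^m_c(BG)(qt^2)^{m/2} = H_c(BG,q,t)$.

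The only genuinely delicate point is justifying that $H^*_c(BG)$ is concentrated in even degrees (equivalently, that the Hodge/weight structure has no odd contributions), which I would handle either by invoking that $E(BG,q)$ is a genuine power series in $q$ (forcing cancellation-free evenness once purity is known) or by citing the standard fact that the approximation spaces $X \times_G U_m$ have cohomology computed by the Leray–Hirsch type argument for the affine bundle $U_m \to U_m/G$; everything else is a direct substitution into the definitions of $H_c$ and $E$ using purity. I expect this evenness/parity bookkeeping to be the main obstacle, while the point-counting half is routine given Theorem \ref{Eseriesquotientstacks} and Remark \ref{countingpointsfinitefields}.
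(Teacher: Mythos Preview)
Your proposal is correct and takes essentially the same approach as the paper: the paper does not give a detailed proof but simply states that the Lemma is deduced from Theorem~\ref{Eseriesquotientstacks} together with the purity of $H^m_c(BG)$ from \cite{HodgeIII}, and you have filled in exactly those details. Your argument for odd-degree vanishing via purity plus the fact that $E(BG,q)=1/E(G,q)$ has no half-integer powers of $q$ is valid (purity means each $q^{m/2}$ receives a contribution only from degree $m$, so no cancellation can occur), though one could also just invoke that $H^*(BG)$ for connected reductive $G$ is a polynomial ring in even-degree generators.
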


\begin{esempio}
We deduce, for instance, that, for each $n \in \N$, we have

\begin{equation}
\label{intuile}
H_c(B\Gl_n,q,t)=\dfrac{1}{(qt^2)^{\frac{n(n-1)}{2}}}\dfrac{1}{(qt^2-1)\cdots ((qt^2)^n-1)}    
\end{equation}

and 

\begin{equation}
\label{intuile1}
H_c(B\PGl_n,q,t)=\dfrac{1}{(qt^2)^{\frac{n(n-1)}{2}}}\dfrac{1}{((qt^2)^2-1)\cdots ((qt^2)^n-1)}    
\end{equation}

\end{esempio}

\vspace{6 pt}

Lastly, we will need the following Proposition about the cohomology of a quotient stack $[X/G]$. Assume that $G=\Gl_n$ and the center $\mathbb{G}_m \subseteq \Gl_n$ acts trivially on the scheme $X$. There is thus an induced action of $\PGl_n$ on $X$ and a canonical morphism $h:[X/\Gl_n] \to [X/\PGl_n]$.

\begin{prop}
\label{tensorprodisomgerbe}
Let $X$ be a $\C$-variety with a $\Gl_n$-action trivial on the center. We have  \begin{equation}
\label{cohomologygerbe}
H_c([X/\Gl_n],q,t)=\dfrac{H_c([X/\PGl_n],q,t)}{qt^2-1}
\end{equation}
\end{prop}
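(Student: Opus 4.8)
The plan is to factor the gerbe $h\colon[X/\Gl_n]\to[X/\PGl_n]$ through a gerbe banded by a \emph{finite} group, which is invisible to $\C$-cohomology, and a genuine product with $B\mathbb{G}_m$. Since the centre $\mathbb{G}_m\subseteq\Gl_n$ acts trivially on $X$, so does the finite subgroup $\mu_n\subseteq\mathbb{G}_m$ of $n$-th roots of unity, and thus the natural morphism $[X/\Gl_n]\to[X/(\Gl_n/\mu_n)]$ is a $\mu_n$-gerbe. Next, the homomorphism $\Gl_n\to\PGl_n\times\mathbb{G}_m$, $g\mapsto(g\bmod\mathbb{G}_m,\det g)$, is surjective (surjectivity onto the $\mathbb{G}_m$-factor uses that $z\mapsto z^n$ is onto on the centre, the ground field being algebraically closed) and has kernel exactly $\mu_n$; hence it identifies $\Gl_n/\mu_n\cong\PGl_n\times\mathbb{G}_m$. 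Under this identification the induced $(\Gl_n/\mu_n)$-action on $X$ (well defined because $\mu_n$ acts trivially) is the action of $\PGl_n\times\mathbb{G}_m$ via the first projection, with the $\mathbb{G}_m$-factor acting trivially; therefore $[X/(\Gl_n/\mu_n)]\cong[X/\PGl_n]\times B\mathbb{G}_m$.

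Granting this, I would conclude in three steps. First, the $\mu_n$-gerbe $[X/\Gl_n]\to[X/(\Gl_n/\mu_n)]$ induces an isomorphism $H^*_c([X/\Gl_n],\C)\cong H^*_c([X/(\Gl_n/\mu_n)],\C)$ respecting the weight filtration: choosing a faithful representation $\Gl_n\hookrightarrow\Gl_r$ and the Borel approximations $U_m\subseteq V_m=\Hom(\C^m,\C^r)$ as in \cref{finitefiledsandcoho}, the group $\mu_n$ acts freely on $U_m$ (being a subgroup of the freely acting $\Gl_r$) and trivially on $X$, so $(X\times U_m)/\Gl_n=(X\times(U_m/\mu_n))/(\Gl_n/\mu_n)$; since $\dim(U_m/\mu_n)=\dim V_m$ and $U_m/\mu_n$ is acyclic in a range growing with $m$, this single scheme computes both stack cohomologies with the same normalisation shift $2\dim V_m$, hence with the same mixed Hodge structure, using the independence of the Borel construction from the chosen acyclic free model (cf. \cite{davisonequivariant}). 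Second, the Künneth formula for compactly supported cohomology of finite type complex stacks — proved, as usual, by applying Künneth for varieties to the approximating schemes — gives $H_c([X/\PGl_n]\times B\mathbb{G}_m,q,t)=H_c([X/\PGl_n],q,t)\cdot H_c(B\mathbb{G}_m,q,t)$. Third, the Lemma expressing $H_c(BG,q,t)=E(BG,qt^2)=1/E(G,qt^2)$, applied to the polynomial count group $G=\mathbb{G}_m$ with $E(\mathbb{G}_m,q)=q-1$, gives $H_c(B\mathbb{G}_m,q,t)=1/(qt^2-1)$. Combining the three steps yields eq.(\ref{cohomologygerbe}).

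I expect the only delicate point to be the first step of the second paragraph: making the statement that a gerbe banded by a finite group leaves $H_c$ (with its weight filtration) unchanged fully rigorous, in particular that $U_m/\mu_n$ is an admissible model for the Borel construction of $[X/(\Gl_n/\mu_n)]$ and that the normalisation shifts match exactly. The group-theoretic identifications, the Künneth step, and the evaluation $H_c(B\mathbb{G}_m,q,t)=1/(qt^2-1)$ are routine. As a consistency check one can specialise to $X=\mathrm{pt}$, where the asserted identity reduces to the standard relation $E(\Gl_n,q)=(q-1)E(\PGl_n,q)$, matching eq.(\ref{intuile}) and eq.(\ref{intuile1}).
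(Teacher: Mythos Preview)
Your proof is correct and takes a genuinely different route from the paper. The paper does not use the group-theoretic factorisation $\Gl_n/\mu_n\cong\PGl_n\times\mathbb{G}_m$; instead it runs the Leray spectral sequence for $h\colon[X/\Gl_n]\to[X/\PGl_n]$ directly. It first settles the case $X=\spec(\C)$ by appealing to the explicit formulas (\ref{intuile}) and (\ref{intuile1}), then identifies $R^qh_!\C$ as the constant sheaf with fibre $H^q_c(B\mathbb{G}_m)$ (using that every local system on $B\PGl_n$ is trivial, $\PGl_n$ being connected), and finally proves degeneration at $E_2$ by showing that the edge map $H^p_c([X/\Gl_n])\to H^p_c(B\mathbb{G}_m)$ is surjective --- which follows because it factors through $H^p_c(B\Gl_n)\to H^p_c(B\mathbb{G}_m)$, already seen to be surjective in the point case.

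Your approach trades the spectral-sequence degeneration for two cleaner ingredients: the K\"unneth formula and the invisibility of a $\mu_n$-gerbe in $\C$-coefficients. This is more conceptual and avoids any case-by-case check of differentials. The price you pay is exactly the point you flag: one must argue that $U_m/\mu_n$ (which does not sit inside a vector space) is an admissible model for the Borel construction of $[X/(\Gl_n/\mu_n)]$, with the correct cohomological and weight shifts. This is true (any free acyclic approximation works, as in \cite{davisonequivariant}), but it is slightly outside the literal definition given in \cref{finitefiledsandcoho}. The paper's argument stays entirely within that definition at the cost of a longer computation.
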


\begin{proof}

We start by the case in which $X=\spec(\C)$ and we look at the canonical morphism  $\pi:B\Gl_n \to B\PGl_n$. In this case, eq.(\ref{cohomologygerbe}) is a direct consequence of eq.(\ref{intuile}), eq.(\ref{intuile1}). There is a cartesian diagram:
\begin{center}
\begin{tikzcd}
B\mathbb{G}_m \arrow[r,""] \arrow[d,""] &B\Gl_n \arrow[d,"\pi"]\\
\spec(\C) \arrow[r,"\psi"] &B\PGl_n
\end{tikzcd}
\end{center}
where $\psi:\spec(\C) \to B\PGl_n$ is the canonical projection. Since $\psi$ is a smooth covering, for each $q \in \Z$, the sheaf $R^q\pi_!\C$ is a  local system with fiber $H^q_c(B\mathbb{G}_m)$. 

Moreover, as $\PGl_n$ is connected, each local system is trivial over $B\PGl_n$, see for example \cite[Proposition 6.13]{achar}. In particular, the Leray spectral sequence for compactly supported cohomology and the morphism $\pi$ in second page is $$E^{p,q}_2 :H^p_c(B\PGl_n) \otimes H^q_c(B \mathbb{G}_m) \Rightarrow H^{p+q}_c(B \Gl_n) .$$

From eq.(\ref{cohomologygerbe}), we deduce that the spectral sequence collapses at page $2$, i.e. that the canonical morphism $$H^p_c(B\Gl_n) \to H^p_c(B\mathbb{G}_m) $$ is surjective for every $p$.

\vspace{6 pt}

 Consider now a general $X$. In this case too, we have a Leray spectral sequence for compactly supported cohomology with second page $$E^{p,q}_2=H^p_c\left([X/\PGl_n],R^qh_{!}\C\right) \Rightarrow H^{p+q}_c([X/\Gl_n]) .$$

Notice that there is $2$-Cartesian diagram $$
\xymatrix{[X/\Gl_n]\ar[r]\ar[d]&B\Gl_n\ar[d]\\
[X/\PGl_n]\ar[r]& B\PGl_n}
 $$
where the morphism $\pi:B\Gl_n \to B\PGl_n$ on the right is the canonical morphism between classifying spaces. In particular, we have  $$R^qh_!\C \cong r^*R^q\pi_{!}\C $$ where we put $r:[X/\PGl_n] \to B\PGl_n$.  

We deduce thus that each $R^qh_!\C$ is trivial. Moreover, the associated map $$H^p_c([X/\Gl_n]) \to H^p_c(B\mathbb{G}_m) $$ is surjective, since the map $H^p_c(B\Gl_n)\to H^p_c(B\mathbb{G}_m)$ is surjective, as remarked above. Therefore, the spectral sequence $E^{p,q}_2$ collapses at page $2$ and we obtain an isomorphism $$H_c^*\left([X/\Gl_n]\right)=H_c^*\left([X/\PGl_n]\right) \otimes H^*_c(B \mathbb{G}_m) .$$

\end{proof}

\subsection{E-series of multiplicative quiver stacks and character stacks}
\label{section8.2}

In this paragraph, we apply Theorem \ref{charstack} to the computation of E-series of multiplicative quiver stacks and character stacks.

We first recall the results of Hausel, Letellier, Rodriguez-Villegas about the E-series $E(\mathcal{M}_{\mathcal{C}},q)$ in the case where $\mathcal{C}$ is a \textit{generic} $k$-tuple of semisimple conjugacy classes.

\subsubsection{Cohomological results for generic $k$-tuples}
\label{genericktuplesepoly}
Let $\mathcal{C}$ be a $k$-tuple of semisimple conjugacy classes of $\Gl_n(\C)$. Let $Q=(I,\Omega)$ be the associated quiver and $\gamma_{\mathcal{C}} \in (\C^*)^I, \alpha_{\mathcal{C}} \in (\N^I) ^*$ the associated parameters, introduced in \cref{chaptercharstack}.

\vspace{10 pt}

In \cite[Definition 2.1.1]{HA}, the authors give the following definition of a generic $k$-tuple $\mathcal{C}$.

\begin{definizione}
A $k$-tuple $\mathcal{C}$ of semisimple conjugacy classes is said generic if, given a subspace $W$ of  $\C^n$ which is stabilized by some $X_i \in \mathcal{C}_i$, for each $i=1,\dots,k$,  such that $\displaystyle\prod_{i=1}^k \det(X_i|_W)=1  $ then either $W=\{0\}$ or $W=\C^n$. 

\end{definizione}
\vspace{8 pt}

\begin{oss}
\label{genericexists}
For any $\beta \in (\N^I)^*$, there exists a generic $k$-tuple $\mathcal{C}'$ such that $\alpha_{\mathcal{C}'}=\beta$, see for example \cite[Lemma 2.1.2]{HA}. 

\end{oss}

\vspace{6 pt}

\begin{lemma}
\label{criteriongenericityorbits}
If $\mathcal{H}_{\gamma_{\mathcal{C}},\alpha_{\mathcal{C}}}^*=\{\alpha_{\mathcal{C}}\}$ the $k$-tuple $\mathcal{C}$ is generic. On the other side, if $\mathcal{C}$ is generic, there are no $\delta, \epsilon \in \mathcal{H}_{\gamma_{\mathcal{C}},\alpha_{\mathcal{C}}}^*\setminus \{\alpha_{\mathcal{C}}\}$ such that $\delta+\epsilon=\alpha_{\mathcal{C}}$.
\end{lemma}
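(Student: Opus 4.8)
\textbf{Proof plan for Lemma \ref{criteriongenericityorbits}.}

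The plan is to unwind both implications through the dictionary between subspaces $W \subseteq \C^n$ stabilized by tuples $X_i \in \mathcal{C}_i$ and dimension vectors $\delta \in (\N^I)^*$ with $\delta \leq \alpha_{\mathcal{C}}$. The key observation is this: if $W$ is stabilized by $X_i \in \mathcal{C}_i$, then for each $i$ the restriction $X_i|_W$ has, for each eigenvalue $\gamma_{i,j}$ of $\mathcal{C}_i$, some multiplicity $w_{i,j} \leq m_{i,j}$ in $W$; conversely any choice of such multiplicities is realized by an appropriate $X_i$ and an appropriate $W$ (choose $W$ to be a coordinate subspace adapted to a simultaneous eigenbasis). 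Encoding these multiplicities into a dimension vector $\delta$ by $\delta_{[i,j]} = \sum_{h=j}^{s_i} w_{i,h}$ (the same recipe that defines $\alpha_{\mathcal{C}}$), one gets $\delta \in (\N^I)^*$ with $\delta \leq \alpha_{\mathcal{C}}$ and $\delta_0 = \dim W$. The determinant condition $\prod_i \det(X_i|_W) = 1$ translates, using the definition of $\gamma_{\mathcal{C}}$, into $\gamma_{\mathcal{C}}^{\delta} = 1$: indeed $\prod_i \det(X_i|_W) = \prod_{i,j} \gamma_{i,j}^{w_{i,j}}$, and rewriting $w_{i,j} = \delta_{[i,j]} - \delta_{[i,j+1]}$ (with $\delta_{[i,s_i+1]}=0$) together with the telescoping that relates the $\gamma_{i,j}$ to the $(\gamma_{\mathcal{C}})_{[i,j]} = \gamma_{i,j}^{-1}\gamma_{i,j-1}$ and $(\gamma_{\mathcal{C}})_0 = \prod_i \gamma_{i,0}^{-1}$ yields exactly $\gamma_{\mathcal{C}}^{\delta} = \prod_{i} \det(X_i|_W)^{-1}$. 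So $\delta \in \mathcal{H}^*_{\gamma_{\mathcal{C}},\alpha_{\mathcal{C}}}$ precisely when $W$ satisfies the genericity-test hypotheses, and $W \in \{0, \C^n\}$ corresponds to $\delta \in \{0, \alpha_{\mathcal{C}}\}$.

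Granting this dictionary, the first implication is immediate: if $\mathcal{H}^*_{\gamma_{\mathcal{C}},\alpha_{\mathcal{C}}} = \{\alpha_{\mathcal{C}}\}$ (note $0 \notin \mathcal{H}^*$ under the convention, or is excluded as the trivial case), then any $W$ passing the test has $\delta = \alpha_{\mathcal{C}}$, hence $\dim W = n$, i.e. $W = \C^n$ (the case $W = 0$ being the other trivial possibility); so $\mathcal{C}$ is generic. For the second implication, suppose $\mathcal{C}$ is generic and suppose toward a contradiction that there are $\delta, \epsilon \in \mathcal{H}^*_{\gamma_{\mathcal{C}},\alpha_{\mathcal{C}}} \setminus \{\alpha_{\mathcal{C}}\}$ with $\delta + \epsilon = \alpha_{\mathcal{C}}$. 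Since $\delta \leq \alpha_{\mathcal{C}}$ and $\delta \neq \alpha_{\mathcal{C}}$, and $\gamma_{\mathcal{C}}^{\delta} = 1$, the dictionary produces a subspace $W$ with $\dim W = \delta_0$, stabilized by suitable $X_i$, with $\prod_i \det(X_i|_W) = 1$. If $\delta_0 \neq 0$ and $\delta_0 \neq n$ this already contradicts genericity. The remaining cases — $\delta_0 = 0$ or $\delta_0 = n$ while $\delta \neq \alpha_{\mathcal{C}}$ — must be handled: $\delta_0 = 0$ together with $\delta \in (\N^I)^*$ and $\delta \leq \alpha_{\mathcal{C}}$ forces $\delta = 0$ (non-increasing along legs from $\delta_0 = 0$), which is excluded; and $\delta_0 = n$ forces $\delta = \alpha_{\mathcal{C}}$ for the same monotonicity reason (each leg starts at $n$... more precisely $\delta_{[i,1]} \le \delta_0 = n$ but also $\delta + \epsilon = \alpha_{\mathcal C}$ with $\epsilon \ge 0$ forces $\epsilon_0 = 0$ hence $\epsilon = 0$, contradiction). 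So the only escape is ruled out, giving the contradiction.

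The main obstacle I anticipate is getting the sign/telescoping bookkeeping in the determinant computation exactly right — tracking how the eigenvalue multiplicities $w_{i,j}$ in $W$ relate to the coordinates $\delta_{[i,j]}$ of the associated dimension vector, and confirming that the product $\prod_{i,j}\gamma_{i,j}^{w_{i,j}}$ indeed collapses to $\gamma_{\mathcal{C}}^{\delta}$ (or its inverse) under the definition of $\gamma_{\mathcal{C}}$. This is the same kind of computation underlying the isomorphism $\mathcal{M}_{\mathcal{C}} \cong \mathcal{M}^*_{\gamma_{\mathcal{C}},\alpha_{\mathcal{C}}}$ of Theorem \ref{isomstacks}, so it should go through, but it requires care with the convention $[i,0]=0$ and the boundary terms $\delta_{[i,s_i+1]} = 0$. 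A secondary point needing attention is the precise convention on whether $0 \in (\N^I)^*$ and whether $0 \in \mathcal{H}^*_{\gamma_{\mathcal{C}},\alpha_{\mathcal{C}}}$; the statement implicitly treats $\{\alpha_{\mathcal{C}}\}$ as the "minimal" content, so the trivial subspace $W = 0$ should be understood as always allowed and not counted, which I would state explicitly at the start of the proof.
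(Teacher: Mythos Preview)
Your approach matches the paper's almost exactly: set up the dictionary between invariant subspaces $W$ (with chosen eigenspace multiplicities $w_{i,j}\le m_{i,j}$) and dimension vectors $\delta\in(\N^I)^*$ with $\delta\le\alpha_{\mathcal{C}}$, verify via the telescoping computation that $\prod_i\det(X_i|_W)=\gamma_{\mathcal{C}}^{-\delta}$, and run both implications by contrapositive. This is precisely what the paper does.

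There is one point where your write-up is a bit loose and could mislead you when you write the details. In the second implication you say ``the dictionary produces a subspace $W$ with $\dim W=\delta_0$'' using only $\delta\in(\N^I)^*$ and $\delta\le\alpha_{\mathcal{C}}$. That is not enough: the backward direction of the dictionary requires the multiplicity bounds $w_{i,j}=\delta_{[i,j]}-\delta_{[i,j+1]}\le m_{i,j}=(\alpha_{\mathcal{C}})_{[i,j]}-(\alpha_{\mathcal{C}})_{[i,j+1]}$, and these do \emph{not} follow from $\delta\le\alpha_{\mathcal{C}}$ alone (e.g.\ along a leg with $\alpha=(3,2,1)$ take $\delta=(2,0,0)$). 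What guarantees them is precisely the hypothesis $\epsilon=\alpha_{\mathcal{C}}-\delta\in(\N^I)^*$, since $\epsilon_{[i,j]}\ge\epsilon_{[i,j+1]}$ rearranges to $\delta_{[i,j]}-\delta_{[i,j+1]}\le m_{i,j}$. The paper makes this step explicit; you invoke $\epsilon\in(\N^I)^*$ only later, for the edge cases $\delta_0\in\{0,n\}$, but it is needed already to build $W$ at all. Once you insert this one sentence, your argument is complete and coincides with the paper's. Your remaining worries (the telescoping sign and the convention on $0$) are harmless: the telescoping gives $\gamma_{\mathcal{C}}^{\delta}=\prod_i\det(X_i|_W)^{-1}$, and $0$ is excluded from $\mathcal{H}^*$ by the running convention that $\gamma^{\beta}=1$ is considered only for $\beta\neq 0$ (equivalently, $W=0$ is always allowed in the genericity test and never threatens it).
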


\begin{proof}
Suppose that for a $k$-tuple $\mathcal{C}$ there exists a proper subspace $ 0 \subset W \subset \C^n$ and $X_1 \in \mathcal{C}_1,\dots,X_k \in \mathcal{C}_k$ such that $X_i (W) \subseteq W$ for each $i$ and $$\prod_{i=1}^k \det(X_i|_W)=1 .$$

For $i=1\dots,k$ and $j=0,\dots,s_i$, put $V_{\gamma_{i,j}}=\Ker(X_i-\gamma_{i,j}I_n)$ and $W_{\gamma_{i,j}}=W \cap V_{\gamma_{i,j}}$. Notice that, for each $i$, we have $$W=\bigoplus_{j=0}^{s_i} W_{\gamma_{i,j}} $$ and $$\det(X_i|_W)=\prod_{j=0}^{s_i} \gamma_{i,j}^{\dim W_{\gamma_{i,j}}} .$$

Consider now the dimension vector $\beta \in (\N^I)^*$ defined as $$\beta_{[i,j]}=\sum_{h=j}^{s_i} \dim(W_{\gamma_{i,j}}) .$$ We have that $\beta < \alpha_{\mathcal{C}}$. Moreover, we have \begin{equation}
\label{genericequivalence}
\gamma_{\mathcal{C}}^{\beta}=\prod_{i=1}^k \gamma_{i,0}^{-\beta_0}\prod_{j=1}^{s_i} (\gamma_{i,j}^{-1}\gamma_{i,j-1})^{\beta_{[i,j]}}=\prod_{i=1}^k \gamma_{i,0}^{-\sum_{h=0}^{s_i}\dim W_{\gamma_{i,j}} } \prod_{j=1}^{s_i} \gamma_{i,j}^{-\sum_{h=j}^{s_i}\dim W_{\gamma_{i,j}}}\gamma_{i,j-1}^{\sum_{h=j}^{s_i}\dim W_{\gamma_{i,j}}}=
\end{equation}

\begin{equation}
\label{genericequivalence2}
=\prod_{i=1}^k \prod_{j=0}^{s_i}\gamma_{i,j}^{-\dim W_{\gamma_{i,j}}}=\prod_{i=1}^k \det(X_i|_W)^{-1}=1.
\end{equation}

Conversely, suppose that there exists $\beta \in \mathcal{H}_{\gamma_{\mathcal{C}},\alpha_{\mathcal{C}}}^*\setminus\{\alpha_{\mathcal{C}}\}$ such that $\epsilon\coloneqq \alpha_{\mathcal{C}}-\beta$ belongs to $\mathcal{H}_{\gamma_{\mathcal{C}},\alpha_{\mathcal{C}}}^*\setminus\{\alpha_{\mathcal{C}}\}$ too. 

Since $\epsilon \in (\N^I)^*$, for each $j,h$, we have $$\beta_{[h,j]}-\beta_{[h,j=1]} \leq (\alpha_{\mathcal{C}})_{[h,j]}-(\alpha_{\mathcal{C}})_{[h,j+1]}=m_{h,j} ,$$ where $m_{h,j}$ is the multiplicity of the eigenvalue $\gamma_{h,j}$ in the orbit $\mathcal{C}_j$.

Put $m=\beta_0$ and let $W=\C^m \subseteq \C^n$ be the span of the first $m$ vectors of the canonical basis. We have that $m < n$, since $\epsilon \in (\N^I)^*$.

For each $i=1,\dots,k$, there exists  a diagonal matrix $X_i \in \mathcal{C}_i$ such that its first $m$ diagonal entries are given by $\beta_{[i,s_i]}$ times the element $\gamma_{i,s_i}$, then $\beta_{[i,s_i-1]}-\beta_{[i,s_i]}$ times the element $\gamma_{i,s_i-1}$ and so on.

The subspace $W$ is $X_i$-stable for each $i=1,\dots,k$ and, moreover, $$\det(X_i|_W)=\gamma_{\mathcal{C}}^{\beta}=1 ,$$ from which we deduce that $\mathcal{C}$ is not generic.

\end{proof}

\vspace{10 pt}
For generic $k$-tuples, we have the following general combinatorial formula computing the E-series of the associated character stacks, shown by Hausel, Letellier, Rodriguez-Villegas.

\vspace{8 pt}

 Let $\mathbf{x_1}=\{x_{1,1},x_{1,2} \dots \},\dots, \mathbf{x_k}=\{x_{k,1},\dots\}$ be $k$ sets of infinitely many variables and let us denote by $\Lambda_k\coloneqq\Lambda(\mathbf{x_1},\dots ,\mathbf{x_k})$ the ring of functions over $\Q(z,w)$ separetely symmetric in each set of variables.

On $\Lambda_k$ there is a natural $\lambda$-ring structure, induced by the operations $\psi_d:\Lambda_k \to \Lambda_k$ defined as $$\psi_d(f(\mathbf{x_1},\dots,\mathbf{x_k}))=f(\mathbf{x_1}^d,\dots,\mathbf{x_k}^d) $$
 
 On $\Lambda_k$ there is a natural bilinear form obtained by  extending by linearity $$\left < f_1(\mathbf{x_1})\cdots f_k(\mathbf{x_k}),g_1(\mathbf{x_1})\cdots g_k(\mathbf{x_k})\right>=\prod_{i=1}^k \left<f_i,g_i\right> $$ where $\left<,\right>$ is the  bilinear form on the ring of symmetric functions making the Schur functions $s_{\mu}$ an orthonormal basis. For a multipartition $\bm \mu=(\mu^1,\dots ,\mu^k) \in \mathcal{P}^k$ we denote by $h_{\bm{\mu}}=h_{\mu^1}(\mathbf{x_1})\cdots h_{\mu^k}(\mathbf{x_k})$ the associated complete symmetric function.

For any $\lambda \in \mathcal{P}$, let $\mathcal{H}_{\lambda}(z,w)$ be the hook function: 
\begin{equation}
 \label{defH}
 \mathcal{H}_{\lambda}(z,w)=\prod_{s \in \lambda} \dfrac{(z^{2a(s)+1}-w^{2l(s)+1})^{2g}}{(z^{2a(s)+2}-w^{2l(s)})(z^{2a(s)}-w^{2l(s)+2})}
\end{equation}

and the associated series $\Omega(z,w) \in \Lambda_k[[T]]$ \begin{equation}
    \label{omega}
    \Omega(z,w)=\sum_{\lambda \in \mathcal{P}}\mathcal{H}_{\lambda}(z,w) \prod_{i=1}^k H_{\lambda}(\mathbf{x_i},z^2,w^2)T^{|\lambda|}
\end{equation}
where $H_{\lambda}(\mathbf{x_i},q,t)$ are the (modified) Macdonald symmetric polynomials (for a definition see \cite[I.11]{garsia-haiman}).

 For any $\bm \mu \in \mathcal{P}_n^k$, in \cite{HA} it is defined the rational function $\mathbb{H}_{\bm \mu}(z,w)$ as follows:
\begin{equation}
    \label{formula}
    \mathbb{H}_{\bm \mu}(z,w)\coloneqq(z^2-1)(1-w^2)\left<\Coeff_{T^n}(\Plelog(\Omega(z,w))),h_{\bm \mu}\right>.
\end{equation}

\vspace{8 pt}

For any $\beta \in (\N^I)^*$ and for any $j=1,\dots,k$, the integers $(\beta_{[j,0]}-\beta_{[j,1]},\dots,\beta_{[j,s_j-1]}-\beta_{[j,s_j]},\beta_{[j,s_j]})$ up to reordering form a partition $\mu_{\beta}^j \in \mathcal{P}$. Denote by $\bm \mu_{\beta} \in \mathcal{P}^k$ the multipartition $\bm \mu_{\beta}=( \mu_{\beta}^1,\dots, \mu_{\beta}^k)$ and by $\mathbb{H}_{\beta}(z,w)$ the function $\mathbb{H}_{\bm \mu_{\beta}}(z,w)$. 

\vspace{8 pt}

 Hausel, Letellier, Rodriguez-Villegas \cite[Theorem 5.2.3]{HA} showed the following result:

\begin{teorema}
\label{genericresultcharvarieties}
For any generic $k$-tuple $\mathcal{C}$, we have:
\begin{equation}
\label{genericresultcharvarieties1}
\dfrac{E(\mathcal{M}_{\mathcal{C}},q)}{q^{-(\alpha_{\mathcal{C}},\alpha_{\mathcal{C}})}}=\dfrac{q\mathbb{H}_{\alpha_{\mathcal{C}}}\left(\sqrt{q},\dfrac{1}{\sqrt{q}}\right)}{q-1}.
\end{equation}

\end{teorema}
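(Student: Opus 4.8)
The statement is the theorem of Hausel--Letellier--Rodriguez-Villegas \cite[Theorem 5.2.3]{HA}; the plan is to establish it by reduction to a point count over finite fields, phrased in the framework of the present paper. We assume throughout that $\prod_{j=1}^k\det(C_j)=1$, equivalently $\gamma_{\mathcal{C}}^{\alpha_{\mathcal{C}}}=1$, since otherwise $\mathcal{M}_{\mathcal{C}}=\emptyset$. \emph{Step 1 (reduction to finite fields).} As $\mathcal{C}$ is semisimple, Theorem \ref{isomstacks} gives $\mathcal{M}_{\mathcal{C}}\cong\mathcal{M}^{*}_{\gamma_{\mathcal{C}},\alpha_{\mathcal{C}}}$. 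One checks, choosing the eigenvalues $\gamma_{i,j}$ in a suitable finitely generated $\Z$-subalgebra, that $X_{\mathcal{C}}$ (equivalently $(\Phi^{*}_{\alpha_{\mathcal{C}}})^{-1}(\gamma_{\mathcal{C}})$) spreads out and is polynomial count; then by Remark \ref{countingpointsfinitefields} and Theorem \ref{Eseriesquotientstacks},
$$\frac{E(\mathcal{M}_{\mathcal{C}},q)}{q^{-(\alpha_{\mathcal{C}},\alpha_{\mathcal{C}})}}=\frac{m_{\alpha_{\mathcal{C}}}(\gamma_{\mathcal{C}})}{|\Gl_{\alpha_{\mathcal{C}}}(\F_q)|}=\frac{\#X_{\mathcal{C}}(\F_q)}{|\Gl_n(\F_q)|}$$
as an identity of rational functions in $q$.

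\emph{Step 2 (the character sum and the role of genericity).} Expand $\#X_{\mathcal{C}}(\F_q)$ by Frobenius's mass formula
$$\#X_{\mathcal{C}}(\F_q)=\prod_{j=1}^k|\mathcal{C}_j|\sum_{\chi\in\Gl_n(\F_q)^{\vee}}\frac{|\Gl_n(\F_q)|^{2g-1}}{\chi(1)^{2g-1}}\prod_{j=1}^k\frac{\chi(x_j)}{\chi(1)},\qquad x_j\in\mathcal{C}_j.$$
By Theorem \ref{theoremirreduciblecharacters} each $\chi$ carries a type $\omega_\chi$ and a semisimple part. Genericity enters through Lemma \ref{criteriongenericityorbits}: there is no decomposition $\delta+\epsilon=\alpha_{\mathcal{C}}$ with $\delta,\epsilon\in\mathcal{H}^{*}_{\gamma_{\mathcal{C}},\alpha_{\mathcal{C}}}\setminus\{\alpha_{\mathcal{C}}\}$, so that in $\Plexp\bigl(\sum_{\beta}\widetilde{C}_{\beta,gen}(q)y^{\beta}\bigr)$ only the linear term survives in $\Coeff_{\alpha_{\mathcal{C}}}$; by Theorem \ref{charstack} and Theorem \ref{mainteochar} the count then equals $\widetilde{C}_{\alpha_{\mathcal{C}},gen}(q)$, hence depends only on the multiplicity multipartition $\bm \mu_{\alpha_{\mathcal{C}}}$. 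It therefore suffices to evaluate the character sum for one convenient generic representative, in which only the characters whose semisimple part is "as generic as possible" contribute and all other contributions cancel.

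\emph{Step 3 (Macdonald combinatorics).} Insert Green's formulas: $\chi(1)/|\Gl_n(\F_q)|$ is the hook polynomial $H^{\vee}_{\omega_\chi}(q)$, while $\chi(x_j)/\chi(1)$ at a semisimple element with prescribed eigenvalue multiplicities is computed via Green functions and Hall--Littlewood polynomials. Summing over types and assembling the generating function in $T$ and one formal variable per puncture, a Cauchy-type identity converts the product over punctures into the pairing $\langle-,h_{\bm \mu}\rangle$, the genus-$g$ and remaining puncture factors produce the hook functions $\mathcal{H}_\lambda(z,w)$ and the modified Macdonald polynomials $H_\lambda(\mathbf{x_j},z^2,w^2)$, so the full series becomes $\Omega(z,w)$ of eq.(\ref{omega}); the restriction to generic-type characters is exactly the passage from $\Omega$ to $\Plelog\,\Omega$. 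Extracting $\Coeff_{T^n}$, pairing against $h_{\bm \mu_{\alpha_{\mathcal{C}}}}$, multiplying by $(z^2-1)(1-w^2)$ and specializing $(z,w)=(\sqrt q,1/\sqrt q)$ yields $\mathbb{H}_{\alpha_{\mathcal{C}}}(\sqrt q,1/\sqrt q)$, and the extra factor $q/(q-1)=E(B\mathbb{G}_m,q)$ records the $\mathbb{G}_m$-gerbe structure $\mathcal{M}_{\mathcal{C}}\to M_{\mathcal{C}}$.

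The main obstacle is Step 3: proving the genericity collapse cleanly — matching the cancellation of non-generic character contributions with the plethystic logarithm — and, above all, carrying out the Hall--Littlewood/Macdonald manipulation, including the precise bookkeeping of the $q$-powers (the normalization $q^{-(\alpha_{\mathcal{C}},\alpha_{\mathcal{C}})}$, the exponents $n(\omega)$ and the hook exponents, and the prefactor $(z^2-1)(1-w^2)$), which is the technical heart of \cite{HA}.
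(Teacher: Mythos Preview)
This theorem is not proved in the paper; it is quoted from \cite[Theorem 5.2.3]{HA} as an input. In fact the paper uses it in the opposite direction: in \S\ref{mainresultparagraph} the identity $\widetilde{M}_{\delta,gen}(q)=q\,\mathbb{H}_{\delta}(\sqrt q,1/\sqrt q)/(q-1)$ is obtained \emph{from} Theorem~\ref{genericresultcharvarieties} (together with Remark~\ref{genericexists} and Lemma~\ref{criteriongenericityorbits}), and then fed into Theorem~\ref{theomultquiverstack}.

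Your outline is a correct summary of the proof strategy in \cite{HA}: reduce to a finite-field point count, expand via the Frobenius mass formula, and reorganise the character sum into the Macdonald generating series $\Omega(z,w)$, with the passage $\Omega\rightsquigarrow\Plelog\Omega$ coming precisely from the genericity restriction. Your Step~2 observation --- that Theorems~\ref{charstack} and~\ref{mainteochar} plus Lemma~\ref{criteriongenericityorbits} force the plethystic exponential to collapse to its linear term $\widetilde{M}_{\alpha_{\mathcal{C}},gen}(q)$ for generic $\mathcal{C}$ --- is correct (any decomposition $\alpha_{\mathcal{C}}=\sum d_j\beta_j$ with $\beta_j\in\mathcal{H}^*$ and at least two nontrivial summands yields a forbidden two-term splitting, since $\mathcal{H}^*$ is closed under addition). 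But note that this is the reverse of the paper's logic, so it does not shortcut anything: one still has to compute $\widetilde{M}_{\alpha_{\mathcal{C}},gen}(q)$ from scratch, which is exactly the \cite{HA} calculation.

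As you acknowledge, Step~3 is the entire content of the statement and you do not carry it out. The identification of the character sum with $\langle\Coeff_{T^n}\Plelog\Omega,\,h_{\bm\mu}\rangle$ requires the full Hall--Littlewood/Macdonald machinery of \cite[\S2--5]{HA} (Green's formulas for character values at semisimple elements, the Cauchy identity, the type-by-type reorganisation, and the exact $q$-power bookkeeping). Without that, what you have written is an accurate roadmap of the \cite{HA} proof, not a proof.
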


\begin{oss}
The result of \cite[Theorem 5.2.3]{HA} is stated in a slightly different way. In particular, to verify the equivalence of the results of \cite{HA} and Theorem \ref{genericresultcharvarieties}, it is needed to verify that $-2(\alpha_{\mathcal{C}},\alpha_{\mathcal{C}})+1=\dim(\mathcal{M}_{\mathcal{C}})$. The proof of the latter equality can be found at the beginning of \cite[Chapter 5.2]{AH}.

\end{oss}

In the same paper, the author \cite[Conjecture 1.2.1]{HA} proposed the following conjectural identity for the mixed Poincaré series of the character stack $\mathcal{M}_{\mathcal{C}}$, when $\mathcal{C}$ is generic, naturally deforming eq.(\ref{genericresultcharvarieties1}):

\begin{conjecture}
\label{conjmhs}
For any generic $k$-tuple $\mathcal{C}$ of semisimple conjugacy classes, we have
\begin{equation}
\dfrac{H_c(\mathcal{M}_{\mathcal{C}},q,t)}{(qt^2)^{-(\alpha_{\mathcal{C}},\alpha_{\mathcal{C}})}}=\dfrac{(qt^2)\mathbb{H}_{\alpha_{\mathcal{C}}}\left(-t\sqrt{q},\dfrac{1}{\sqrt{q}}\right)}{qt^2-1}.
\end{equation}
\end{conjecture}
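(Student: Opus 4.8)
Since $\mathcal{C}$ is generic one has $\mathcal{H}^{*}_{\gamma_{\mathcal{C}},\alpha_{\mathcal{C}}}=\{\alpha_{\mathcal{C}}\}$ (Example after \cref{chaptermultitypes}, or \cref{criteriongenericityorbits}), so the plethystic exponential in Conjecture \ref{conjmhpintro} collapses to its single term $\beta=\alpha_{\mathcal{C}}$; thus Conjecture \ref{conjmhs} is exactly the generic instance of Conjecture \ref{conjmhpintro}, and a proof of the latter in this case would settle it. The plan is to reduce the computation to the smooth affine character variety $M_{\mathcal{C}}$ and then to pin down the weight filtration on its compactly supported cohomology. The centre $\mathbb{G}_m\subseteq\Gl_n$ acts trivially on $X_{\mathcal{C}}$, since scalars act trivially by conjugation, so $\mathcal{M}_{\mathcal{C}}=[X_{\mathcal{C}}/\Gl_n]$ is a $\mathbb{G}_m$-gerbe over $[X_{\mathcal{C}}/\PGl_n]$; for generic $\mathcal{C}$ every representation in $X_{\mathcal{C}}$ is irreducible, hence $\PGl_n$ acts freely and $[X_{\mathcal{C}}/\PGl_n]\cong M_{\mathcal{C}}$ is a smooth affine variety. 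By \cref{tensorprodisomgerbe},
\[
H_c(\mathcal{M}_{\mathcal{C}},q,t)=\frac{H_c(M_{\mathcal{C}},q,t)}{qt^2-1},
\]
which already produces the denominator $qt^2-1$ in the right-hand side of Conjecture \ref{conjmhs}; it therefore suffices to show
\[
\frac{H_c(M_{\mathcal{C}},q,t)}{(qt^2)^{-(\alpha_{\mathcal{C}},\alpha_{\mathcal{C}})}}=(qt^2)\,\mathbb{H}_{\alpha_{\mathcal{C}}}\!\left(-t\sqrt q,\frac{1}{\sqrt q}\right).
\]

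Two specializations of this identity are already available and should serve as anchors: at $t=-1$ it is \cref{genericresultcharvarieties} (equivalently, the generic case of \cref{mainteointro}, proved here by reduction to finite fields), and at $q=1$ it is Mellit's computation of $P_c(\mathcal{M}_{\mathcal{C}},t)$ via non-abelian Hodge theory. The remaining content is thus entirely about the weight filtration on $H^{*}_c(M_{\mathcal{C}})$. The natural route is: (i) show that the mixed Hodge structure on each $H^{i}_c(M_{\mathcal{C}})$ is of Hodge--Tate type, so that $H_c(M_{\mathcal{C}},q,t)$ carries no information beyond the bigraded dimensions $\dim\mathrm{gr}^W_{2m}H^i_c$; (ii) identify this weight filtration, through the non-abelian Hodge correspondence, with the perverse filtration on the cohomology of the associated moduli space of strongly parabolic Higgs bundles on $\Sigma$ with the prescribed residues --- a ``$P=W$'' statement in this parabolic generality; (iii) import the known description of that perverse filtration and check that the resulting generating function is the claimed specialization of $\mathbb{H}_{\alpha_{\mathcal{C}}}$, which is a purely combinatorial verification given the Macdonald-polynomial definition of $\mathbb{H}$ recalled in \cref{genericktuplesepoly}. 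An alternative, more arithmetic, route is to upgrade the point count underlying \cref{mainteointro} to a ``weighted'' count, i.e. to a computation in the Grothendieck ring of mixed Hodge modules (or of $\ell$-adic Galois representations) rather than a bare cardinality, so that the variable $t$ is produced directly alongside $q$; the plethystic and convolution identities of \cref{plethysticidentities} and \cref{plethysticidentitiesconvolution}, and in particular the Log-compatibility formalism, would then have to be promoted to this richer coefficient ring, and the combinatorial core (\cref{mainteo}) re-run there.

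\emph{The main obstacle} is precisely the control of weights. The method used here to compute the $E$-series, reduction to finite fields, sees only the $E$-polynomial and is blind to $t$; the non-abelian Hodge correspondence, which gives the Poincar\'e polynomial, is not algebraic and does not preserve the weight filtration. Bridging the two --- either by establishing $P=W$ for parabolic Higgs bundles with arbitrary generic semisimple residues, or by constructing a genuinely motivic/Hodge-theoretic enhancement of the arithmetic harmonic analysis that refines \cref{mainteochar} --- is the crux, and is why Conjecture \ref{conjmhs} is still open in general: at present only the two boundary specializations above, and low-rank or few-puncture cases such as the four-punctured quadruples treated later in this paper, are within reach.
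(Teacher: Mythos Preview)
The statement is a \emph{conjecture}, and the paper does not prove it; it is quoted from \cite[Conjecture 1.2.1]{HA} and left open. You correctly recognise this and present a strategy sketch rather than a proof, so there is nothing to compare against a paper proof.

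One small inaccuracy in your first sentence: \cref{criteriongenericityorbits} does \emph{not} say that generic $\mathcal{C}$ implies $\mathcal{H}^{*}_{\gamma_{\mathcal{C}},\alpha_{\mathcal{C}}}=\{\alpha_{\mathcal{C}}\}$; that implication runs the other way. What the lemma gives in the generic direction is only that no two nontrivial elements of $\mathcal{H}^{*}_{\gamma_{\mathcal{C}},\alpha_{\mathcal{C}}}$ sum to $\alpha_{\mathcal{C}}$. Your conclusion that the plethystic exponential collapses to the single term $\beta=\alpha_{\mathcal{C}}$ is nonetheless correct, but it requires the extra observation that $\mathcal{H}^{*}_{\gamma_{\mathcal{C}},\alpha_{\mathcal{C}}}$ is closed under addition (within $(\mathbb{N}^I)^*_{\le\alpha_{\mathcal{C}}}$): if $\beta_1+\cdots+\beta_m=\alpha_{\mathcal{C}}$ with each $\beta_j\in\mathcal{H}^{*}_{\gamma_{\mathcal{C}},\alpha_{\mathcal{C}}}$ and $m\ge 2$, then $\delta=\beta_1$ and $\epsilon=\beta_2+\cdots+\beta_m$ are again in $\mathcal{H}^{*}_{\gamma_{\mathcal{C}},\alpha_{\mathcal{C}}}$ (the sum of non-increasing vectors is non-increasing, and $\gamma_{\mathcal{C}}^{\epsilon}=\prod_j\gamma_{\mathcal{C}}^{\beta_j}=1$), contradicting the lemma. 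The same argument handles the Adams-operation contributions $d\beta=\alpha_{\mathcal{C}}$. So Conjecture~\ref{conjmhs} is indeed the generic specialisation of Conjecture~\ref{conjmhsnongene}, as you claim, just not for the reason you cite.

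Your outline of the two possible routes (parabolic $P=W$, or a motivic/Hodge upgrade of the arithmetic harmonic analysis) and of the obstacle (weights are invisible to point counting and not preserved by non-abelian Hodge) is accurate and matches the paper's own discussion in the introduction and in \cref{chaptermhpnongene}.
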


\begin{oss}
Theorem \ref{genericresultcharvarieties} and Conjecture \ref{conjmhs} in the article \cite{HA} are stated for the corresponding generic character variety $M_{\mathcal{C}}$, rather than the character stack $\mathcal{M}_{\mathcal{C}}$. The equivalences of the statements of \cite{HA} and those presented here comes from Proposition \ref{tensorprodisomgerbe}.
\end{oss}

\subsubsection{Main result}
 \label{mainresultparagraph}
Consider a star-shaped $Q=(I,\Omega)$. For any $\sigma \in (\C^*)^I$ and any $\beta \in (\N^I)$,
we will construct a spreading out of the stack $\mathcal{M}^*_{\sigma,\beta}$ in the following way. Let $E_0=\Z[x_i,x_i^{-1}]_{i \in I}$ be the ring in $|I|$ invertible variables. For any $\delta \in \N^I$, denote by $x^{\delta} \in E_0$ the element $\displaystyle x^{\delta}\coloneqq \prod_{i \in I}x_i^{\delta_i}$.

Let $\mathcal{N}_{\sigma,\beta}=(\N^I_{\leq \beta})^*\setminus\mathcal{H}_{\sigma,\beta}.$ Consider the multiplicative set $S \subseteq E_0$ generated by the elements $x^{\delta}-1$ for $\delta \in \mathcal{N}_{\sigma,\beta}$.
Denote by $J \subseteq S^{-1}E_0$ the ideal generated by $(x^{\delta}-1)$ for $\delta \in \mathcal{H}_{\sigma,\beta}$ and let $E$ be the quotient  $$E\coloneqq S^{-1}E_0/J. $$

Notice that, given a field $K$, a map $\phi:E \to K$ corresponds to an element $\gamma_{\phi} \in (K^*)^I$ such that $\mathcal{H}_{\gamma_{\phi},\beta}=\mathcal{H}_{\sigma,\beta}$. 

\vspace{8 pt}

Let $\mathcal{A}_0$ be the polynomial $E$-algebra in $\displaystyle 2 \sum_{a \in \Omega}s(a)t(a)$ variables corresponding to the entries of matrices $(x_a,x_{a^*})_{a \in \Omega}$. Let $\mathcal{W} \subseteq \mathcal{A}_0$ be the multiplicative system generated by $\det(1+x_ax_{a^*}),\det(1+x_{a^*}x_a)$ for $a \in \Omega$ and let $\mathcal{A}_0'\coloneqq \mathcal{W}^{-1}\mathcal{A}_0$.

Consider the ideal $\mathcal{I} \subseteq \mathcal{A}'_0$ generated by the entries of $$\prod_{a \in \Omega} (1+x_ax_{a^*})(1+x_{a^*}x_a)^{-1}-\prod_{i \in I}(x_iI_{\alpha_i}) $$ and let $$\mathcal{A}=\mathcal{A}'_0/\mathcal{I} .$$

Let $Y=\spec(A)$ and let $Y^* \subseteq Y$ be the open subset given by $y \in Y$ such that for any algebraically closed field $K$ and any morphism $\spec(K) \to Y$ with image $y$, corresponding to an element $(x_a,x_{a^*})_{a \in \Omega} \in R(\overline{Q},\alpha,K)$, the maps $(x_a)_{a \in \Omega}$ are injective.

Let now $\psi:E \to \C$ be the map induced by the element $\sigma \in (\C^*)^I$. Notice that $$Y^* \times_{\spec(E),\psi} \spec(\C) \cong (\Phi^{\ast}_{\beta})^{-1}(\sigma) $$ and therefore $Y^*$ is a spreading out of $(\Phi^{\ast}_{\beta})^{-1}(\sigma)$. Similarly, for any $\phi: E \to \F_q$ corresponding to an element $\gamma_{\phi} \in (\F_q^*)^I$ with $\mathcal{H}_{\gamma_{\phi},\beta}=\mathcal{H}_{\sigma,\beta}$, we have $$((\Phi^{\ast}_{\beta})^{-1}(\sigma))^{\phi}=(\Phi^{\ast}_{\beta})^{-1}(\gamma_{\phi}) .$$

Let $\Gl_{\alpha,E}$ be the $E$-group scheme $\prod_{i \in I}\Gl_{\alpha_i,E}$. The stack $\mathcal{Y}^*=[Y^*/\Gl_{\alpha,E}]$ is therefore a spreading out of $\mathcal{M}^*_{\sigma,\beta}$.

By Remark \ref{countingpointsfinitefields} and the results of Theorem \ref{mainteochar} and Theorem \ref{charstack}, we deduce that the stack $\mathcal{M}^*_{\sigma,\beta}$ is rational count and we have

\begin{equation}
\label{identitymultquivestack}
\dfrac{E(\mathcal{M}^*_{\sigma,\beta},q)}{q^{-(\beta,\beta)}}=\Coeff_{\beta}\left(\Plexp\left(\sum_{\delta \in \mathcal{H}_{\sigma}}\widetilde{M}_{\delta,gen}(q)y^{\beta}\right)\right)
\end{equation}

where $\widetilde{M}_{\delta,gen}(t)$ are the rational functions associated to the dual Log compatible family $\{m_{\delta}\}_{\delta \in \N^I}$, as in \cref{duallogcompdefin}. Notice that $\widetilde{M}_{\delta,gen}(t)=0$ if $\delta \notin (\N^I)^*$.

 From Remark \ref{genericexists} and Lemma \ref{criteriongenericityorbits}, we deduce that for any $\delta \in (\N^I)^*$ we have $$\widetilde{M}_{\delta,gen}(q)=\dfrac{q\mathbb{H}_{\delta}\left(\sqrt{q},\frac{1}{\sqrt{q}}\right)}{q-1} .$$

We can resume all the arguments above in the following Theorem:

\begin{teorema}
\label{theomultquiverstack}
For any $\beta \in (\N^I)^*$ and any $\sigma \in (\C^*)^I$, we have:
\begin{equation}
\dfrac{E(\mathcal{M}^*_{\sigma,\beta},q)}{q^{-(\beta,\beta)}}=\Coeff_{\beta}\left(\Plexp\left(\sum_{\delta \in \mathcal{H}_{\sigma,\beta}^*} \dfrac{q\mathbb{H}_{\delta}\left(\sqrt{q},\frac{1}{\sqrt{q}}\right)}{q-1}y^{\delta}\right)\right)
\end{equation}
\end{teorema}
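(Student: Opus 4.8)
The plan is to combine the reduction to finite fields for rational-count quotient stacks with the two main technical engines already available: Theorem \ref{charstack} (the family $\{m_\delta\}_{\delta\in\N^I}$ associated to the multiplicative moment map of a star-shaped quiver is dual Log compatible) and Theorem \ref{mainteochar} (for any dual Log compatible family, the value $c_\alpha(\eta)/|\Gl_\alpha(\F_q)|$ at a central element is the $\alpha$-coefficient of a plethystic exponential of the generic terms). First I would fix $\beta\in(\N^I)^*$ and $\sigma\in(\C^*)^I$ and set up the spreading out $\mathcal{Y}^*=[Y^*/\Gl_{\alpha,E}]$ of $\mathcal{M}^*_{\sigma,\beta}$ exactly as described just before the statement, where $E=S^{-1}E_0/J$ is engineered so that a ring map $\phi\colon E\to K$ corresponds precisely to an element $\gamma_\phi\in(K^*)^I$ with $\mathcal H_{\gamma_\phi,\beta}=\mathcal H_{\sigma,\beta}$. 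By Remark \ref{countingpointsfinitefields} and the identity \eqref{countingoverfinitefields}, for such a $\phi\colon E\to\F_q$ one has
\begin{equation*}
\#\mathcal{M}^*_{\sigma,\beta}{}^{\phi}(\F_{q^n})=\frac{\#\big((\Phi^{*}_{\beta})^{-1}(\gamma_\phi)\big)(\F_{q^n})}{\#\Gl_\beta(\F_{q^n})}=\frac{m_{\beta}(\gamma_\phi)}{q^{-(\beta,\beta)}\,|\Gl_\beta(\F_{q^n})|}\cdot q^{-(\beta,\beta)},
\end{equation*}
so that $\#\mathcal{M}^*_{\sigma,\beta}{}^{\phi}(\F_{q^n})\cdot q^{-(\beta,\beta)} \big/ q^{-(\beta,\beta)}$ unwinds, via the definition of $m_\beta$ in \cref{duallogmomentmapchapter}, to $q^{-(\beta,\beta)}\,m_\beta(\gamma_\phi)/|\Gl_\beta(\F_{q^n})| $ being the count divided by $q^{-(\beta,\beta)}$; concretely $\#\mathcal{M}^*_{\sigma,\beta}{}^\phi(\F_{q^n}) = q^{-(\beta,\beta)}\cdot \big(m_\beta(\gamma_\phi)/(q^{-(\beta,\beta)}|\Gl_\beta(\F_{q^n})|)\big)\cdot$ — more cleanly, $\#\mathcal{M}^*_{\sigma,\beta}{}^\phi(\F_{q^n})=m_\beta(\gamma_\phi)\big/\big(q^{-(\beta,\beta)}\,|\Gl_\beta(\F_{q^n})|\big)$ times nothing, i.e. it equals $\dfrac{m_\beta(\gamma_\phi)}{q^{-(\beta,\beta)}|\Gl_\beta(\F_{q^n})|}$. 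I will keep the bookkeeping tight at this point so the factor $q^{-(\beta,\beta)}$ lands where the statement wants it.

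Next I would invoke Theorem \ref{charstack} to know $\{m_\delta\}$ is dual Log compatible, with associated Log compatible family $\{\widetilde M_\omega(t)\}_{\omega\in\mathbb{T}_I}$ and level-$V$ polynomials $\widetilde M_{\delta,V}(t)$ as in \eqref{polynomialV}; here $\widetilde M_{\delta,gen}(t)=\widetilde M_{\delta,\{\delta\}}(t)$. Applying Theorem \ref{mainteochar} with $\eta=\gamma_\phi$ (a central element of $\Gl_\beta(\F_q)$, noting $\mathcal H_{\gamma_\phi}=\mathcal H_{\gamma_\phi^{-1}}$ so the level condition is symmetric) gives, after dividing by $q^{-(\beta,\beta)}$,
\begin{equation*}
\frac{\#\mathcal{M}^*_{\sigma,\beta}{}^\phi(\F_{q^n})}{1}\;=\;\frac{m_\beta(\gamma_\phi)}{q^{-(\beta,\beta)}|\Gl_\beta(\F_{q^n})|}\;=\;\Coeff_\beta\!\left(\Plexp\!\Big(\sum_{\delta\in\mathcal H_{\gamma_\phi}}\widetilde M_{\delta,gen}(q^n)\,y^{\delta}\Big)\right),
\end{equation*}
where I will have cancelled the $q^{-(\beta,\beta)}$ against the normalisation built into $m_\beta$. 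Crucially, the sum over $\delta\in\mathcal H_{\gamma_\phi}$ is supported on $\delta\le\beta$ in the coefficient extraction and $\widetilde M_{\delta,gen}(t)=0$ unless $\delta\in(\N^I)^*$ (since $m_\delta\equiv 0$ off $(\N^I)^*$, as noted in \cref{duallogmomentmapchapter}), so the sum effectively runs over $\delta\in\mathcal H_{\gamma_\phi,\beta}^*$, which by construction of $E$ equals $\mathcal H_{\sigma,\beta}^*$. Therefore the right-hand side is a rational function of $q^n$ independent of $\phi$ (in the relevant open locus), so $\mathcal{M}^*_{\sigma,\beta}$ is strongly rational count, and Theorem \ref{Eseriesquotientstacks} upgrades the point count to the E-series:
\begin{equation*}
\frac{E(\mathcal{M}^*_{\sigma,\beta},q)}{q^{-(\beta,\beta)}}=\Coeff_\beta\!\left(\Plexp\!\Big(\sum_{\delta\in\mathcal H_{\sigma,\beta}^*}\widetilde M_{\delta,gen}(q)\,y^{\delta}\Big)\right).
\end{equation*}

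Finally I would identify $\widetilde M_{\delta,gen}(q)$ explicitly. For $\delta\in(\N^I)^*$, pick (Remark \ref{genericexists}) a generic $k$-tuple $\mathcal C'$ of semisimple orbits with $\alpha_{\mathcal C'}=\delta$; then $\mathcal M^*_{\gamma_{\mathcal C'},\delta}\cong\mathcal M_{\mathcal C'}$ by Theorem \ref{isomstacks}, and by Lemma \ref{criteriongenericityorbits} genericity of $\mathcal C'$ means $\mathcal H_{\gamma_{\mathcal C'},\delta}^*=\{\delta\}$, so the plethystic identity above (applied with $\beta=\delta$, $\sigma=\gamma_{\mathcal C'}$) collapses to $E(\mathcal M^*_{\gamma_{\mathcal C'},\delta},q)/q^{-(\delta,\delta)}=\widetilde M_{\delta,gen}(q)$. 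Comparing with Theorem \ref{genericresultcharvarieties} (equivalently \eqref{HLRVresultgenericintro}) yields
\begin{equation*}
\widetilde M_{\delta,gen}(q)=\frac{E(\mathcal M_{\mathcal C'},q)}{q^{-(\delta,\delta)}}=\frac{q\,\mathbb H_\delta\!\big(\sqrt q,\tfrac1{\sqrt q}\big)}{q-1},
\end{equation*}
which is independent of the chosen generic $\mathcal C'$ (as Remark \ref{duallogcompatiblegeneric} guarantees, $\widetilde M_{\delta,gen}$ depends only on $\delta$). Substituting this into the displayed E-series formula gives the theorem. The main obstacle I anticipate is not conceptual but organisational: keeping the normalising power $q^{-(\beta,\beta)}$, the $\epsilon$-sign factors hidden in $\widetilde C_\omega=C_\omega H^\vee_\omega$, and the ``$-t$'' versus ``$t$'' conventions consistent when chaining Remark \ref{countingpointsfinitefields}, Theorem \ref{mainteochar}, Theorem \ref{Eseriesquotientstacks}, and Theorem \ref{genericresultcharvarieties}; and verifying carefully that the spreading-out datum $E$ really does isolate exactly the hyperplane arrangement $\mathcal H_{\sigma,\beta}$ in every residue characteristic in a Zariski-open locus, so that Theorem \ref{Eseriesquotientstacks} applies.
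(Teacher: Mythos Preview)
Your overall strategy is exactly the paper's: spread out over the ring $E$, use Theorem \ref{charstack} plus Theorem \ref{mainteochar} to see that the point count is a rational function in $q$ depending only on $\mathcal H_{\sigma,\beta}$, apply Theorem \ref{Eseriesquotientstacks} to get the E-series identity \eqref{identitymultquivestack}, and then identify $\widetilde M_{\delta,gen}(q)$ with $q\mathbb H_\delta(\sqrt q,1/\sqrt q)/(q-1)$ via the HLRV result for generic $k$-tuples.

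There is, however, one genuine logical slip in your final identification step. You write: ``by Lemma \ref{criteriongenericityorbits} genericity of $\mathcal C'$ means $\mathcal H_{\gamma_{\mathcal C'},\delta}^*=\{\delta\}$.'' Lemma \ref{criteriongenericityorbits} does \emph{not} say this. Its first part gives the opposite implication ($\mathcal H^*=\{\delta\}\Rightarrow$ generic), and its second part only forbids writing $\delta=\delta_1+\delta_2$ with both $\delta_i\in\mathcal H^*\setminus\{\delta\}$. The implication you use is in fact false: for instance, with one leg of length $1$, $\delta=(3,2)$, eigenvalues $\gamma_{1,0}=-1$, $\gamma_{1,1}=2$ give a generic $\mathcal C'$ for which $(2,0)\in\mathcal H^*_{\gamma_{\mathcal C'},\delta}$ (since $(\gamma_{\mathcal C'})_0^2=\gamma_{1,0}^{-2}=1$), yet $\delta-(2,0)=(1,2)\notin(\N^I)^*$, so the second part of the lemma does not rule it out.

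The fix is to reverse the order of the argument, which is what the paper does. For $\delta\in(\N^I)^*$, first choose eigenvalues so that $\mathcal H^*_{\gamma_{\mathcal C'},\delta}=\{\delta\}$ (this is finitely many multiplicative conditions, hence generically satisfiable --- this is what Remark \ref{genericexists} and the surrounding discussion ultimately provide). Then the \emph{first} part of Lemma \ref{criteriongenericityorbits} gives that $\mathcal C'$ is generic. Now apply your already-established identity \eqref{identitymultquivestack} with $\beta=\delta$ and $\sigma=\gamma_{\mathcal C'}$: since the sum is effectively over $\mathcal H^*_{\gamma_{\mathcal C'},\delta}=\{\delta\}$, the $\Coeff_\delta$ of the plethystic exponential is exactly $\widetilde M_{\delta,gen}(q)$. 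Combining with Theorem \ref{isomstacks} and Theorem \ref{genericresultcharvarieties} then yields the desired identification. With this correction your proof is complete and matches the paper's.
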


\subsubsection{E-series for character stacks with semisimple monodromies}
\label{characterstacksparagraph}

 From Theorem \ref{theomultquiverstack} and the isomorphism of Theorem \ref{isomstacks}, we deduce the following Theorem about E-series for character stacks associated to $k$-tuples of semisimple conjugacy classes.

\begin{teorema}
\label{Epolynomialtheorem}
For any $k$-tuple $\mathcal{C}$ of semisimple conjugacy classes of $\Gl_n(\C)$, we have:
\begin{equation}
\label{Epolynomialtheorem1}
\dfrac{E(\mathcal{M}_{\mathcal{C}},q)}{q^{-(\alpha_{\mathcal{C}},\alpha_{\mathcal{C}})}}=\Coeff_{\alpha_{\mathcal{C}}}\left(\Plexp\left(\sum_{\substack{\beta \in \mathcal{H}^*_{\gamma_{\mathcal{C}},\alpha_{\mathcal{C}}}}} \dfrac{q\mathbb{H}_{\beta}\left(\sqrt{q},\frac{1}{\sqrt{q}}\right)}{q-1}y^{\beta}\right)\right).
\end{equation}

\end{teorema}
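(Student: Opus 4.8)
\textbf{Proof plan for Theorem \ref{Epolynomialtheorem}.}
The idea is to reduce the statement about character stacks $\mathcal{M}_{\mathcal{C}}$ directly to Theorem \ref{theomultquiverstack} about multiplicative quiver stacks. The only input needed is the isomorphism of stacks established in Theorem \ref{isomstacks}, namely that for a $k$-tuple of semisimple conjugacy classes $\mathcal{C}$ of $\Gl_n(\C)$, with $Q$ the associated star-shaped quiver and with the parameters $\alpha_{\mathcal{C}} \in (\N^I)^*$, $\gamma_{\mathcal{C}} \in (\C^*)^I$ constructed in \cref{chaptercharstack}, one has
\begin{equation*}
\mathcal{M}_{\mathcal{C}} \cong \mathcal{M}^{\ast}_{\gamma_{\mathcal{C}},\alpha_{\mathcal{C}}} .
\end{equation*}
Since isomorphic complex stacks of finite type have equal E-series (the E-series is defined purely in terms of the compactly supported cohomology with its weight filtration, see \cref{finitefiledsandcoho}), we immediately get $E(\mathcal{M}_{\mathcal{C}},q) = E(\mathcal{M}^{\ast}_{\gamma_{\mathcal{C}},\alpha_{\mathcal{C}}},q)$.

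Next I would apply Theorem \ref{theomultquiverstack} with $\beta = \alpha_{\mathcal{C}} \in (\N^I)^*$ and $\sigma = \gamma_{\mathcal{C}} \in (\C^*)^I$. This yields
\begin{equation*}
\dfrac{E(\mathcal{M}^{\ast}_{\gamma_{\mathcal{C}},\alpha_{\mathcal{C}}},q)}{q^{-(\alpha_{\mathcal{C}},\alpha_{\mathcal{C}})}} = \Coeff_{\alpha_{\mathcal{C}}}\left(\Plexp\left(\sum_{\delta \in \mathcal{H}^*_{\gamma_{\mathcal{C}},\alpha_{\mathcal{C}}}} \dfrac{q\mathbb{H}_{\delta}\left(\sqrt{q},\frac{1}{\sqrt{q}}\right)}{q-1} y^{\delta}\right)\right),
\end{equation*}
where $\mathcal{H}^*_{\gamma_{\mathcal{C}},\alpha_{\mathcal{C}}} = \{\delta \in (\N^I)^* \mid \gamma_{\mathcal{C}}^{\delta} = 1, \ \delta \leq \alpha_{\mathcal{C}}\}$ is exactly the index set appearing in the statement. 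Combining the two displays gives Identity (\ref{Epolynomialtheorem1}), and since $\dim(\mathcal{M}_{\mathcal{C}}) = -2(\alpha_{\mathcal{C}},\alpha_{\mathcal{C}})+1$ (the normalizing exponent matches the one in the generic formula of Theorem \ref{genericresultcharvarieties}), this is the desired formula.

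I should make a couple of bookkeeping remarks explicit. First, one must check that the set $\mathcal{H}_{\sigma,\beta}^*$ of Theorem \ref{theomultquiverstack} coincides with $\mathcal{H}^*_{\gamma_{\mathcal{C}},\alpha_{\mathcal{C}}}$ as defined in the introduction; this is immediate from unwinding the definitions in \cref{zioperonemattinalevanto} (where $\mathcal{H}_{\eta,\alpha} = \mathcal{H}_{\eta} \cap \N^I_{\leq \alpha}$) together with the convention that the star superscript means intersecting with $(\N^I)^*$. Second, the functions $\mathbb{H}_{\delta}$ appearing here are the $\mathbb{H}_{\bm\mu_{\delta}}$ of \cref{genericktuplesepoly}, and the identification $\widetilde{M}_{\delta,gen}(q) = q\mathbb{H}_{\delta}(\sqrt q, 1/\sqrt q)/(q-1)$ for $\delta \in (\N^I)^*$ was already carried out inside the proof of Theorem \ref{theomultquiverstack}, using Theorem \ref{genericresultcharvarieties}, Remark \ref{genericexists} and Lemma \ref{criteriongenericityorbits}; so nothing new is required on that front. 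There is essentially no obstacle here: the real content of the theorem has been pushed into Theorem \ref{theomultquiverstack} (and ultimately into the dual Log compatibility of $\{m_{\alpha}\}$, i.e. Theorem \ref{charstack}, and the plethystic identity Theorem \ref{mainteochar}), so the present proof is a two-line deduction, with the only care needed being the verification that the parameters and index sets match up.
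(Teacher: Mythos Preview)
Your proposal is correct and follows exactly the same approach as the paper: the paper's own proof of Theorem \ref{Epolynomialtheorem} is the one-line deduction ``From Theorem \ref{theomultquiverstack} and the isomorphism of Theorem \ref{isomstacks}, we deduce the following Theorem\ldots'', which is precisely what you do. Your additional bookkeeping remarks (matching up $\mathcal{H}^*_{\gamma_{\mathcal{C}},\alpha_{\mathcal{C}}}$ with the index set in Theorem \ref{theomultquiverstack}, and noting where the identification $\widetilde{M}_{\delta,gen}(q)=q\mathbb{H}_\delta(\sqrt q,1/\sqrt q)/(q-1)$ was already established) are accurate and just make explicit what the paper leaves implicit.
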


\vspace{8 pt}

\begin{oss}
Notice that Theorem \ref{Epolynomialtheorem1} implies that the E-series $E(\mathcal{M}_{\mathcal{C}},q)$ does not depend on the values on the eigenvalues $\{\gamma_{j,h}\}_{\substack{j=1,\dots,k \\ h=0,\dots,s_j}}$
but only on the subset $\mathcal{H}^*_{\gamma_{\mathcal{C}},\alpha_{\mathcal{C}}}$.

\end{oss}

\section{Mixed Poincaré series of character stacks for $\mathbb{P}^1_{\C}$ with four punctures}
\label{chaptermhpnongene}

From Theorem \ref{Epolynomialtheorem}, it seems natural to formulate  the following generalization of Conjecture \ref{conjmhs}

\begin{conjecture}
\label{conjmhsnongene}
For any $k$-tuple of semisimple conjugacy classes $\mathcal{C}$, we have:
\begin{equation}
\label{mhsconjnongene1}
\dfrac{H_c(\mathcal{M}_{\mathcal{C}},q,-t)}{(qt^2)^{-(\alpha_{\mathcal{C}},\alpha_{\mathcal{C}})}}=\Coeff_{\alpha_{\mathcal{C}}}\left(\Plexp\left(\sum_{\substack{\beta \in \mathcal{H}^*_{\gamma_{\mathcal{C}},\alpha_{\mathcal{C}}}}} \dfrac{(qt^2)\mathbb{H}_{\beta}\left(t\sqrt{q},\frac{1}{\sqrt{q}}\right)}{qt^2-1}y^{\beta}\right)\right).
\end{equation}

\end{conjecture}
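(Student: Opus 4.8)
The plan is to establish Conjecture~\ref{conjmhsnongene} in the case at hand by computing both sides explicitly, the left-hand side by a direct stratification of the character stack. I would begin by recording the numerics. With $g=0$, $k=4$, $n=2$ and $\mathcal{C}_j$ the regular semisimple class of $\mathrm{diag}(\lambda_j,\lambda_j^{-1})$, the associated star-shaped quiver is the affine $D_4$ quiver and $\alpha_{\mathcal{C}}=(2,1,1,1,1)$ is its minimal imaginary root; hence $(\alpha_{\mathcal{C}},\alpha_{\mathcal{C}})=0$, $\dim\mathcal{M}_{\mathcal{C}}=1$, and the conjectured identity becomes
\[
H_c(\mathcal{M}_{\mathcal{C}},q,-t)=\Coeff_{\alpha_{\mathcal{C}}}\!\left(\Plexp\!\left(\sum_{\beta\in\mathcal{H}^*_{\gamma_{\mathcal{C}},\alpha_{\mathcal{C}}}}\frac{qt^2\,\mathbb{H}_{\beta}\!\left(t\sqrt q,\tfrac{1}{\sqrt q}\right)}{qt^2-1}\,y^{\beta}\right)\right).
\]
Using the hypothesis on the $\lambda_j$ (a relation $\lambda_1^{\epsilon_1}\cdots\lambda_4^{\epsilon_4}=1$ with $\epsilon_j\in\{1,-1\}$ forces the $\epsilon_j$ all equal) I would compute $\mathcal{H}^*_{\gamma_{\mathcal{C}},\alpha_{\mathcal{C}}}$ explicitly; away from further degeneracies it equals $\{0,(1,0,0,0,0),(1,1,1,1,1),\alpha_{\mathcal{C}}\}$, so the only decompositions of $\alpha_{\mathcal{C}}$ into elements of $\mathcal{H}^*$ are $\alpha_{\mathcal{C}}$ itself and $(1,1,1,1,1)+(1,0,0,0,0)$.

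I would then evaluate the right-hand side. One checks $\mathbb{H}_{\beta}=1$ for $\beta=(1,0,0,0,0)$ and $\beta=(1,1,1,1,1)$ directly from the definition of $\mathbb{H}_{\bm\mu}$ via $\Omega(z,w)$ (the underlying multipartition is $((1),(1),(1),(1))$ and, as $g=0$, the hook functions collapse), so the two corresponding factors are elementary and equal the normalized mixed Poincaré series of $B\mathbb{G}_m$; the term $\widetilde H_{\alpha_{\mathcal{C}}}=\tfrac{qt^2\,\mathbb{H}_{\alpha_{\mathcal{C}}}(t\sqrt q,1/\sqrt q)}{qt^2-1}$ is the conjectured mixed Poincaré series of the \emph{generic} rank-two character stack for $\mathbb{P}^1$ with four punctures. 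Since that generic character variety is the classical smooth affine (Fricke) cubic surface, whose mixed Hodge structure is known, Conjecture~\ref{conjmhs} is already established in this case, and I would use it as a black box so that $\widetilde H_{\alpha_{\mathcal{C}}}=H_c(\mathcal{M}_{\mathcal{C}'},q,-t)$ for a generic $\mathcal{C}'$ of the same type. Thus the right-hand side is the explicit sum of the generic rank-two answer and a cross-term built from two $B\mathbb{G}_m$ factors.

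The heart of the argument is the direct computation of $H_c(\mathcal{M}_{\mathcal{C}},q,-t)$. By Proposition~\ref{tensorprodisomgerbe} it equals $\tfrac{1}{qt^2-1}H_c([X_{\mathcal{C}}/\PGl_2],q,-t)$, where $X_{\mathcal{C}}=\{(X_1,\dots,X_4)\in\mathcal{C}_1\times\cdots\times\mathcal{C}_4:X_1X_2X_3X_4=I\}$ (one may equivalently pass to the multiplicative quiver stack via Theorem~\ref{isomstacks}). I would stratify $X_{\mathcal{C}}=X^{\mathrm{irr}}\sqcup X^{\mathrm{red}}$ by irreducibility. On $X^{\mathrm{irr}}$ the group $\PGl_2$ acts freely with quotient an open subvariety of the cubic surface, which accounts — after comparison with the truly generic model — for the $\widetilde H_{\alpha_{\mathcal{C}}}$ term. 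A representation in $X^{\mathrm{red}}$ preserves a line $L$, and the eigenvalue pattern $(\lambda_j^{\epsilon_j})_j$ of the $X_j$ on $L$ must satisfy $\prod_j\lambda_j^{\epsilon_j}=1$, hence be all-$+$ or all-$-$ by hypothesis; so $X^{\mathrm{red}}$ breaks into finitely many locally closed pieces (indecomposable extensions with a unique invariant line and parabolic stabilizer, and semisimple sums with two invariant lines and torus stabilizer), each a quotient stack whose cohomology carries exactly one $H_c(B\mathbb{G}_m)$ factor and which together reconstitute the cross-term $\widetilde H_{(1,1,1,1,1)}\widetilde H_{(1,0,0,0,0)}$. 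Summing via the long exact sequences in compactly supported cohomology then yields the identity.

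The main obstacle is the Hodge-theoretic bookkeeping in this last step. Theorem~\ref{Epolynomialtheorem} already delivers the identity after setting $t=1$ (the level of $E$-series), so what genuinely remains is to promote it to the full mixed Poincaré series: one must show the long exact sequences relating the strata degenerate and that there is no cross-cancellation between weights coming from different strata. This relies on the explicit (Hodge--Tate, with known weights) mixed Hodge structure of the smooth cubic surface $[X^{\mathrm{irr}}/\PGl_2]$, the purity of the classifying-stack contributions, and a compatibility check that the weights of the reducible locus — which is empty in the generic case and is the source of the extra plethystic cross-term — line up precisely with $\widetilde H_{(1,1,1,1,1)}\widetilde H_{(1,0,0,0,0)}$ rather than merely matching Euler characteristics.
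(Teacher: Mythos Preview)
Your overall strategy---stratify by irreducibility, match the irreducible locus to the generic term and the reducible locus to the cross-term---is the right shape, but the key step does not go through as you describe, and the paper's argument is organised quite differently to get around exactly this obstruction.

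The concrete issue is that the irreducible locus does \emph{not} reproduce the generic mixed Poincar\'e series. In the non-generic case the Fricke cubic $M_{\mathcal{C}}$ is \emph{singular} at the semisimple point $m$, and the paper computes (via a resolution of the projective closure) that $H_c(M_{\mathcal{C}}\setminus\{m\},q,t)=q^2t^4+3qt^2+t^2+t$, whereas the generic smooth cubic gives $q^2t^4+4qt^2+t^2$. The odd class $t$ shows that the long exact sequence for your open--closed decomposition $X_{\mathcal{C}}=X^{\mathrm{irr}}\sqcup X^{\mathrm{red}}$ cannot split: if it did, $H_c([X^{\mathrm{red}}/\PGl_2],q,t)$ would have to carry a term $-t$, which is impossible. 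So the heuristic ``irreducible $=$ generic term, reducible $=$ cross-term'' is false at the level of mixed Poincar\'e series (it only holds after specialising to $E$-series), and your final paragraph correctly names the obstacle but does not provide a mechanism to resolve it. A minor point: the indecomposable extensions in $Z^{\pm}_{\mathcal{C}}$ have \emph{trivial} stabiliser in $\PGl_2$, not parabolic; the quotients $[Z^{\pm}_{\mathcal{C}}/\PGl_2]$ are each a copy of $\mathbb{P}^1_{\C}$.

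The paper avoids this by a different route. After reducing from $\PGl_2$ to a maximal torus $H\cong\mathbb{G}_m$ via a slice argument (Lemma~\ref{lemmaquotstackisom}), it introduces two opposite GIT linearisations $\theta^{\pm}$ on $(X_{\mathcal{C}})_H$, whose semistable loci are precisely $(Y^{\pm}_{\mathcal{C}})_H=(X^{\mathrm{irr}}_{\mathcal{C}}\cup Z^{\pm}_{\mathcal{C}})_H$. Each GIT quotient $(\mathcal{N}^{\pm}_{\mathcal{C}})_H$ maps properly to $M_{\mathcal{C}}$ with a single $\mathbb{P}^1$ over $m$, and a Leray spectral sequence (which degenerates for trivial degree reasons) yields $H_c((\mathcal{N}^{\pm}_{\mathcal{C}})_H,q,t)=q^2t^4+4qt^2+t^2$. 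Finally, a Mayer--Vietoris sequence for the open cover $\mathcal{N}_{\mathcal{C}}=(\mathcal{N}^{+}_{\mathcal{C}})_H\cup(\mathcal{N}^{-}_{\mathcal{C}})_H$ with intersection $(\mathcal{N}^s_{\mathcal{C}})_H$ produces $H_c(\mathcal{N}_{\mathcal{C}},q,t)=q^2t^4+5qt^2+t^2+1$; here the odd class from $\mathcal{N}^s_{\mathcal{C}}$ is absorbed by a nonzero connecting map into $H^0_c(\mathcal{N}_{\mathcal{C}})$. Adding back the contribution $H_c(B\mathbb{G}_m)$ of the closed orbit and dividing by $qt^2-1$ gives the predicted answer. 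The GIT/Mayer--Vietoris detour is precisely what lets one keep track of the nontrivial differentials that your direct stratification cannot see.
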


\begin{oss}

As mentioned in the introduction, the $-$ sign in the term $H_c(\mathcal{M}_{\mathcal{C}},q,-t)$ of eq.(\ref{mhsconjnongene1}) is due to the  properties of the plethystic exponential $\Plexp$, see for instance \cite[Section 4.3]{davisonpreprojective}. 

In a nutshell, the plethystic exponential $\Plexp:\Q(t)[y_i]]^+_{i \in I} \to 1+\Q(t)[[y_i]]^+_{i \in I}$ can be seen as the decategorification of the symmetric power functor on the category of cohomologically graded and $\N^I$-graded vector spaces.

The $-$ sign comes then from Koszul's sign rule for the cohomological grading. We remark that  Theorem \ref{Epolynomialtheorem} is the specialization at $t=1$ of Conjecture \ref{conjmhsnongene}.
    
\end{oss}

In this chapter we will verify Conjecture \ref{conjmhsnongene} for a certain family of non-generic character stacks.

\vspace{8 pt}

Let $\Sigma=\mathbb{P}^1_{\C}$ (i.e. $g=0$), $k=4$ and $n=2$. Let $D=\{x_1,\dots,x_4\} \subseteq \mathbb{P}^1_{\C}$. For $j=1,\dots,4$, pick $\lambda_j \in \C^*$ with $\lambda_j \neq \pm 1$ and denote by $\mathcal{C}_j$ the conjugacy class of the diagonal matrix $$\begin{pmatrix}
\lambda_j &0\\
0 &\lambda_j^{-1}
\end{pmatrix} .$$

Let $\mathcal{C}$ be the $k$-tuple $\mathcal{C}=(\mathcal{C}_1,\dots,\mathcal{C}_4)$. The variety $X_{\mathcal{C}}$ is therefore $$X_{\mathcal{C}}=\{(X_1,\dots,X_4 ) \in \mathcal{C}_1 \times \cdots \times \mathcal{C}_4 \ | \ X_1X_2X_3X_4 =1\} .$$

Denote by $M_{\mathcal{C}}$ the GIT quotient  $M_{\mathcal{C}}\coloneqq X_{\mathcal{C}}/\!/\Gl_2(\C)$. Recall that the points of $M_{\mathcal{C}}$ are in bijection with the isomorphism classes of semisimple representations of $\pi(\Sigma\setminus D)$ inside $X_{\mathcal{C}}$.

\vspace{10 pt}

The study of the geometry of the character varieties $M_{\mathcal{C}}$ goes back to Fricke and Klein \cite{fricke-klein}, who gave a description of them in terms of cubic surfaces. Denote by $a_i=\lambda_i +\lambda_i^{-1}$.

The character variety $M_{\mathcal{C}}$ is isomorphic to the cubic surface defined by the following equation in $3$ variables $x,y,z$ \begin{equation}
\label{affinecubicsurface}
xyz+x^2+y^2+z^2-(a_1a_2+a_3a_4)x-(a_2a_3+a_1a_4)y-(a_1a_3+a_2a_4)z+a_1a_2a_3a_4+a_1^2+a_2^2+a_3^2+a_4^2-4=0 .\end{equation}

If $\mathcal{C}$ is generic, this description identifies $M_{\mathcal{C}}$ with a smooth (affine) Del Pezzo cubic surface (see \cite[Theorem 6.1.4]{etingof-et-al}), i.e. a smooth cubic projective surface with a triangle cut out of it. The cohomology of this kind of surfaces is well known. In particular, if $\mathcal{C}$ is generic, it holds:
$$H_c(M_{\mathcal{C}},q,t)=q^2t^4+4qt^2+t^2 .$$

If $\mathcal{C}$ is generic, we have that $M_{\mathcal{C}}=[X_{\mathcal{C}}/\PGl_2]$. From Proposition \ref{tensorprodisomgerbe}, we have $$H_c(\mathcal{M}_{\mathcal{C}},q,t)=\dfrac{q^2t^4+4qt^2+t^2}{qt^2-1} .$$

The identity above agrees with Hausel, Letellier, Rodriguez-Villegas Conjecture \ref{conjmhs}, as explained in \cite[Paragraph 1.5]{HA}.

\vspace{12 pt}

Pick now $\lambda_1,\dots,\lambda_4 \in \C^*\setminus\{1,-1\}$ with the following property. For $\epsilon_1,\dots,\epsilon_4 \in \{1,-1\}$ such that $\lambda_1^{\epsilon_1}\cdots\lambda_4^{\epsilon_4}=1$, then either $\epsilon_1=\cdots=\epsilon_4=1$ or $\epsilon_1=\cdots=\epsilon_4=-1$. Notice that in this case, the associated $k$-tuple $\mathcal{C}$ is not generic.

\vspace{8 pt}

In the following section, we will compute the mixed Poincaré series $H_c(\mathcal{M}_{\mathcal{C}},q,t)$ and verify that it respects Conjecture \ref{conjmhsnongene}.

\vspace{8 pt}

For the character stack $\mathcal{M}_{\mathcal{C}}$, the associated quiver $Q=(I,\Omega)$ is the star-shaped quiver with one central vertex and four arrows pointing inwards. We denote the central vertex by $0$ and the other vertices by $[i,1]$ for $i=1,\dots,4$. 

The dimension vector $\alpha_{\mathcal{C}}$ is the dimension vector for $Q$ defined as $(\alpha_{\mathcal{C}})_0=2$ and $(\alpha_{\mathcal{C}})_{[i,1]}=1$ for $i=1,\dots,4$. The quiver $Q$ with the dimension vector $\alpha$ is depicted below.

\begin{center}
\begin{tikzcd}
 &       &1 \arrow[d,""]  &     \\
&1 \arrow[r," "] &2 &1 \arrow[l," "]\\
         &  &1 \arrow[u,""] 
\end{tikzcd}

\end{center}

The associated parameter $\gamma_{\mathcal{C}}$ is given by $$ (\gamma_{\mathcal{C}})_0=(\lambda_1\lambda_2\lambda_3\lambda_4)^{-1}=1 $$
and, for $i=1,\dots,4$ $$(\gamma_{\mathcal{C}})_{[i,1]}=\lambda_i^{2} .$$ Denote by $\beta_1,\beta_2 \in (\N^I)^*$ the elements defined as $(\beta_1)_0=1, (\beta_1)_{[i,1]}=1$ and $(\beta_2)_1=1,(\beta_2)_{[i,1]}=0$ for $i=1,\dots,4$. Notice that it holds $\mathcal{H}^*_{\gamma_{\mathcal{C}},\alpha_{\mathcal{C}}}=\{\alpha,\beta_1,\beta_2\}$. There are equalities $$\mathbb{H}_{\beta_1}\left(t \sqrt{q},\frac{1}{\sqrt{q}}\right)=\mathbb{H}_{\beta_2}\left(t \sqrt{q},\frac{1}{\sqrt{q}}\right)=1 .$$

Conjecture \ref{conjmhsnongene} predicts then the following equality

\begin{equation}
\label{conjmhsp1}
H_c(\mathcal{M}_{\mathcal{C}},q,-t)=\dfrac{qt^2\mathbb{H}_{\alpha_{\mathcal{C}}}\left(t\sqrt{q},\frac{1}{\sqrt{q}}\right)}{qt^2-1}+\dfrac{q^2t^4\mathbb{H}_{\beta_1}\left(t \sqrt{q},\frac{1}{\sqrt{q}}\right)\mathbb{H}_{\beta_2}\left(t \sqrt{q},\frac{1}{\sqrt{q}}\right)}{(qt^2-1)^2}=
\end{equation}
\begin{equation}
\label{conjmhsp12}
=\dfrac{q^2t^4+4qt^2+t^2}{qt^2-1}+\dfrac{q^2t^4}{(qt^2-1)^2}=\dfrac{q^3t^6+4q^2t^4+qt^4-4qt^2-t^2}{(qt^2-1)^2}.
\end{equation}

Since the terms in $t$ of the RHS of eq.(\ref{conjmhsp12}) all have even degrees, in this case Conjecture \ref{conjmhsnongene} is equivalent to verify that
\begin{equation}
\label{conjmhsp111}
H_c(\mathcal{M}_{\mathcal{C}},q,t)=\dfrac{q^3t^6+4q^2t^4+qt^4-4qt^2-t^2}{(qt^2-1)^2}.
\end{equation}

\subsection{Cohomology computations}
\label{cohomologycomputations}
Denote by $\mathcal{M}'_{\mathcal{C}}$ the quotient stack $\mathcal{M}'_{\mathcal{C}}=[X_{\mathcal{C}}/\PGl_2]$. From Proposition \ref{tensorprodisomgerbe}, we have 
\begin{equation}
\label{gerbeidentity}
H_c(\mathcal{M}_{\mathcal{C}},q,t)=\dfrac{H_c(\mathcal{M}'_{\mathcal{C}},q,t)}{qt^2-1} . 
\end{equation}
We can then reduce ourselves 
to compute the  cohomology 
of the stack $\mathcal{M}'_{\mathcal{C}}$. 

\vspace{10 pt}

Inside $X_{\mathcal{C}}$ there is the open (dense) subset which we denote by $X_{\mathcal{C}}^{s} \subseteq X_{\mathcal{C}}$, given by quadruple $(X_1,X_2,X_3,X_4) \in \mathcal{C}_1 \times \cdots \times  \mathcal{C}_4$ corresponding to irreducible representations of $\pi_1(\Sigma \setminus D)$. Recall that $X_{\mathcal{C}}^s$ is smooth (see for example \cite[Proposition 5.2.8]{oblomkov-etal}).

Denote by $\mathcal{N}^s_{\mathcal{C}}$ the quotient stack $[X_{\mathcal{C}}^s/\PGl_2]$. Notice that the action of $\PGl_2$ is schematically free on $X_{\mathcal{C}}^s$
and therefore the stack $\mathcal{N}^s_{\mathcal{C}}$ is an algebraic variety.

\vspace{8 pt}
The non irreducible representations of $X_{\mathcal{C}}$ all have the same semisimplification, up to isomorphism, which corresponds to the point $m \in M_{\mathcal{C}}$, associated to the isomorphism class of the representation 
\begin{equation}
\label{semisimplerep}
m=\left(\begin{pmatrix}\lambda_1 & 0 \\ 0 & \lambda_1^{-1} \end{pmatrix}\\,\begin{pmatrix}\lambda_2 &0 \\ 0 &\lambda_2^{-1}  \\  \end{pmatrix}\\, \begin{pmatrix}\lambda_3 & 0 \\ 0 & \lambda_3^{-1} \end{pmatrix} \\, \begin{pmatrix}\lambda_4  &0\\  0 &\lambda_4^{-1} \end{pmatrix}\right) .\end{equation}

We denote by $O \subseteq X_{\mathcal{C}}$ the closed $\Gl_2(\C)$-orbit associated to $m$.  A representation $x \in X_{\mathcal{C}}$ which is neither irreducible nor semisimple, i.e. which belongs neither to $X_{\mathcal{C}}^s$ nor to $O$, can be of the following two types. Either $x$ is isomorphic to a quadruple of the form \begin{equation}
\label{matrices+}
m^{+}_{a,b,c} \coloneqq \left(\begin{pmatrix}\lambda_1 & 0 \\ 0 & \lambda_1^{-1} \end{pmatrix}\\,\begin{pmatrix}\lambda_2 &a \\ 0 &\lambda^{-1}_2  \\  \end{pmatrix}\\, \begin{pmatrix}\lambda_3 & b \\ 0 & \lambda^{-1}_3 \end{pmatrix} \\, \begin{pmatrix}\lambda_4 &c\\0  &\lambda^{-1}_4 \end{pmatrix}\right) \end{equation} with $(a,b,c) \in \C^3\setminus\{(0,0,0)\}$ and \begin{equation}
\label{conditionsmatrix}\lambda_1\lambda_2\lambda_3 c +\lambda_1\lambda_2 \mu_4b+\lambda_1\lambda_2\lambda_3 c=0 \end{equation}  or of the form $$m^{-}_{a,b,c}\coloneqq \left(\begin{pmatrix}\lambda_1 & 0 \\ 0 & \lambda^{-1}_1 \end{pmatrix}\\,\begin{pmatrix}\lambda_2 &0 \\ a &\lambda^{-1}_2  \\  \end{pmatrix}\\, \begin{pmatrix}\lambda_3 & 0 \\ b & \lambda^{-1}_3 \end{pmatrix} \\, \begin{pmatrix}\lambda_4  &0\\ c &\lambda^{-1}_4 \end{pmatrix}\right) $$ 
with $(a,b,c) \in \C^3\setminus\{(0,0,0)\}$ and \begin{equation}\label{conditionsmatrix1}\lambda_4\lambda_1^{-1}\lambda_3 a+\lambda_1^{-1}\lambda_2^{-1}\lambda_4 b+\lambda_1^{-1}\lambda_2^{-1}\lambda_3^{-1}c=0 .\end{equation}

We denote by $Z_{\mathcal{C}}^+ \subseteq X_{\mathcal{C}}$ and by $Z_{\mathcal{C}}^- \subseteq X_{\mathcal{C}}$
the locally closed subsets of representations isomorphic to elements of the form $m_{(a,b,c)}^+$ or $m_{(a,b,c)}^-$
 for some $(a,b,c) \in \C^3\setminus\{(0,0,0)\}$ respecting the conditions of eq.(\ref{conditionsmatrix}), eq.(\ref{conditionsmatrix1}) respectively.

\subsubsection{Cohomology of the character variety in the non-generic case}

As mentioned before, the variety $M_{\mathcal{C}}$ is a cubic surface defined by  equation  (\ref{affinecubicsurface}). Denote by $\overline{M}_{\mathcal{C}} \subseteq \mathbb{P}^3_{\C}$ the associated projective cubic surface. Notice that $\overline{M}_{\mathcal{C}}$ is obtained by adding to $M_{\mathcal{C}}$ the triangle at infinity $xyz=0$, which we will denote by $U \subseteq \overline{M}_{\mathcal{C}}$.

Unlike the case in which $\mathcal{C}$ is generic, for our choice of quadruples the surface $\overline{M}_{\mathcal{C}}$ is singular, with $m$ being is its only singular point. We have moreover an isomorphism $\mathcal{N}_{\mathcal{C}}^s \cong M_{\mathcal{C}}\setminus\{m\}$.

It is a well known result (see for example \cite{resolution-surfaces}) that  for such a singular  cubic surface $\overline{M}_{\mathcal{C}}$, there exists a resolution of singularities $$f:\widetilde{M_{\mathcal{C}}} \to \overline{M}_{\mathcal{C}}$$ such that $f^{-1}(m) \cong \mathbb{P}^1_{\C}$ and $f$ is an isomorphism over $\overline{M}_{\mathcal{C}}\setminus\{m\}$, i.e. $\overline{M}_{\mathcal{C}}\setminus\{m\} \cong \widetilde{M}_{\mathcal{C}}\setminus f^{-1}(\{m\})$.

Moreover, it is known that $\widetilde{M_{\mathcal{C}}}$ is the blow-up of $\mathbb{P}^2_{\C}$ at $6$ points. There is thus an equality $$H_c(\widetilde{M}_{\mathcal{C}},q,t)=q^2t^4+7qt^2+1 .$$ Using the long exact sequence in compactly supported cohomology for the open-closed decomposition $\widetilde{M}_{\mathcal{C}}=f^{-1}(\overline{M}_{\mathcal{C}}\setminus\{m\}) \bigsqcup f^{-1}(m)$, we deduce that we have $$H_c(\overline{M}_{\mathcal{C}}\setminus\{m\},q,t)=H_c(f^{-1}(\overline{M}_{\mathcal{C}}\setminus\{m\}),q,t)=q^2t^4+6qt^2$$ and so  that we have $$H_c(\overline{M}_{\mathcal{C}},q,t)=q^2t^4+6qt^2+1 .$$

It is not difficult to check that the compactly supported Poincaré polynomial of $U$ is $H_c(U,q,t)=3qt^2+t+1$. Applying the long exact sequence in compactly-supported cohomology for the open-closed decomposition $\overline{M}_{\mathcal{C}}=M_{\mathcal{C}} \bigsqcup U$ we find that we have
\begin{equation}
\label{cohoGITquoteq}
H_c(M_{\mathcal{C}},q,t)=q^2t^4+3qt^2+t^2.
\end{equation}

From eq.(\ref{cohoGITquoteq}), using the long exact sequence for the open-closed decomposition $M_{\mathcal{C}}=(M_{\mathcal{C}}\setminus\{m\})\bigsqcup \{m\}$  we deduce that we have:
\begin{equation}
\label{cohoGITquoteqsimple}
H_c(\mathcal{N}^s_{\mathcal{C}},q,t)=H_c(M_{\mathcal{C}}\setminus\{m\},q,t)=q^2t^4+3qt^2+t^2+t. 
\end{equation}

\subsubsection{Cohomology of the character stack in the non-generic case}

We introduce the following notations. Let $Y_{\mathcal{C}}=X_{\mathcal{C}}\setminus O$ and $\mathcal{N}_{\mathcal{C}}=[Y_{\mathcal{C}}/\PGl_2]$. The action of $\PGl_2$ on $Y_{\mathcal{C}}$ is set-theoretically free so that $\mathcal{N}_{\mathcal{C}}$ is at least an algebraic space. 
 
\vspace{8 pt}

Notice that there is an isomorphism $[O/\PGl_2] \cong B\mathbb{G}_m$ and an open-closed decomposition $$\mathcal{M}'_{\mathcal{C}}=\mathcal{N}_{\mathcal{C}} \bigsqcup [O/\PGl_2] .$$ Applying the long-exact sequence for compactly supported cohomology for the open-closed decomposition above and knowing that $H^*_c(B \mathbb{G}_m)$ is concentrated in strictly negative even degrees, we obtain
\begin{equation}
\label{reductioncohomology}
H_c(\mathcal{M}'_{\mathcal{C}},q,t)=H_c(\mathcal{N}_{\mathcal{C}},q,t)+H_c(B \mathbb{G}_m,q,t)=H_c(\mathcal{N}_{\mathcal{C}},q,t)+\dfrac{1}{qt^2-1}.
\end{equation}
\vspace{10 pt}

We have thus reduced ourselves to compute the mixed Poincaré series $H_c(\mathcal{N}_{\mathcal{C}},q,t)$. Let $Y_{\mathcal{C}}^+=Y_{\mathcal{C}}\setminus Z_{\mathcal{C}}^-$ and $Y_{\mathcal{C}}^-=Y_{\mathcal{C}}\setminus Z_{\mathcal{C}}^+$. Notice that, a priori, $Y_{\mathcal{C}}^+,Y^-_{\mathcal{C}}$ are only constructible subsets of $X_{\mathcal{C}}$. Therefore we don't have a good definition of the quotient stacks $[Y_{\mathcal{C}}^+/\PGl_2],[Y_{\mathcal{C}}^-/\PGl_2]$ neither of their cohomology.

\vspace{4 pt}
To solve this problem, we start by the following preliminary Lemma. 

\begin{lemma}
\label{lemmaquotstackisom}
Let $G$ be a linear algebraic group over $\C$ acting on the left on a $\C$-scheme $X$. Let $H \leq G$ be a closed subgroup. Suppose that there exists a $G$-equivariant map $p:X \to G/H$, where $G$ acts on $G/H$ by left multiplication. Denote by $X_H=p^{-1}(eH)$.

The group $H$ acts on $X_H$ and there is an isomorphism of quotient stacks
\begin{equation}
\label{quotientstackisom1}
[X/G] \cong [X_H/H].
\end{equation}

Moreover, if $X$ is an affine variety and $G,H$ are reductive, there is an isomorphism of varieties:
\begin{equation}
\label{quotientstackisom2}
X/\!/G \cong X_H/\!/H.
\end{equation}
\end{lemma}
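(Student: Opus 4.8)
\textbf{Proof plan for Lemma \ref{lemmaquotstackisom}.}

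The plan is to exhibit the isomorphism \eqref{quotientstackisom1} directly on the level of groupoids of $T$-points, for an arbitrary $\C$-scheme $T$. Recall that a $T$-point of $[X/G]$ is a pair $(P,\phi)$ where $P \to T$ is a $G$-torsor and $\phi : P \to X$ is a $G$-equivariant map; similarly a $T$-point of $[X_H/H]$ is a pair $(P',\phi')$ with $P'$ an $H$-torsor and $\phi' : P' \to X_H$ an $H$-equivariant map. Given $(P',\phi')$ on the $H$-side, I would send it to $(P' \times^H G,\ \psi)$ where $\psi : P' \times^H G \to X$ is induced by the $G$-equivariant extension of $\phi'$ composed with the inclusion $X_H \hookrightarrow X$; concretely $\psi([p',g]) = g^{-1}\cdot\phi'(p')$ (with the appropriate left/right conventions matching those in the excerpt). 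Conversely, given $(P,\phi)$ on the $G$-side, the composite $p\circ\phi : P \to G/H$ is a $G$-equivariant map, hence corresponds to a reduction of structure group: set $P' := (p\circ\phi)^{-1}(\text{section over } eH)$, which is an $H$-torsor over $T$, and restrict $\phi$ to get $\phi' : P' \to p^{-1}(eH) = X_H$. These two constructions are quasi-inverse, functorially in $T$, which gives the isomorphism of stacks.

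Next I would record the key point making this work: because $p : X \to G/H$ is $G$-equivariant and $G/H$ is the quotient, the fiber $X_H = p^{-1}(eH)$ is $H$-stable, and the map $G\times^H X_H \to X$, $[g,x]\mapsto g\cdot x$, is an isomorphism of $G$-schemes (it is the standard induction/restriction equivalence of equivariant geometry — its inverse sends $x \in X$ lying over $gH$ to $[g, g^{-1}x]$, well-defined up to the $H$-action). This isomorphism is exactly what allows one to pass back and forth between $G$-equivariant data on $X$ and $H$-equivariant data on $X_H$, and it is what one quotients by $G$, resp. $H$, to obtain \eqref{quotientstackisom1}. For \eqref{quotientstackisom2}, in the affine reductive setting, one uses that $X \cong G\times^H X_H$ as affine $G$-varieties, so $\C[X] \cong (\C[G]\otimes\C[X_H])^H$, and taking $G$-invariants gives $\C[X]^G \cong (\C[G]^{?}\otimes \C[X_H])^{?}$; more cleanly, $X/\!\!/G = \Spec \C[X]^G$ and the identification $\C[X]^G \cong \C[X_H]^H$ follows from the equivalence of categories between $G$-equivariant quasi-coherent sheaves on $X$ and $H$-equivariant ones on $X_H$ (restricting to global sections and taking invariants), using reductivity only to know that $/\!\!/$ computes the correct categorical quotient. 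Equivalently one invokes \cite[Lemma 2.5.1]{B}-type point counting heuristics replaced here by the Luna-type descent; I would cite the standard reference for $G\times^H(-)$.

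The main obstacle I anticipate is purely bookkeeping: getting the left/right conventions consistent between the $G$-action on $X$, the $G$-action on $G/H$ by left multiplication, the induced spaces $P'\times^H G$, and the formulas for $\phi,\phi'$, so that equivariance holds on the nose and the two functors are genuinely inverse (not just inverse up to a fixed automorphism). There is no deep content — the equivalence $G\times^H X_H \cong X$ is classical — so the proof is short once conventions are pinned down; I would state the $T$-point description, give the two assignments with explicit formulas, check equivariance and mutual inverseness, and then deduce the affine GIT statement by passing to invariant global functions. The application in the paper will be to the situation $G = \PGl_2$, $H = $ a Borel (or a torus), with $p$ recording the flag preserved by a non-semisimple representation, which is exactly what is needed to make sense of $[Y_{\mathcal{C}}^{+}/\PGl_2]$ and $[Y_{\mathcal{C}}^{-}/\PGl_2]$ as honest quotient stacks.
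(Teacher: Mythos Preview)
Your proposal is correct and, for the stack isomorphism \eqref{quotientstackisom1}, follows essentially the same route as the paper: the paper also works with $T$-points and defines the inverse $\beta:[X/G]\to[X_H/H]$ by setting $P_H$ to be the pullback of $P\xrightarrow{\phi}X\xrightarrow{p}G/H$ over $eH$, which is exactly your reduction-of-structure-group description $P'=(p\circ\phi)^{-1}(eH)$. The forward map $\alpha$ is only asserted to exist in the paper, while you spell it out via induction $P'\times^H G$; and you make explicit the underlying isomorphism $G\times^H X_H\cong X$, which is implicit in the paper's argument.

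The one genuine difference is in deducing \eqref{quotientstackisom2}. You propose a direct ring-theoretic argument showing $\C[X]^G\cong\C[X_H]^H$ from $X\cong G\times^H X_H$; the paper instead observes that $X/\!\!/G$ and $X_H/\!\!/H$ are good moduli spaces (in Alper's sense) for $[X/G]$ and $[X_H/H]$, so the isomorphism of stacks forces the isomorphism of GIT quotients by uniqueness of good moduli spaces. Both arguments are short and valid; the paper's is cleaner since it avoids any computation, while yours is more self-contained.
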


\begin{proof}
Notice firstly that, if $X$ is affine and $G,H$ are reductive, the isomorphism (\ref{quotientstackisom2}) is implied by the isomorphism (\ref{quotientstackisom1}) as the varieties $X_H /\!/H,X/\!/G$ are good moduli spaces for the stacks $[X_H/H],[X/G]$ respectively (see \cite[Remark 4.8]{alper}).
We now prove isomorphism (\ref{quotientstackisom1}).

\vspace{8 pt}

Notice  that in general there is always a map $\alpha:[X_H/H] \to [X/G]$. We must construct an inverse $\beta:[X_H/H] \to [X/G]$.

Fix a scheme $S$ and recall that the objects of the groupoid $[X/G](S)$ are  couples $(P,\phi)$ where $P \to S$ is a principal $G$-bundle and $\phi:P \to X$ is a $G$-equivariant map and similarly for $[X_H/H](S).$ We define $\beta(P,\phi)\coloneqq(P_H,\phi_H)$ to fit in the following diagram, where both the squares are cartesian:
\begin{center}
\begin{tikzcd}
P_H \arrow[r,"\phi_H"] \arrow[d,""] &X_H \arrow[d,""] \arrow[r,""] &eH
\arrow[d,""] \\
P \arrow[r,"\phi"] &X \arrow[r,"\pi"] &G/H.
\end{tikzcd}
\end{center}

It can be checked that $P_H$ is a principal $H$-bundle over $S$ and $\phi_H$ is $H$-equivariant, so that $\beta$ actually defines a morphism $$\beta:[X/G]\to [X_H/H] .$$ The morphism $\beta$ is an inverse to $\alpha$.
\end{proof}

\vspace{12 pt}

We will apply Lemma \ref{lemmaquotstackisom} above in the case where $X=X_{\mathcal{C}}$, $G=\PGl_2$ and $H \subseteq \PGl_2$ is the maximal torus of diagonal matrices as follows. In the following, we identify $H \cong \mathbb{G}_m$, via the map $\mathbb{G}_m \to \PGl_2$, which sends  $z \in \C^*$ to the class of $\begin{pmatrix}
z &0\\
0 &1
\end{pmatrix}$.

\vspace{8 pt}

Recall that there is an isomorphism  $\mathcal{C}_1\cong G/H$. Via this latter isomorphism,  the projection on the first factor induces a $G$-equivariant morphism $$ p:X_{\mathcal{C}} \to G/H \cong \mathcal{C}_1$$ $$(X_1,X_2,X_3,X_4) \to X_1 .$$ 

Notice that $$(X_{\mathcal{C}})_H=\Biggl\{X_2 \in \mathcal{C}_2, X_3 \in \mathcal{C}_3, X_4 \in \mathcal{C}_4 \ | \ X_2X_3X_4=\begin{pmatrix}\lambda_1^{-1} &0\\
0 &\lambda_1 \end{pmatrix}\Biggr\} .$$

Denote by $(M_{\mathcal{C}})_H \coloneqq (X_{\mathcal{C}})_H/\!/H$. Lemma \ref{lemmaquotstackisom} implies that there is an isomorphism $$(M_{\mathcal{C}})_H \cong M_{\mathcal{C}} .$$ We use similar notations for $(\mathcal{N}_{\mathcal{C}})_H,(\mathcal{N}^s_{\mathcal{C}})_H$. Reapplying Lemma \ref{lemmaquotstackisom}, we see that there is an isomorphism $(\mathcal{N}^s_{\mathcal{C}})_H \cong \mathcal{N}^s_{\mathcal{C}}$. In particular,  
\begin{equation}
\label{isomorphismquotstacksimple}
H_c((\mathcal{N}_{\mathcal{C}}^s)_H,q,t)=q^2t^4+3qt^2+t^2+t.
\end{equation}

\vspace{8 pt}

Consider now the character $\theta^+:H=\mathbb{G}_m \to \mathbb{G}_m$ given by $\theta^+(z)=z$. The character $\theta$ induces a linearization of the $H$-action on the affine variety $(X_{\mathcal{C}})_H$ (see for example \cite[Section 2]{king}). Using Mumford's criterion (see \cite[Proposition 2.5]{king}), we check below that the semistable points $(X_{\mathcal{C}})_H^{ss,\theta^+}$ are given by $$(X_{\mathcal{C}})_H^{ss,\theta^+}=(Y^+_{\mathcal{C}})_H .$$ 

In particular, $(Y^+_{\mathcal{C}})_H$ is an open subset of $(X_{\mathcal{C}})_H$ and it is thus an algebraic variety. We denote by $(\mathcal{N}_{\mathcal{C}}^+)_H$ the quotient stack $(\mathcal{N}_{\mathcal{C}}^+)_H=[Y_{\mathcal{C}}^+/H]$.

Since the action of $H$ on $(Y^+_{\mathcal{C}})_H$ is free, the stack $(\mathcal{N}_{\mathcal{C}}^+)_H$ is an algebraic variety. Denote by $f^+:(\mathcal{N}_{\mathcal{C}}^+)_H \to (M_{\mathcal{C}})_H$ the canonical (proper) map.

\vspace{10 pt}

We have indeed four type of points inside $(X_{\mathcal{C}})_H$:

\begin{itemize}

\item Notice that $O \cap (X_{\mathcal{C}})_H$ is the singleton $\{m\}$, corresponding to the quadruple (\ref{semisimplerep}). The point $m$, being a $\mathbb{G}_m$ fixed point, is unstable. Indeed, considering the $1$-parameter subgroup $\lambda: \mathbb{G}_m \to \mathbb{G}_m$ given by $\lambda(z)=z^{-1}$, we have $\langle \theta^+,\lambda \rangle=-1 <0$ while it exists $\displaystyle \lim_{t \to 0}\lambda(t) \cdot m=m$.

\item The points of $(X_{\mathcal{C}}^s)_H$ are stable. Each $x \in (X_{\mathcal{C}})_H$ corresponds to an irreducible representation. For a $1$-parameter subgroup $\lambda: \mathbb{G}_m \to \mathbb{G}_m$, the limit $\displaystyle \lim_{t \to 0} \lambda(t) \cdot x$ exists if and only if $\lambda$ is trivial, i.e. $\langle \theta^+,\lambda \rangle=0$.  

\item The points of $(Z_{\mathcal{C}}^+)_H$ are semistable. Notice that $(Z_{\mathcal{C}}^+)_H$ is given by points of the form $m_{(a,b,c)}^+$ as in eq.(\ref{matrices+}), for $(a,b,c) \in \C^3\setminus\{(0,0,0)\}$ which respects eq.(\ref{conditionsmatrix}). 

For $\lambda:\mathbb{G}_m \to \mathbb{G}_m$ given by $\lambda(t)=t^n$ for $n \in \Z$ and $t \in \C^*$, we have $$\lambda(t) \cdot m_{(a,b,c)}^+=m_{(t^na,t^nb,t^nc)} .$$ In particular, we see that the limit $\displaystyle \lim_{t \to 0}\lambda(t) \cdot m_{(a,b,c)}^+$ exists (and it is given by $m$) if and only if $n \geq 0$, i.e. if and only if $\langle \theta^+,\lambda \rangle \geq 0$. 

\item By a similar reasoning, the points of $(Z_{\mathcal{C}}^-)_H$ are unstable. 

\end{itemize}

\vspace{10 pt}

 We have that $(f^+)^{-1}(m)=(Z_{\mathcal{C}}^+)_H/H$. From the description of the elements of $X_{\mathcal{C}}$ given at the beginning of \cref{cohomologycomputations}, we see that $(Z_{\mathcal{C}}^+)_H$ is isomorphic to $\C^2\setminus\{(0,0)\}$. Via this identification $\mathbb{G}_m$ acts on $\C^2\setminus\{(0,0)\}$ by scalar multiplication on both coordinates. We have therefore: $$(Z_{\mathcal{C}}^+)_H/H \cong (\C^2\setminus\{(0,0)\})/\mathbb{G}_m=\mathbb{P}^1_{\C} .$$

\vspace{10 pt}

Consider now the Leray spectral sequence for compactly supported cohomology $$E^{p,q}_2:H^p_c((M_{\mathcal{C}})_H,R^qf^+_*\Q) \Rightarrow H^{p+q}_c((\mathcal{N}_{\mathcal{C}}^+)_H,\Q) .$$

Notice that  $R^qf^+_*\Q \neq 0$ if and only if $q=0,2$. More precisely, we have $f^+_*\Q=\Q$ and $R^2f^+_*\Q=(i_m)_*\Q$, where $i_m$ is the closed embedding $$i_m:\{m\} \to (M_{\mathcal{C}})_H .$$ Recall that the differential maps of the spectral sequence go in the direction $$d_{r}^{p,q}:E^{p,q}_r \to E^{p+r,q-r+1}_r .$$

As $R^qf^+_*\Q$ is  $0$ for odd $q$, the differential $d_2^{p,q}$ is the zero map for each $p,q$ and therefore we have $E^{p,q}_3=E^{p,q}_2$ for each $p,q$. Moreover, the differentials on the third page go in the direction $d^{p,q}_3:E^{p,q}_3 \to E^{p+3,q-2}_3$ and if $q \neq 0,2$, the vector space $E^{p,q}_3$ is equal to $0$.

If $q=0$, we have $E^{3,-3}_3=\{0\}$ and so $d^{p,q}_3=0$. Lastly, if $q=2$, we have $E^{p,q}_3=\{0\}$ if $p \geq 1$ and if $p=0$, we have $E^{3,0}_3=H^3_c((M_{\mathcal{C}})_H,\Q)=\{0\}$.

We deduce therefore that the differential maps $d^{p,q}_3$ are all zero. In a similar way, it is possible to verify that $d^{p,q}_r=0$ if $r \geq 2$, for any $p,q$ and so that the spectral sequence collapses on the second page. 

\vspace{2 pt}

For each $n$, there is therefore an equality $$H^{n}_c((\mathcal{N}_{\mathcal{C}}^+)_H,\Q)=\bigoplus_{p+q=n} H^p_c((M_{\mathcal{C}})_H,R^qf^+_*\Q) .$$

From the description of the sheaves $R^qf^+_*\Q$ given above, we deduce  that we have \begin{equation}
\label{cohomologypositivepiece}
H_c((\mathcal{N}_{\mathcal{C}}^+)_H,q,t)=q^2t^4+4qt^2+t^2
\end{equation}

\vspace{12 pt}

A similar reasoning can be applied to the opposite linearization, induced by the character  $\theta^-:\mathbb{G}_m \to \mathbb{G}_m$ given by $\theta^-(z)=z^{-1}$. In this case, in a similar way we can argue that  the semistable points $(X_{\mathcal{C}})_H^{ss,\theta^-}$ are given by $(Y^-_{\mathcal{C}})_H$.

For the corresponding quotient $(\mathcal{N}_{\mathcal{C}}^-)_H$ there is therefore an equality \begin{equation}
    \label{cohomologynegativepiece}
H_c((\mathcal{N}_{\mathcal{C}}^-)_H,q,t)=q^2t^4+4qt^2+t^2 .\end{equation}

\vspace{8 pt}

Denote now by $j^+,j^-$ the open embeddings $j^+:(\mathcal{N}_{\mathcal{C}}^+)_H \to (\mathcal{N}_{\mathcal{C}})_H$ and $j^+:(\mathcal{N}_{\mathcal{C}}^-)_H \to (\mathcal{N}_{\mathcal{C}})_H$ and by $j$ the open embedding $(\mathcal{N}^s_{\mathcal{C}})_H \to (\mathcal{N}_{\mathcal{C}})_H$. Notice that there is a short exact sequence of sheaves on $(\mathcal{N}_{\mathcal{C}})_H$

\begin{center}
\begin{tikzcd}
0 \arrow[r,""] &j_!\C \arrow[r,""] &j^+_!\C \oplus j_!^-\C \arrow[r,""] &\C \arrow[r,""] &0  
\end{tikzcd}
\end{center}
and therefore an associated long exact sequence in compactly supported cohomology
\begin{center}
\begin{tikzcd}
H^{i-1}_c((\mathcal{N}_{\mathcal{C}})_H) \arrow[r,""] &H^i_c((\mathcal{N}_{\mathcal{C}}^s)_H) \arrow[r,""] &H^i_c((\mathcal{N}_{\mathcal{C}}^+)_H) \oplus H^i_c((\mathcal{N}_{\mathcal{C}}^-)_H) \arrow[r,""] &H^i_c((\mathcal{N}_{\mathcal{C}})_H)  
\end{tikzcd}
\end{center}

From Lemma \ref{lemmaquotstackisom}, we deduce that there is an  isomorphism $(\mathcal{N}_{\mathcal{C}})_H \cong \mathcal{N}_{\mathcal{C}}$. From the long exact sequence above and eq.(\ref{cohomologypositivepiece}), eq.(\ref{cohomologynegativepiece}), eq.(\ref{isomorphismquotstacksimple}), it is therefore not difficult to show that \begin{equation}
\label{eqarrogance}
H_c(\mathcal{N}_{\mathcal{C}},q,t)=q^2t^4+5qt^2+t^2+1.
\end{equation}

Plugging this result into eq.(\ref{reductioncohomology}) and using identity (\ref{gerbeidentity}), we obtain finally identity (\ref{conjmhsp12}).

\end{document}